\pgfplotsset{compat=1.16}
\newcommand{\qedhere}{}
\newcommand{\naturals}                       {\mathbb{N}}
\newcommand{\reals}                          {\mathbb{R}}
\newcommand{\realsBar}                       {\bar{\mathbb{R}}}
\newcommand{\nonnegativeReals}               {\mathbb{R}_{\geq 0}}
\newcommand{\cardinality}[1]                 {|#1|}
\DeclareMathOperator{\Id}                    {Id}
\DeclareMathOperator{\proj}                  {proj}
\newcommand{\effectiveDomain}[1]             {D(#1)}
\newcommand{\onotation}[2][]                 {\ifstrequal{#1}{adapt}{o\left(#2\right)}{o(#2)}}
\newcommand{\conv}                           {\ast}
\DeclareMathOperator*{\argmax}               {argmax}
\newcommand{\negativePart}[1]                {#1^-}
\newcommand{\innerProduct}[2]                {\left\langle #1,#2\right\rangle}
\newcommand{\eye}[1]                         {I_{#1}}
\DeclareMathOperator{\spec}                  {spec}
\newcommand{\transpose}[1]                   {{#1}^\top}
\newcommand{\inv}[1]                         {#1^{-1}}
\let\var\relax
\DeclareMathOperator{\var}{Var}
\DeclareMathOperator{\std}{std}
\DeclareMathOperator{\kl}{KL}
\newcommand{\expectedValue}[2]               {\mathbb{E}^{#1}\left[#2\right]}
\newcommand{\variance}[2]                    {\var^{#1}\left[#2\right]}
\newcommand{\standardDeviation}[2]           {\std^{#1}\left[#2\right]}
\newcommand{\kullbackLeibler}[2]             {\kl(#1|#2)}
\newcommand{\productMeasure}                 {\times}
\newcommand{\diracMeasure}[1]                {\delta_{#1}}
\newcommand{\radonNikodyn}[2]                {\frac{\d #1}{\d #2}}
\newcommand{\support}[1]                     {\mathrm{supp}(#1)}
\newcommand{\spaceProbabilityMeasures}[1]    {\mathcal{P}(#1)}
\newcommand{\Pp}[2]                          {\mathcal{P}_{#1}(#2)}
\newcommand{\Ppabs}[2]                       {\mathcal{P}_{#1,\mathrm{abs}}(#2)}
\newcommand{\wassersteinDistance}[3]         {W_{#1}(#2,#3)}
\newcommand{\optimalTransportDiscrepancy}[3] {W_{c}(#2,#3)}
\newcommand{\wassersteinBall}[3]             {B_{#2}(#3)}
\newcommand{\closedWassersteinBall}[3]       {\bar B_{#2}(#3)}
\newcommand{\optMap}[2]                      {T_{#1}^{#2}}
\newcommand{\setPlans}[2]                    {\Gamma(#1,#2\checknextargPlans} 
\newcommand{\setOptimalPlans}[2]             {\Gamma_o(#1,#2\checknextargPlans} 
\newcommand{\checknextargPlans}              {\@ifnextchar\bgroup{\gobblenextargPlans}{)}}
\newcommand{\gobblenextargPlans}[1]          {,#1\@ifnextchar\bgroup{\gobblenextargPlans}{)}}
\newcommand{\pushforward}[1]                 {#1_{\#}}
\newcommand{\lebesgueMeasure}[1]             {\mathcal{L}^{#1}}
\renewcommand{\d}[0]                         {\mathrm{d}}
\newcommand{\diff}[2]                        {\frac{\d #1}{\d #2}}
\renewcommand{\gradient}[1]                  {\nabla_{#1}}
\newcommand{\subdifferential}[1]             {\partial^{-}#1}
\newcommand{\superdifferential}[1]           {\partial^{+}#1}
\newcommand{\tangentSpace}[2]                {\mathrm{Tan}_{#1}#2}
\newcommand{\weakconvergence}[0]             {\rightharpoonup}
\newcommand{\Lp}[2]                          {L^{#1}\ifstrempty{#2}{}{(#2)}}
\newcommand{\LpNorm}[3]                      {\left\Vert #1\right\Vert_{L^{#2}(#3)}}
\newcommand{\Ccinf}[1]                       {C_c^\infty(#1)}
\newcommand{\Cb}[1]                          {C_b\ifstrempty{#1}{}{(#1)}}
\newcommand{\C}[2]                           {C^{#1}\ifstrempty{#2}{}{(#2)}}
\newcommand{\Wrloc}[2]                       {W^{1,#1}_\mathrm{loc}\ifstrempty{#2}{}{(#2)}}
\newcommand{\compSymbol}                     {\fatsemi}
\newcommand{\checknextargComp}               {\@ifnextchar\bgroup{\gobblenextargComp}{}}
\newcommand{\gobblenextargComp}[1]           {\compSymbol #1\@ifnextchar\bgroup{\gobblenextargComp}{}}
\newacronym{acr:amod}{AMoD}{autonomous mobility-on-demand}
\newacronym{acr:av}{AV}{autonomous vehicle}
\newacronym{acr:dro}{DRO}{distributionally robust optimization}
\newacronym{acr:kl}{KL}{Kullback-Leibler}
\newacronym{acr:mmd}{MMD}{Maximum Mean Discrepancy}
\newcommand{\proofinverseoptimaltransportmap}[1]{
\ifbool{#1}{\begin{proof}}{\begin{proof}[Proof of~\cref{prop:inverse optimal transport map}]}
By symmetry of the Wasserstein distance, it suffices to show
\begin{equation*}
    \wassersteinDistance{2}{\mu}{\nu}^2=\int_{\reals^d}\norm{(\optMap{\mu}{\nu})^{-1}(y)-y}^2\d\nu(y).
\end{equation*}
Indeed, 
\begin{equation*}
\begin{aligned}[b]
    \wassersteinDistance{2}{\mu}{\nu}^2
    &=
    \int_{\reals^d}\norm{x-\optMap{\mu}{\nu}(x)}^2\d\mu(x) \\
    &=
    \int_{\reals^d}\norm{(\optMap{\mu}{\nu})^{-1}(\optMap{\mu}{\nu}(x))-\optMap{\mu}{\nu}(x)}^2\d\mu(x) \\
    &=
    \int_{\reals^d}\norm{(\optMap{\mu}{\nu})^{-1}(y)-y}^2\d(\pushforward{(\optMap{\mu}{\nu})}\mu)(y) \\
    &=
    \int_{\reals^d}\norm{(\optMap{\mu}{\nu})^{-1}(y)-y}^2\d\nu(y).
\end{aligned}
\end{equation*}
This concludes the proof. 
\end{proof}}
\newcommand{\proofperturbationtransportmap}[1]{
\ifbool{#1}{\begin{proof}}{\begin{proof}[Proof of \cref{lemma:perturbation transport map}]}
Our proof follows the proof of~\cite[Corollary 1.1]{bonnet2019optimal}.
By~\cref{prop:brenier2}, it suffices to establish that $\Id+s\gradient{}\psi$ is the gradient of a convex function. Indeed, $\Id+s\gradient{}\psi$ is smooth, and so differentiable $\mu$-almost everywhere, and 
\begin{equation}\label{eq:proof perturbation transport map}
\begin{aligned}
    \int_{\reals^d}\norm{x+s\gradient{}\psi(x)}^2\d\mu(x)
    &\leq 
    \int_{\reals^d}(\norm{x}+|s|\norm{\gradient{}\psi(x)})^2\d\mu(x)
    \\
    &\leq 2\int_{\reals^d}\norm{x}^2+|s|^2\norm{\gradient{}\psi(x)}^2\d\mu(x)
    \\
    &\leq 2\int_{\reals^d}\norm{x}^2\d\mu(x)+|s|^2\max_{x\in\reals^d}\norm{\gradient{}\psi(x)}^2
    \\
    &< +\infty,
\end{aligned}
\end{equation}
where the maximum of $\norm{\gradient{}\psi(x)}^2$ exists and is bounded by (i) continuity of the gradient and the norm and (ii) due to compactness of the support of $\psi$. So, by \cref{prop:brenier2}, if $\Id+s\gradient{}\psi$ is the gradient of a convex function, then it is an optimal transport map between $\mu$ and $\pushforward{(\Id+s\gradient{}\psi)}\mu$.
To prove convexity, by smoothness, it suffices to show that $I+s\nabla^2\psi(x)$ is positive definite for all $x\in\reals^d$.
Without loss of generality, assume $\nabla^2\psi(x)$ is non-zero at least for some $x\in\reals^d$; else, the statement is trivial.
Let $\bar s\coloneqq 1/\max_{x\in\reals^d}\norm{\nabla^2\psi(x)}>0$, where the maximum exists by compactness of the support of $\psi$ and continuity  of the norm. Then, $I+s\nabla^2\psi(x)$ is positive definite for all $s\in(-\bar s,\bar s)$, which implies $\Id+s\gradient{}\psi$ is the gradient of a convex function, and thus the statement follows. Finally, \eqref{eq:proof perturbation transport map} shows that $\pushforward{(\Id+s\gradient{}\psi)}\mu\in\Pp{2}{\reals^d}$ and, by definition of Wasserstein distance and optimal transport map, 
\begin{equation*}
    \wassersteinDistance{2}{\mu}{\pushforward{(\Id+s\gradient{}\psi)}\mu}
    =
    \left(\int_{\reals^d}\norm{x-(x-s\gradient{}\psi)}^2\d\mu(x)\right)^{\frac{1}{2}}
    =
    |s|\norm{\gradient{}\psi}_{\Lp{2}{\reals^d,\reals^d;\mu}}.
\end{equation*}
This concludes the proof. 
\end{proof}}
\newcommand{\proofgradientsactontangentvectors}[1]{
\ifbool{#1}{\begin{proof}}{\begin{proof}[Proof of \cref{prop:gradients act on tangent vectors}]}
The proof of \eqref{eq:gradients act on tangent vectors} follows from~\cite[Proposition 8.5.4, Eq. 8.4.4]{Ambrosio2008a}. Note that when $\gamma$ is induced by a transport map, \eqref{eq:gradients act on tangent vectors} also follows from $\optMap{\mu}{\nu}-\Id\in\tangentSpace{\mu}\Pp{2}{\reals^d}$ (cf. \cref{rem:tangent space}) and the definition of $\Lp{2}{}$ inner product.
Indeed, since $\xi\in\tangentSpace{\mu}{\Pp{2}{\reals^d}}^\perp$ and $\optMap{\mu}{\nu}-\Id\in\tangentSpace{\mu}{\Pp{2}{\reals^d}}$, by definition of the orthogonal complement we have
\begin{equation*}
\begin{aligned}
    \int_{\reals^d\times\reals^d}\innerProduct{\xi(x)}{y-x}\d(\pushforward{(\Id,\optMap{\mu}{\nu})}\mu)(x,y)
    &=
    \int_{\reals^d}\innerProduct{\xi(x)}{\optMap{\mu}{\nu}(x)-x}\d\mu(x)
    \\
    &=
    \innerProduct{\xi}{\optMap{\mu}{\nu}-\Id}_{\Lp{2}{\reals^d,\reals^d;\mu}}
    \\
    &=0.
\end{aligned}
\end{equation*}
To show uniqueness of Wasserstein gradients it suffices to show that $\subdifferential{\cost}(\mu)\cap\superdifferential{\cost}(\mu)$ contains at most one element in $\tangentSpace{\mu}{\Pp{2}{\reals^d}}$.
Without loss of generality, assume $\subdifferential{\cost}(\mu)\cap\superdifferential{\cost}(\mu)$ is non-empty.
Let $\xi,\xi'\in\tangentSpace{\mu}{\Pp{2}{\reals^d}}\cap\subdifferential{\cost}(\mu)\cap\superdifferential{\cost}(\mu)$, and let $\varepsilon>0$.  Then, there exist $\varphi_\varepsilon,\varphi_\varepsilon'\in\Ccinf{\reals^d}$ such that
\begin{equation*}
    \LpNorm{\xi-\gradient{}\varphi_\varepsilon}{2}{\reals^d,\reals^d,\mu}<\frac{\varepsilon}{2}, 
    \qquad 
    \LpNorm{\xi'-\gradient{}\varphi_\varepsilon'}{2}{\reals^d,\reals^d,\mu}<\frac{\varepsilon}{2}.
\end{equation*}
Then, let $\sigma\coloneqq\pushforward{(\Id+s\gradient{}\varphi_\varepsilon)}\mu$ and $\sigma'\coloneqq\pushforward{(\Id+s\gradient{}\varphi_\varepsilon')}\mu$. By \cref{lemma:perturbation transport map}, there is $s>0$ sufficiently small such that $\Id+s\gradient{}\varphi_\varepsilon$ is an optimal transport map between $\mu$ and $\sigma$, and $\Id+s\gradient{}\varphi_\varepsilon'$ is an optimal transport map between $\mu$ and $\sigma'$. 
Thus, by definition of sub- and super-differentials
\begin{align*}
    \cost(\sigma)-\cost(\mu)&\geq\int_{\reals^d}\innerProduct{\xi}{s\gradient{}\varphi_\varepsilon}\d\mu(x)+\onotation{s}
    \\
    -\cost(\sigma')+\cost(\mu)&\geq\int_{\reals^d}\innerProduct{-\xi}{s\gradient{}\varphi_\varepsilon'}\d\mu(x)+\onotation{s}
    \\
    \cost(\sigma')-\cost(\mu)&\geq\int_{\reals^d}\innerProduct{\xi'}{s\gradient{}\varphi_\varepsilon'}\d\mu(x)+\onotation{s}
    \\
    -\cost(\sigma)+\cost(\mu)&\geq\int_{\reals^d}\innerProduct{-\xi'}{s\gradient{}\varphi_\varepsilon}\d\mu(x)+\onotation{s}.
\end{align*}
We can divide all inequalities by $s$, sum them up, and let $s\to 0$ to get  
\begin{equation}\label{eq:proposition only act on tangent vectors:ineqluality}
\begin{aligned}
    0
    &\geq
    \int_{\reals^d}\innerProduct{\xi}{\gradient{}\varphi_\varepsilon} + \innerProduct{\xi'}{\gradient{}\varphi_\varepsilon'} - \innerProduct{\xi}{\gradient{}\varphi_\varepsilon'} -  \innerProduct{\xi'}{\gradient{}\varphi_\varepsilon}\d\mu(x)
    \\
    &=
    \int_{\reals^d}\innerProduct{\xi-\xi'}{\gradient{}\varphi_\varepsilon-\gradient{}\varphi_\varepsilon'}\d\mu(x).
\end{aligned}
\end{equation}
Thus, 
\begin{align*}
    \LpNorm{\xi-\xi'}{2}{\reals^d,\reals^d;\mu}
    \overset{\eqref{eq:proposition only act on tangent vectors:ineqluality}}&{\leq}
    \begin{aligned}[t]
    &\bigg(\LpNorm{\xi-\xi'}{2}{\reals^d,\reals^d;\mu}^2 \\
    &-2\int_{\reals^d}\innerProduct{\xi-\xi'}{\gradient{}\varphi_\varepsilon-\gradient{}\varphi_\varepsilon'}\d\mu(x)
    +
    \LpNorm{\gradient{}\varphi_\varepsilon-\gradient{}\varphi_\varepsilon'}{2}{\reals^d,\reals^d;\mu}^2\bigg)^{\frac{1}{2}}
    \end{aligned}
    \\
    &=\LpNorm{\xi-\xi'-(\gradient{}\varphi_\varepsilon-\gradient{}\varphi_\varepsilon')}{2}{\reals^d,\reals^d;\mu}
    \\
    &\leq
    \LpNorm{\xi-\gradient{}\varphi_\varepsilon}{2}{\reals^d,\reals^d;\mu}
    +\LpNorm{\xi'-\gradient{}\varphi_\varepsilon}{2}{\reals^d,\reals^d;\mu}
    \\
    &<\varepsilon.
\end{align*}
Let $\varepsilon\to 0$ to conclude $\xi=\xi'$.
\end{proof}}
\newcommand{\proofdifferentiablecontinuous}[1]{
\ifbool{#1}{\begin{proof}}{\begin{proof}[Proof of \cref{prop:differentiable continuous}]}
Let $(\mu_n)_{n\in\naturals}\subset\Pp{2}{\reals^d}$ such that $\mu_n\weakconvergence\mu$ and $\gamma_n\in\setOptimalPlans{\mu}{\mu_n}$. By differentiability, 
\begin{align*}
    &\limsup_{n\to\infty}|\cost(\mu_n)-\cost(\mu)|
    \\
    &\leq  
    \limsup_{n\to\infty}\left|\int_{\reals^d\times\reals^d}\innerProduct{\gradient{\mu}\cost(\mu)(x)}{y-x}\d\gamma_n(x,y)\right| + |\onotation{\wassersteinDistance{2}{\mu_n}{\mu}}|
    \\
    &\leq  
    \limsup_{n\to\infty}\left|\int_{\reals^d\times\reals^d}\innerProduct{\gradient{\mu}\cost(\mu)(x)}{y-x}\d\gamma_n(x,y)\right| + \limsup_{n\to\infty}|\onotation{\wassersteinDistance{2}{\mu_n}{\mu}}|
    \\
    &\leq
    \limsup_{n\to\infty}\left(\int_{\reals^d\times\reals^d}\norm{\gradient{\mu}\cost(\mu)(x)}^2\d\gamma_n(x,y)\right)^{\frac{1}{2}}\left(\int_{\reals^d\times\reals^d}\norm{y-x}^2\d\gamma_n(x,y)\right)^{\frac{1}{2}}
    \\
    &\leq
    \limsup_{n\to\infty}\left(\int_{\reals^d}\norm{\gradient{\mu}\cost(\mu)(x)}^2\d\mu(x)\right)^{\frac{1}{2}}\wassersteinDistance{2}{\mu}{\mu_n}
    \\
    &=
    \norm{\gradient{\mu}\cost(\mu)}_{\Lp{2}{\reals^d,\reals^d;\mu}}\limsup_{n\to\infty}\wassersteinDistance{2}{\mu_n}{\mu},
\end{align*}
where the second inequality uses subadditivity of the limit superior and the third inequality follows from Cauchy-Schwarz inequality in the Hilbert space $\Lp{2}{\reals^d,\reals^d;\gamma_n}$~\cite[Theorem 4.2]{Rudin1987} 
Since the Wasserstein distance metrizes weak convergence in $\Pp{2}{\reals^d}$, $\wassersteinDistance{2}{\mu_n}{\mu}\to 0$. Thus, we conclude $\lim_{n\to\infty}\cost(\mu_n)=\cost(\mu)$, and thus continuity w.r.t. weak convergence in $\Pp{2}{\reals^d}$.
\end{proof}}
\newcommand{\proofgradientsarestrong}[1]{
\ifbool{#1}{\begin{proof}}{\begin{proof}[Proof of \cref{prop:strong differentiability}]}
We prove subdifferentiability; the proofs of superdifferentiability and thus differentiability are analogous. Let $\xi\in\subdifferential\cost(\mu)$ be a subgradient of $\cost$ at $\mu$.
We conduct the proof by contradiction. Assume that there is $\delta>0$ and a sequence $(\nu_n)_{n\in\naturals}\subset\Pp{2}{\reals^d}$ and $\gamma_n\in\setPlans{\mu}{\nu_n}$ so that 
\begin{equation*}
    \varepsilon_n
    \coloneqq 
    \sqrt{\int_{\reals^d\times\reals^d}\norm{x-y}^2\d\gamma_n(x,y)}\to 0
\end{equation*}
and
\begin{equation}\label{prop:strong subdifferential 1}
    \cost(\nu_n)-\cost(\mu)
    -\int_{\reals^d\times\reals^d}\innerProduct{\xi(x)}{y-x}\d\gamma_n(x,y)\leq -\delta \varepsilon_n.
\end{equation}
Then, let $\beta_n\in\setOptimalPlans{\mu}{\nu_n}$ be an optimal transport plan between $\mu$ and $\nu_n$.
By definition, $\wassersteinDistance{2}{\mu}{\nu_n}\leq\varepsilon_n\to 0$.
By definition of subdifferential, there exists $N$ so that for all $n>N$ we have 
\begin{equation}\label{prop:strong subdifferential 2}
    \cost(\nu_n)-\cost(\mu)
    \geq 
    \int_{\reals^d\times\reals^d}\innerProduct{\xi(x)}{y-x}\d\beta_n(x,y)-\frac{\delta}{2}\varepsilon_n.
\end{equation}
We can combine~\eqref{prop:strong subdifferential 1} and \eqref{prop:strong subdifferential 2} to get
\begin{equation}\label{prop:strong subdifferential 3}
    \int_{\reals^d\times\reals^d}\innerProduct{\xi(x)}{y-x}\d\beta_n(x,y)-
    \int_{\reals^d\times\reals^d}\innerProduct{\xi(x)}{y-x}\d\gamma_n(x,y)
    \leq 
    -\frac{\delta}{2}\varepsilon_n
\end{equation}
for all $n>N$.
Let us now introduce the ``rescaled'' plans
\begin{equation}
\begin{aligned}
    \hat\gamma_n\coloneqq\pushforward{\left(\proj_1,\frac{\proj_2-\proj_1}{\varepsilon_n}\right)}\gamma_n,
    \\
    \hat\beta_n\coloneqq\pushforward{\left(\proj_1,\frac{\proj_2-\proj_1}{\varepsilon_n}\right)}\beta_n.
\end{aligned}
\end{equation}
Then,~\eqref{prop:strong subdifferential 3} can be rewritten to
\begin{equation}\label{prop:strong subdifferential 3 rewritten}
    \int_{\reals^d\times\reals^d}\innerProduct{\xi(x)}{y}\d\hat\beta_n(x,y)-
    \int_{\reals^d\times\reals^d}\innerProduct{\xi(x)}{y}\d\hat\gamma_n(x,y)
    \leq 
    -\frac{\delta}{2}
\end{equation}
for all $n>N$.
We now claim that $\hat\mu_n$ and $\hat\beta_n$ converge narrowly (up to subsequences) to some $\hat\mu$ and $\hat\beta$ with finite second moment:
\begin{itemize}
    \item Narrow convergence: We equivalently show that the sets $\{\hat\gamma_n\}_{n\in\naturals}$ and $\{\hat\beta_n\}_{n\in\naturals}$ are tight, which automatically establishes convergence to some $\hat\gamma$ and $\hat\beta$ by Prokhorov's Theorem (\cref{thm:prokhorov} in~\cref{app:preliminaries}).
    To show tightness it suffices to prove that 
    \begin{equation*}
        \sup_{n}\int_{\reals^d\times\reals^d}\norm{x}^2+\norm{y}^2\d\hat\gamma_n(x,y)<+\infty,
    \end{equation*}
    by~\cref{app:criterion tightness} in~\cref{app:preliminaries}. In particular, 
    \begin{equation}\label{eq:background strong differential gamma tight}
    \begin{aligned}
        \sup_{n}\int_{\reals^d\times\reals^d}\norm{x}^2+\norm{y}^2\d\hat\gamma_n(x,y)
        &=
        \sup_n\int_{\reals^d}\norm{x}^2\d\mu(x) + \int_{\reals^d}\norm{\frac{y-x}{\varepsilon_n}}^2\d\gamma_n(x,y)
        \\
        &=
        \sup_n\int_{\reals^d}\norm{x}^2\d\mu(x)+\frac{\varepsilon_n^2}{\varepsilon_n^2}
        \\
        &= 
        \int_{\reals^d}\norm{x}^2\d\mu(x)+1
        \\
        &<+\infty,
    \end{aligned}
    \end{equation}
    where we used that the first marginal of $\gamma_n$ is $\mu$, and $\mu$ has finite second moment. 
    Similarly, the set $\{\beta_n\}_{n\in\naturals}$ is tight since 
    \begin{equation}\label{eq:background strong differential beta tight}
    \begin{aligned}
        \sup_{n}\int_{\reals^d\times\reals^d}\norm{x}^2+\norm{y}^2\d\hat\beta_n(x,y)
        &=
        \sup_n\int_{\reals^d}\norm{x}^2\d\mu(x) + \int_{\reals^d}\norm{\frac{y-x}{\varepsilon_n}}^2\d\beta_n(x,y)
        \\
        &=
        \sup_n\int_{\reals^d}\norm{x}^2\d\mu(x)+\frac{\wassersteinDistance{2}{\mu}{\nu_n}^2}{\varepsilon_n^2}
        \\
        &\leq 
        \int_{\reals^d}\norm{x}^2\d\mu(x)+1
        \\
        &<+\infty.
    \end{aligned}
    \end{equation}
    Thus, $\gamma_n$ and $\beta_n$ converge narrowly (up to subsequences) to $\hat\mu$ and $\hat\beta$.
    
    \item Finite second moment: Since $(x,y)\mapsto \norm{x}^2+\norm{y}^2$ is non-negative and continuous (and thus lower semi-continuous), $\gamma\mapsto\int_{\reals^d\times\reals^d}\norm{x}^2+\norm{y}^2\d\gamma(x,y)$ is lower semi-continuous w.r.t. narrow convergence (see~\cref{prop:expectation} below or \cite[Lemma 5.1.7]{Ambrosio2008a}). Thus,
    \begin{equation*}
        \int_{\reals^d\times\reals^d}\norm{x}^2+\norm{y}^2\d\hat\gamma(x,y)
        \leq
        \liminf_{n\to\infty}\int_{\reals^d\times\reals^d}\norm{x}^2+\norm{y}^2\d\hat\gamma_n(x,y)
        \leq 
        \int_{\reals^d}\norm{x}^2\d\mu(x)+1,
    \end{equation*}
    which implies that $\hat\gamma$ has finite second moment. Analogously, $\hat\beta$ has finite second moment. 
\end{itemize}
We now seek to prove that 
\begin{equation}\label{eq:background strong differential beta inner product}
    \int_{\reals^d\times\reals^d}\innerProduct{\xi(x)}{y}\d\hat\beta_n(x,y)\to
    \int_{\reals^d\times\reals^d}\innerProduct{\xi(x)}{y}\d\hat\beta(x,y).
\end{equation}
Note that this does not follow from narrow convergence, since the inner product is not a bounded function. Let $\eta>0$, and define $\zeta_{\eta}\in\Ccinf{\reals^d}$ so that $\norm{\xi-\gradient{}\zeta_\eta}_{\Lp{2}{\reals^d,\reals^d;\mu}}<\eta$, which exists by density of gradients of compactly supported functions in $\tangentSpace{\mu}{\Pp{2}{\reals^d}}$. Then, 
\begin{equation*}
\begin{aligned}
    \int_{\reals^d\times\reals^d}\innerProduct{\zeta_{\eta}(x)}{y}\d\hat\beta_n(x,y)
    &=
    \int_{\reals^d\times\reals^d}\innerProduct{x}{y}\d(\pushforward{\zeta_{\eta}\times\Id)}\hat\beta_n(x,y).
\end{aligned}
\end{equation*}
By continuity of $\zeta_{\eta}$, $(\pushforward{\zeta_{\eta}\times\Id)}\hat\beta_n$ converges narrowly to $(\pushforward{\zeta_{\eta}\times\Id)}\hat\beta$. Moreover, we have that (i)
\begin{equation*}
\begin{aligned}
    \sup_n\int_{\reals^d\times\reals^d}\norm{x}^2+\norm{y}^2\d(\pushforward{\zeta_{\eta}\times\Id)}\hat\beta_n(x,y)
    \ifbool{compact}{&}{}=
    \sup_n\int_{\reals^d}\norm{\zeta_{\eta}(x)}^2\d\mu(x) + 
    \int_{\reals^d\times\reals^d}\norm{y}^2\d\hat\beta_n(x,y)
    \ifbool{compact}{\\ &}{}<
    +\infty
\end{aligned}
\end{equation*}
by the compact support of $\zeta_{\eta}$ and~\eqref{eq:background strong differential beta tight}, and (ii) $\pushforward{(\proj_1)}\pushforward{(\zeta_{\eta}\times\Id)}\hat\beta_n=\pushforward{(\zeta_{\eta})}\mu$ has trivially uniformly integrable second moment (since it does not depend on $n$). Thus, \cref{prop:narrow convergence transport plan} gives
\begin{equation*}
    \int_{\reals^d\times\reals^d}\innerProduct{\zeta_{\eta}(x)}{y}\d\hat\beta_n(x,y)\to
    \int_{\reals^d\times\reals^d}\innerProduct{\zeta_{\eta}(x)}{y}\d\hat\beta(x,y).
\end{equation*}
Thus, 
\begin{equation*}
\begin{aligned}
    \limsup_{n\to\infty}&\int_{\reals^d\times\reals^d}\innerProduct{\xi(x)}{y}\d\hat\beta_n(x,y)
    \\
    &=
    \limsup_{n\to\infty}\int_{\reals^d\times\reals^d}\innerProduct{\zeta_{\eta}(x)}{y}\d\hat\beta_n(x,y)+\int_{\reals^d\times\reals^d}\innerProduct{\xi(x)-\zeta_{\eta}(x)}{y}\d\hat\beta_n(x,y)
    \\
    &\leq 
    \lim_{n\to\infty}\int_{\reals^d\times\reals^d}\innerProduct{\zeta_{\eta}(x)}{y}\d\hat\beta_n(x,y) + \eta \sup_n\norm{y}_{\Lp{2}{\reals^d,\reals^d;\hat\beta_n}}
    \\
    \overset{\eqref{eq:background strong differential beta tight}}&{\leq}
    \lim_{n\to\infty}\int_{\reals^d\times\reals^d}\innerProduct{\zeta_{\eta}(x)}{y}\d\hat\beta_n(x,y) + \eta
    \sqrt{\int_{\reals^d}\norm{x}^2\d\mu(x)+1}
    \\
    &= 
    \int_{\reals^d\times\reals^d}\innerProduct{\zeta_{\eta}(x)}{y}\d\hat\beta(x,y) + \eta
    \sqrt{\int_{\reals^d}\norm{x}^2\d\mu(x)+1}
    \\
    &\leq 
    \int_{\reals^d\times\reals^d}\innerProduct{\xi(x)}{y}\d\hat\beta(x,y) + 2\eta
    \sqrt{\int_{\reals^d}\norm{x}^2\d\mu(x)+1},
\end{aligned}
\end{equation*}
where the first inequality results from Cauchy-Schwarz inequality~\cite[Theorem 4.2]{Rudin1987}. 
Similarly, 
\begin{equation*}
\begin{aligned}
    \liminf_{n\to\infty}&\int_{\reals^d\times\reals^d}\innerProduct{\xi(x)}{y}\d\hat\beta_n(x,y)
    \geq 
    \int_{\reals^d\times\reals^d}\innerProduct{\xi(x)}{y}\d\hat\beta(x,y) - 2\eta 
    \sqrt{\int_{\reals^d}\norm{x}^2\d\mu(x)+1}.
\end{aligned}
\end{equation*}
Let $\eta\to 0$ to establish~\eqref{eq:background strong differential beta inner product}. An analogous argument gives 
\begin{equation*}
    \int_{\reals^d\times\reals^d}\innerProduct{\xi(x)}{y}\d\hat\gamma_n(x,y)\to
    \int_{\reals^d\times\reals^d}\innerProduct{\xi(x)}{y}\d\hat\gamma(x,y),
\end{equation*}
and so, by \eqref{prop:strong subdifferential 3 rewritten},
\begin{equation}\label{eq:strong subdifferential contradiction negative}
    \int_{\reals^d\times\reals^d}\innerProduct{\xi(x)}{y}\d\hat\beta(x,y)-
    \int_{\reals^d\times\reals^d}\innerProduct{\xi(x)}{y}\d\hat\gamma(x,y)
    \leq 
    -\frac{\delta}{2}<0.
\end{equation}
We will show that the left-hand side of~\eqref{eq:strong subdifferential contradiction negative} is at the same time non-negative, which yields a contradiction. 
Consider $\zeta\in\Ccinf{\reals^d}$, and $x,y\in\reals^d$. Then, by Taylor's expansion, there exists $z\in \reals^d$ so that
\begin{equation}\label{eq:strong subdifferential taylor expansion 1}
    \zeta(y)=\zeta(x)+\innerProduct{\zeta(x)}{y-x}+\frac{1}{2}\innerProduct{(y-x)}{\gradient{}^2\zeta(z)(y-x)}.
\end{equation}
Thus, with $M=\sup_{x\in\reals^d}\norm{\gradient{}^2\zeta}$, we have
\begin{equation}\label{eq:strong subdifferential taylor expansion 2}
\begin{aligned}
    \zeta(y)-\zeta(x)
    \leq 
    \innerProduct{\gradient{}\zeta(x)}{y-x}+\frac{M}{2}\norm{x-y}^2.
\end{aligned}
\end{equation}
The same bound applied to $-\zeta$ yields
\begin{equation*}
\begin{aligned}
    -\zeta(y)+\zeta(x)
    \leq 
    \innerProduct{\gradient{}\zeta(x)}{x-y}+\frac{M}{2}\norm{x-y}^2.
\end{aligned}
\end{equation*}
Thus, for all $x,y_1,y_2\in\reals^d$ we have
\begin{equation*}
\begin{aligned}
    \zeta(y_1)-\zeta(y_2)
    \leq
    \innerProduct{\gradient{}\zeta(x)}{y_1-y_2}+\frac{M}{2}\norm{x-y_1}^2+\frac{M}{2}\norm{x-y_2}^2.
\end{aligned}
\end{equation*}
Since $\pushforward{(\proj_{1})}\beta_n=\pushforward{(\proj_{1})}\gamma_n=\mu$, we can deploy Gluing Lemma (\cref{prop:gluing} in~\cref{app:preliminaries}) to construct $\tilde\gamma_n\in\Pp{2}{\reals^d\times\reals^d\times\reals^d}$ so that
\begin{equation*}
    \pushforward{(\proj_{12})}\tilde\gamma_n=\beta_n
    \qquad
    \text{and}
    \qquad 
    \pushforward{(\proj_{13})}\tilde\gamma_n=\gamma_n.
\end{equation*}
Then, we have
\begin{align*}
    0
    &=
    \int_{\reals^d}\zeta(y_1)\d\nu_n(y_1)-\int_{\reals^d}\zeta(y_2)\d\nu_n(y_2)
    \\
    &=
    \int_{\reals^d\times\reals^d\times\reals^d}\zeta(y_1)-\zeta(y_2)\d\tilde\gamma_n(x,y_1,y_2)
    \\
    &\leq  
    \begin{aligned}[t]
    \ifbool{compact}{&}{}
    \int_{\reals^d\times\reals^d\times\reals^d}
    \innerProduct{\gradient{}\zeta(x)}{y_1-y_2}\d\tilde\gamma_n(x,y_1,y_2) \ifbool{compact}{\\ &}{}+ \frac{M}{2}\int_{\reals^d\times\reals^d\times\reals^d}\norm{y_1-x}^2+\norm{y_2-x}^2\d\tilde\gamma_n(x,y_1,y_2)
    \end{aligned}
    \\
    &=  
    \begin{aligned}[t]
    &\int_{\reals^d\times\reals^d\times\reals^d}
    \innerProduct{\gradient{}\zeta(x)}{y_1-x-(y_2-x)}\d\tilde\gamma_n(x,y_1,y_2) \\
    &+ \frac{M}{2}\int_{\reals^d\times\reals^d}\norm{y-x}^2\d\beta_n(x,y)+\frac{M}{2}\int_{\reals^d\times\reals^d}\norm{y-x}^2\d\gamma_n(x,y)
    \end{aligned}
    \\
    &\leq 
    \int_{\reals^d\times\reals^d}
    \innerProduct{\gradient{}\zeta(x)}{y-x}\d\beta_n(x,y)-\int_{\reals^d\times\reals^d}\innerProduct{\gradient{}\zeta(x)}{y-x}\d\gamma_n(x,y) + M\varepsilon_n^2
    \\
    &=
    \varepsilon_n\int_{\reals^d\times\reals^d}
    \innerProduct{\gradient{}\zeta(x)}{y}\d\hat\beta_n(x,y)-
    \varepsilon_n\int_{\reals^d\times\reals^d}\innerProduct{\gradient{}\zeta(x)}{y}\d\hat\gamma_n(x,y) + M\varepsilon_n^2.
\end{align*}
Overall, this can be re-expressed as  
\begin{equation*}
    \int_{\reals^d\times\reals^d}
    \innerProduct{\gradient{}\zeta(x)}{y}\d\hat\beta_n(x,y)
    -\int_{\reals^d\times\reals^d}\innerProduct{\gradient{}\zeta(x)}{y}\d\hat\gamma_n(x,y) + M\varepsilon_n
    \geq 0.
\end{equation*}
Again by~\cref{prop:narrow convergence transport plan} (see the proof of~\eqref{eq:background strong differential beta inner product} above), we let $n\to\infty$ and conclude
\begin{equation}\label{prop:strong subdifferential contradiction}
    \int_{\reals^d\times\reals^d}
    \innerProduct{\gradient{}\zeta(x)}{y}\d\hat\beta(x,y)-\int_{\reals^d\times\reals^d}\innerProduct{\gradient{}\zeta(x)}{y}\d\hat\gamma(x,y)
    \geq 0.
\end{equation}
We now use the density of gradients of smooth compactly supported functions in $\tangentSpace{\mu}\Pp{2}{\reals^d}$ to obtain the desired contradiction. 
Let $\zeta\in\Ccinf{\reals}$ so that $\norm{\gradient{}\zeta-\xi}\leq\frac{\delta}{6}(\int_{\reals^d}\norm{x}^2\d\mu(x)+1)^{-1}$. The existence of $\zeta$ is ensured by the density of gradients of smooth compactly supported functions in $\tangentSpace{\mu}\Pp{2}{\reals^d}$.
Then,
\begin{align*}
    0>
    -\frac{\delta}{2}
    \overset{\eqref{eq:strong subdifferential contradiction negative}}&{\geq}
    \int_{\reals^d\times\reals^d}\innerProduct{\xi(x)}{y}\d\hat\beta(x,y)-
    \int_{\reals^d\times\reals^d}\innerProduct{\xi(x)}{y}\d\hat\gamma(x,y)
    \\
    &\geq 
    \begin{aligned}[t]
    &\int_{\reals^d\times\reals^d}\innerProduct{\gradient{}\zeta(x)}{y}\d\hat\beta(x,y)+
    \int_{\reals^d\times\reals^d}\innerProduct{\xi(x)-\gradient{}\zeta(x)}{y}\d\hat\beta(x,y)
    \\
    &-\int_{\reals^d\times\reals^d}\innerProduct{\gradient{}\zeta(x)}{y}\d\hat\gamma(x,y) -
    \int_{\reals^d\times\reals^d}\innerProduct{\xi(x)-\gradient{}\zeta(x)}{y}\d\hat\gamma(x,y)
    \end{aligned}
    \\
    &\geq 
    \begin{aligned}[t]
    &\int_{\reals^d\times\reals^d}\innerProduct{\gradient{}\zeta(x)}{y}\d\hat\beta(x,y) -
    \norm{\xi-\gradient{}\zeta}_{\Lp{2}{\reals^d,\reals^d;\hat\beta}}\norm{y}_{\Lp{2}{\reals^d,\reals^d;\hat\beta}}
    \\
    &-\int_{\reals^d\times\reals^d}\innerProduct{\gradient{}\zeta(x)}{y}\d\hat\gamma(x,y) - 
    \norm{\xi-\gradient{}\zeta}_{\Lp{2}{\reals^d,\reals^d;\hat\gamma}}\norm{y}_{\Lp{2}{\reals^d,\reals^d;\hat\gamma}}
    \end{aligned}
    \\
    \overset{\eqref{eq:background strong differential beta tight},\eqref{eq:background strong differential gamma tight}}&{\geq}
    \begin{aligned}[t]
    &\int_{\reals^d\times\reals^d}\innerProduct{\gradient{}\zeta(x)}{y}\d\hat\beta(x,y) -
    \int_{\reals^d\times\reals^d}\innerProduct{\gradient{}\zeta(x)}{y}\d\hat\gamma(x,y)
    \\
    &-2\norm{\xi-\gradient{}\zeta}_{\Lp{2}{\reals^d,\reals^d;\mu}}\left(\int_{\reals^d}\norm{x}^2\d\mu(x)+1\right)
    \end{aligned}
    \\
    &\geq 
    \int_{\reals^d\times\reals^d}\innerProduct{\gradient{}\zeta(x)}{y}\d\hat\beta(x,y) -\int_{\reals^d\times\reals^d}\innerProduct{\gradient{}\zeta(x)}{y}\d\hat\gamma(x,y) - 
    \frac{\delta}{3}.
\end{align*}
which gives 
\begin{equation*}
    \int_{\reals^d\times\reals^d}\innerProduct{\gradient{}\zeta(x)}{y}\d\hat\beta(x,y) -\int_{\reals^d\times\reals^d}\innerProduct{\gradient{}\zeta(x)}{y}\d\hat\gamma(x,y)
    \leq 
    -\frac{\delta}{6}<0.
\end{equation*}
This contradicts~\eqref{prop:strong subdifferential contradiction}, and concludes the proof. 
\end{proof}}
\newcommand{\proofgradientsgeodesicallyconvex}[1]{
\ifbool{#1}{\begin{proof}}{\begin{proof}[Proof of \cref{prop:gradients geodesically convex}]}
Let $\gamma\in\setOptimalPlans{\mu}{\nu}$ and set $\mu_t\coloneqq\pushforward{((1-t)\proj_1+t\proj_2)}\gamma$. By definition of geodesic convexity
\begin{equation*}
    \cost(\mu_t)\leq(1-t)\cost(\mu)+t\cost(\nu)-\frac{\alpha}{2}t(1-t)\wassersteinDistance{2}{\mu}{\nu}^2
\end{equation*}
for all $t\in(0,1)$ and so
\begin{equation}\label{eq:proof gradients geodesically convex inequality 1}
    \frac{\cost(\mu_t)-\cost(\mu)}{t}\leq\cost(\nu)-\cost(\mu)-\frac{\alpha}{2}(1-t)\wassersteinDistance{2}{\mu}{\nu}^2.    
\end{equation}
Also, by definition of geodesic, $\wassersteinDistance{2}{\mu}{\mu_t}=t\wassersteinDistance{2}{\mu}{\nu}$ for all $t\in(0,1)$.
Let $\gamma_t\coloneqq\pushforward{(\proj_1,(1-t)\proj_1+t\proj_2)}\gamma$. We claim $\gamma_t\in\setOptimalPlans{\mu}{\mu_t}$. Indeed, 
\begin{equation*}
\begin{aligned}
    \int_{\reals^d\times\reals^d}\norm{x-y}^2\d\gamma_t
    =
    \int_{\reals^d\times\reals^d}\norm{x-((1-t)x+ty)}^2\d\gamma(x,y)
    =
    t^2\wassersteinDistance{2}{\mu}{\nu}^2
    =
    \wassersteinDistance{2}{\mu}{\mu_t}^2.
\end{aligned}
\end{equation*}
Thus, by Wasserstein subdifferentiability,
\begin{equation*}
\begin{aligned}
    \cost(\nu)-\cost(\mu)-\frac{\alpha}{2}\wassersteinDistance{2}{\mu}{\nu}^2
    &=
    \liminf_{t\downarrow 0} \cost(\nu)-\cost(\mu)-\frac{\alpha}{2}(1-t)\wassersteinDistance{2}{\mu}{\nu}^2
    \\
    \overset{\eqref{eq:proof gradients geodesically convex inequality 1}}&{\geq}
    \liminf_{t\downarrow 0} \frac{\cost(\mu_t)-\cost(\mu)}{t}
    \\
    &\geq
    \liminf_{t\downarrow 0}
    \frac{
    \int_{\reals^{d}\times\reals^d}\innerProduct{\xi(x)}{y-x}\d\gamma_t(x,y)
    +
    \onotation{t}}{t}
    \\
    &=
    \liminf_{t\downarrow 0}
    \frac{
    \int_{\reals^{d}\times\reals^d}\innerProduct{\xi(x)}{(1-t)x+ty-x}\d\gamma(x,y)
    +
    \onotation{t}}{t}
    \\
    &=
    \int_{\reals^{d}\times\reals^d}\innerProduct{\xi(x)}{y-x}\d\gamma(x,y).
\end{aligned}
\end{equation*}
In particular, the inequality holds for the supremum over $\gamma\in\setOptimalPlans{\mu}{\nu}$. For the second inequality, observe that $\pushforward{(\proj_2,\proj_1)}\gamma\in\setOptimalPlans{\nu}{\mu}$. So, 
\begin{equation}\label{eq:prop gradient monotone proof}
\begin{aligned}
    \cost(\mu)-\cost(\nu)
    &\geq
    \int_{\reals^d\times\reals^d}\innerProduct{\zeta(x)}{y-x}\d(\pushforward{(\proj_2,\proj_1)}\gamma)(x,y)+\frac{\alpha}{2}\wassersteinDistance{2}{\mu}{\nu}^2
    \\
    &=
    \int_{\reals^d\times\reals^d}\innerProduct{\zeta(y)}{x-y}\d\gamma(x,y)+\frac{\alpha}{2}\wassersteinDistance{2}{\mu}{\nu}^2
    \\
    &=
    \int_{\reals^d\times\reals^d}\innerProduct{-\zeta(y)}{y-x}\d\gamma(x,y)+\frac{\alpha}{2}\wassersteinDistance{2}{\mu}{\nu}^2.
\end{aligned}
\end{equation}
Finally, the ``sum'' of~\eqref{eq:prop gradient convex} and \eqref{eq:prop gradient monotone proof} establishes \eqref{eq:prop gradient monotone}.
\end{proof}}
\newcommand{\proofsummultiplicationrule}[1]{
\ifbool{#1}{\begin{proof}}{\begin{proof}[Proof of~\cref{prop:sum and multiplication rule}]}
Let $\xi_1\in\subdifferential{\cost_1}(\mu)$ and $\xi_2\in\subdifferential{\cost_2}(\mu)$. Then, for any $\nu\in\Pp{2}{\reals^d}$, we have 
\begin{align*}
    (&\cost_1(\nu)+\cost_2(\nu))-(\cost_1(\mu)+\cost_2(\mu))
    \\
    &=
    \cost_1(\nu)-\cost_1(\mu)+\cost_2(\nu)-\cost_2(\mu)
    \\
    &\geq 
    \sup_{\gamma_1\in\setOptimalPlans{\mu}{\nu}}\int_{\reals^d}\innerProduct{\xi_1(x)}{y-x}\d\gamma_1(x,y)+
    \sup_{\gamma_2\in\setOptimalPlans{\mu}{\nu}}\int_{\reals^d}\innerProduct{\xi_2(x)}{y-x}\d\gamma_2(x,y)
    +\onotation{\wassersteinDistance{2}{\mu}{\nu}}
    \\
    &\geq 
    \sup_{\gamma\in\setOptimalPlans{\mu}{\nu}}\int_{\reals^d}\innerProduct{\xi_1(x)}{y-x}\d\gamma(x,y)+\int_{\reals^d}\innerProduct{\xi_2(x)}{y-x}\d\gamma(x,y)
    +\onotation{\wassersteinDistance{2}{\mu}{\nu}}
    \\
    &= 
    \sup_{\gamma\in\setOptimalPlans{\mu}{\nu}}\int_{\reals^d}\innerProduct{\xi_1(x)+\xi_2(x)}{y-x}\d\gamma(x,y)
    +\onotation{\wassersteinDistance{2}{\mu}{\nu}}.
\end{align*}
Thus, $\xi_1+\xi_2\in\subdifferential{(\cost_1+\cost_2)}(\mu)$, which proves (i). For (ii), it suffices to observe that  
\begin{equation*}
    \alpha\cost_1(\nu)-\alpha\cost_1(\mu)
    \geq 
    \sup_{\gamma\in\setOptimalPlans{\mu}{\nu}}\int_{\reals^d}\innerProduct{\xi(x)}{y-x}\d\gamma(x,y)
    +\onotation{\wassersteinDistance{2}{\mu}{\nu}}
\end{equation*}
holds if and only if $\xi\in\alpha\subdifferential{\cost_1}(\mu)$. 
We now focus on the differentiable case. One inclusion follows readily from (i) above:
\begin{equation*}
    \subdifferential{(\cost_1+\cost_2)}(\mu)
    \supset \{\gradient{\mu}\cost_1(\mu)\}
    +\subdifferential{\cost_2}(\mu).
\end{equation*}
For the other inclusion, we use (i) again: 
\begin{align*}
    \subdifferential{\cost_2}(\mu)
    &=\subdifferential{((\cost_1+\cost_2)-\cost_1)}(\mu)
    \\
    &\supset \subdifferential{(\cost_1+\cost_2)}(\mu) + \subdifferential{(-\cost_1)}(\mu)
    =
    \subdifferential{(\cost_1+\cost_2)}(\mu) + \{-\gradient{\mu}\cost_1(\mu)\},
\end{align*}
where we used super-differentiability of $\cost_1$. This proves the other inclusion and thus (i) for the differentiable case. Finally, (ii) follows directly as (ii) in the subdifferentiable case, with the definition of Wasserstein differentiability.
\end{proof}}
\newcommand{\proofchainrule}[1]{
\ifbool{#1}{\begin{proof}}{\begin{proof}[Proof of \cref{prop:chain rule}]}
Let $\nu\in\Pp{2}{\reals^d}$ and $\gamma\in\setOptimalPlans{\mu}{\nu}$. Then, 
\begin{equation*}
\begin{aligned}
    g(\cost(\nu))-g(\cost(\mu))
    &=
    g'(\cost(\mu))(\cost(\nu)-\cost(\mu)) + \onotation{|\cost(\nu)-\cost(\mu)|}
    \\
    &=
    \begin{aligned}[t]
    &g'(\cost(\mu))\left(
    \int_{\reals^d\times\reals^d}\innerProduct{\gradient{\mu}\cost(\mu)(x)}{y-x}\d\gamma(x,y) + \onotation{\wassersteinDistance{2}{\nu}{\mu}}
    \right)
    \\&+
    \onotation{|\cost(\nu)-\cost(\mu)|}
    \end{aligned}
    \\
    &=
    \begin{aligned}[t]
    &\int_{\reals^d\times\reals^d}\innerProduct{ g'(\cost(\mu))\gradient{\mu}\cost(\mu)(x)}{y-x}\d\gamma(x,y) \\&+g'(\cost(\mu))\onotation{\wassersteinDistance{2}{\nu}{\mu}} + 
    \onotation{|\cost(\nu)-\cost(\mu)|}.
    \end{aligned}
\end{aligned}
\end{equation*}
By differentiability (and thus continuity, by~\cref{prop:differentiable continuous}) of $\cost$ we have 
\begin{equation*}
    \lim_{\wassersteinDistance{2}{\nu}{\mu}\to 0}\frac{\onotation{|\cost(\nu)-\cost(\mu)|}}{\wassersteinDistance{2}{\nu}{\mu}}
    =
    \lim_{\wassersteinDistance{2}{\nu}{\mu}\to 0}\frac{\onotation{|\cost(\nu)-\cost(\mu)|}}{|\cost(\nu)-\cost(\mu)|}\frac{|\cost(\nu)-\cost(\mu)|}{\wassersteinDistance{2}{\nu}{\mu}}
    =
    0.
\end{equation*}
This concludes the proof. 
\end{proof}}
\newcommand{\proofoptimaltransportdiscrepancy}[1]{
\ifbool{#1}{\begin{proof}}{\begin{proof}[Proof of \cref{prop:optimal transport discrepancy}]}
First, by continuity and non-negativity of $c$, $\cost$ is well defined (e.g., see \cite[Chapter 4]{Villani2009a}).
We prove the statements separately. 
\begin{enumerate}
    \item Non-negativity follows directly from non-negativity of $c$. Moreover, since $\pushforward{(\Id,\Id)}\bar\mu\in\setPlans{\bar\mu}{\bar\mu}$, $\cost(\bar\mu)\leq \int_{\reals^d}c(y,y)\d\bar\mu(y)\leq C$. Thus, $\cost$ is proper. 
    
    \item Since weak convergence in $\Pp{2}{\reals^d}$ implies narrow convergence, the statement follows from (iii). 
    \item The statement is given in~\cite[Remark 6.12]{Villani2009a} and in~\cite[Proposition 7.4]{Santambrogio2015}.
    
    \item Let $\mu_0,\mu_1\in\Pp{2}{\reals^d}$, $\gamma\in\setPlans{\mu_0}{\mu_1}$, and consider the interpolation
    \begin{equation*}
        \mu_t=\pushforward{((1-t)\proj_1+t\proj_2)}\gamma.
    \end{equation*}
    The proof simply leverages the $\alpha$-convexity of $-c(\cdot,y)$; i.e., 
    \begin{equation}\label{eq:optimal transport discrepancy alpha convexity convexity c}
        -c((1-t)x_0+tx_1,y)
        \leq 
        -(1-t)c(x_0,y) - tc(x_1,y) -\frac{\alpha}{2}t(1-t)\norm{x_0-x_1}^2.
    \end{equation}
    Moreover, from~\cite[Lemma 2]{Aolaritei2023DistributionalTransport}, for Polish spaces $X,Y,Z$, probability measures $\nu_0\in\Pp{}{X}$, $\nu_1\in\Pp{}{Y}$, and a Borel map $f:X\to Z$
    \begin{equation}\label{eq:optimal transport discrepancy alpha convexity property set plans}
        \pushforward{(f\times \Id_Y)}\setPlans{\nu_0}{\nu_1}
        =
        \setPlans{\pushforward{f}\nu_0}{\nu_1}
        \subset
        \Pp{}{Z\times Y},
    \end{equation}  
    where $\Id_Y$ is the identity map on $Y$.
    Then, 
    \begin{align*}
        -\cost(\mu_t)
        &=
        -\min_{\tilde\gamma\in\setPlans{\mu_t}{\bar\mu}}
        \int_{\reals^d\times\reals^d}c(x,y)\d\tilde\gamma(x,y)
        \\
        &=
        -\min_{\tilde\gamma\in\setPlans{\pushforward{((1-t)\proj_1+t\proj_2)}\gamma}{\bar\mu}}
        \int_{\reals^d\times\reals^d}c(x,y)\d\tilde\gamma(x,y)
        \\
        \overset{\eqref{eq:optimal transport discrepancy alpha convexity property set plans}}&{=}
        -\min_{\tilde\gamma\in\pushforward{(((1-t)\proj_1+t\proj_2)\times\Id)}\setPlans{\gamma}{\bar\mu}}
        \int_{\reals^d\times\reals^d}c(x,y)\d\tilde\gamma(x,y)
        \\
        &=
        -\min_{\bar\gamma\in\setPlans{\gamma}{\bar\mu}}
        \int_{\reals^d\times\reals^d}c(x,y)\d(\pushforward{(((1-t)\proj_1+t\proj_2)\times\Id)}\bar\gamma)(x,y)
        \\
        &=
        -\min_{\bar\gamma\in\setPlans{\gamma}{\bar\mu}}
        \int_{\reals^d\times\reals^d\times\reals^d}c((1-t)x_0+x_1,y)\d\bar\gamma(x_0,x_1,y)
        \\
        &=
        \max_{\bar\gamma\in\setPlans{\gamma}{\bar\mu}}
        \int_{\reals^d\times\reals^d\times\reals^d}-c((1-t)x_0+x_1,y)\d\bar\gamma(x_0,x_1,y)
        \\
        \overset{\eqref{eq:optimal transport discrepancy alpha convexity convexity c}}&{\leq}
        \begin{aligned}[t]
        \max_{\bar\gamma\in\setPlans{\gamma}{\bar\mu}}
        &-(1-t)\int_{\reals^d\times\reals^d\times\reals^d}c(x_0,y)\d\bar\gamma(x_0,x_1,y)
        \ifbool{compact}{\\&}{}
        -t\int_{\reals^d\times\reals^d\times\reals^d}c(x_1,y)\d\bar\gamma(x_0,x_1,y)
        \\
        &-\frac{\alpha}{2}t(1-t)\int_{\reals^d\times\reals^d\times\reals^d}\norm{x_0-x_1}^2\d\bar\gamma(x_0,x_1,y)
        \end{aligned}
        \\
        &\leq 
        \begin{aligned}[t]
        &\max_{\bar\gamma\in\setPlans{\gamma}{\bar\mu}}
        -(1-t)\int_{\reals^d\times\reals^d\times\reals^d}c(x_0,y)\d\bar\gamma(x_0,x_1,y)
        \\
        &+\max_{\bar\gamma\in\setPlans{\gamma}{\bar\mu}}-t\int_{\reals^d\times\reals^d\times\reals^d}c(x_1,y)\d\bar\gamma(x_0,x_1,y)
        \\
        &-\frac{\alpha}{2}t(1-t)\int_{\reals^d\times\reals^d}\norm{x_0-x_1}^2\d\gamma(x_0,x_1)
        \end{aligned}
        \\
        &\leq 
        \begin{aligned}[t]
        &-(1-t)\min_{\bar\gamma\in\setPlans{\gamma}{\bar\mu}}\int_{\reals^d\times\reals^d\times\reals^d}c(x_0,y)\d\bar\gamma(x_0,x_1,y)
        \\
        &-t\min_{\bar\gamma\in\setPlans{\gamma}{\bar\mu}}\int_{\reals^d\times\reals^d\times\reals^d}c(x_1,y)\d\bar\gamma(x_0,x_1,y)
        -\frac{\alpha}{2}t(1-t)\wassersteinDistance{2}{\mu_0}{\mu_1}^2
        \end{aligned}
        \\
        &=
        \begin{aligned}[t]
        &-(1-t)\min_{\bar\gamma\in\setPlans{\gamma}{\bar\mu}}\int_{\reals^d\times\reals^d\times\reals^d}c(x_0,y)\d(\pushforward{(\proj_1\times\Id)}\bar\gamma)(x_0,x_1,y)
        \\
        &-t\min_{\bar\gamma\in\setPlans{\gamma}{\bar\mu}}\int_{\reals^d\times\reals^d\times\reals^d}c(x_1,y)\d(\pushforward{(\proj_2\times\Id)}\bar\gamma)(x_0,x_1,y)
        \\
        &-\frac{\alpha}{2}t(1-t)\wassersteinDistance{2}{\mu_0}{\mu_1}^2
        \end{aligned}
        \\
        &=
        \begin{aligned}[t]
        &-(1-t)\min_{\tilde\gamma_0\in\pushforward{(\proj_1\times\Id)}\setPlans{\gamma}{\bar\mu}}\int_{\reals^d\times\reals^d}c(x_0,y)\d\tilde\gamma_0(x_0,y)
        \\
        &-t\min_{\tilde\gamma_1\in\pushforward{(\proj_1\times\Id)}\setPlans{\gamma}{\bar\mu}}\int_{\reals^d\times\reals^d}c(x_1,y)\d\tilde\gamma_1(x_1,y)
        -\frac{\alpha}{2}t(1-t)\wassersteinDistance{2}{\mu_0}{\mu_1}^2
        \end{aligned}
        \\
        \overset{\eqref{eq:optimal transport discrepancy alpha convexity property set plans}}&{=}
        \begin{aligned}[t]
        &-(1-t)\min_{\tilde\gamma_0\in\setPlans{\pushforward{(\proj_1)}\gamma}{\bar\mu}}\int_{\reals^d\times\reals^d}c(x_0,y)\d\tilde\gamma_0(x_0,y)
        \\
        &-t\min_{\tilde\gamma_1\in\setPlans{\pushforward{(\proj_2)}\gamma}{\bar\mu}}\int_{\reals^d\times\reals^d}c(x_1,y)\d\tilde\gamma_1(x_1,y)
        -\frac{\alpha}{2}t(1-t)\wassersteinDistance{2}{\mu_0}{\mu_1}^2
        \end{aligned}
        \\
        &=
        \begin{aligned}[t]
        &-(1-t)\min_{\tilde\gamma_0\in\setPlans{\mu_0}{\bar\mu}}\int_{\reals^d\times\reals^d}c(x_0,y)\d\tilde\gamma_0(x_0,y)
        \\
        &-t\min_{\tilde\gamma_1\in\setPlans{\mu_1}{\bar\mu}}\int_{\reals^d\times\reals^d}c(x_1,y)\d\tilde\gamma_1(x_1,y)
        -\frac{\alpha}{2}t(1-t)\wassersteinDistance{2}{\mu_0}{\mu_1}^2
        \end{aligned}
        \\
        &=
        -(1-t)\cost(\mu_0) - t\cost(\mu_1) -\frac{\alpha}{2}t(1-t)\wassersteinDistance{2}{\mu_0}{\mu_1}^2.
    \end{align*}
    This proves $\alpha$-convexity of $-\cost$ along any interpolating curve.

    \item First, observe that $\pushforward{((1-t)\proj_2+t\proj_3,\proj_1)}\gamma\in\setPlans{\mu_t}{\bar\mu}.$
    Thus,
    \begin{align*}
        \cost(\mu_t)
        &\leq
        \int_{\reals^d\times\reals^d} c(x,y)\d(\pushforward{((1-t)\proj_2+t\proj_3,\proj_1)}\gamma)(x,y)
        \\
        &=
        \int_{\reals^d\times\reals^d\times\reals^d} c((1-t)x_0+tx_1),x)\d\gamma(x,x_0,x_1)
        \\
        &\leq 
        \begin{aligned}[t]
        &(1-t)\int_{\reals^d\times\reals^d\times\reals^d}c(x_0,x)\d\gamma(x,x_0,x_1)
        +
        t\int_{\reals^d\times\reals^d\times\reals^d}c(x_1,x)\d\gamma(x,x_0,x_1)
        \\
        &-
        \frac{\alpha}{2}t(1-t)\int_{\reals^d\times\reals^d\times\reals^d} \norm{x_0-x_1}^2\d\gamma(x,x_0,x_1)
        \end{aligned}
        \\
        &=
        (1-t)\cost(\mu) + t\cost(\nu)
        -
        \frac{\alpha}{2}t(1-t)\int_{\reals^d\times\reals^d\times\reals^d} \norm{x_0-x_1}^2\d\gamma(x,x_0,x_1)
        \\
        &\leq 
        (1-t)\cost(\mu) + t\cost(\nu)
        -
        \frac{\alpha}{2}t(1-t)\wassersteinDistance{2}{\mu}{\nu}^2.
    \end{align*}
    
    \item To show that the Wasserstein gradient is well-defined observe that
    \begin{align*}
        \int_{\reals^d}\norm{\gradient{x}c(x,\optMap{\mu}{\bar\mu}(x))}^2\d\mu(x)
        &\leq 
        \int_{\reals^d}M\left(1+\norm{x}^2+\norm{\optMap{\mu}{\bar\mu}(x)}^2\right)\d\mu(x)
        \\
        &=
        M\left(1 + \int_{\reals^d}\norm{x}^2\d\mu(x)+\int_{\reals^d}\norm{\optMap{\mu}{\bar\mu}(x)}^2\d\mu(x)\right)
        \\
        &<+\infty,
    \end{align*}
    since $\mu,\bar\mu\in\Pp{2}{\reals^d}$.
    Then, we prove subdifferentiability and superdifferentiability separately. 
    \begin{itemize}
        \item Superdifferentiability: Let $\nu\in\Pp{2}{\reals^d}$ and $\gamma\in\setOptimalPlans{\mu}{\nu}$. Via a Taylor's expansion (recall~\eqref{eq:strong subdifferential taylor expansion 1} and \eqref{eq:strong subdifferential taylor expansion 2} in the proof of~\cref{prop:strong differentiability}), we have that
        \begin{equation*}
            c(x_1,y)-c(x_0,y)
            \leq 
            \innerProduct{\gradient{x}c(x_0,y)}{x_1-x_0}
            +
            \frac{M}{2} \norm{x_1-x_0}^2,
        \end{equation*}
        where we used that the norm of Hessian $\gradient{x}^2c(x_0,y)$ is uniformly bounded. Then, 
        \begin{equation*}
        \begin{aligned}
            -\cost(\nu)+\cost(\mu)
            &\geq 
            -\int_{\reals^d\times\reals^d}c(x_1,\optMap{\mu}{\bar\mu}(x_0))\d\gamma(x_0,x_1)+\int_{\reals^d}c(x_0,\optMap{\mu}{\bar\mu}(x_0))\d\mu(x_0)
            \\
            &=
            -\int_{\reals^d\times\reals^d}c(x_1,\optMap{\mu}{\bar\mu}(x_0))-c(x_0,\optMap{\mu}{\bar\mu}(x_0))\d\gamma(x_0,x_1)
            \\
            &\geq 
            -\int_{\reals^d\times\reals^d}\innerProduct{\gradient{x}c(x_0,\optMap{\mu}{\bar\mu}(x_0))}{x_1-x_0}
            +\frac{M}{2}\norm{x_0-x_1}^2
            \d\gamma(x_0,x_1)
            \\
            &=
            \int_{\reals^d\times\reals^d}\innerProduct{-\gradient{x}c(x_0,\optMap{\mu}{\bar\mu}(x_0))}{x_1-x_0}\d\gamma(x_0,x_1)
            -\frac{M}{2}\wassersteinDistance{2}{\mu}{\nu}^2,
        \end{aligned}
        \end{equation*}
        which proves superdifferentiability and, in particular, $-\gradient{x}c(\cdot,\optMap{\mu}{\bar\mu}(\cdot))\in\subdifferential{(-\cost)}$.
        
        \item Subdifferentiability: Let $(\mu_n)_{n\in\naturals}\subset\Pp{2}{\reals^d}$ so that $\mu_n\weakconvergence\mu$. Define $\gamma_n\in\Pp{2}{\reals^d\times\reals^d\times\reals^d}$ so that
        \begin{equation*}
            \pushforward{(\proj_{13})}\gamma_n\in\setOptimalPlans{\mu}{\mu_n}
            \qquad
            \pushforward{(\proj_{12})}\gamma_n=\pushforward{(\Id,\optMap{\mu}{\bar\mu})}\mu,
        \end{equation*}
        which exists by Gluing Lemma (\cref{prop:gluing} in~\cref{app:preliminaries}).
        Informally, $\gamma_n$ is optimal from $\mu$ to $\mu_n$, and optimal from $\mu$ to $\bar\mu$.
        Moreover, again by Gluing Lemma (\cref{prop:gluing} in~\cref{app:preliminaries}), define $\beta_n\in\Pp{2}{\reals^d\times\reals^d\times\reals^d}$ so that 
        \begin{equation*}
            \pushforward{(\proj_{13})}\beta_n=\pushforward{(\proj_{13})}\gamma_n,
            \qquad
            \pushforward{(\proj_{23})}\beta_n\in\setOptimalPlans{\bar\mu}{\mu_n}.
        \end{equation*}
        Informally, $\beta_n$ is optimal from $\mu$ to $\mu_n$ (analogously to $\gamma_n$), and optimal from $\bar\mu$ to $\mu_n$, and thus does not coincide with $\gamma_n$. Accordingly, $\beta_n$ is generally not optimal from $\mu$ to $\bar\mu$; i.e., 
        \begin{equation*}
            \cost(\mu)=\int_{\reals^d}c(x,\optMap{\mu}{\bar\mu}(x))\d\mu(x)\leq \int_{\reals^d\times\reals^d\times\reals^d}c(x_1,x_2)\d\beta_n(x_1,x_2,x_3).
        \end{equation*}
        Define $\varepsilon_n\coloneqq\wassersteinDistance{2}{\mu_n}{\mu}\to 0$, and define the ``rescaled plans''
        \begin{equation}
        \begin{aligned}
            \hat\gamma_n
            \coloneqq\pushforward{\left(\proj_1,\proj_2,\frac{\proj_3-\proj_1}{\varepsilon_n}\right)}\gamma_n,
            \\
            \hat\beta_n
            \coloneqq\pushforward{\left(\proj_1,\proj_2,\frac{\proj_3-\proj_1}{\varepsilon_n}\right)}\beta_n.
        \end{aligned}
        \end{equation}
        Then, 
        \begin{equation}\label{eq:optimal transport discrepancy subdifferential inequality 1}
        \begin{aligned}
            \cost(\mu_n)-\cost(\mu)
            &\geq 
            \int_{\reals^d\times\reals^d\times\reals^d}c(x_3,x_2)-c(x_1,x_2)\d\beta_n(x_1,x_2,x_3)
            \\
            &=
            \int_{\reals^d\times\reals^d\times\reals^d}c(x_1+\varepsilon_n x_3,x_2)-c(x_1,x_2)\d\hat\beta_n(x_1,x_2,x_3)
            \\
            &\geq
            \int_{\reals^d\times\reals^d\times\reals^d}\innerProduct{\gradient{x}c(x_1,x_2)}{\varepsilon_n x_3} - \frac{M}{2}\varepsilon_n^2\norm{x_3}^2\d\hat\beta_n(x_1,x_2,x_3)
            \\
            &=
            \begin{aligned}[t]
            &\varepsilon_n \int_{\reals^d\times\reals^d\times\reals^d}\innerProduct{\gradient{x}c(x_1,x_2)}{x_3}\d\hat\beta_n(x_1,x_2,x_3)
            \\
            &- \frac{M}{2}\varepsilon_n^2 \int_{\reals^d\times\reals^d\times\reals^d}\norm{x_3}^2\d\hat\beta_n(x_1,x_2,x_3),
            \end{aligned}
        \end{aligned}
        \end{equation}
        and for any $\tilde\gamma_n\in\setOptimalPlans{\mu}{\mu_n}$
        \begin{equation}\label{eq:optimal transport discrepancy subdifferential inequality 2}
        \begin{aligned}
            \int_{\reals^d\times\reals^d}&\innerProduct{\gradient{x}c(x_1,\optMap{\mu}{\bar\mu}(x_1)}{x_3-x_1}\d\tilde\gamma_n(x_1,x_3)
            \\
            &\hspace{2cm}=
            \int_{\reals^d\times\reals^d\times\reals^d}\innerProduct{\gradient{x}c(x_1,x_2)}{x_3-x_1}\d\gamma_n(x_1,x_2,x_3)
            \\
            &\hspace{2cm}=
            \int_{\reals^d\times\reals^d\times\reals^d}\innerProduct{\gradient{x}c(x_1,x_2)}{\varepsilon_n x_3}\d\hat\gamma_n(x_1,x_2,x_3)
            \\
            &\hspace{2cm}=
            \varepsilon_n\int_{\reals^d\times\reals^d\times\reals^d}\innerProduct{\gradient{x}c(x_1,x_2)}{x_3}\d\hat\gamma_n(x_1,x_2,x_3).
        \end{aligned}
        \end{equation}
        We now claim that $\hat\gamma_n$ and $\hat\beta_n$ converge narrowly (up to subsequences) to some $\hat\gamma$ and $\hat\beta$ with finite second moment:
        \begin{itemize}
        \item Narrow convergence: We equivalently show that the sets $\{\hat\gamma_n\}_{n\in\naturals}$ and $\{\hat\beta_n\}_{n\in\naturals}$ are tight, which automatically establishes convergence to some $\hat\gamma$ and $\hat\beta$ by Prokhorov's theorem (\cref{thm:prokhorov} in~\cref{app:preliminaries}).
        To show tightness it suffices to prove that 
        \begin{equation*}
            \sup_{n}\int_{\reals^d\times\reals^d\times\reals^d}\norm{x_1}^2+\norm{x_2}^2+\norm{x_3}^2\d\hat\gamma_n(x_1,x_2,x_3)<+\infty,
        \end{equation*}
        by~\cref{app:criterion tightness} in~\cref{app:preliminaries}. In particular, 
        \begin{equation}\label{eq:optimal transport discrepancy subdifferential tight gamma}
        \begin{aligned}
            \sup_{n}&\int_{\reals^d\times\reals^d\times\reals^d}\norm{x_1}^2+\norm{x_2}^2+\norm{x_3}^2\d\hat\gamma_n(x_1,x_2,x_3)
            \\
            &=
            \sup_n\int_{\reals^d}\norm{x}^2\d\mu(x) + \int_{\reals^d}\norm{x}^2\d\bar\mu(x) 
            + \int_{\reals^d\times\reals^d\times\reals^d}\norm{\frac{x_3-x_1}{\varepsilon_n}}^2\d\gamma_n(x_1,x_2,x_3)
            \\
            &=
            \sup_n\int_{\reals^d}\norm{x}^2\d\mu(x)+ \int_{\reals^d}\norm{x}^2\d\bar\mu(x) + \frac{\varepsilon_n^2}{\varepsilon_n^2}
            \\
            &= 
            \int_{\reals^d}\norm{x}^2\d\mu(x)+\int_{\reals^d}\norm{x}^2\d\bar\mu(x)+1
            \\
            &<+\infty,
        \end{aligned}
        \end{equation}
        where we used that the first marginal of $\gamma_n$ is $\mu$, the second marginal is $\bar\mu$, and both $\mu$ and $\bar\mu$ have finite second moment. 
        Analogously,
        \begin{equation}\label{eq:optimal transport discrepancy subdifferential tight beta}
        \begin{aligned}
            \sup_{n}\int_{\reals^d\times\reals^d\times\reals^d}&\norm{x_1}^2+\norm{x_2}^2+\norm{x_3}^2\d\hat\beta_n(x_1,x_2,x_3)
            \\
            &\leq 
            \int_{\reals^d}\norm{x}^2\d\mu(x)+\int_{\reals^d}\norm{x}^2\d\bar\mu(x)+1 
            \\
            &<+\infty,
        \end{aligned}
        \end{equation}
        and the set $\{\beta_n\}_{n\in\naturals}$ is tight. 
        Thus, $\hat\gamma_n$ and $\hat\beta_n$ converge narrowly (up to subsequences) to $\hat\gamma$ and $\hat\beta$.
        
        \item Finite second moment: Since $(x_1,x_2,x_3)\mapsto \norm{x_1}^2+\norm{x_2}^2+\norm{x_3}^2$ is non-negative and lower semi-continuous, $\gamma\mapsto\int_{\reals^d\times\reals^d\times\reals^d}\norm{x_1}^2+\norm{x_2}^2+\norm{x_3}^2\d\gamma(x_1,x_2,x_3)$ is lower semi-continuous w.r.t. narrow convergence (see~\cref{prop:expectation} below or \cite[Lemma 5.1.7]{Ambrosio2008a}). Thus,
        \begin{equation*}
        \begin{aligned}
            \int_{\reals^d\times\reals^d\times\reals^d}&\norm{x_1}^2+\norm{x_2}^2+\norm{x_3}^2\d\hat\gamma(x_1,x_2,x_3)
            \\
            &\leq
            \liminf_{n\to\infty}\int_{\reals^d\times\reals^d\times\reals^d}\norm{x_1}^2+\norm{x_2}^2+\norm{x_3}^2\d\hat\gamma_n(x_1,x_2,x_3)
            \\
            \overset{\eqref{eq:optimal transport discrepancy subdifferential tight gamma}}&{\leq}
            \int_{\reals^d}\norm{x}^2\d\mu(x)+\int_{\reals^d}\norm{x}^2\d\bar\mu(x)+1,
        \end{aligned}
        \end{equation*}
        which implies $\hat\gamma$ has finite second moment. Analogously, $\hat\beta$ has finite second moment. 
    \end{itemize}
        Moreover, by construction of $\hat\beta_n$,
        \begin{equation}\label{eq:optimal transport discrepancy subdifferential equality proj 13}
            \pushforward{(\proj_{13})}\hat\gamma=\pushforward{(\proj_{13})}\hat\beta.
        \end{equation}
        Indeed, for any $\phi\in\Cb{\reals^d\times\reals^d}$, which by approximation suffices to prove~\eqref{eq:optimal transport discrepancy subdifferential equality proj 13}, we have
        \begin{align*}
            \int_{\reals^d\times\reals^d}
            \phi(x_1,x_3)\d(\pushforward{(\proj_{13})}\hat\gamma)(x_1,x_3)
            &=
            \int_{\reals^d\times\reals^d}
            \phi(x_1,x_3)\d\hat\gamma(x_1,x_2x_3)
            \\
            &=
            \lim_{n\to\infty}\int_{\reals^d\times\reals^d\times\reals^d}\phi(x_1,x_3)\d\hat\gamma_n(x_1,x_2,x_3)
            \\
            &=
            \lim_{n\to\infty}\int_{\reals^d\times\reals^d\times\reals^d}\phi(x_1,x_3)\d\hat\beta_n(x_1,x_2,x_3)
            \\
            &=
            \int_{\reals^d\times\reals^d\times\reals^d}\phi(x_1,x_3)\d\hat\beta(x_1,x_2,x_3)
            \\
            &=
            \int_{\reals^d\times\reals^d}
            \phi(x_1,x_3)\d(\pushforward{(\proj_{13})}\hat\beta)(x_1,x_3).
        \end{align*}
        Since $\pushforward{(\proj_{12})}\hat\gamma$ is induced by a transport, by \cite[Lemma 10.2.8]{Ambrosio2008a}, we have
        \begin{equation*}
            \hat\beta=\pushforward{(\Id,\optMap{\mu}{\bar\mu})}\mu.
        \end{equation*}
        By~Gluing Lemma (\cref{prop:gluing} in~\cref{app:preliminaries}), since $\pushforward{(\proj_{12})}\hat\gamma=\pushforward{(\proj_{12})}\hat\beta$ is induced by a transport, we conclude $\hat\gamma=\hat\beta$. Next, we seek to prove that 
        \begin{equation}\label{eq:optimal transport discrepancy subdifferential convergence gamma hat}
        \begin{aligned}
            \lim_{n\to\infty}
            \int_{\reals^d\times\reals^d\times\reals^d}&\innerProduct{\gradient{x}c(x_1,x_2)}{x_3}\d\hat\gamma_n(x_1,x_2,x_3)
            \\
            &=
            \int_{\reals^d\times\reals^d\times\reals^d}\innerProduct{\gradient{x}c(x_1,x_2)}{x_3}\d\hat\gamma(x_1,x_2,x_3).
        \end{aligned}
        \end{equation}
        Note that the inner product is continuous, but not bounded, so the result does not follow directly from the definition of narrow convergence. 
        Nonetheless, by~\cref{prop:narrow convergence transport plan}, we have 
        \begin{equation*}
        \begin{aligned}
            \lim_{n\to\infty}
            \int_{\reals^d\times\reals^d\times\reals^d}&\innerProduct{\gradient{x}c(x_1,x_2)}{x_3}\d\hat\gamma_n(x_1,x_2,x_3)
            \\
            &=
            \lim_{n\to\infty}
            \int_{\reals^d\times\reals^d}\innerProduct{y}{x_3}
            \d(\pushforward{(\gradient{x}c(\cdot,\cdot)\times\Id)}\hat\gamma_n)(y,x_3)
            \\
            &=
            \int_{\reals^d\times\reals^d}\innerProduct{y}{x_3}
            \d(\pushforward{(\gradient{x}c(\cdot,\cdot)\times\Id)}\hat\gamma)(y,x_3)
            \\
            &=
            \int_{\reals^d\times\reals^d\times\reals^d}\innerProduct{\gradient{x}c(x_1,x_2)}{x_3}\d\hat\gamma(x_1,x_2,x_3),
        \end{aligned}
        \end{equation*}
        where we used that
        \begin{itemize}
            \item 
            if $\hat\gamma_n$ converges narrowly to $\hat\gamma$, then $\pushforward{(\gradient{x}c(\cdot,\cdot)\times\Id)}\hat\gamma_n$ converges narrowly to $\pushforward{(\gradient{x}c(\cdot,\cdot)\times\Id)}\hat\gamma$ (by continuity of $\gradient{x}c$);
            
            \item 
            the fact that
            \begin{equation*}
            \begin{aligned}
                \sup_n\int_{\reals^d\times\reals^d\times\reals^d}
                &\norm{x}^2+\norm{y}^2\d(\pushforward{(\gradient{x}c(\cdot,\cdot)\times\Id)}\hat\gamma_n)(y,x)
                \\
                &\leq 
                \sup_n\int_{\reals^d\times\reals^d\times\reals^d}
                \norm{x_3}^2+\norm{\gradient{x}c(x_1,x_2)}^2\d\hat\gamma_n(x_1,x_2,x_3)
                \\
                &=\sup_n\int_{\reals^d}\norm{x_3}^2 + M\left(1+\norm{x_1}^2+\norm{x_2}^2\right)\d\hat\gamma_n(x_1,x_2,x_3)
                \\
                \overset{\eqref{eq:optimal transport discrepancy subdifferential tight gamma}}&{<}+\infty;
            \end{aligned}
            \end{equation*}
            
            \item 
            the fact that $\pushforward{(\proj_2)}\pushforward{(\gradient{x}c(\cdot,\cdot)\times\Id)}\hat\gamma_n=\mu_n$ has uniformly integrable 2-moments since $\mu_n\weakconvergence\mu$ in $\Pp{2}{\reals^d}$ (see~\cite[Proposition 7.1.5]{Ambrosio2008a}).
        \end{itemize}
        A similar argument gives 
        \begin{equation}\label{eq:optimal transport discrepancy subdifferential convergence beta hat}
        \begin{aligned}
            \lim_{n\to\infty}
            \int_{\reals^d\times\reals^d\times\reals^d}\ifbool{compact}{&}{}\innerProduct{\gradient{x}c(x_1,x_2)}{x_3}\d\hat\beta_n(x_1,x_2,x_3)
            \ifbool{compact}{ \\ &=}{=}
            \int_{\reals^d\times\reals^d\times\reals^d}\innerProduct{\gradient{x}c(x_1,x_2)}{x_3}\d\hat\beta(x_1,x_2,x_3).
        \end{aligned}
        \end{equation}
        Thus, 
        \begin{align*}
        \liminf_{n\to\infty}&
        \frac{\cost(\mu_n)-\cost(\mu)
            -\int_{\reals^d\times\reals^d}\innerProduct{\gradient{x}c(x_1,\optMap{\mu}{\bar\mu}(x_1))}{x_2-x_1}\d\tilde\gamma(x_1,x_2)}{\varepsilon_n}
            \\
            \overset{\eqref{eq:optimal transport discrepancy subdifferential inequality 1},\eqref{eq:optimal transport discrepancy subdifferential inequality 2}}&\geq 
            \begin{aligned}[t]
            &\liminf_{n\to\infty}
            \int_{\reals^d\times\reals^d\times\reals^d}\innerProduct{\gradient{x}c(x_1,x_2)}{x_3}\d\hat\beta_n(x_1,x_2,x_3)
            \\
            &-\frac{M}{2}\varepsilon_n \int_{\reals^d\times\reals^d\times\reals^d}\norm{x_3}^2\d\hat\beta_n(x_1,x_2,x_3) \\
            &-\int_{\reals^d\times\reals^d\times\reals^d}\innerProduct{\gradient{x}c(x_1,x_2)}{x_3}\d\hat\gamma_n(x_1,x_2,x_3)
            \end{aligned}
            \\
            \overset{\eqref{eq:optimal transport discrepancy subdifferential convergence beta hat},\eqref{eq:optimal transport discrepancy subdifferential convergence gamma hat}}&\geq 
            \begin{aligned}[t]
            &\int_{\reals^d\times\reals^d\times\reals^d}\innerProduct{\gradient{x}c(x_1,x_2)}{x_3}\d\hat\beta(x_1,x_2,x_3)
            \\
            &+\liminf_{n\to\infty}-\frac{M}{2}\varepsilon_n \int_{\reals^d\times\reals^d\times\reals^d}\norm{x_3}^2\d\hat\beta_n(x_1,x_2,x_3)
            \\
            &-\int_{\reals^d\times\reals^d\times\reals^d}\innerProduct{\gradient{x}c(x_1,x_2)}{x_3}\d\hat\gamma(x_1,x_2,x_3)
            \end{aligned}
            \\
            &\geq 
            \liminf_{n\to\infty}-
            \frac{M}{2}\varepsilon_n \int_{\reals^d\times\reals^d\times\reals^d}\norm{x_3}^2\d\hat\beta_n(x_1,x_2,x_3)
            \\
            \overset{\eqref{eq:optimal transport discrepancy subdifferential tight beta}}&{\geq}
            \liminf_{n\to\infty}-
            \frac{M}{2}\varepsilon_n \left(\int_{\reals^d}\norm{x}^2\d\mu(x)+\int_{\reals^d}\norm{x}^2\d\bar\mu(x)+1\right)
            \\
            &=0,
        \end{align*}
        where we used $\hat\gamma=\hat\beta$ and superadditivity of the liminf. This proves $\gradient{x}c(\cdot,\optMap{\mu}{\bar\mu}(\cdot))\in\subdifferential{\cost}(\mu)$, and concludes the proof. 
        \qedhere 
    \end{itemize}

    \item
    If $\mu$ is absolutely continuous and $c$ is continuous, has bounded Hessian, and satisfies the twist condition, then  $\setOptimalPlans{\mu}{\bar\mu}$ contains a unique optimal transport plan induced by an optimal transport map~\cite[Theorem  10.28 and Example 10.35]{Villani2009a}. Then, the statement follows directly from (vi). 
\end{enumerate}
\end{proof}}
\newcommand{\proofwassersteindistance}[1]{
\ifbool{#1}{\begin{proof}}{\begin{proof}[Proof of \cref{cor:wasserstein distance}]}
We prove the statements separately: 
\begin{enumerate}
    \item Since $\wassersteinDistance{2}{\cdot}{\cdot}$ is distance on $\Pp{2}{\reals^d}$, it is non-negative and it does not evaluate to $+\infty$. Thus, $\effectiveDomain{\cost}=\Pp{2}{\reals^d}$, $\cost$ is proper, and $\cost(\mu)\geq 0$ for all $\mu\in\Pp{2}{\reals^d}$.
    \item Continuity w.r.t. weak convergence in $\Pp{2}{\reals^d}$ follows directly from triangle inequality and continuity of $x^2/2$.
    \item Lower semi-continuity w.r.t. narrow convergence follows from~\cref{prop:optimal transport discrepancy} and monotonicity of $x^2/2$.
    \item It suffices to observe that $-\norm{x-y}^2$ is $-2$-convex for all $y\in\reals^d$, and deploy~\cref{prop:optimal transport discrepancy}.
    \item It suffices to observe that $\norm{x-y}^2$ is $2$-convex for all $y\in\reals^d$, and deploy~\cref{prop:optimal transport discrepancy}.
    \item It suffices to observe that $\gradient{x}\norm{x-y}^2=2(x-y)$, and deploy~\cref{thm:brenier,prop:optimal transport discrepancy}.\qedhere 
\end{enumerate}
\end{proof}}
\newcommand{\proofexpectation}[1]{
\ifbool{#1}{\begin{proof}}{\begin{proof}[Proof of \cref{prop:expectation}]}
We prove the statements separately: 
\begin{enumerate}
    \item It suffices to prove that $V$ has at most quadratic growth, i.e., that there exists $C>0$ so that
    \begin{equation*}
        |V(x)|\leq C(1+\norm{x}^2). 
    \end{equation*}
    Then, the statement follows from Jensen's inequality: 
    \begin{equation*}
        |\expectedValue{\mu}{V(x)}|
        \leq 
        \int_{\reals^d}|V(x)|\d\mu(x)
        \leq 
        \int_{\reals^d}C(1+\norm{x}^2)\d\mu(x)
        < 
        +\infty,
    \end{equation*}
    which also directly establishes properness. 
    To see that $V$ has at most quadratic growth, observe that by Taylor's expansion there exists $z\in\reals^d$ so that  
    \begin{align*}
        V(x)
        &=
        V(0) + \innerProduct{\gradient{x}V(0)}{x} + \frac{1}{2}\innerProduct{x}{\gradient{x}^2V(z)x}
        \\
        &\leq 
        V(0) + \norm{\gradient{x}V(0)}\norm{x} + \frac{M}{2}\norm{x}^2
        \\
        &\leq
        V(0)+\frac{M}{2}\left(\frac{\norm{\gradient{x}V(0)}}{M}+\norm{x}\right)^2
        \\
        &\leq 
        V(0)+\frac{\norm{\gradient{x}V(0)}^2}{M}+M\norm{x}^2
        \\
        &\leq 
        C(1+\norm{x}^2)
    \end{align*}
    for $C\coloneqq\max\{V(0)+\norm{\gradient{}V(0)}^2/M,M\}$.
    \item Continuity follows directly from the definition of weak convergence in $\Pp{2}{\reals^d}$ since $V$ has at most quadratic growth. 
    \item \cite[Theorem 3]{Yue2021} proves an analogous statement for upper semi-continuity; the proof of lower semi-continuity follows \emph{muta mutandis}. 
    \item See~\cite[Proposition 9.3.2]{Ambrosio2008a}. 
    \item A more sophisticated proof involving subdifferentials can be found in~\cite[Proposition 10.4.2]{Ambrosio2008a}; we provide a simplified proof for the differentiable case which follows closely~\cite[§5]{bonnet2021necessary}.
Let $\nu\in\Pp{2}{\reals}$ and $\gamma\in\setOptimalPlans{\mu}{\nu}$. Then,
\begin{align*}
    \cost(\nu)-\cost(\mu)
    &=
    \int_{\reals^d}V(x)\d\nu(x)-\int_{\reals^d}V(x)\d\mu(x)
    \\
    &=
    \int_{\reals^d\times\reals^d}V(y)\d\gamma(x,y)-\int_{\reals^d\times\reals^d}V(x)\d\gamma(x,y)
    \\
    &=
    \int_{\reals^d\times\reals^d}V(y)-V(x)\d\gamma(x,y)
    \\
    &=
    \int_{\reals^d\times\reals^d}\innerProduct{\gradient{x}V(x)}{y-x}\d\gamma(x,y) + \int_{\reals^d\times\reals^d}R_x(y-x)\,\d\gamma(x,y),
\end{align*}
where $R_x(\cdot)$ is the remainder term of Taylor's series expansion at $x$ of $V$.
Since the Hessian of $V$ is uniformly bounded, the Taylor's (multivariate) remainder formula establishes the existence of $z\in\reals^d$ so that $R_x(x-y)=\frac{1}{2}\innerProduct{x-y}{\gradient{}^2V(z)(x-y)}$.
Thus, 
\begin{equation*}
    |R_x(y-x)|\leq \frac{M}{2}\norm{y-x}^2,
\end{equation*}
and so 
\begin{align*}
    \left|\int_{\reals^d\times\reals^d}R_x(y-x)\,\d\gamma(x,y)\right|
    &\leq 
    \int_{\reals^d\times\reals^d}|R_x(y-x)|\,\d\gamma(x,y)
    \\
    &\leq 
    \int_{\reals^d\times\reals^d}\frac{M}{2}\norm{y-x}^2\,\d\gamma(x,y)
    \\
    &=\frac{M}{2}\wassersteinDistance{2}{\mu}{\nu}^2,
\end{align*}
which clearly approaches 0 as $\wassersteinDistance{2}{\mu}{\nu}\to 0$. Hence, 
\begin{equation}\label{eq:expected value proof differential}
\begin{aligned}
    \cost(\nu)-\cost(\mu)
    =
    \int_{\reals^d\times\reals^d}\innerProduct{\gradient{x}V(x)}{y-x}\d\gamma(x,y) + \onotation{\wassersteinDistance{2}{\mu}{\nu}}.
\end{aligned}
\end{equation}
By \cref{def:wasserstein differentiable},~\eqref{eq:expected value proof differential} directly gives subdifferential and superdifferential. These coincide and thus $\gradient{\mu}\cost(\mu)=\gradient{x}V$.
Finally, since $V$ has bounded Hessian, $\gradient{x}V$ is Lipschitz continuous (with Lipschitz constant, say, $L\geq 0$), and
\begin{align*}
    \int_{\reals^d}\norm{\gradient{x}V(x)}^2\d\mu(x)
    &\leq 
    \int_{\reals^d}(\norm{\gradient{x}V(x)-\gradient{x}V(0)}+\norm{\gradient{x}V(0)})^2\d\mu(x)
    \\
    &\leq 
    2\norm{\gradient{x}V(0)}^2+
    2\int_{\reals^d}\norm{\gradient{x}V(x)-\gradient{x}V(0)}^2\d\mu(x)
    \\
    &\leq
    2\norm{\gradient{x}V(0)}^2+
    2L^2\int_{\reals^d}\norm{x}^2\d\mu(x)
    \\
    &<
    +\infty. 
\end{align*} 
Hence, $\gradient{x}V\in\Lp{2}{\reals^d,\reals^d;\mu}$.
\qedhere 
\end{enumerate}
\end{proof}}
\newcommand{\proofinteraction}[1]{
\ifbool{#1}{\begin{proof}}{\begin{proof}[Proof of \cref{prop:interaction}]}
Since $U$ has bounded Hessian, we have $U(z)\leq C(1+\norm{z}^2)$ for some $C>0$ (see (i) in the proof of~\cref{prop:expectation}). Thus, 
\begin{equation}\label{eq:interaction well defined}
\begin{aligned}
    \int_{\reals^d}\left(\int_{\reals^d}|U(x-y)|\d\mu(y)\right)\d\mu(x)
    &\leq 
    \int_{\reals^d}\left(\int_{\reals^d}C(1+\norm{x-y}^2)\d\mu(y)\right)\d\mu(x)
    \\
    &=
    C + 2C\int_{\reals^d}\left(\int_{\reals^d}\norm{x}^2+\norm{y}^2\d\mu(y)\right)\d\mu(x)
    \\
    &=
    C + 2C\left(\int_{\reals^d}\norm{x}^2\d\mu(x)\right)^2
    \\
    &<+\infty. 
\end{aligned}
\end{equation}
Thus, by Fubini's Theorem~\cite[Theorem 8.8]{Rudin1987}, the order of integration does not matter, and $\cost$ is well defined. Also, without loss of generality, we can assume that $U$ is even. If not, $U$ can be written as the sum of an even function $U_e$ and an odd function $U_o$.
On one hand, since $U_o$ is odd,
\begin{equation*}
    \int_{\reals^d\times\reals^d}U_o(x-y)\d(\mu\productMeasure\mu)(x,y)
    =
    -\int_{\reals^d\times\reals^d}U_o(y-x)\d(\mu\productMeasure\mu)(x,y),
\end{equation*}
but on the other hand, by ``relabeling'' $x$ with $y$ and Fubini's Theorem~\cite[Theorem 8.8]{Rudin1987},
\begin{equation*}
\begin{aligned}
    \int_{\reals^d}\int_{\reals^d}U_o(x-y)\d\mu(x)\d\mu(y)
    =
    \int_{\reals^d}\int_{\reals^d}U_o(y-x)\d\mu(y)\d\mu(x)
    =
    \int_{\reals^d}\int_{\reals^d}U_o(y-x)\d\mu(x)\d\mu(y)
\end{aligned}
\end{equation*}
which implies $\int_{\reals^d\times\reals^d}U_o(x-y)\d(\mu\productMeasure\mu)(x,y)=0$. Thus, 
\begin{equation*}
\begin{aligned}
    \int_{\reals^d\times\reals^d}U(x-y)\ifbool{compact}{&}{}\d(\mu\productMeasure\mu)(x,y)
    \ifbool{compact}{\\}{}&=
    \int_{\reals^d\times\reals^d}U_e(x-y)\d(\mu\productMeasure\mu)(x,y)
    +
    \int_{\reals^d\times\reals^d}U_o(x-y)\d(\mu\productMeasure\mu)(x,y)
    \\
    &=
    \int_{\reals^d\times\reals^d}U_e(x-y)\d(\mu\productMeasure\mu)(x,y).
\end{aligned}
\end{equation*}
Then, we prove the statements separately: 
\begin{enumerate}
    \item Since $|\cost(\mu)|\leq\frac{1}{2}\int_{\reals^d\times\reals^d}|U(x-y)|\d(\mu\productMeasure\mu)(x,y)$, the statement follows from~\eqref{eq:interaction well defined}. Properness follows from $\effectiveDomain{\cost}\neq 0$.  
    \item Since $\mu_n\weakconvergence\mu_n$ implies $\mu_n\productMeasure\mu_n\weakconvergence\mu\productMeasure\mu$~\cite[Eq. 9.3.7]{Ambrosio2008a}, continuity follows from~\cref{prop:expectation}.
    \item Since narrow convergence of $\mu_n$ to $\mu$ implies narrow convergence of $\mu_n\productMeasure\mu_n$ to $\mu\productMeasure\mu$~\cite[Lemma 7.3]{Santambrogio2015}, lower semi-continuity follows from~\cref{prop:expectation}.
    \item A proof can be found in~\cite[Proposition 9.3.5]{Ambrosio2008a}; we provide a more direct proof for completeness.  Let $\mu_0,\mu_1\in\Pp{2}{\reals^d}$, and $\gamma\in\setPlans{\mu_0}{\mu_1}$. Let $\mu_t\coloneqq\pushforward{((1-t)\proj_1+t\proj_2)}\gamma$. Then, 
    \begin{align*}
        \cost(\mu_t)
        &=
        \frac{1}{2}\int_{\reals^d\times\reals^d}U(x-y)\d(\mu_t\productMeasure\d\mu_t)(x,y)
        \\
        &=
        \frac{1}{2}\int_{\reals^d\times\reals^d}\int_{\reals^d\times\reals^d}U((1-t)x+tx'-((1-t)y+y'))\d\gamma(x,x')\d\gamma(y,y')
        \\
        &\leq 
        \frac{1}{2}\int_{\reals^d\times\reals^d}\int_{\reals^d\times\reals^d}(1-t)U(x-y)+tU(x'-y')\d\gamma(x,x')\d\gamma(y,y')
        \\
        &=   
        \frac{1-t}{2}\int_{\reals^d}\int_{\reals^d}U(x-y)\d\mu_0(x)\d\mu_1(y)+\frac{t}{2}\int_{\reals^d}\int_{\reals^d}U(x'-y')\d\mu_0(x')\d\mu_1(y')
        \\
        &=
        (1-t)\cost(\mu_0) + t\cost(\mu_1).
    \end{align*}
    \item A more sophisticated proof involving subdifferentials can be found in~\cite[Theorem 10.4.11]{Ambrosio2008a} under different assumptions (in particular, convexity of $U$ and the doubling assumption~\cite[Eq. 10.4.42]{Ambrosio2008a}).
    We provide instead a proof of the smooth case under boundedness of the Hessian.
    Let $\nu\in\Pp{2}{\reals}$ and $\gamma\in\setOptimalPlans{\mu}{\nu}$. Then,
    \begin{align*}
        \cost(\nu)-\cost(\mu)
        &=
        \frac{1}{2}\int_{\reals^d\times\reals^d}U(x'-y')\d(\nu\productMeasure\nu)(x',y')-\frac{1}{2}\int_{\reals^d\times\reals^d}U(x-y)\d(\mu\productMeasure\mu)(x,y)
        \\
        &=
        \begin{aligned}[t]
        &\frac{1}{2}\int_{\reals^d\times\reals^d}\int_{\reals^d\times\reals^d}U(x'-y')\d\gamma(x,x')\d\gamma(y,y')
        \\
        &-\frac{1}{2}\int_{\reals^d\times\reals^d}\int_{\reals^d\times\reals^d}U(x-y)\d\gamma(x,x')\d\gamma(y,y')
        \end{aligned}
        \\
        &=
        \frac{1}{2}\int_{\reals^d\times\reals^d}\int_{\reals^d\times\reals^d}U(x'-y')-U(x-y)\d\gamma(x,x')\d\gamma(y,y')
        \\
        &=
        \begin{aligned}[t]
        &\frac{1}{2}\int_{\reals^d\times\reals^d}\int_{\reals^d\times\reals^d}\innerProduct{\gradient{}U(x-y)}{(x'-y')-(x-y)}\d\gamma(x,x')\d\gamma(y,y') \\
        &+ \frac{1}{2}\int_{\reals^d\times\reals^d}\int_{\reals^d\times\reals^d}R_{x-y}((x'-y')-(x-y))\,\d\gamma(x,x')\d\gamma(y,y'),
        \end{aligned}
    \end{align*}
    where $R_{x-y}(\cdot)$ is the remainder term of Taylor's series expansion of $U$ at $x-y\in\reals^d$.
    Since the Hessian of $U$ is uniformly bounded, by Taylor's (multivariate) remainder formula we have that there exists $z\in\reals^d$ so that 
    \begin{equation*}
        R_{x-y}((x'-y')-(x-y))
        =
        \frac{1}{2}\innerProduct{(x'-y')-(x-y)}{\gradient{}^2U(z)((x'-y')-(x-y))},
    \end{equation*}
    and so 
    \begin{equation*}
        |R_{x-y}((x'-y')-(x-y))|
        \leq
        \frac{M}{2}\norm{(x'-y')-(x-y)}^2
        \leq 
        M\left(\norm{x-x'}^2+\norm{y-y'}^2\right).
    \end{equation*}
    Thus, 
    \begin{align*}
        \bigg|\int_{\reals^d\times\reals^d}\int_{\reals^d\times\reals^d}&R_{x-y}((x'-y')-(x-y))\,\d\gamma(x,x')\d\gamma(y,y')\bigg|
        \\
        &\leq 
        \int_{\reals^d\times\reals^d}\int_{\reals^d\times\reals^d}|R_{x-y}((x'-y')-(x-y))|\,\d\gamma(x,x')\d\gamma(y,y')
        \\
        &\leq 
        M\int_{\reals^d\times\reals^d}\int_{\reals^d\times\reals^d}\norm{x-x'}^2+\norm{y-y'}^2\,\d\gamma(x,x')\d\gamma(y,y')
        \\
        &\leq 
        M\left(\int_{\reals^d\times\reals^d}\norm{x-x'}^2\d\gamma(x,x')+\int_{\reals^d\times\reals^d}\norm{y-y'}^2\d\gamma(y,y')\right)
        \\
        &=2M\cdot \wassersteinDistance{2}{\mu}{\nu}^2,
    \end{align*}
    which clearly approaches 0 as $\wassersteinDistance{2}{\mu}{\nu}\to 0$. Hence, 
    \begin{align*}
        \cost(\ifbool{compact}{&}{}\nu)-\cost(\mu)
        \ifbool{compact}{\\}{}&=
        \frac{1}{2}\int_{\reals^d\times\reals^d}\int_{\reals^d\times\reals^d}\innerProduct{\gradient{}U(x-y)}{(x'-y')-(x-y)}\d\gamma(x,x')\d\gamma(y,y')
        + \onotation{\wassersteinDistance{2}{\mu}{\nu}}
        \\
        &=
        \begin{aligned}[t]
        &\frac{1}{2}\int_{\reals^d\times\reals^d}\int_{\reals^d\times\reals^d}\innerProduct{\gradient{}U(x-y)}{x'-x}\d\gamma(x,x')\d\gamma(y,y')
        \\
        &-\frac{1}{2}\int_{\reals^d\times\reals^d}\int_{\reals^d\times\reals^d}\innerProduct{\gradient{}U(x-y)}{y'-y}\d\gamma(x,x')\d\gamma(y,y')
        + \onotation{\wassersteinDistance{2}{\mu}{\nu}}
        \end{aligned}
        \\
        &=
        \begin{aligned}[t]
        &\frac{1}{2}\int_{\reals^d\times\reals^d}\innerProduct{\int_{\reals^d}\gradient{}U(x-y)\d\mu(y)}{x'-x}\d\gamma(x,x')
        \\
        &+\frac{1}{2}\int_{\reals^d\times\reals^d}\innerProduct{\int_{\reals^d}-\gradient{}U(x-y)\d\mu(x)}{y'-y}\d\gamma(y,y')
        + \onotation{\wassersteinDistance{2}{\mu}{\nu}}
        \end{aligned}
        \\
        &=
        \begin{aligned}[t]
        &\frac{1}{2}\int_{\reals^d\times\reals^d}\innerProduct{\int_{\reals^d}\gradient{}U(x-y)\d\mu(y)}{x'-x}\d\gamma(x,x')
        \\
        &+\frac{1}{2}\int_{\reals^d\times\reals^d}\innerProduct{\int_{\reals^d}\gradient{}U(y-x)\d\mu(x)}{y'-y}\d\gamma(y,y')
        + \onotation{\wassersteinDistance{2}{\mu}{\nu}}
        \end{aligned}
        \\
        &=
        \begin{aligned}[t]
        &\frac{1}{2}\int_{\reals^d\times\reals^d}\innerProduct{(\gradient{}U\conv\mu)(x)}{x'-x}\d\gamma(x,x')
        \\
        &+\frac{1}{2}\int_{\reals^d\times\reals^d}\innerProduct{(\gradient{}U\conv\mu)(y)}{y'-y}\d\gamma(y,y')
        + \onotation{\wassersteinDistance{2}{\mu}{\nu}}
        \end{aligned}
        \\
        &=
        \int_{\reals^d\times\reals^d}\innerProduct{(\gradient{}U\conv\mu)(x)}{x'-x}\d\gamma(x,x')
        +
        \onotation{\wassersteinDistance{2}{\mu}{\nu}},
    \end{align*}
    where we used that the gradient of an even function is odd (i.e., $-\gradient{}U(x-y)=\gradient{}U(y-x)$). 
    By \cref{def:wasserstein differentiable}, this directly gives subdifferential and superdifferential. These coincide and thus $\gradient{\mu}\cost(\mu)=\gradient{x}U\circ\mu$.
    \qedhere 
\end{enumerate}
\end{proof}}
\newcommand{\proofvariance}[1]{
\ifbool{#1}{\begin{proof}}{\begin{proof}[Proof of \cref{cor:variance}]}
First, observe that for a real random variable $X$ the variance can be rewritten as follows: 
\begin{align*}
    \variance{\mu}{X}
    &=\int_{\reals^d}x\transpose{x}\d\mu(x)-\expectedValue{\mu}{X}\transpose{\expectedValue{\mu}{X}}
    \\
    &=
    \frac{1}{2}\int_{\reals^d}x\transpose{x}\d\mu(x)+\frac{1}{2}\int_{\reals^d}y\transpose{y}\d\mu(y)
    -\int_{\reals^d}x\d\mu(x)\int_{\reals^d}\transpose{x}\d\mu(x)
    \\
    &=
    \begin{aligned}[t]
    \frac{1}{2}\Bigg(&\int_{\reals^d}x\transpose{x}\d\mu(x)+\int_{\reals^d}y\transpose{y}\d\mu(y) \\
    &-\int_{\reals^d}x\d\mu(x)\int_{\reals^d}\transpose{y}\d\mu(y)-\int_{\reals^d}y\d\mu(y)\int_{\reals^d}\transpose{x}\d\mu(x)\Bigg)
    \end{aligned}
    \\
    &=
    \frac{1}{2}\int_{\reals^d\times\reals^d}x\transpose{x}+y\transpose{y}-x\transpose{y}-y\transpose{x}\d(\mu\productMeasure\mu)(x,y)
    \\
    &=
    \frac{1}{2}\int_{\reals^d\times\reals^d}(x-y)\transpose{(x-y)}\d(\mu\productMeasure\mu)(x,y).
\end{align*}
In our case, 
\begin{equation*}
\begin{aligned}
    \cost(\mu)
    =
    \frac{1}{2}\int_{\reals^d\times\reals^d}\innerProduct{a}{x-y}^2\d(\mu\productMeasure\mu)(x,y).
\end{aligned}
\end{equation*}
Clearly, $U(z)=\innerProduct{a}{z}^2$ is differentiable, convex, and has bounded Hessian. So, \cref{prop:interaction} directly gives continuity w.r.t. weak convergence and convexity. To compute the gradient, we first show that $U(x+y)\leq 2(U(x)+U(y))$. Indeed, 
\begin{equation*}
\begin{aligned}
    U(x+y)
    =
    \innerProduct{a}{x+y}^2
    \leq
    \left(\innerProduct{a}{x}+\innerProduct{a}{y}\right)^2
    \leq
    2(\innerProduct{a}{x}^2+\innerProduct{a}{y}^2)
    =
    2\left(U(x)+U(y)\right).
\end{aligned}
\end{equation*}
Hence, by \cref{prop:interaction},
\begin{equation*}
\begin{aligned}
    \gradient{}\cost(\mu)(x)
    &=
    (\gradient{} U \conv \mu)(x)
    =
    2a\innerProduct{a}{\int_{\reals^d}x-y\d\mu(y)}
    =
    2a\innerProduct{a}{x-\expectedValue{\mu}{x}},
\end{aligned}
\end{equation*}
where we used $\gradient{}U(z)=2a\innerProduct{a}{z}$.
\end{proof}}
\newcommand{\proofinternalenergy}[1]{
\ifbool{#1}{\begin{proof}}{\begin{proof}[Proof of \cref{prop:internal energy}]}
All proofs are in \cite{Ambrosio2008a}. In particular: 
\begin{enumerate}
    \item The statement is an obvious consequence of the definition of $\cost$. To show that $\cost$ is proper, consider the uniform probability measure over the $d$-dimensional ball in $\reals$, for which $\cost$ is finite. 
    
    \item It follows from (ii). 
    
    \item See \cite[Lemma 9.4.3]{Ambrosio2008a}.
    
    \item See \cite[Proposition 9.3.9]{Ambrosio2008a}, by noting that $\negativePart{F}$ is trivially integrable.
    
    \item See \cite[Theorem 10.4.6]{Ambrosio2008a} for existence of a subgradient; uniqueness follows as in~\cite[Proposition 4.3]{erbar2010heat} (in the special case of the relative entropy).
    \qedhere 
\end{enumerate}
\end{proof}}
\newcommand{\proofdivergence}[1]{
\ifbool{#1}{\begin{proof}}{\begin{proof}[Proof of \cref{prop:divergence}]}
All proofs are in~\cite{Ambrosio2008a}. In particular: 
\begin{enumerate}
    \item The statement is an obvious consequence of the definition of $\cost$. To show that $\cost$ is proper, consider $\cost(\bar\mu)=\int_{\reals^d}F(1)\d\bar\mu(x)=F(1)<+\infty$.
    
    \item It follows from (ii). 
    
    \item See~\cite[Lemma 9.4.3]{Ambrosio2008a}.
    
    \item See~\cite[Theorem 9.4.12]{Ambrosio2008a}.
    
    \item See~\cite[Theorem 10.4.9]{Ambrosio2008a} for existence of a subgradient; uniqueness follows as in~\cite[Proposition 4.3]{erbar2010heat} (in the special case of the relative entropy).
    \qedhere 
\end{enumerate}
\end{proof}}
\newcommand{\prooflemmavariations}[1]{
\ifbool{#1}{\begin{proof}}{\begin{proof}[Proof of \cref{lemma:variations}]}
Let $\varepsilon>0$. Then, by definition of $\tangentSpace{\mu}{\Pp{2}{\reals^d}}$, there exists $\varphi_\varepsilon\in\Ccinf{\reals^d}$ such that 
\begin{equation*}
    \LpNorm{H-\gradient{}\varphi_\varepsilon}{2}{\reals^d,\reals^d;\mu}^2
    <
    \varepsilon.
\end{equation*}
Consider \eqref{eq:lemma variations} with $\psi=\varphi_\varepsilon$, then
\begin{equation}\label{eq:proof lemma variations equality 1}
\begin{aligned}
    0=-2\int_{\reals^d}\innerProduct{H}{\gradient{}\varphi_\varepsilon}\d\mu(x)
    &=
    \int_{\reals^d}\Vert H-\gradient{}\varphi_\varepsilon\Vert^2-\Vert H\Vert^2-\Vert\gradient{}\varphi_\varepsilon\Vert^2\d\mu(x)
    \\
    &=\LpNorm{H-\gradient{}\varphi_\varepsilon}{2}{\reals^d,\reals^d;\mu}^2-\LpNorm{H}{2}{\reals^d,\reals^d;\mu}^2-\LpNorm{\gradient{}\varphi_\varepsilon}{2}{\reals^d,\reals^d;\mu}^2,
\end{aligned}
\end{equation}
and thus
\begin{equation*}
\begin{aligned}
    \LpNorm{H}{2}{\reals^d,\reals^d;\mu}^2
    \overset{\eqref{eq:proof lemma variations equality 1}}&{=}
    \LpNorm{H-\gradient{}\varphi_\varepsilon}{2}{\reals^d,\reals^d;\mu}^2
    -\LpNorm{\gradient{}\varphi_\varepsilon}{2}{\reals^d,\reals^d;\mu}^2
    \\
    &\leq
    \LpNorm{H-\gradient{}\varphi_\varepsilon}{2}{\reals^d,\reals^d;\mu}^2
    \\
    &<\varepsilon.
\end{aligned}
\end{equation*}
Let $\varepsilon\to 0$ to get $H\equiv 0$.
\end{proof}}
\newcommand{\prooffirstorderconditions}[1]{
\ifbool{#1}{\begin{proof}}{\begin{proof}[Proof of \cref{thm:first order conditions}]}
Let $T=\gradient{}\psi$ for some $\psi\in \Ccinf{\reals^d}$.
By \cref{lemma:perturbation transport map}, there exists $\bar s>0$ such that $\Id+\varepsilon T$ are the optimal transport maps between $\mu^\ast$ and $\mu\coloneqq\pushforward{(\Id+\varepsilon T)}\mu^\ast$ for all $\varepsilon\in(-\bar s,\bar s)$. Moreover, observe that $\mu\in\Pp{2}{\reals^d}$ and 
\begin{equation*}
    \wassersteinDistance{2}{\mu^\ast}{\mu}=|\varepsilon|\LpNorm{T}{2}{\reals^d,\reals^d;\mu^\ast}.
\end{equation*}
Thus, by local minimality of $\mu^\ast$, for $\varepsilon$ sufficiently small it holds $\cost(\mu)-\cost(\mu^\ast)\geq 0$. So, for all $\varepsilon>0$, differentiability gives 
\begin{equation*}
\begin{aligned}
    0\leq \frac{\cost(\pushforward{(\Id+\varepsilon T)}\mu^\ast) - \cost(\mu^\ast)}{\varepsilon}
    &=
    \int_{\reals^d}\innerProduct{\gradient{\mu}\cost(\mu^\ast)(x)}{T(x)}\d\mu^\ast(x)
    +\frac{\onotation{\varepsilon}}{\varepsilon}.
\end{aligned}
\end{equation*}
Letting $\varepsilon\to 0$ we get 
\begin{equation*}
    \int_{\reals^d}\innerProduct{\gradient{\mu}\cost(\mu^\ast)(x)}{T(x)}\d\mu^\ast(x)\geq 0.
\end{equation*}
Similarly, for $\varepsilon<0$, we get 
\begin{equation*}
    \int_{\reals^d}\innerProduct{\gradient{\mu}\cost(\mu^\ast)(x)}{T(x)}\d\mu^\ast(x)\leq 0,
\end{equation*}
and thus 
\begin{equation*}
    \int_{\reals^d}\innerProduct{\gradient{\mu}\cost(\mu^\ast)(x)}{T(x)}\d\mu^\ast(x)= 0.
\end{equation*}
Finally, by \cref{lemma:variations}, we conclude $\gradient{\mu}\cost(\mu^\ast)(x)=0$, for $\mu^\ast$-almost every $x\in\reals^d$.
\end{proof}}
\newcommand{\prooffirstordersufficientconditions}[1]{
\ifbool{#1}{\begin{proof}}{\begin{proof}[Proof of~\cref{thm:first order sufficient conditions}]}
Consider a competitor $\nu\in\Pp{2}{\reals^d}$, $\nu\neq\mu$.
Then, by \cref{prop:gradients geodesically convex},
\begin{equation*}
    \cost(\nu)-\cost(\mu)
    \geq 
    \frac{\alpha}{2}\wassersteinDistance{2}{\mu}{\nu}^2.
\end{equation*}
By non-negativity of $\alpha$, $\cost(\nu)\geq \cost(\mu)$; if additionally $\alpha>0$, $\cost(\nu)>\cost(\mu)$. Since $\nu$ is arbitrary, we conclude the proof. 
\end{proof}}
\newcommand{\proofadmissiblevariation}[1]{
\ifbool{#1}{\begin{proof}}{\begin{proof}[Proof of~\cref{lemma:admissibility variation}]}
Define the function
\begin{equation*}
\begin{aligned}
    \chi:
    \reals\times\reals & \to \reals \\
(\varepsilon,\delta) & \mapsto \constraint(\pushforward{(\Id+\varepsilon T+\delta H)}\mu^\ast).
\end{aligned}
\end{equation*}
By definition of $\mu^\ast$ and $K$, $\chi(0,0)=\constraint(\mu^\ast)=0$. We now prove that $\chi$ is continuously differentiable around $(0,0)$. We do so by showing that all partial derivatives exist and are continuous at all $(\varepsilon,\delta)$ sufficiently close to $(0,0)$. 
We start with the partial derivative with respect to the first variable. Let $\Delta\varepsilon>0$.  Since $T$ is the gradient of a smooth compactly supported function, we know, by \cref{lemma:perturbation transport map}, that for $\Delta\varepsilon$ sufficiently small $\Id+\Delta\varepsilon T$ is an optimal transport map from $\pushforward{(\Id+\varepsilon T+\delta H)}\mu^\ast$ to $\pushforward{(\Id+(\varepsilon+\Delta\varepsilon) T+\delta H)}\mu^\ast\in\Pp{2}{\reals^d}$ and 
\begin{equation*}
    \wassersteinDistance{2}{\pushforward{(\Id+\varepsilon T+\delta H)}\mu^\ast}{\pushforward{(\Id+(\varepsilon+\Delta\varepsilon) T+\delta H)}\mu^\ast}=\Delta\varepsilon\norm{T}_{\Lp{2}{\reals^d,\reals^d;\pushforward{(\Id+\varepsilon T+\delta H)}\mu^\ast}}.
\end{equation*}
Without loss of generality, assume $T$ is normalized; i.e., $\norm{T}_{\Lp{2}{\reals^d, \reals^d; \pushforward{(\Id+\varepsilon T+\delta H)}\mu^\ast}}=1$. 
By Wasserstein differentiability of $K$ at $\pushforward{(\Id+\varepsilon T+\delta H)}\mu^\ast$ (cf. \cref{def:wasserstein differentiable}) we have
\begin{multline*}
    \frac{\constraint(\pushforward{(\Id+(\varepsilon+\Delta\varepsilon) T+\delta H)}\mu^\ast)-\constraint(\pushforward{(\Id+\varepsilon T+\delta H)}\mu^\ast)}{\Delta\varepsilon}
    \\
    =
    \frac{1}{\Delta\varepsilon}\int_{\reals^d\times\reals^d}\innerProduct{\gradient{\mu}\constraint\pushforward{(\Id+\varepsilon T+\delta H)}\mu^\ast)(x)}{y-x}\d\gamma(x,y)+\frac{\onotation{\Delta\varepsilon}}{\Delta\varepsilon}
\end{multline*}
for any $\gamma\in\setOptimalPlans{\pushforward{(\Id+\varepsilon T+\delta H)}\mu^\ast}{\pushforward{(\Id+(\varepsilon+\Delta\varepsilon) T+\delta H)}\mu^\ast}$.  As $\Id+\Delta\varepsilon T$ is an optimal transport map, we choose $\gamma=\pushforward{(\Id,\Id+\Delta\varepsilon T)}\pushforward{(\Id+\varepsilon T+\delta H)}\mu^\ast$ to get 
\begin{multline*}
    \frac{\constraint(\pushforward{(\Id+(\varepsilon+\Delta\varepsilon) T+\delta H)}\mu^\ast)-\constraint(\pushforward{(\Id+\varepsilon T+\delta H)}\mu^\ast)}{\Delta\varepsilon}
    \\
    =
    \int_{\reals^d}\innerProduct{\gradient{\mu}\constraint(\pushforward{(\Id+\varepsilon T+\delta H)}\mu^\ast)(x)}{T(x)}\d(\pushforward{(\Id+\varepsilon T+\delta H)}\mu^\ast)(x)+\frac{\onotation{\Delta\varepsilon}}{\Delta\varepsilon},
\end{multline*}
which proves that the limit $\Delta\varepsilon\to 0$ exists (and, by definition, equals the partial derivative). Hence, 
\begin{equation*}
\begin{aligned}
    \left.\diff{\chi}{\varepsilon}\right|_{(\varepsilon,\delta)}
    &=
    \lim_{\Delta\varepsilon\to 0}\int_{\reals^d}\innerProduct{\gradient{\mu}\constraint\pushforward{(\Id+\varepsilon T+\delta H)}\mu^\ast)(x)}{T(x)}\d(\pushforward{(\Id+\varepsilon T+\delta H)}\mu^\ast)(x)+\frac{\onotation{\Delta\varepsilon}}{\Delta\varepsilon}
    \\
    &=
    \int_{\reals^d}\innerProduct{\gradient{\mu}\constraint(\pushforward{(\Id+\varepsilon T+\delta H)}\mu^\ast)(x)}{T(x)}\d(\pushforward{(\Id+\varepsilon T+\delta H)}\mu^\ast)(x),
\end{aligned}
\end{equation*}
which is well-defined since $T$ has compact support and Wasserstein gradients belong to the tangent space $\tangentSpace{\mu}\Pp{2}{\reals^d}$.
Analogously, we obtain that
\begin{equation*}
    \left.\diff{\chi}{\delta}\right|_{(\varepsilon,\delta)}
    =
    \int_{\reals^d}\innerProduct{\gradient{\mu}\constraint(\pushforward{(\Id+\varepsilon T+\delta H)}\mu^\ast)(x)}{H(x)}\d(\pushforward{(\Id+\varepsilon T+\delta H)}\mu^\ast)(x).
\end{equation*}
By assumption, partial derivatives are continuous at all $(\varepsilon,\delta)$ sufficiently close to $(0,0)$. Thus, $\chi$ is continuously differentiable in a neighborhood of $(0,0)$. 
Also, by assumption on $H$,  
\begin{equation*}
\begin{aligned}
    \left.\diff{\chi}{\delta}\right|_{(0,0)}
    =\int_{\reals^d}\innerProduct{\gradient{\mu}\constraint(\mu^\ast)(x)}{H(x)}\d\mu^\ast(x)
    \neq 0.
\end{aligned}
\end{equation*}
Thus, by the implicit function theorem~\cite[Theorem 1.3.1]{krantz2002implicit}, there exists $\bar\varepsilon>0$ and a continuously differentiable function $\sigma:(-\bar\varepsilon,\bar\varepsilon)\to\reals$ such that $\sigma(0)=0$ and 
\begin{equation*}
    K(\pushforward{(\Id+\varepsilon T+\sigma(\varepsilon)H)}\mu^\ast)=0
\end{equation*}
for all $\varepsilon\in(-\bar\varepsilon,\bar\varepsilon)$.

\end{proof}}
\newcommand{\prooffcontinuousdifferentiability}[1]{
\ifbool{#1}{\begin{proof}}{\begin{proof}[Proof of~\cref{prop:continuous differentiability functionals}]}\,
\begin{enumerate}
    \item
    We start with Condition (i) of~\cref{def:continous differentiability}.
    Since for $\varepsilon_1$ and $\varepsilon_2$ sufficiently small, the map $\Id+\varepsilon_1T_1+\varepsilon_2T_2$ is invertible (cf.~\cref{lemma:perturbation transport map}) and the pushforward of an absolutely continuous measure through an invertible map remains absolutely continuous~\cite[Lemma 5.5.3]{Ambrosio2008a}, $\mu_{\varepsilon_1,\varepsilon_2}\coloneqq \pushforward{(\Id+\varepsilon_1T_1+\varepsilon_2T_2)}\mu$ is absolutely continuous. By~\cref{prop:optimal transport discrepancy}, $\cost$ is differentiable at $\mu_{\varepsilon_1,\varepsilon_2}$ and so Condition (i) holds. For Condition (ii), we need to prove that
    \begin{align*}
        (\varepsilon_1,\varepsilon_2)\mapsto
        h_1(\varepsilon_1,\varepsilon_2)
        &\coloneqq \int_{\reals^d}\innerProduct{\gradient{x}c(x,\optMap{\mu_{\varepsilon_1,\varepsilon_2}}{\bar\mu}(x))}{T_1(x)}\d\mu_{\varepsilon_1,\varepsilon_2}(x)
        \\
        &=
        \int_{\reals^d}\innerProduct{\gradient{x}c(x,y)}{T_1(x)}\d\gamma_{\varepsilon_1,\varepsilon_2}(x,y),
    \end{align*}
    where $\gamma_{\varepsilon_1,\varepsilon_2}\in\setOptimalPlans{\mu_{\varepsilon_1,\varepsilon_2}}{\bar\mu}$, is continuous at $(\varepsilon_1,\varepsilon_2)$ sufficiently close to 0. To start, observe that
    \begin{align*}
        \wassersteinDistance{2}{\mu_{\varepsilon_1,\varepsilon_2}}{\mu_{\varepsilon_1',\varepsilon_2'}}^2
        &\leq
        \int_{\reals^d}\norm{x+\varepsilon_1T_1(x)+\varepsilon_2T_2(x)-x-\varepsilon_1'T_1(x)-\varepsilon_2'T_2(x)}^2\d\mu(x)
        \\
        &\leq 
        2(\varepsilon_1-\varepsilon_1')^2\LpNorm{T_1}{2}{\reals^d,\reals^d;\mu}^2+2(\varepsilon_2-\varepsilon_2')^2\LpNorm{T_2}{2}{\reals^d,\reals^d;\mu}^2
    \end{align*}
    and so the map $(\varepsilon_1,\varepsilon_2)\mapsto\mu_{\varepsilon_1,\varepsilon_2}$ is continuous w.r.t. weak convergence in $\Pp{2}{\reals^d}$ (and so also continuous w.r.t. narrow convergence).
    By standard stability results of the optimal transport problem~\cite[Theorem 5.20]{Villani2009a}, we therefore have that the optimal transport plans $\gamma_{\varepsilon_1',\varepsilon_2'}\in\setOptimalPlans{\mu_{\varepsilon_1',\varepsilon_2'}}{\bar\mu}$ converge narrowly to the optimal transport plan $\gamma_{\varepsilon_1,\varepsilon_2}\in\setOptimalPlans{\mu_{\varepsilon_1,\varepsilon_2}}{\bar\mu}$ as $(\varepsilon_1',\varepsilon_2')\to(\varepsilon_1,\varepsilon_2)$. We now prove that this convergence is also weak in $\Pp{2}{\reals^d}$, by showing convergence of the second moment (see~\cite[Remark 7.1.6]{Ambrosio2008a} and~\cite[Definition 6.8]{Villani2009a}):
    \begin{align*}
        \lim_{(\varepsilon_1',\varepsilon_2')\to(\varepsilon_1,\varepsilon_2)}
        \int_{\reals^d\times\reals^d}
        &\norm{x}^2+\norm{y}^2\d\gamma_{\varepsilon_1',\varepsilon_2'}(x,y)
        \\
        &=
        \lim_{(\varepsilon_1',\varepsilon_2')\to(\varepsilon_1,\varepsilon_2)}
        \int_{\reals^d}\norm{x}^2+\norm{\optMap{\mu_{\varepsilon_1',\varepsilon_2'}}{\bar\mu}(x)}^2\d\mu_{\varepsilon_1',\varepsilon_2'}(x)
        \\
        \overset{\eqref{eq:integral and pushforward}}&{=}
        \lim_{(\varepsilon_1',\varepsilon_2')\to(\varepsilon_1,\varepsilon_2)}
        \int_{\reals^d}\norm{x}^2\d\mu_{\varepsilon_1',\varepsilon_2'}(x)+\int_{\reals^d}\norm{y}^2\d\bar\mu(y)
        \\
        \overset{\diamondsuit}&{=}
        \int_{\reals^d}\norm{x}^2\d\mu_{\varepsilon_1,\varepsilon_2}(x)+\int_{\reals^d}\norm{y}^2\d\bar\mu(y)
        \\
        \overset{\eqref{eq:integral and pushforward}}&{=}
        \int_{\reals^d}\norm{x}^2+\norm{\optMap{\mu_{\varepsilon_1,\varepsilon_2}}{\bar\mu}(x)}^2\d\mu_{\varepsilon_1,\varepsilon_2}(x)
        \\
        &=
        \int_{\reals^d\times\reals^d}
        \norm{x}^2+\norm{y}^2\d\gamma_{\varepsilon_1,\varepsilon_2}(x,y),
    \end{align*}
    where $\diamondsuit$ follows from weak convergence in $\Pp{2}{\reals^d}$ of $\mu_{\varepsilon_1',\varepsilon_2'}$ to $\mu_{\varepsilon_1,\varepsilon_2}$. 
    Then,
    \begin{align*}
        \lim_{(\varepsilon_1',\varepsilon_2')\to(\varepsilon_1,\varepsilon_2)}
        h_1(\varepsilon_1',\varepsilon_2')
        &=
        \lim_{(\varepsilon_1',\varepsilon_2')\to(\varepsilon_1,\varepsilon_2)}
        \int_{\reals^d}\innerProduct{\gradient{x}c(x,y)}{T_1(x)}\d\gamma_{\varepsilon_1',\varepsilon_2'}(x,y)
        \\
        &=
        \int_{\reals^d}\innerProduct{\gradient{x}c(x,y)}{T_1(x)}\d\gamma_{\varepsilon_1,\varepsilon_2}(x,y)
        \\
        &=
        h_1(\varepsilon_1,\varepsilon_2),
    \end{align*}
    where we used that $\innerProduct{\gradient{x}c(x,y)}{T_1(x)}$ hat at most quadratic growth (since, by assumption, $T_1$ has compact support and $\norm{\gradient{x}c(x,y)}^2\leq M(1+\norm{x}^2+\norm{y}^2)$), which establishes continuity of $h_1$. Since the case of $h_2$ is analogous, Condition (ii) holds true.

    \item
    Condition (i) of~\cref{def:continous differentiability} holds trivially. For Condition (ii) it suffices to observe that the maps $(\varepsilon_1,\varepsilon_2)\mapsto\mu_{\varepsilon_1,\varepsilon_2}$ and $\mu\mapsto\int_{\reals^d}\innerProduct{\gradient{}V(x)}{T_i(x)}\d\mu(x)$ are continuous w.r.t. weak convergence in $\Pp{2}{\reals^d}$ and so their composition is continuous too. Continuity of $(\varepsilon_1,\varepsilon_2)\mapsto\mu_{\varepsilon_1,\varepsilon_2}$ follows from (i); continuity of $\mu\mapsto\int_{\reals^d}\innerProduct{\gradient{}V(x)}{T_i(x)}\d\mu(x)$, instead, follows from continuity of all functions and the compact support of $T_i$.

    \item 
    Again, Condition (i) of~\cref{def:continous differentiability} holds trivially. Condition (ii) follows from the observation that, by Fubini's theorem,  
    \begin{equation*}
        \int_{\reals^d}\innerProduct{(\gradient{}U\conv\mu)(x)}{T_i(x)}\d\mu(x)
        =
        \int_{\reals^d\times\reals^d}\innerProduct{\gradient{}U(x-y)}{T_i(x)}d(\mu\productMeasure\mu)(x,y).
    \end{equation*}
    Continuity follows then as in (ii), with the additional considerations that $\mu\mapsto\mu\productMeasure\mu$ is also continuous (since $\wassersteinDistance{2}{\mu\productMeasure\mu}{\nu\productMeasure\nu}^2\leq 2\wassersteinDistance{2}{\mu}{\nu}^2$) and the fact that $(x,y)\mapsto\innerProduct{\gradient{}U(x-y)}{T_i(x)}$ has at most quadratic growth (by boundedness of the Hessian of $U$ and compact support of $T_i$).
\end{enumerate}
\end{proof}}
\newcommand{\prooffirstordernecessaryconditionsconstrained}[1]{
\ifbool{#1}{\begin{proof}}{\begin{proof}[Proof of~\cref{thm:wasserstein lagrange multipliers}]}
The proof goes by variations. As in the proof of \cref{thm:first order conditions}, we will consider variations via gradients of compactly supported smooth functions; we will then deploy \cref{lemma:admissibility variation} to make sure that variations are admissible (i.e., they satisfy the constraint).

To start, we argue that we can without loss of generality assume that $H=\gradient{}\varphi$ for some $\varphi\in\Ccinf{\reals^d}$. Indeed, let $H\in\tangentSpace{\mu^\ast}\Pp{2}{\reals^d}$ so that $\beta\coloneqq\int_{\reals^d}\innerProduct{\gradient{\mu}\constraint(\mu^\ast)(x)}{H(x)}\d\mu^\ast(x)\neq 0$. Up to replacing $H$ with $-H$ we can assume $\beta>0$. 
By density, let $\varphi\in\Ccinf{\reals^d}$ so that $\LpNorm{H-\gradient{}\varphi}{2}{\reals^d,\reals^d;\mu^\ast}<\beta/\LpNorm{\gradient{\mu}K(\mu^\ast)}{2}{\reals^d,\reals^d;\mu^\ast}$. Then, linearity and the Cauchy-Schwarz inequality~\cite[Theorem 4.2]{Rudin1987} give
\begin{align*}
    \int_{\reals^d}&\innerProduct{\gradient{\mu}\constraint(\mu^\ast)(x)}{\gradient{}\varphi(x)}\d\mu^\ast(x)
    \\
    &=
    \int_{\reals^d}\innerProduct{\gradient{\mu}\constraint(\mu^\ast)(x)}{H(x)}\d\mu^\ast(x)
    +
    \int_{\reals^d}\innerProduct{\gradient{\mu}\constraint(\mu^\ast)(x)}{\gradient{}\varphi(x)-H(x)}\d\mu^\ast(x)
    \\
    &\geq 
    \int_{\reals^d}\innerProduct{\gradient{\mu}\constraint(\mu^\ast)(x)}{H(x)}\d\mu^\ast(x)
    -
    \LpNorm{\gradient{\mu}K(\mu^\ast)}{2}{\reals^d,\reals^d;\mu^\ast}
    \LpNorm{\gradient{}\varphi-H}{2}{\reals^d,\reals^d;\mu^\ast}
    \\
    &>
    \beta-\beta=0.
\end{align*}

We can now proceed with the variational analysis. Let $T=\gradient{}\psi$ for some $\psi\in\Ccinf{\reals^d}$.
The variation $\pushforward{(\Id+\varepsilon T)}\mu^\ast$ is \emph{not} admissible, as generally $K(\pushforward{(\Id+\varepsilon T)}\mu^\ast)\neq 0$.
Yet, since by assumption $K$ is Wasserstein continuously differentiable at $\mu^\ast$, $K(\mu^\ast)=0$, and its Wasserstein gradient does not vanish at $\mu^\ast$, we can use \cref{lemma:admissibility variation} to make the variation admissible. In particular, there exists $\bar\varepsilon>0$ and $\sigma\in\C{1}{(-\bar\varepsilon,\bar\varepsilon)}$ such that the variation $\pushforward{(\Id+\varepsilon T+\sigma(\varepsilon)H)}\mu^\ast$ is admissible for all $\varepsilon\in(-\bar\varepsilon,\bar\varepsilon)$.
As $\sigma(\varepsilon)=\sigma'(0)\varepsilon + \onotation{\varepsilon}$, \cref{lemma:perturbation transport map} gives that, for $\varepsilon$ sufficiently small, $\Id+\varepsilon T+\sigma(\varepsilon)H$ is an optimal transport map from $\mu^\ast$ to $\pushforward{(\Id+\varepsilon T+\sigma(\varepsilon)H)}\mu^\ast$.
So,
\begin{equation*}
\begin{aligned}
    \wassersteinDistance{2}{\pushforward{(\Id+\varepsilon T+\sigma(\varepsilon)H)}\mu^\ast}{\mu^\ast}
    &=
    \LpNorm{\varepsilon T+\sigma(\varepsilon)H}{2}{\reals^d,\reals^d;\mu^\ast} \\
    &\leq |\varepsilon|\LpNorm{T}{2}{\reals^d,\reals^d;\mu^\ast}
    +|\sigma(\varepsilon)|\LpNorm{H}{2}{\reals^d,\reals^d;\mu^\ast}.
\end{aligned}
\end{equation*}
As $\sigma(\varepsilon)\to 0$ as $\varepsilon\to 0$, local minimality of $\mu^\ast$ with $\varepsilon>0$, together with differentiability, gives
\begin{align*}
    0
    &\leq \frac{
    \cost(\pushforward{(\Id+\varepsilon T+\sigma(\varepsilon)H)}\mu^\ast)
    -
    \cost(\mu^\ast)}{\varepsilon}
    \\
    &=
    \frac{1}{\varepsilon}
    \int_{\reals^d}\innerProduct{\gradient{\mu}\cost(\mu^\ast)(x)}{x+\varepsilon T(x)+\sigma(\varepsilon)H(x)-x}\d\mu^\ast(x)+\frac{\onotation{\varepsilon}}{\varepsilon}
    \\
    &=
    \int_{\reals^d}\innerProduct{\gradient{\mu}\cost(\mu^\ast)}{ T+\frac{\sigma(\varepsilon)}{\varepsilon}H}\d\mu^\ast+\frac{\onotation{\varepsilon}}{\varepsilon}.
\end{align*}
Let $\varepsilon\to 0$ to conclude 
\begin{equation*}
    \int_{\reals^d}\innerProduct{\gradient{\mu}\cost(\mu^\ast)}{ T+\sigma'(0)H}\d\mu^\ast\geq 0.
\end{equation*}
If we repeat the argument with $\varepsilon<0$, we get 
\begin{equation*}
    \int_{\reals^d}\innerProduct{\gradient{\mu}\cost(\mu^\ast)}{ T+\sigma'(0)H}\d\mu^\ast\leq 0,
\end{equation*}
and thus
\begin{equation}\label{eq:proof multipliers gradient cost}
\begin{aligned}
    0
    &=
    \int_{\reals^d}\innerProduct{\gradient{\mu}\cost(\mu^\ast)}{ T+\sigma'(0)H}\d\mu^\ast(x) \\
    &=
    \int_{\reals^d}\innerProduct{\gradient{\mu}\cost(\mu^\ast)}{ T}\d\mu^\ast
    +
    \sigma'(0)\int_{\reals^d}\innerProduct{\gradient{\mu}\cost(\mu^\ast)}{H}\d\mu^\ast.
\end{aligned}
\end{equation}
Moreover, for all $\varepsilon\in(-\bar\varepsilon,\bar\varepsilon)$ the function $\varepsilon\mapsto K(\pushforward{(\Id+\varepsilon T+\sigma(\varepsilon)H)}\mu)$ is zero and so
\begin{equation*}
    \frac{
    K(\pushforward{(\Id+\varepsilon T+\sigma(\varepsilon)H)}\mu^\ast)
    -
    K(\mu^\ast)}{\varepsilon}
    =
    0.
\end{equation*}
We can now proceed as above to get
\begin{equation*}
    \int_{\reals^d}\innerProduct{\gradient{\mu}\constraint(\mu^\ast)}{ T}\d\mu^\ast
    +
    \sigma'(0)\int_{\reals^d}\innerProduct{\gradient{\mu}\constraint(\mu^\ast)}{H}\d\mu^\ast
    =
    0,
\end{equation*}
which leads, as by assumption $\int_{\reals}\innerProduct{\gradient{\mu}\constraint(\mu^\ast)}{H}\d\mu^\ast\neq 0$, to 
\begin{equation*}
    \sigma'(0)=-\frac{\int_{\reals^d}\innerProduct{\gradient{\mu}\constraint(\mu^\ast)}{ T}\d\mu^\ast}{\int_{\reals^d}\innerProduct{\gradient{\mu}\constraint(\mu^\ast)}{H}\d\mu^\ast}\in\reals.
\end{equation*}
So, \eqref{eq:proof multipliers gradient cost} reads
\begin{equation}\label{eq:proof multipliers before lambda}
    \int_{\reals^d}\innerProduct{\gradient{\mu}\cost(\mu^\ast)}{T}\d\mu^\ast
    -
    \frac{\int_{\reals^d}\innerProduct{\gradient{\mu}\constraint(\mu^\ast)}{ T}\d\mu^\ast}{\int_{\reals}\innerProduct{\gradient{\mu}\constraint(\mu^\ast)}{H}\d\mu^\ast}
    \int_{\reals^d}\innerProduct{\gradient{\mu}\cost(\mu^\ast)}{H}\d\mu^\ast=0.
\end{equation}
Define the multiplier as  
\begin{equation*}
    \lambda
    \coloneqq
    -
    \frac{
    \int_{\reals^d}\innerProduct{\gradient{\mu}\cost(\mu^\ast)}{H}\d\mu^\ast
    }{
    \int_{\reals^d}\innerProduct{\gradient{\mu}\constraint(\mu^\ast)}{H}\d\mu^\ast
    }\in\reals.
\end{equation*}
Then, \eqref{eq:proof multipliers before lambda} becomes
\begin{equation*}
    \int_{\reals^d}\innerProduct{\gradient{\mu}\cost(\mu^\ast)+\lambda\gradient{\mu}\constraint(\mu^\ast)}{T}\d\mu^\ast=0.
\end{equation*}
As $T$ is arbitrary, \cref{lemma:variations} gives 
\begin{equation*}
    \gradient{\mu}\cost(\mu^\ast)+\lambda\gradient{\mu}\constraint(\mu^\ast)\equiv 0,
\end{equation*}
or equivalently $\gradient{\mu}\cost(\mu^\ast)(x)+\lambda\gradient{\mu}\constraint(\mu^\ast)(x)=0$ for $\mu^\ast$-almost every $x\in\reals^d$. Finally, to show uniqueness, assume that there exists another $\lambda'\in\reals$ so that 
\begin{equation*}
    \gradient{\mu}\cost(\mu^\ast)(x)+\lambda'\gradient{\mu}\constraint(\mu^\ast)(x)=0
\end{equation*}
for $\mu^\ast$-almost every $x\in\reals^d$. Then, $\lambda'\gradient{\mu}\constraint(\mu^\ast)=\lambda\gradient{\mu}\constraint(\mu^\ast)$, and
\begin{align*}
    \lambda'\int_{\reals^d}\innerProduct{\gradient{\mu}\constraint(\mu^\ast)(x)}{H(x)}\d\mu^\ast(x)
    =
    \lambda\int_{\reals^d}\innerProduct{\gradient{\mu}\constraint(\mu^\ast)(x)}{H(x)}\d\mu^\ast(x).
\end{align*}
Since, by assumption, $\int_{\reals^d}\innerProduct{\gradient{\mu}\constraint(\mu^\ast)(x)}{H(x)}\d\mu^\ast(x)\neq 0$, we conclude $\lambda'=\lambda$.
\end{proof}}
\newcommand{\prooffirstordernecessaryconditionsconstrainedinequality}[1]{
\ifbool{#1}{\begin{proof}}{\begin{proof}[Proof of~\cref{thm:inequality:wasserstein lagrange multipliers}]}
We distinguish two cases.

Suppose initially that $\mu^\ast$ lies at the boundary of the feasible set; i.e., $\constraint(\mu^\ast)=0$. Then, $\mu^\ast$ is also a local minimizer of \eqref{eq:constrained optimization}. Thus,~\cref{thm:wasserstein lagrange multipliers} establishes existence of $\lambda\in\reals$ so that $\gradient{\mu}\cost(\mu^\ast)+\lambda\gradient{}\constraint(\mu^\ast)=0$ holds. Clearly, $\lambda K(\mu^\ast)=0$. Thus, we only need to prove $\lambda\geq 0$.
Suppose $\lambda<0$. For $\varepsilon>0$, consider the probability measure $\mu_\varepsilon\coloneqq\pushforward{(\Id-\varepsilon H)}\mu^\ast$, where $H\in\tangentSpace{\mu}\Pp{2}{\reals^d}$ is as in the statement. As in the proof of~\cref{thm:wasserstein lagrange multipliers}, we can without loss of generality assume that $H=\gradient{}\varphi$ for $\varphi\in\Ccinf{\reals}$ and, up to replacing $H$ with $-H$ and rescaling, that $\LpNorm{H}{2}{\reals^d,\reals^d;\mu^\ast}=1$ and 
\begin{equation*}
    \beta\coloneqq\int_{\reals^d}\innerProduct{\gradient{\mu}\constraint(\mu^\ast)(x)}{H(x)}\d\mu^\ast(x)>0.
\end{equation*}
By~\cref{lemma:perturbation transport map}, for $\varepsilon$ sufficiently small $\Id-\varepsilon H$ is an optimal transport map between $\mu^\ast$ and $\mu_\varepsilon$ and, in particular, $\wassersteinDistance{2}{\mu^\ast}{\mu_\varepsilon}=\varepsilon$.
By Wasserstein differentiability of $\constraint$ at $\mu^\ast$, we then have
\begin{align*}
    \constraint(\mu_\varepsilon)
    &=\constraint(\mu^\ast)
    +
    \int_{\reals^d}\innerProduct{\gradient{\mu}\constraint(\mu^\ast)(x)}{x-\varepsilon H(x)-x}\d\mu^\ast(x)+\onotation{\varepsilon}
    =
    -\varepsilon\beta+\onotation{\varepsilon}.
\end{align*}
Thus, for $\varepsilon$ sufficiently small, $\constraint(\mu_\varepsilon)<0$ and $\mu_\varepsilon$ is feasible for~\eqref{eq:inequality constrained optimization}. We now show that, if $\lambda<0$, $\mu_\varepsilon$ achieves a lower cost compared to $\mu^\ast$, which yields the desired contradiction.  
Wasserstein differentiability of $\cost$ gives
\begin{align*}
    \cost(\mu_\varepsilon)
    &=
    \cost(\mu^\ast)
    +
    \int_{\reals^d}\innerProduct{\gradient{\mu}\cost(\mu^\ast)(x)}{x-\varepsilon H(x)-x}\d\mu^\ast(x)+\onotation{\varepsilon}
    \\
    \overset{\diamondsuit}&{=}
    \cost(\mu^\ast)
    +
    \int_{\reals^d}\innerProduct{-\lambda\gradient{\mu}\constraint(\mu^\ast)(x)}{-\varepsilon H(x)}\d\mu^\ast(x)+\onotation{\varepsilon}
    \\
    \overset{\diamondsuit}&{=}
    \cost(\mu^\ast)
    +\lambda\varepsilon\beta+\onotation{\varepsilon},
\end{align*}
where in $\diamondsuit$ we used that at optimality $\gradient{\mu}\cost(\mu^\ast)+\lambda\gradient{}\constraint(\mu^\ast)=0$. If $\lambda<0$, for all $\varepsilon$ sufficiently small, we conclude $\cost(\mu_\varepsilon)<\cost(\mu^\ast)$. This contradicts local minimality of $\mu^\ast$ and shows that it necessarily holds $\lambda\geq 0$.

Suppose now that $\mu^\ast$ resides in the interior of the feasible set; i.e., $\constraint(\mu^\ast)<0$. Then, by continuity of $\constraint$ at $\mu^\ast$ (cf.~\cref{prop:differentiable continuous}), $\mu^\ast$ is also a local minimizer of the unconstrained problem. Accordingly,~\cref{thm:first order conditions} gives $\gradient{\mu}\cost(\mu^\ast)=0$. In this case, we can therefore set $\lambda=0$, which trivially yields $\gradient{\mu}\cost(\mu^\ast)+\lambda\gradient{\mu}\constraint(\mu^\ast)=0$ and $\lambda\constraint(\mu^\ast)=0$.
\end{proof}}
\newcommand{\prooffirstordersufficientconditionsconstrained}[1]{
\ifbool{#1}{\begin{proof}}{\begin{proof}[Proof of~\cref{thm:first order sufficient conditions constrained}]}
Without loss of generality, $\lambda>0$; else, the result follows directly from \cref{thm:first order sufficient conditions}.
Let $\alpha_{\cost}\geq 0$ be the convexity parameter of $\cost$ and $\alpha_{\constraint}\geq 0$ be the convexity parameter of $\constraint$, then $\cost+\lambda\constraint$ is $(\alpha_{\cost}+\lambda\alpha_{\constraint})$-geodesically convex.
By \cref{thm:first order sufficient conditions}, $\mu^\ast$ is a (global) minimizer of $\mu\mapsto\cost(\mu)+\lambda K(\mu)$.
Then, let $\nu\in\Pp{2}{\reals^d}$ such that $\nu\neq\mu^\ast$ and $\constraint(\nu)\leq 0$:  
\begin{equation*}
\begin{aligned}
    \cost(\nu)
    &\geq
    \cost(\nu)+\lambda\constraint(\nu)
    \\
    &\geq 
    \cost(\mu^\ast)+\lambda\constraint(\mu^\ast) + \frac{\alpha_{\cost}+\lambda\alpha_{\constraint}}{2}\wassersteinDistance{2}{\mu^\ast}{\nu}^2
    \\
    &=\cost(\mu^\ast) + \frac{\alpha_{\cost}+\lambda\alpha_{\constraint}}{2}\wassersteinDistance{2}{\mu^\ast}{\nu}^2,
\end{aligned}
\end{equation*}
where the second inequality follows $(\alpha_{\cost}+\lambda\alpha_{\constraint})$-geodesically convexity, existence of a subgradient $\xi\in\subdifferential\cost(\mu^\ast)$ so that $\xi+\lambda\gradient{\mu}\constraint(\mu^\ast)=0$, and~\cref{prop:gradients geodesically convex}.
Since $\nu$ is arbitrary, $\mu^\ast$ is a global minimizer of~\eqref{eq:inequality constrained optimization}. If additionally $\alpha_{\cost}+\lambda\alpha_{\constraint}>0$, $\mu^\ast$ is the strict minimizer.
\end{proof}}
\newcommand{\proofcorfirstordersufficientconditionsconstrained}[1]{
\ifbool{#1}{\begin{proof}}{\begin{proof}[Proof of~\cref{cor:first order sufficient conditions constrained}]}
The proof follows directly from \cref{thm:first order sufficient conditions constrained} with $\mu\mapsto\constraint(\mu)$ and $\mu\mapsto-\constraint(\mu)$.
\end{proof}}
\newcommand{\prooffirstordersufficientconditionsconstrainedwasserstein}[1]{
\ifbool{#1}{\begin{proof}}{\begin{proof}[Proof of~\cref{thm:first order sufficient conditions constrained wasserstein}]}
By~\cref{prop:optimal transport discrepancy}, convexity along interpolation curves of $\cost$, and non-negativity of $\lambda$, the function $\mathcal{L}(\mu)=\cost(\mu)+\lambda\optimalTransportDiscrepancy{c}{\bar\mu}{\mu}$ is $(\lambda\alpha_c+\alpha_J)$-convex along the interpolation curve between $\mu^\ast$ and $\nu$ ``centered'' at $\bar\mu$, defined by
\begin{equation*}
    \mu_t
    \coloneqq
    \pushforward{((1-t)\proj_2+ t\proj_3)}\gamma,
\end{equation*}
with 
\begin{equation*}
    \gamma\in\Pp{2}{\reals^d\times\reals^d\times\reals^d},
    \qquad
    \pushforward{(\proj_{12})}\gamma\in\setOptimalPlans{\bar\mu}{\mu^\ast},
    \qquad 
    \pushforward{(\proj_{13})}\gamma\in\setOptimalPlans{\bar\mu}{\nu},
\end{equation*}
where optimality is intended w.r.t. transport cost $c$. 
Note that $\pushforward{(\proj_{23})}\gamma\in\setPlans{\mu^\ast}{\nu}$ and
\begin{equation*}
    \pushforward{(\proj_2,(1-t)\proj_2+ t\proj_3)}\gamma\in\setPlans{\mu^\ast}{\mu_t},
\end{equation*}
where both plans are generally not optimal between their marginals.
Moreover, by~\cref{prop:sum and multiplication rule}, $\mathcal{L}$ is subdifferentiable at $\mu^\ast$ and $\subdifferential\mathcal{L}(\mu^\ast)=\subdifferential\cost(\mu^\ast)+\lambda\gradient{x}c(\cdot,\optMap{\mu^\ast}{\bar\mu}(x))$. Let $\zeta\in\subdifferential\mathcal{L}(\mu^\ast)$. Then, 
\begin{equation}\label{eq:first order sufficient conditions constrained wasserstein strong differential 1}
\begin{aligned}
    \int_{\reals^d\times\reals^d}&\innerProduct{\zeta(x)}{y-x}\d(\pushforward{(\proj_2,(1-t)\proj_2+ t\proj_3)}\gamma)(x,y)
    \\
    &=
    \int_{\reals^d\times\reals^d}\innerProduct{\zeta(x)}{(1-t)x+ty-x}\d(\pushforward{(\proj_2,\proj_3)}\gamma)(x,y)
    \\
    &=
    t\int_{\reals^d\times\reals^d}\innerProduct{\zeta(x)}{y-x}\d(\pushforward{(\proj_2,\proj_3)}\gamma)(x,y)
\end{aligned}
\end{equation}
and
\begin{equation}\label{eq:first order sufficient conditions constrained wasserstein strong differential 2}
\begin{aligned}
    \int_{\reals^d\times\reals^d}&\norm{y-x}^2\d(\pushforward{(\proj_2,(1-t)\proj_2+ t\proj_3)}\gamma)(x,y)
    \\
    &=
    \int_{\reals^d\times\reals^d}\norm{(1-t)x+ty-x}^2\d(\pushforward{(\proj_2,\proj_3)}\gamma)(x,y)
    \\
    &=
    t^2\int_{\reals^d\times\reals^d}\norm{y-x}^2\d(\pushforward{(\proj_2,\proj_3)}\gamma)(x,y).
\end{aligned}
\end{equation}
Then, \cref{prop:strong differentiability} gives 
\begin{equation}\label{eq:first order sufficient conditions constrained wasserstein strong differential}
\begin{aligned}
    \mathcal{L}(\mu_t)-\mathcal{L}(\mu^\ast)
    &\geq
    \begin{aligned}[t]
    &\int_{\reals^d\times\reals^d}\innerProduct{\zeta(x)}{y-x}\d(\pushforward{(\proj_2,(1-t)\proj_2+ t\proj_3)}\gamma)(x,y)
    \\
    &+\onotation[adapt]{\sqrt{\int_{\reals^d\times\reals^d}\norm{y-x}^2\d(\pushforward{(\proj_2,(1-t)\proj_2+ t\proj_3)}\gamma)(x,y)}}
    \end{aligned}
    \\
    \overset{\eqref{eq:first order sufficient conditions constrained wasserstein strong differential 1},\eqref{eq:first order sufficient conditions constrained wasserstein strong differential 2}}&{=}
    t\int_{\reals^d\times\reals^d}\innerProduct{\zeta(x)}{y-x}\d(\pushforward{(\proj_2,\proj_3)}\gamma)(x,y) + \onotation{t}.
\end{aligned}
\end{equation}
Since $\mathcal{L}$ is $(\alpha_J+\lambda\alpha_c)$-convex along the interpolation curve $\mu_t$, for all $t\in(0,1)$ we have 
\begin{equation*}
\begin{aligned}
    \mathcal{L}(\mu_t)
    &\leq 
    \begin{aligned}[t]
    &(1-t)\mathcal{L}(\mu^\ast)+t\mathcal{L} (\nu)
    -\frac{\alpha_J+\lambda\alpha_c}{2}t(1-t)\int_{\reals^d\times\reals^d}\norm{y-x}^2\d(\pushforward{(\proj_2,\proj_3)}\gamma)(x,y)
    \end{aligned}
    \\
    &\leq
    (1-t)\mathcal{L}(\mu^\ast)+t\mathcal{L} (\nu)-\frac{\alpha_J+\lambda\alpha_c}{2}t(1-t)\wassersteinDistance{2}{\mu^\ast}{\nu}^2,
\end{aligned}
\end{equation*}
where the second inequality follows from $\lambda\alpha_c+\alpha_J\geq 0$ and suboptimality of the transport plan $\pushforward{(\proj_2,\proj_3)}\gamma\in\setPlans{\mu^\ast}{\nu}$. Hence,
\begin{equation}\label{eq:first order sufficient conditions constrained wasserstein convexity}
    \frac{\mathcal{L}(\mu_t)-\mathcal{L}(\mu^\ast)}{t}\leq\mathcal{L}(\nu)-\mathcal{L}(\mu^\ast)-\frac{\alpha_J+\lambda\alpha_c}{2}(1-t)\wassersteinDistance{2}{\mu^\ast}{\nu}^2.    
\end{equation}
Thus,
\begin{align*}
    \mathcal{L}(\nu)-\mathcal{L}(\mu^\ast)-&\frac{\alpha_J+\lambda\alpha_c}{2}\wassersteinDistance{2}{\mu^\ast}{\nu}^2
    \\
    &=
    \liminf_{t\downarrow 0} \mathcal{L}(\nu)-\mathcal{L}(\mu^\ast)-\frac{\alpha_J+\lambda\alpha_c}{2}(1-t)\wassersteinDistance{2}{\mu^\ast}{\nu}^2
    \\
    \overset{\eqref{eq:first order sufficient conditions constrained wasserstein convexity}}&{\geq}
    \liminf_{t\downarrow 0} \frac{\mathcal{L}(\mu_t)-\mathcal{L}(\mu^\ast)}{t}
    \\
    \overset{\eqref{eq:first order sufficient conditions constrained wasserstein strong differential}}&{\geq}
    \liminf_{t\downarrow 0}
    \frac{
    t\int_{\reals^d\times\reals^d}\innerProduct{\zeta(x)}{y-x}\d(\pushforward{(\proj_2,\proj_3))}\gamma)(x,y) + \onotation{t}}{t}
    \\
    &=
    \int_{\reals^d\times\reals^d}\innerProduct{\zeta(x)}{y-x}\d(\pushforward{(\proj_2,\proj_3))}\gamma)(x,y).
\end{align*}
By assumption, there exists $\xi\in\subdifferential J(\mu^\ast)$ so that $\xi+\lambda\gradient{x}c(\cdot,\optMap{\mu}{\bar\mu}(\cdot))=0$. That is, $\zeta=0$ lies in the subdifferential of $\mathcal{L}$ at $\mu^\ast$, which yields
\begin{equation}\label{eq:first order sufficient conditions constrained wasserstein convexity lagrangian final}
    \mathcal{L}(\nu)-\mathcal{L}(\mu^\ast)\geq\frac{\alpha_J+\lambda\alpha_c}{2}\wassersteinDistance{2}{\mu^\ast}{\nu}^2.
\end{equation}
Suppose now that $\nu$ is feasible; i.e.,  $\optimalTransportDiscrepancy{c}{\nu}{\bar\mu}\leq\varepsilon$. Then,
\begin{equation*}
\begin{aligned}
    \cost(\nu)
    &\geq
    \cost(\nu)+\lambda\left(\optimalTransportDiscrepancy{c}{\nu}{\bar\mu}-\varepsilon\right)
    \\
    \overset{\eqref{eq:first order sufficient conditions constrained wasserstein convexity lagrangian final}}&{\geq}
    \cost(\mu^\ast)+\lambda\left(\optimalTransportDiscrepancy{c}{\mu^\ast}{\bar\mu}-\varepsilon\right)+\frac{\alpha_J+\lambda\alpha_c}{2}\wassersteinDistance{2}{\mu^\ast}{\nu}^2
    \\
    &=\cost(\mu^\ast)+\frac{\alpha_J+\lambda\alpha_c}{2}\wassersteinDistance{2}{\mu^\ast}{\nu}^2.
\end{aligned}
\end{equation*}
Since $\nu$ is arbitrary, $\mu^\ast$ is a global minimizer. If $\alpha_J+\lambda\alpha_c>0$, then $\mu^\ast$ is the strict minimizer.
\end{proof}}
\newcommand{\prooffirstordersufficientconditionsconstrainedwassersteincorollary}[1]{
\ifbool{#1}{\begin{proof}}{\begin{proof}[Proof of~\cref{cor:first order sufficient conditions constrained wasserstein}]}
In the case of the squared Wasserstein distance, we have $c(x,y)=\norm{x-y}^2$, and thus $\alpha_c=2$. By~\cref{cor:wasserstein distance} and convexity along generalized geodesics of $\cost$, the function $\mathcal{L}(\mu)=\cost(\mu)+\lambda\wassersteinDistance{2}{\bar\mu}{\mu}^2$ is $(2\lambda+\alpha)$-convex along the generalized geodesic curve between $\mu^\ast$ and $\nu$ ``centered'' at $\bar\mu$. Then, the proof is identical to the proof of~\cref{thm:first order sufficient conditions constrained wasserstein}.
\end{proof}}
\newcommand{\proofabsolutelycontinuityref}[1]{
\ifbool{#1}{\begin{proof}}{\begin{proof}[Proof of~\cref{prop:absolutely continuity ref}]}
Existence of $\refmu'_\delta$ is ensured by density (w.r.t. the Wasserstein distance) of $\Ppabs{2}{\reals^d}$ in $\Pp{2}{\reals^d}$~\cite[Theorem 2.2.7]{Panaretos2020AnSpace}. Then, the result follows from triangle inequality.
\end{proof}}
\newcommand{\proofwassersteinballs}[1]{
\ifbool{#1}{\begin{proof}}{\begin{proof}[Proof of~\cref{prop:wasserstein balls}]}
We prove the two statements for completeness; alternative (but slightly more sophisticated) proofs are in~\cite{Ambrosio2008a,Yue2021}.
\begin{enumerate}
    \item Closedness follows from triangle inequality. To see that $\closedWassersteinBall{2}{\radius}{\refmu}$ is generally not compact, let $d=1$, $\refmu=\diracMeasure{0}$, and $\radius=1$. Define $(\mu_n)_{n\in\naturals}\subset\closedWassersteinBall{2}{1}{\diracMeasure{0}}$ by 
    $\mu_n\coloneqq(1-\frac{1}{n^2})\diracMeasure{0}+\frac{1}{n^2}\diracMeasure{n}$. Assume that there exists a subsequence $\mu_{k_n}\weakconvergence\mu\in\closedWassersteinBall{2}{1}{\diracMeasure{0}}$. Since $\mu_n$ converges narrowly to $\diracMeasure{0}$, it necessarily holds $\mu=\diracMeasure{0}$; else, since weak convergence in $\Pp{2}{\reals^d}$ implies narrow convergence, $\mu_{k_n}$ would converge narrowly to two limits, which is absurd (since narrow convergence is induced by a distance~\cite[Remark 5.1.1]{Ambrosio2008a} and in metric spaces limits are unique). However, by definition of weak convergence in $\Pp{2}{\reals^d}$,
    \begin{equation*}
        \wassersteinDistance{2}{\mu_{k_n}}{\diracMeasure{0}}
        =
        \int_{\reals}x^2\d\mu_{k_n}(x)=\frac{1}{k_n^2}k_n^2=1\to \int_{\reals}x^2\d\mu(x)
        =
        \wassersteinDistance{2}{\mu}{\diracMeasure{0}}.
    \end{equation*}
    That is, $\wassersteinDistance{2}{\mu}{\diracMeasure{0}}=1$.
    Since the Wasserstein distance metrizes weak convergence in $\Pp{2}{\reals^d}$, this contradicts $\mu_{k_n}\weakconvergence\diracMeasure{0}$. So, $(\mu_n)_{n\in\naturals}$ does not admit a convergent subsequence, and $\closedWassersteinBall{2}{1}{\diracMeasure{0}}$ is not compact. 
    
    \item By \cref{cor:wasserstein distance}, $\mu\mapsto\wassersteinDistance{2}{\mu}{\refmu}$ is lower semi-continuous w.r.t. narrow convergence, thus its level sets $\closedWassersteinBall{2}{\radius}{\refmu}$ are closed: Let $(\mu_n)_{n\in\naturals}\subset\closedWassersteinBall{2}{\radius}{\refmu}$ narrowly converge to $\mu\in\Pp{2}{\reals^d}$, then
    \begin{equation*}
        \wassersteinDistance{2}{\mu}{\refmu}
        \leq 
        \liminf_{n\to\infty}\wassersteinDistance{2}{\mu_n}{\refmu}
        \leq \radius,
    \end{equation*}
    and so $\mu\in\closedWassersteinBall{2}{\radius}{\refmu}$. Thus, it suffices to prove that Wasserstein balls are relatively compact (i.e., the closure w.r.t. narrow convergence is compact). 
    By Prokohorov's theorem (\cref{thm:prokhorov} in~\cref{app:preliminaries}), relative compactness is equivalent to $\closedWassersteinBall{2}{\radius}{\refmu}$ being tight. Since 
    \begin{equation*}
        \sup_{\mu\in\closedWassersteinBall{2}{\radius}{\refmu}}\int_{\reals^d}\norm{x}^2\d\mu(x)=
        \sup_{\mu\in\closedWassersteinBall{2}{\radius}{\refmu}}\wassersteinDistance{2}{\mu}{\diracMeasure{0}}^2
        \leq(\varepsilon+\wassersteinDistance{2}{\refmu}{\diracMeasure{0}})^2<+\infty.
    \end{equation*}
    tightness follows from~\cref{app:criterion tightness}.
    \qedhere 
\end{enumerate}
\end{proof}}
\newcommand{\proofdrolinearcost}[1]{
\ifbool{#1}{\begin{proof}}{\begin{proof}[Proof of~\cref{prop:dro linear cost}]}

The proof consists of two parts. 
First, we assume absolute continuity of $\mu^\ast$ to ensure differentiability of the Wasserstein distance, and use the necessary conditions for optimality to construct candidate solutions. We then prove its optimality via sufficient conditions.

Assume $\mu^\ast$ is absolutely continuous. 
By \cref{cor:wasserstein distance}, together with absolutely continuity of $\mu^\ast$ and~\cref{prop:sum and multiplication rule}, the Wasserstein gradient of $K$ reads
\begin{equation*}
    \gradient{\mu}K(\mu)=2\left(\Id-\optMap{\mu^\ast}{\refmu}\right).
\end{equation*}
We can assume that there exists $\varphi\in\Ccinf{\reals^d}$ such that 
\begin{equation*}
    2\int_{\reals^d}\innerProduct{\Id-\optMap{\mu^\ast}{\refmu}}{\gradient{}\varphi}\d\mu^\ast\neq 0.
\end{equation*}
Indeed, if
\begin{equation}
    \int_{\reals^d}\innerProduct{\Id-\optMap{\mu^\ast}{\refmu}}{\gradient{}\varphi}\d\mu^\ast=0
\end{equation}
for all $\varphi\in\Ccinf{\reals^d}$, then, by \cref{lemma:variations}, $\optMap{\mu^\ast}{\refmu}=\Id$. In this case, however, the optimal transport map from $\mu^\ast$ to $\refmu$ would coincide with the identity map, which is clearly suboptimal. 
Thus, we can deploy \cref{thm:inequality:wasserstein lagrange multipliers} to conclude that at optimality there exists $\lambda\geq 0$ such that for $\mu^\ast$-almost every $x\in\reals^d$
\begin{equation*}
    -w+2\lambda\left(x-\optMap{\mu^\ast}{\refmu}(x)\right)=0,
\end{equation*}
where the negative sign appears because we are maximizing instead of minimizing. 
As $w\neq 0$, $\lambda=0$ is certainly not a solution and, by complementary slackness, the maximum lies at the boundary of the Wasserstein ball, so that $\wassersteinDistance{2}{\mu^\ast}{\refmu}=\varepsilon$. Thus, we have
\begin{equation*}
    \optMap{\mu^\ast}{\refmu}(x)
    =
    x-\frac{1}{2\lambda}w.
\end{equation*}
As $\optMap{\mu^\ast}{\refmu}$ is invertible, \cref{prop:inverse optimal transport map} suggests that the optimal transport map from $\refmu$ to $\mu^\ast$ reads
\begin{equation*}
    \optMap{\refmu}{\mu^\ast}(x)
    =
    x+\frac{1}{2\lambda}w.
\end{equation*}
Note that, as a consequence, $\mu^{\ast}$ is absolutely continuous whenever $\refmu$ is absolutely continuous. 
Moreover, 
\begin{equation*}
\begin{aligned}
    \radius^2
    =\wassersteinDistance{2}{\refmu}{\mu^\ast}^2
    =
    \int_{x\in\reals^d}\norm{x-\optMap{\hat\mu}{\mu^\ast}(x)}^2\d\refmu(x)
    =\frac{1}{4\lambda^2}\norm{w}^2.
\end{aligned}
\end{equation*}
Hence, the Wasserstein ball constraint reduces to
\begin{equation*}
    \frac{1}{2\lambda}=\pm\frac{\radius}{\norm{w}}.
\end{equation*}
Since $\lambda$ is non-negative, we have $\lambda=\frac{\norm{w}}{2\varepsilon}$ and thus $\optMap{\refmu}{\mu^\ast}(x)=x+\varepsilon\frac{w}{\norm{w}}$.
Overall, this gives
\begin{align*}
    \cost^\ast
    =
    \expectedValue{\mu^\ast}{\innerProduct{w}{x}}
    =
    \int_{\reals^d}\innerProduct{w}{x+\varepsilon\frac{w}{\norm{w}}}\d\refmu(x)
    =
    \expectedValue{\refmu}{\innerProduct{w}{x}} + \radius\norm{w}.
\end{align*}

We can now use \cref{cor:first order sufficient conditions constrained wasserstein} to prove optimality of $\mu^\ast=\pushforward{(\optMap{\refmu}{\mu^\ast})}\refmu$. Since $\lambda>0$, $\mu^\ast$ is absolutely continuous (since $\optMap{\refmu}{\mu^\ast}$ is invertible and $\refmu$ is absolutely continuous), and $-\cost$ is $0$-convex along generalized geodesics (by~\cref{prop:expectation}), we directly have optimality and uniqueness of $\mu^\ast$. 
\end{proof}}
\newcommand{\prooflemmapropertiesdrovariance}[1]{
\ifbool{#1}{\begin{proof}}{\begin{proof}[Proof of~\cref{lemma:properties dro variance}]}
Continuity w.r.t. weak convergence in $\Pp{2}{\reals^d}$ follows from~\cref{prop:expectation} and~\cref{cor:variance}.
For geodesic convexity, consider $\mu_0, \mu_1\in\Pp{2}{\reals^d}$, $\gamma\in\setPlans{\mu_0}{\mu_1}$, and $\mu_t=\pushforward{((1-t)\proj_1+t\proj_2)}\gamma$. Then,
\begin{align*}
    -&\variance{\mu_t}{\innerProduct{w}{x}}
    \\
    &=
    -\frac{1}{2}\int_{\reals^{d}\times\reals^{d}}\norm{\innerProduct{w}{x-y}}^2\d(\mu_t\productMeasure\d\mu_t)(x,y)
    \\
    &=
    -\frac{1}{2}\int_{\reals^{d}\times\reals^{d}}\int_{\reals^{d}\times\reals^{d}}\norm{\innerProduct{w}{(1-t)x+tx'-(1-t)y-ty'}}^2\d\gamma(x,x')\d\gamma(y,y')
    \\
    &=
    \begin{aligned}[t]
    &-\frac{1}{2}\int_{\reals^{d}\times\reals^{d}}\int_{\reals^{d}\times\reals^{d}}(1-t)\norm{\innerProduct{w}{x-y}}^2 +t\norm{\innerProduct{w}{x'-y'}}^2\d\gamma(x,x')\d\gamma(y,y')
    \\
    &+\frac{1}{2}t(1-t)\int_{\reals^{d}\times\reals^{d}}\int_{\reals^{d}\times\reals^{d}}\norm{\innerProduct{w}{x-x'-y+y'}}^2\d\gamma(x,x')\d\gamma(y,y')
    \end{aligned}
    \\
    &= 
    \begin{aligned}[t]
    &-(1-t)\variance{\mu_0}{\innerProduct{w}{x}}-t\variance{\mu_1}{\innerProduct{w}{x}} 
    \\
    &+\frac{1}{2}t(1-t)\Bigg(\int_{\reals^{d}\times\reals^d}
    \norm{\innerProduct{w}{x-x'}}^2\d\gamma(x,x')+\int_{\reals^{d}\times\reals^d}\norm{\innerProduct{w}{y-y'}}^2\d\gamma(y,y')\Bigg)
    \\
    &-\frac{1}{2}t(1-t)\int_{\reals^{d}\times\reals^{d}}\int_{\reals^{d}\times\reals^{d}}2\innerProduct{w}{x-x'}\innerProduct{w}{y-y'}\d\gamma(x,x')\d\gamma(y,y')
    \end{aligned}
    \\
    \overset{\diamondsuit}&{\leq}
    -(1-t)\variance{\mu_0}{\innerProduct{w}{x}}-t\variance{\mu_1}{\innerProduct{w}{x}} 
    + t(1-t)\int_{\reals^{d}\times\reals^d}\norm{\innerProduct{w}{x-x'}}^2\d\gamma(x,x')
    \\
    \overset{\heartsuit}&{\leq}
    -(1-t)\variance{\mu_0}{\innerProduct{w}{x}}-t\variance{\mu_1}{\innerProduct{w}{x}} 
    +\norm{w}^2 t(1-t)\int_{\reals^{d}\times\reals^d}\norm{x-x'}^2\d\gamma(x,x'),
\end{align*}
where in $\diamondsuit$ we used 
\begin{equation*}
\begin{aligned}
    \int_{\reals^{d}\times\reals^{d}}\int_{\reals^{d}\times\reals^{d}}\!\innerProduct{w}{x-x'}\innerProduct{w}{y-y'}\d\gamma(x,x')\d\gamma(y,y')
    =
    \left(\int_{\reals^d\times\reals^d}\!\innerProduct{w}{x-x'}\d\gamma(x,x')\right)^2\geq 0,
\end{aligned}
\end{equation*}
and in $\heartsuit$ we used the Cauchy-Schwarz inequality~\cite[Theorem 4.2]{Rudin1987}. 
Since $\expectedValue{\mu}{\innerProduct{w}{x}}$ is 0-convex (by~\cref{prop:expectation}), $-\cost$ is $(-2\weight\norm{w}^2)$-convex along any interpolating curve.
\end{proof}}
\newcommand{\proofdrovariance}[1]{
\ifbool{#1}{\begin{proof}}{\begin{proof}[Proof of~\cref{thm:dro variance}]}
We split the proof into two parts.
First, we assume enough regularity (in particular, absolute continuity of $\mu^\ast$) to use the necessary conditions for optimality to construct candidate solutions.
Second, we use sufficient conditions for optimality to show that the probability measure built before is indeed optimal. 

Let us assume that $\mu^\ast$ exists and that it is absolutely continuous with respect to the Lebesgue measure. 
By \cref{thm:wasserstein lagrange multipliers}, there exists $\lambda\geq 0$ such that
\begin{equation}\label{eq:dro variance lagrange multiplier equation}
    -w-2\weight w\innerProduct{w}{\Id{}-\expectedValue{\mu^\ast}{x}}+2\lambda\left(\Id-\optMap{\mu^\ast}{\refmu}\right)=0,
\end{equation}
where the gradient of $-\cost$ is computed via~\cref{prop:expectation,cor:variance} as
\begin{equation*}
    \gradient{\mu}(-\cost)(\mu)
    =
    -w - 2\weight w\innerProduct{w}{x-\expectedValue{\mu^\ast}{x}}
    =
    -w\left(1 + 2\weight\innerProduct{w}{x-\expectedValue{\mu^\ast}{x}}\right).
\end{equation*}
and the gradient of the Wasserstein distance stems from \cref{cor:wasserstein distance}.
With~\eqref{eq:dro variance lagrange multiplier equation}, we can already conclude $\lambda>0$. Else,
$\innerProduct{w}{x}= -\frac{1}{2\weight} + \expectedValue{\mu^\ast}{\innerProduct{w}{x}}$, which implies that 
$\innerProduct{w}{x}$ is constant $\mu^\ast$-almost everywhere (say, $\innerProduct{w}{x}=\beta\in\reals$ for $\mu^\ast$-almost every $x\in\reals^d$) and
\begin{align*}
    \beta
    =-\frac{1}{2\weight}+\int_{\reals^d}\innerProduct{w}{x}\d\mu^\ast(x)
    =-\frac{1}{2\weight}+\beta,
\end{align*}
which is a contradiction. Thus, $\lambda>0$ and complementary slackness yields $\wassersteinDistance{2}{\refmu}{\mu^\ast}=\radius$.
We now characterize the optimal transport map $\optMap{\mu^\ast}{\refmu}$. By~\eqref{eq:dro variance lagrange multiplier equation}, we have the affine optimal transport map
\begin{equation*}
    \optMap{\mu^\ast}{\refmu}
    =\underbrace{\left(\Id-\frac{\weight}{\lambda}w\transpose{w}\right)}_{\eqqcolon A}x \underbrace{-\frac{w}{2\lambda} + \frac{\rho}{\lambda}\expectedValue{\mu^\ast}{\innerProduct{w}{x}}w}_{\eqqcolon b},
\end{equation*}
where $A=\transpose{A}$ and $\spec(A)=\{1,\ldots,1,1-\frac{\weight}{\lambda}\norm{w}^2\}$.
Since $\mu^\ast$ is, by assumption, absolutely continuous, the optimal transport map $\optMap{\mu^\ast}{\refmu}$ is the gradient of a convex function, which implies $\det(A)\geq 0$, and so $\lambda\geq \weight\norm{w}^2$.
Moreover, if $\lambda=\rho\norm{w}^2$, then $A$ is not invertible, which contradicts $\refmu$ being absolutely continuous (since all probability mass would be concentrated on a linear subspace, which has measure 0). 
Thus, $\lambda>\weight\norm{w}^2$, $A$ is invertible, and by \cref{prop:inverse optimal transport map}:
\begin{equation}\label{eq:dro variance opt map}
    \optMap{\refmu}{\mu^\ast}(x)=(\optMap{\mu^\ast}{\refmu})^{-1}(x)=A^{-1}x-A^{-1}b.
\end{equation}
It is readily verified that
\begin{equation*}
    A^{-1}=\eye{}+\frac{\frac{\weight}{\lambda}}{1-\frac{\weight}{\lambda}\norm{w}^2}w\transpose{w}.
\end{equation*}
Since $1-\weight\norm{w}^2/\lambda>0$, $A^{-1}$ is well defined.
By definition, $A+\frac{\weight}{\lambda}w\transpose{w}=\eye{}$, so
\begin{equation}\label{eq:dro variance equality Ainv}
\begin{aligned}
    A^{-1}w
    =\left(1+\frac{\frac{\weight}{\lambda}\norm{w}^2}{1-\frac{\weight}{\lambda}\norm{w}^2}\right)w
    =\frac{1}{1-\frac{\weight}{\lambda}\norm{w}^2}w.
\end{aligned}
\end{equation}
By definition of $b$, we have
\begin{align*}
    b
    &=
    -\frac{w}{2\lambda} + \frac{\rho}{\lambda}\expectedValue{\mu^\ast}{\innerProduct{w}{x}}w
    \\
    &=
    -\frac{w}{2\lambda} + \frac{\rho}{\lambda}\expectedValue{\refmu}{\innerProduct{w}{\optMap{\refmu}{\mu^\ast}(x)}}w
    \\
    \overset{\eqref{eq:dro variance opt map}}&{=}
    -\frac{w}{2\lambda}+\frac{\weight}{\lambda}\left(\expectedValue{\refmu}{\innerProduct{w}{A^{-1}x}}-\innerProduct{w}{A^{-1}b}\right)w,
    \\
    &=
    -\frac{w}{2\lambda}+\frac{\weight}{\lambda}\left(\expectedValue{\refmu}{\innerProduct{A^{-1}w}{x}}-\innerProduct{A^{-1}w}{b}\right)w
    \\
    \overset{\eqref{eq:dro variance equality Ainv}}&{=}
    -\frac{w}{2\lambda}+\frac{\frac{\weight}{\lambda}}{1-\frac{\weight}{\lambda}\norm{w}^2}\expectedValue{\refmu}{\innerProduct{w}{x}}w-\frac{\weight}{\lambda}w\transpose{w}A^{-1}b
    \\
    \overset{\diamondsuit}&{=}
    -\frac{w}{2\lambda}+\frac{\frac{\weight}{\lambda}}{1-\frac{\weight}{\lambda}\norm{w}^2}\expectedValue{\refmu}{\innerProduct{w}{x}}w+b-A^{-1}b,
\end{align*}
where in $\diamondsuit$ we used
\begin{equation*}
    \frac{\weight}{\lambda}w\transpose{w}\inv{A}
    =
    \frac{\weight}{\lambda}w\transpose{(A^{-1}w)}
    \overset{\eqref{eq:dro variance equality Ainv}}{=}
    \frac{\frac{\weight}{\lambda}}{1-\frac{\weight}{\lambda}\norm{w}^2} w\transpose{w}=\eye{}-\inv{A}.
\end{equation*}
Thus,
\begin{equation}\label{eq:dro variance A times b}
    A^{-1}b
    =
    -\frac{w}{2\lambda}+\frac{\frac{\weight}{\lambda}}{1-\frac{\weight}{\lambda}\norm{w}^2}\expectedValue{\refmu}{\innerProduct{w}{x}}w.
\end{equation}
We can now leverage the constraint (which, by complementary slackness, is active) to find $\lambda$:
\begin{equation}\label{eq:dro variance distance}
\begin{aligned}
    \radius^2
    &=
    \wassersteinDistance{2}{\refmu}{\mu^\ast}^2
    \\
    \overset{\eqref{eq:dro variance opt map}}&{=}
    \int_{\reals^d}\norm{x-(A^{-1}x-A^{-1}b)}^2\d\refmu(x)
    \\
    &=
    \int_{\reals^d}\norm{(I-A^{-1})x+A^{-1}b}^2\d\refmu(x)
    \\
    &=
    \int_{\reals^d}\norm{-\frac{\weight}{\lambda}w\transpose{w}A^{-1}x+A^{-1}b}^2\d\refmu(x)
    \\
    \overset{\eqref{eq:dro variance equality Ainv}}&{=}
    \int_{\reals^d}\norm{-\frac{\frac{\weight}{\lambda}}{1-\frac{\weight}{\lambda}\norm{w}^2}\innerProduct{w}{x}w+A^{-1}b}^2\d\refmu(x)
    \\
    \overset{\eqref{eq:dro variance A times b}}&{=}
    \int_{\reals^d}\norm{\frac{\frac{\weight}{\lambda}}{1-\frac{\weight}{\lambda}\norm{w}^2}\left(\innerProduct{w}{x}-\expectedValue{\refmu}{\innerProduct{w}{x}}\right)w+\frac{w}{2\lambda}}^2\d\refmu(x)
    \\
    &=
    \left(\frac{\frac{\weight}{\lambda}\norm{w}}{1-\frac{\weight}{\lambda}\norm{w}^2}\right)^2\variance{\refmu}{\innerProduct{w}{x}}
    +
    \frac{\frac{\weight}{\lambda^2}\norm{w}^2}{1-\frac{\weight}{\lambda}\norm{w}^2}
    \left(\int_{\reals^d}\innerProduct{w}{x}\d\refmu(x)-\expectedValue{\refmu}{\innerProduct{w}{x}}\right)+\frac{\norm{w}^2}{4\lambda^2}
    \\
    &=
    \left(\frac{\frac{\weight}{\lambda}\norm{w}}{1-\frac{\weight}{\lambda}\norm{w}^2}\right)^2\variance{\refmu}{\innerProduct{w}{x}}
    +
    \frac{\norm{w}^2}{4\lambda^2}.
\end{aligned}
\end{equation}
Finally, we multiply \eqref{eq:dro variance distance} by $\lambda^2(1-\frac{\weight}{\lambda}\norm{w}^2)^2/\radius^2$ and rearrange to obtain the fourth-order polynomial equation $f(\lambda)=0$ with 
\begin{equation*}
    f(\lambda)
    \coloneqq 
    \lambda^4
    -2\weight\norm{w}^2 \lambda^3
    +\left(-\frac{\weight^2\norm{w}^2}{\radius^2}\variance{\refmu}{\innerProduct{w}{x}} - \frac{\norm{w}^2}{4\radius^2}+\weight^2\norm{w}^4\right)\lambda^2
    +\frac{\weight\norm{w}^4}{2\radius^2}\lambda
    -\frac{\weight^2\norm{w}^6}{4\radius^2}.
\end{equation*}
To ease the notation, let
\begin{equation*}
    \Lambda\coloneqq\{\lambda>\weight\norm{w}^2: f(\lambda)=0\}.
\end{equation*}
The set $\Lambda$ contains all real roots of $f$ such that $1-\weight\norm{w}^2/\lambda>0$. Values of $\lambda<\weight\norm{w}^2$ would contradict the optimality of the transport map or non-negativity of $\lambda$. Clearly, $\cardinality{\Lambda}\leq 4$; we will now prove $\Lambda$ is non-empty. 
First, $f(\lambda)\to+\infty$ as $\lambda\to+\infty$.
Second, $f(\lambda)\to-\norm{w}^6\weight^4\variance{\refmu}{\innerProduct{w}{x}}/\radius^2<0$ (since $\refmu$ is absolutely continuous, $\variance{\refmu}{\innerProduct{w}{x}}>0$) for $\lambda\downarrow\weight\norm{w}^2>0$. So, by the intermediate value theorem, $\Lambda$ is non-empty and $\cardinality{\Lambda}\geq 1$.
The resulting cost is
\begin{align*}
    \cost^\ast
    \ifbool{compact}{&=}{=}
    \cost(\mu^\ast)
    \ifbool{compact}{\\ &=}{&=}
    \expectedValue{\mu^\ast}{\innerProduct{w}{x}}+\weight\variance{\mu^\ast}{\innerProduct{w}{x}}
    \\
    \overset{\eqref{eq:dro variance opt map}}&{=}
    \expectedValue{\refmu}{\innerProduct{w}{A^{-1}x-A^{-1}b}}+\weight\variance{\refmu}{\innerProduct{w}{A^{-1}x-A^{-1}b}}
    \\
    &=
    \expectedValue{\refmu}{\innerProduct{w}{A^{-1}x}}-\innerProduct{w}{A^{-1}b}+\weight\int_{\reals^d}\innerProduct{w}{A^{-1}x-A^{-1}b-A^{-1}\expectedValue{\refmu}{x}+A^{-1}b}^2\d\refmu(x)
    \\
    \overset{\eqref{eq:dro variance A times b}}&{=}
    \expectedValue{\refmu}{\innerProduct{w}{A^{-1}x}}
    +\frac{\norm{w}^2}{2\lambda}-\frac{\frac{\weight}{\lambda}\norm{w}^2}{1-\frac{\weight}{\lambda}\norm{w}^2}\expectedValue{\refmu}{\innerProduct{w}{x}}
    +\weight\int_{\reals^d}\innerProduct{A^{-1}w}{x-\expectedValue{\refmu}{x}}^2\d\refmu(x)
    \\
    \overset{\eqref{eq:dro variance equality Ainv}}&{=}
    \frac{1-\frac{\weight}{\lambda}\norm{w}^2}{{1-\frac{\weight}{\lambda}\norm{w}^2}}\expectedValue{\refmu}{\innerProduct{w}{x}}+\frac{1}{2\lambda}\norm{w}^2+\weight\left(\frac{1}{1-\frac{\weight}{\lambda}\norm{w}^2}\right)^2\int_{\reals^d}\innerProduct{w}{x-\expectedValue{\refmu}{x}}^2\d\refmu(x)
    \\
    &=
    \expectedValue{\refmu}{\innerProduct{w}{x}}
    +\frac{1}{2\lambda}\norm{w}^2+\weight\left(\frac{1}{1-\frac{\weight}{\lambda}\norm{w}^2}\right)^2\variance{\refmu}{\innerProduct{w}{x}}.
\end{align*}
Overall, this provides with $1\leq\cardinality{\Lambda}\leq 4$ candidate optimal solutions, one for each $\lambda\in\Lambda$.

To show that one of them is indeed optimal, we will use sufficient conditions. 
From above, there exists $\mu^\ast$ absolutely continuous and $\lambda>\weight\norm{w}^2$ such that 
\begin{equation*}
    \gradient{\mu}\cost(\mu^\ast)+\lambda\gradient{\mu}\left(\wassersteinDistance{2}{\mu^\ast}{\refmu}^2-\radius\right)=0.
\end{equation*}
Thanks to \cref{lemma:properties dro variance,cor:first order sufficient conditions constrained wasserstein}, $\mu^\ast$ is the unique global minimizer. In particular, this also proves that $\Lambda$ contains at most one element; else, we would contradict uniqueness. 
\end{proof}}
\newcommand{\prooflemmapropertiesdrostd}[1]{
\ifbool{#1}{\begin{proof}}{\begin{proof}[Proof of~\cref{lemma:properties dro std}]}
Continuity follows from~\cref{prop:expectation} and~\cref{cor:variance}, together with continuity of $x\mapsto-\sqrt{x}$. To prove the upper bound, let $\mu\in\closedWassersteinBall{2}{\radius}{\refmu}$ be arbitrary. Let $\nu=\pushforward{(\innerProduct{w}{x})}\mu\in\Pp{2}{\reals}$ and $\hat\nu=\pushforward{(\innerProduct{w}{x})}\refmu\in\Pp{2}{\reals}$. Then, the Wasserstein distance between $\nu$ and $\hat\nu$ reads
\begin{equation}\label{eq:lemmapropertiesdrostd:preliminary}
\begin{aligned}
    \wassersteinDistance{2}{\nu}{\hat\nu}
    &=
    \wassersteinDistance{2}{\pushforward{(\innerProduct{w}{x})}\mu}{\pushforward{(\innerProduct{w}{x})}\refmu}
    \\
    \overset{\triangle}&{=}
    \sqrt{\min_{\gamma\in\setPlans{\mu}{\refmu}}\int_{\reals^d\times\reals^d}\abs{\innerProduct{w}{x}-\innerProduct{w}{y}}^2\d\gamma(x,y)}
    \\
    \overset{\square}&{\leq}
    \norm{w}\wassersteinDistance{2}{\mu}{\hat\mu}, 
\end{aligned}
\end{equation}
where 
\begin{itemize}
    \item in $\triangle$ we use~\cite[Lemma 6.1]{terpin2023dynamic} or~\cite[Proposition 3]{Aolaritei2023DistributionalTransport}; and
    \item in $\square$ we use linearity of the inner product and Cauchy-Schwarz inequality~\cite[Theorem 4.2]{Rudin1987}.
\end{itemize}
Then, 
\begin{align*}
    \cost(\mu)
    &=
    \expectedValue{\mu}{\innerProduct{w}{x}}
    + \weight\standardDeviation{\mu}{\innerProduct{w}{x}}
    \\
    \overset{\triangle}&{=} 
    \expectedValue{\nu}{y} + \weight\standardDeviation{\nu}{y}
    \\
    &\leq 
    \expectedValue{\hat\nu}{y} + \abs{\expectedValue{\nu}{y}-\expectedValue{\hat\nu}{y}}
    +\weight\standardDeviation{\hat\nu}{y} + \weight\abs{\standardDeviation{\nu}{y}-\standardDeviation{\hat\nu}{y}}
    \\
    \overset{\square}&{\leq}
    \cost(\refmu) + \sqrt{1+\weight^2}\sqrt{\abs{\expectedValue{\nu}{y}-\expectedValue{\hat\nu}{y}}^2 + \abs{\standardDeviation{\nu}{y}-\standardDeviation{\hat\nu}{y}}^2}
    \\
    \overset{\heartsuit}&{\leq}
    \cost(\refmu) + \sqrt{1+\weight^2}\wassersteinDistance{2}{\nu}{\hat\nu}
    \\
    \overset{\eqref{eq:lemmapropertiesdrostd:preliminary}}&{\leq}
    \cost(\refmu) + \norm{w}\sqrt{1+\weight^2}\wassersteinDistance{2}{\mu}{\refmu}
    \\
    &\leq 
    \cost(\refmu) + \radius\norm{w}\sqrt{1+\weight^2},
\end{align*}
where  
\begin{itemize}
    \item in $\triangle$ we use the definition of $\nu$;
    \item in $\square$ we use that
        $\sqrt{a^2+b^2}\geq \frac{a+c b}{\sqrt{1+c^2}}$ for all non-negative $a,b,c\in\nonnegativeReals$; and
    \item in $\heartsuit$ we use (a corollary of) Gelbrich bound~\cite{Gelbrich1990}: 
    If $\mu,\nu\in\Pp{2}{\reals}$ with mean $m_1,m_2\in\reals$ and standard deviation $\sigma_1,\sigma_2\in\nonnegativeReals$, then
    $
    \wassersteinDistance{2}{\mu}{\nu}
        \geq 
        \sqrt{\abs{ m_1-m_2}^2 + \abs{\sigma_1-\sigma_2}^2}
    $.
    \qedhere 
\end{itemize}
\end{proof}}
\newcommand{\proofdrostd}[1]{
\ifbool{#1}{\begin{proof}}{\begin{proof}[Proof of~\cref{thm:dro std}]}
Again, we assume sufficient regularity to use necessary conditions for optimality to construct a solution. We will then prove that this solution is indeed optimal.

Let us assume that $\mu^\ast$ exists and that it is absolutely continuous. Being absolutely continuous, $\standardDeviation{\mu^\ast}{\innerProduct{w}{x}}>0$. Thus, $\mu\mapsto-\standardDeviation{\mu}{\innerProduct{w}{x}}=-\sqrt{\variance{\mu}{\innerProduct{w}{x}}}$ is Wasserstein differentiable at $\mu^\ast$ and,~\cref{prop:expectation,cor:variance} and the sum, multiplication, and chain rules (\cref{prop:sum and multiplication rule,prop:chain rule}), its Wasserstein gradient reads
\begin{equation*}
-w - \frac{\weight}{\standardDeviation{\mu^\ast}{\innerProduct{w}{x}}}w\innerProduct{w}{\Id{}-\expectedValue{\mu^\ast}{x}},
\end{equation*}
which is well-defined since $\standardDeviation{\mu^\ast}{\innerProduct{w}{x}}>0$. As above, the Wasserstein gradient of the constraint follows directly from~\cref{cor:wasserstein distance,prop:sum and multiplication rule} and reads $2(\Id{} - \optMap{\mu^\ast}{\refmu})$. Then, by \cref{thm:inequality:wasserstein lagrange multipliers}, there exists $\lambda\geq 0$ such that
\begin{equation}\label{eq:dro std lagrange multiplier equation}
    -w - \frac{\weight}{\standardDeviation{\mu^\ast}{\innerProduct{w}{x}}}w\innerProduct{w}{\Id{}-\expectedValue{\mu^\ast}{x}}
    +
    2\lambda\left(\Id{} - \optMap{\mu^\ast}{\refmu}\right)=0.
\end{equation}
As for the mean-variance, the case $\lambda=0$ yields a contradiction. Thus, $\lambda>0$ and complementary slackness yields $\wassersteinDistance{2}{\refmu}{\mu^\ast}=\radius$.
Then, \eqref{eq:dro std lagrange multiplier equation} yields the optimal transport map 
\begin{equation}
     \optMap{\mu^\ast}{\refmu}(x)=
     \underbrace{\left(\Id{}-\alpha w\transpose{w}\right)}_{\eqqcolon A}x
     \underbrace{-\frac{w}{2\lambda}+\alpha\expectedValue{\mu^\ast}{\innerProduct{w}{x}}}_{\eqqcolon b}w,
\end{equation}
where $\alpha\coloneqq \frac{\weight}{2\lambda\standardDeviation{\mu^\ast}{\innerProduct{w}{x}}}$, and $A=\transpose{A}$ and $\spec(A)=\{1,\ldots,1,1-\alpha\norm{w}^2\}$.
Since $\mu^\ast$ is, by assumption, absolutely continuous, the optimal transport map $\optMap{\mu^\ast}{\refmu}$ is the gradient of a convex function, and $1-\alpha\norm{w}^2\geq 0$. Moreover, if $1-\alpha\norm{w}^2=0$, then $A$ is not invertible, which contradicts $\refmu$ being absolutely continuous (since all probability mass would be concentrated on a linear subspace, which has measure 0). Thus, by \cref{prop:inverse optimal transport map},
\begin{equation}\label{eq:dro std opt map}
    \optMap{\refmu}{\mu^\ast}(x)=(\optMap{\mu^\ast}{\refmu})^{-1}(x)
    =A^{-1}x-A^{-1}b.
\end{equation}
It is readily verified that 
\begin{equation*}
    A^{-1}=\eye{}+\frac{\alpha}{1-\alpha\norm{w}^2}w\transpose{w},
\end{equation*}
well defined since $1-\alpha\norm{w}^2>0$. Thus, 
\begin{equation}\label{eq:dro std A inverse w}
    A^{-1}w
    =\left(1+\frac{\alpha\norm{w}^2}{1-\alpha\norm{w}^2}\right)w 
    =\frac{1}{1-\alpha\norm{w}^2}w
\end{equation}
and 
\begin{equation}\label{eq:dro std std star}
    \standardDeviation{\mu^\ast}{\innerProduct{w}{x}}
    =
    \sqrt{\transpose{w}A^{-1}\variance{\refmu}{x}A^{-1}w}
    =
    \frac{1}{1-\alpha\norm{w}^2}\standardDeviation{\refmu}{\innerProduct{w}{x}}
\end{equation}
Thus, the definition and \eqref{eq:dro std std star} yield 
\begin{equation*}
    \alpha
    =
    \frac{\weight}{2\lambda\standardDeviation{\mu^\ast}{\innerProduct{w}{x}}}
    =
    \frac{\weight}{2\lambda\standardDeviation{\refmu}{\innerProduct{w}{x}}}(1-\alpha\norm{w}^2),
\end{equation*}
which gives 
\begin{equation}\label{eq:dro std alpha}
    \alpha
    =
    \frac{\weight}{2\lambda\standardDeviation{\refmu}{\innerProduct{w}{x}} + \weight\norm{w}^2}.
\end{equation}
and 
\begin{equation}\label{eq:dro std:lambda non-zero}
    1-\alpha\norm{w}^2
    =
    \frac{2\lambda\standardDeviation{\refmu}{\innerProduct{w}{x}}}{2\lambda\standardDeviation{\refmu}{\innerProduct{w}{x}} + \weight\norm{w}^2}>0.
\end{equation}
Thus, 
\begin{equation*}
\begin{aligned}
    A
    &=
    \eye{}-\frac{\weight}{2\lambda\standardDeviation{\refmu}{\innerProduct{w}{x}} + \weight\norm{w}^2}w\transpose{w}.
\end{aligned}
\end{equation*}
Similarly, 
\begin{equation*}
\begin{aligned}
    b 
    &=
    -\frac{w}{2\lambda}+\alpha\expectedValue{\mu^\ast}{\innerProduct{w}{x}} w
    \\
    \overset{\eqref{eq:dro std opt map}}&{=}
    -\frac{w}{2\lambda}+\alpha \left(\expectedValue{\refmu}{\innerProduct{w}{A^{-1}x}}-\innerProduct{w}{A^{-1}b})\right)w
    \\
    &=
    -\frac{w}{2\lambda}+\alpha \left(\expectedValue{\refmu}{\innerProduct{A^{-1}w}{x}}-\innerProduct{A^{-1}w}{b})\right)w
    \\
    &=
    -\frac{w}{2\lambda}+\frac{\alpha}{1-\alpha\norm{w}^2}\expectedValue{\refmu}{\innerProduct{w}{x}}w-\alpha w\transpose{w}A^{-1}b
    \\
    &=
    -\frac{w}{2\lambda}+\frac{\alpha}{1-\alpha\norm{w}^2}\expectedValue{\refmu}{\innerProduct{w}{x}}w+b-A^{-1}b,
\end{aligned}
\end{equation*}
and so
\begin{equation*}
    A^{-1}b
    =
    -\frac{w}{2\lambda}+\frac{\alpha}{1-\alpha\norm{w}^2}\expectedValue{\refmu}{\innerProduct{w}{x}}w
    \overset{\eqref{eq:dro std alpha}}{=}
    -\frac{w}{2\lambda}+\frac{\weight}{2\lambda\standardDeviation{\refmu}{\innerProduct{w}{x}}} \expectedValue{\refmu}{\innerProduct{w}{x}}w.
\end{equation*}
By~\eqref{eq:dro std:lambda non-zero}, $\lambda$ cannot be zero. Thus, complementary slackness yields $\wassersteinDistance{2}{\refmu}{\mu^\ast}=\radius$.
We can now use the constraint to find $\lambda$. We proceed as in~\eqref{eq:dro variance distance} to get
\begin{equation*}
    \radius^2=\wassersteinDistance{2}{\refmu}{\mu^\ast}^2=\frac{1}{4\lambda^2}\norm{w}^2(1+\weight^2),
\end{equation*}
and so $\lambda^\ast=\frac{1}{2\radius}\norm{w}\sqrt{1+\weight^2}$. The resulting worst-case cost reads
\begin{align*}
    \cost^\ast
    =
    \cost(\mu^\ast)
    &=
    \expectedValue{\mu^\ast}{\innerProduct{w}{x}} + \weight\standardDeviation{\mu^\ast}{\innerProduct{w}{x}}
    \\
    \overset{\eqref{eq:dro std opt map}}&{=}
    \expectedValue{\refmu}{\innerProduct{w}{A^{-1}x-A^{-1}b}} 
    + 
    \weight\standardDeviation{\mu^\ast}{\innerProduct{w}{x}}
    \\
    \overset{\eqref{eq:dro std std star}}&{=}
    \expectedValue{\refmu}{\innerProduct{w}{A^{-1}x}}-\innerProduct{w}{A^{-1}b}
    +
    \frac{2\lambda^\ast\standardDeviation{\refmu}{\innerProduct{w}{x}} + \weight^2\norm{w}^2}{2\lambda^\ast}
    \\
    \overset{\eqref{eq:dro variance A times b}}&{=}
    \begin{aligned}[t]
    &\expectedValue{\refmu}{\innerProduct{A^{-1}w}{x}}+
    \frac{\norm{w}^2}{2\lambda^\ast}-\frac{\weight\norm{w}^2}{2\lambda^\ast\standardDeviation{\refmu}{\innerProduct{w}{x}}} \expectedValue{\refmu}{\innerProduct{w}{x}}
    \\
    &+
    \frac{2\lambda^\ast\standardDeviation{\refmu}{\innerProduct{w}{x}} + \weight^2\norm{w}^2}{2\lambda^\ast}
    \end{aligned}
    \\
    \overset{\eqref{eq:dro std A inverse w}}&{=}
    \begin{aligned}[t]
    &\frac{2\lambda^\ast\standardDeviation{\refmu}{\innerProduct{w}{x}} + \weight\norm{w}^2}{2\lambda^\ast\standardDeviation{\refmu}{\innerProduct{w}{x}}}\expectedValue{\refmu}{\innerProduct{w}{x}}
    \\&+
    \frac{\norm{w}^2}{2\lambda^\ast}-\frac{\weight\norm{w}^2}{2\lambda^\ast\standardDeviation{\refmu}{\innerProduct{w}{x}}} \expectedValue{\refmu}{\innerProduct{w}{x}}
    +
    \frac{2\lambda^\ast\standardDeviation{\refmu}{\innerProduct{w}{x}} + \weight^2\norm{w}^2}{2\lambda^\ast}
    \end{aligned}
    \\
    &=
    \expectedValue{\refmu}{\innerProduct{w}{x}}
    +
    \standardDeviation{\refmu}{\innerProduct{w}{x}} + \frac{1}{2\lambda^\ast}\norm{w}^2(1+\weight^2).
\end{align*}
It is clearly optimal to select $\lambda^\ast=+\frac{1}{2\radius}\norm{w}\sqrt{1+\weight^2}$. The resulting worst-case cost reads
\begin{equation*}
    \cost^\ast
    =
    \cost(\mu^\ast)
    =
    \expectedValue{\refmu}{\innerProduct{w}{x}}
    +
    \standardDeviation{\refmu}{\innerProduct{w}{x}} + \radius\norm{w}\sqrt{1+\weight^2}.
\end{equation*}
Overall, this gives us the candidate optimal solution $\mu^\ast=\pushforward{T}\mu$ with 
\begin{equation*}
    T(x)=
    \left(\eye{}+\frac{\weight\radius}{\norm{w}\sqrt{1+\weight^2}\standardDeviation{\refmu}{\innerProduct{w}{x}}}w\transpose{w}\right)x
    +
    \left(1-\frac{\weight\expectedValue{\refmu}{\innerProduct{w}{x}}}{\standardDeviation{\refmu}{\innerProduct{w}{x}}}\right)\frac{\radius}{\sqrt{1+\weight^2}}\frac{w}{\norm{w}},
\end{equation*}
which is well-defined since $\refmu$ is absolutely continuous and so $\standardDeviation{\refmu}{\innerProduct{w}{x}}>0$. Since $T$ is invertible, $\mu^\ast$ is absolutely continuous too. Thus, the probability measure $\mu^\ast$ indeed satisfies the first-order necessary condition for optimality.

To prove that $\mu^\ast$ is optimal, it suffices to observe that it attains the (a priori) upper bound \eqref{eq:dro std upper bound}.
\end{proof}}
\newcommand{\prooflemmaexistencekl}[1]{
\ifbool{#1}{\begin{proof}}{\begin{proof}[Proof of~\cref{lemma:min entropy wasserstein ball existence}]}
The proof follows by lower semi-continuity of the \gls{acr:kl} divergence (\cref{prop:divergence}) and compactness of Wasserstein balls (\cref{prop:wasserstein balls}) w.r.t. narrow convergence. Since the \gls{acr:kl} divergence evaluates to infinity for non-absolutely continuous measures, the minimizer is necessarily absolutely continuous. 
\end{proof}}
\newcommand{\proofoptimalsolutionkl}[1]{
\ifbool{#1}{\begin{proof}}{\begin{proof}[Proof of~\cref{prop:conditions for optimality kl}]}
By~\cref{prop:divergence} (and~\cref{ex:kl divergence}), $\frac{\gradient{}\rho^\ast}{\rho^\ast}$ is a Wasserstein subgradient of $\kullbackLeibler{\cdot}{\muprior}$ at $\mu^\ast=\rho^\ast\lebesgueMeasure{d}$.
Since both $\muref$ and $\mu^\ast$ are absolutely continuous,~\cite[Proposition 6.2.12]{Ambrosio2008a} gives $\det\gradient{}\optMap{\mu^\ast}{\muref}(x)>0$ for $\mu^\ast$-almost every $x\in\reals^d$.
Thus, the usual change of variables for integrals~\cite[Lemma 5.3.3]{Ambrosio2008a} yields
\begin{equation*}
    \det\gradient{}\optMap{\mu^\ast}{\muref}(x)
    =
    \frac{\rho^\ast(x)}{\rho_\mathrm{r}(\optMap{\mu^\ast}{\muref}(x))}.
\end{equation*}
Then, 
\begin{align*}
    \frac{\gradient{}\rho^\ast(x)}{\rho^\ast(x)}
    &=
    \frac{\gradient{}\left(
    \det\gradient{}\optMap{\mu^\ast}{\muref}(x)\rho_\mathrm{r}(\optMap{\mu^\ast}{\muref}(x))
    \right)}{\det\gradient{}\optMap{\mu^\ast}{\muref}(x)\rho_\mathrm{r}(\optMap{\mu^\ast}{\muref}(x))}
    \\
    &=
    \frac{\gradient{}\det\gradient{}\optMap{\mu^\ast}{\muref}(x)}{\det\gradient{}\optMap{\mu^\ast}{\muref}(x)}
    +
    \frac{\gradient{}(\rho_\mathrm{r}(\optMap{\mu^\ast}{\muref}(x)))}{\rho_\mathrm{r}(\optMap{\mu^\ast}{\muref}(x))}
    \\
    &=
    \sum_{i=1}^d\trace\left((\gradient{}\optMap{\mu^\ast}{\muref}(x))^{-1}
    \diff{\gradient{}\optMap{\mu^\ast}{\muref}(x)}{x_i}
    \right)e_i
    -
    \gradient{}\optMap{\mu^\ast}{\muref}(x)\gradient{}\Vref(\optMap{\mu^\ast}{\muref}(x)),
\end{align*}
where we used Jacobi's formula~\cite{horn2012matrix} for the derivative of the determinant and 
\begin{equation*}
\begin{aligned}
    \gradient{}\rho_\mathrm{r}(\optMap{\mu^\ast}{\muref}(x))
    =
    \gradient{}\left(e^{-\Vref(\optMap{\mu^\ast}{\muref}(x))}\right)
    &=
    -\gradient{}\optMap{\mu^\ast}{\muref}(x)\gradient{}\Vref(\optMap{\mu^\ast}{\muref}(x))e^{-\Vref(\optMap{\mu^\ast}{\muref}(x))}
    \\
    &=
    -\gradient{}\optMap{\mu^\ast}{\muref}(x)\gradient{}\Vref(\optMap{\mu^\ast}{\muref}(x))\rho_\mathrm{r}(\optMap{\mu^\ast}{\muref}(x)).
\end{aligned}
\end{equation*}
The statement then follows directly from~\cref{cor:first order sufficient conditions constrained wasserstein}, considering that the \gls{acr:kl} divergence is convex along generalized geodesics of  (by~\cref{prop:divergence}) and the optimal coupling between two absolutely continuous probability measures is unique and induced by an optimal transport map (by~\cref{thm:brenier}).

In the case of Gaussian probability measures, we first show that \eqref{eq:minimum entropy necessary condition} admits a solution of the form $\optMap{\mu^\ast}{\muref}(x)=Ax+b$ for $A\in\reals^{d\times d}$ symmetric and positive definite and $b\in\reals^d$.  In this case, by~\cref{prop:brenier2}, the optimal transport from $\muref$ to $\mu^\ast$ is
\begin{equation}\label{eq:minimum entropy inverse optimal map}
    \optMap{\muref}{\mu^\ast}(x)=(\optMap{\mu^\ast}{\muref})^{-1}(x)=A^{-1}x-A^{-1}b.
\end{equation}
Since $\gradient{}\optMap{\mu^\ast}{\muref}=A$, \eqref{eq:minimum entropy necessary condition} reduces to
\begin{equation}\label{eq:minimum entropy optimality condition ae}
    \Sprior^{-1} x - \Sprior^{-1}\mprior - A\left(\Sref^{-1}(Ax+b) - \Sref^{-1}\mref\right) + 2\lambda(x - (Ax+b))=0,
\end{equation}
for $\mu^\ast$-almost every $x\in\reals^d$.
By the absolute continuity of $\mu^\ast$, \eqref{eq:minimum entropy optimality condition ae} reduces to 
\begin{equation*}
\begin{aligned}
    \Sprior^{-1} - A\Sref^{-1} A +2\lambda(\eye{}-A) &=0
    \\
    -\Sprior^{-1}\mprior + A\Sref^{-1}b -A\Sref^{-1}\mref -2\lambda b &= 0.
\end{aligned}
\end{equation*}
Basic algebraic manipulations then lead to 
\begin{subequations}\label{eq:minimum entropy eqs}
\begin{align}
    0&=-2\lambda A - A\Sref^{-1}A +  \Sprior^{-1}+2\lambda\eye{},\label{eq:minimum entropy eq A}
    \\
    b&=A(\Sprior^{-1}+2\lambda\eye{})^{-1}(-\Sprior^{-1}\mprior+A\Sref^{-1}\mref).\label{eq:minimum entropy eq b}
\end{align}
\end{subequations}
For any fixed $\lambda\geq 0$, the system of equations~\eqref{eq:minimum entropy eqs} fully characterizes $A$ and $b$. Indeed,~\eqref{eq:minimum entropy eq A} is a continuous-time algebraic Riccati equation (with $A=-\lambda\eye{}$, $N=\Sref$, $B=\eye{}$, and $C=(\Sprior+2\lambda\eye{})^{1/2}$ in the notation in~\cite{wonham1968matrix}). Thus, since $(A,B)$ is stabilizable and $(C,A)$ observable, there exists a unique positive definite solution $A\in\reals^{d\times d}$ for any $\lambda\geq 0$~\cite[Theorem 4.1]{wonham1968matrix}. Moreover, for any $\lambda\geq 0$, $\Sprior^{-1}+2\lambda\eye{}$ is positive definite and thus invertible, and $b$ is well defined by~\eqref{eq:minimum entropy eq b}.We now characterize the Lagrange multiplier. If $\lambda=0$ and, by complementary slackness, the minimizer possibly lies in the Wasserstein ball, \eqref{eq:minimum entropy eq A} yields 
\begin{equation}\label{eq:minimum entropy eq A lambda 0}
    A\Sref^{-1}A=\Sprior^{-1},
\end{equation}
which has a unique positive definite solution
\begin{equation}\label{eq:minimum entropy sol A lambda 0}
    A
    =
    \Sref^{1/2}(\Sref^{-1/2}\Sprior^{-1}\Sref^{-1/2})^{1/2}\Sref^{1/2}
    =
    \Sref^{1/2}(\Sref^{1/2}\Sprior\Sref^{1/2})^{-1/2}\Sref^{1/2}.
\end{equation}
Overall, 
\begin{align*}
    \optMap{\muref}{\mu^\ast}(x)
    =
    (\optMap{\mu^\ast}{\muref})^{-1}(x)
    \overset{\eqref{eq:minimum entropy inverse optimal map}}&{=}
    A^{-1}x-A^{-1}b
    \\
    \overset{\eqref{eq:minimum entropy eq b}}&{=}
    A^{-1}x-\Sprior(-\Sprior^{-1}\mprior+A\Sref^{-1}\mref)
    \\
    \overset{\eqref{eq:minimum entropy eq A lambda 0}}&{=}
    A^{-1}x-\Sprior(-\Sprior^{-1}\mprior+\Sprior^{-1}A^{-1}\mref)
    \\
    &=
    A^{-1}(x-\mref)+\mprior
    \\
    \overset{\eqref{eq:minimum entropy sol A lambda 0}}&{=}
    \Sref^{-1/2}(\Sref^{1/2}\Sprior\Sref^{1/2})^{1/2}\Sref^{-1/2}(x-\mref)+\mprior. 
\end{align*}
It is readily verified~\cite[§1.6.3]{Panaretos2020AnSpace} that $\optMap{\muref}{\mu^\ast}$ coincides with the optimal transport map from $\muref$ to $\muprior$. Thus, we would have $\mu^\ast=\muprior$, which however is not feasible since, by assumption, $\wassersteinDistance{2}{\muprior}{\muref}>\varepsilon$.
Thus, we restrict our attention to $\lambda>0$, which, by complementary slackness, implies that $\mu^\ast$ lies at the boundary of the Wasserstein ball (i.e., $\wassersteinDistance{2}{\mu^\ast}{\muref}=\radius$). Thus,
\begin{align*}
    \radius^2
    &=\int_{\reals^d}\norm{x-(A^{-1}x - A^{-1}b)}^2\d\muref(x)
    \\
    &=
    \int_{\reals^d}\norm{(\eye{}-A^{-1})x + (\Sprior^{-1}+2\lambda\eye{})^{-1}(-\Sprior^{-1}\mprior+A\Sref^{-1}\mref)}^2\d\muref(x)
    \\
    &=
    \begin{aligned}[t]
    &\trace\left((\eye{}-A^{-1})(\Sref+\mref\transpose{\mref})(\eye{}-A^{-1})\right)
    \\&+
    2\innerProduct{(\Sprior^{-1}+2\lambda\eye{})^{-1}(-\Sprior^{-1}\mprior+A\Sref^{-1}\mref)}{(\eye{}-A^{-1})\mref}
    \\&+
    \norm{(\Sprior^{-1}+2\lambda\eye{})^{-1}(-\Sprior^{-1}\mprior+A\Sref^{-1}\mref)}^2
    \end{aligned}
    \\
    &=
    \begin{aligned}[t]
    &\trace\left((\eye{}-A^{-1})^2(\Sref+\mref\transpose{\mref})\right)
    \\&+
    2\innerProduct{(\eye{}-A^{-1})(\Sprior^{-1}+2\lambda\eye{})^{-1}(-\Sprior^{-1}\mprior+A\Sref^{-1}\mref)}{\mref}
    \\&+
    \norm{(\Sprior^{-1}+2\lambda\eye{})^{-1}(-\Sprior^{-1}\mprior+A\Sref^{-1}\mref)}^2,
    \end{aligned}
\end{align*}
where we used~\eqref{eq:minimum entropy inverse optimal map} to express the optimal transport map between $\muref$ and $\mu^\ast$. 
Equivalently, we look for the positive roots of 
\begin{align*}
    f(\lambda)
    &\coloneqq
    \int_{\reals^d}\norm{x-(A^{-1}x - A^{-1}b)}^2\d\muref(x)-\varepsilon^2
    \\
    &=
    \begin{aligned}[t]
    &\trace\left((\eye{}-A^{-1})^2(\Sref+\mref\transpose{\mref})\right)
    \ifbool{compact}{\\&+}{+}
    2\innerProduct{(\eye{}-A^{-1})(\Sprior^{-1}+2\lambda\eye{})^{-1}(-\Sprior^{-1}\mprior+A\Sref^{-1}\mref)}{\mref}
    \\&+
    \norm{(\Sprior^{-1}+2\lambda\eye{})^{-1}(-\Sprior^{-1}\mprior+A\Sref^{-1}\mref)}^2-\varepsilon^2.
\end{aligned}
\end{align*}
We now prove that at least one positive root exists using the intermediate value theorem. Note that $f$ is continuous in $\lambda$, since $A$ is continuous $\lambda$, by~\eqref{eq:minimum entropy eq A} and~\eqref{eq:minimum entropy eq b}.
In particular, for $\lambda\to+\infty$ we have $A\to\eye{}$. Thus, $\lim_{\lambda\to+\infty}f(\lambda)=\radius^2<0$.
Conversely, we know from above that $\lambda=0$ yields $\mu^\ast=\muprior$ and thus
\begin{equation*}
    f(0)
    =\wassersteinDistance{2}{\mu^\ast}{\muref}^2-\varepsilon^2
    =\wassersteinDistance{2}{\muprior}{\muref}^2-\varepsilon^2
    >0.
\end{equation*}
Hence, the intermediate value theorem establishes the existence of $\lambda>0$ so that $f(\lambda)=0$.
Thus, the probability measure
\begin{align*}
    \mu^\ast
    =
    \pushforward{(A^{-1}x-A^{-1}b)}\muref
    \overset{\eqref{eq:minimum entropy eq b}}{=}
    \pushforward{(A^{-1}x-(\Sprior^{-1}+2\lambda\eye{})^{-1}(-\Sprior^{-1}\mprior+A\Sref^{-1}\mref))}\muref,
\end{align*}
where $A$ and $b$ result from~\eqref{eq:minimum entropy eq A} and~\eqref{eq:minimum entropy eq b}, satisfies sufficient conditions for optimality. Clearly, $\mu^\ast$ is also Gaussian, being the pushforward to a Gaussian probability measure through a linear (and invertible) map. 
Finally, the \gls{acr:kl} divergence is convex along generalized geodesics (by~\cref{prop:divergence}) and so its convexity parameter is $\alpha=0$.  Thus, $\lambda>0$ implies uniqueness of $\mu^\ast$, which is therefore the strict minimizer of~\eqref{eq:min entropy wasserstein ball}. As a byproduct, we also have uniqueness of $A$ and, by~\eqref{eq:minimum entropy eq A}, of $\lambda$. Finally, given that $\lambda>0$, complementary slackness implies that $\mu^\ast$ necessarily lies at the boundary of the Wasserstein ball.
\end{proof}}
\setlist[enumerate]{leftmargin=*,label=(\roman*)}
\setlist[itemize]{leftmargin=*}
\crefname{subsection}{section}{sections}
\Crefname{subsection}{Section}{Sections}
\crefname{hypothesis}{Hypothesis}{Hypotheses}
\title{First-order Conditions for Optimization in the Wasserstein Space\thanks{Published in the \emph{SIAM Journal on Mathematics of Data Science}, 7(1), 274-300, 2025. \\ \url{https://doi.org/10.1137/23M156687X}
\funding{This work was supported as a part of NCCR Automation, a National Centre of Competence in Research, funded by the Swiss National Science Foundation (grant number 51NF40\_225155).}}}
\author{Nicolas Lanzetti\thanks{Automatic Control Laboratory, ETH Zürich
  (\email{\{lnicolas,bsaverio,dorfler\}@ethz.ch}, \url{http://people.ee.ethz.ch/\~lnicolas}).}
\and Saverio Bolognani\footnotemark[2]
\and Florian Dörfler\footnotemark[2]}
\def\cost{J}
\def\constraint{K}
\def\multiplier{\lambda}
\def\refmu{\hat\mu}
\def\radius{\varepsilon}
\def\weight{\rho}
\def\muprior{\mu_\mathrm{p}}
\def\muref{\mu_\mathrm{r}}
\def\Vprior{V_\mathrm{p}}
\def\Vref{V_\mathrm{r}}
\def\Sprior{\Sigma_\mathrm{p}}
\def\Sref{\Sigma_\mathrm{r}}
\def\mprior{m_\mathrm{p}}
\def\mref{m_\mathrm{r}}
\begin{document}

\maketitle

\begin{abstract}
We study first-order optimality conditions for constrained optimization in the Wasserstein space, whereby one seeks to minimize a real-valued function over the space of probability measures endowed with the Wasserstein distance. Our analysis combines recent insights on the geometry and the differential structure of the Wasserstein space with more classical calculus of variations. We show that simple rationales such as ``setting the derivative to zero'' and ``gradients are aligned at optimality'' carry over to the Wasserstein space. We deploy our tools to study and solve optimization problems in the setting of distributionally robust optimization and statistical inference. The generality of our methodology allows us to naturally deal with functionals, such as mean-variance, Kullback-Leibler divergence, and Wasserstein distance, which are traditionally difficult to study in a unified framework. 
\end{abstract}

\begin{keywords}
Optimal transport, optimization in the space of probability measures, calculus of variations, distributionally robust optimization
\end{keywords}

\begin{AMS}
49K27, 90C46, 58E30
\end{AMS}

\section{Introduction}\label{sec:intro}
Many problems in artificial intelligence, machine learning, and optimization under uncertainty can be cast as optimization problems over the space of probability measures of the form
\begin{equation}\label{eq:problem intro}
    \inf_{\mu\in\Pp{}{\reals^d}}\{J(\mu): K(\mu)\leq 0\},
\end{equation}
where $\Pp{}{\reals^d}$ is the space of Borel probability measures on $\reals^d$, and $J:\Pp{}{\reals^d}\to\reals$ and $K:\Pp{}{\reals^d}\to\reals$ are real-valued functions over the space of probability measures.
Examples of interest include: 

\begin{example}[Distributionally robust optimization]
\label{ex:dro}
In decision-making problems, one seeks a decision $w\in\mathcal{W}\subseteq\reals^n$ which minimizes the expected cost $\expectedValue{\mu}{f(w,x)}$ of a function $f:\mathcal{W}\times\reals^d\to\reals$, where the quantity $x$ is random with distribution $\mu\in\Pp{}{\reals^d}$. Since the probability measure $\mu$ is rarely known in practice, \gls{acr:dro} aims to minimize the \emph{worst-case} cost over the ambiguity set $\mathcal{P}\subset\Pp{}{\reals^d}$:
\begin{equation*}
    \inf_{w\in\mathcal{W}}\sup_{\mu\in\mathcal{P}}\expectedValue{\mu}{f(w,x)}.
\end{equation*}
Ambiguity sets are usually designed to ensure (statistical) performance guarantees and computational tractability; see~\cite{Blanchet2019,gao2022distributionally,kuhn2019wasserstein,peyman2018datadriven,Rahimian2019} and references therein. A popular choice of ambiguity sets (also subject to study in this work, see~\cref{sec:dro}) are Wasserstein balls, which include all probability measures within a given Wasserstein distance from a prescribed probability measure.  
\end{example}

\begin{example}[Statistical inference]
According to Kullback's principle of minimum cross-entropy~\cite{kullback1959information,shore1980axiomatic,shore1981properties}, the inference of a probability measure $\mu$ given a prior $\bar\mu$ over a class of probability measures $\mathcal{P}$ results from the minimization of the \gls{acr:kl} divergence: 
\begin{equation*}
    \inf_{\mu\in\mathcal{P}}\kullbackLeibler{\mu}{\bar\mu}\coloneqq\int_{\reals^d}\log\left(\frac{\rho(x)}{\bar\rho(x)}\right)\bar\rho(x)\d x,
\end{equation*}
where $\rho$ and $\bar\rho$ are the densities of $\mu$ and $\bar\mu$. Here, $\mathcal{P}$ usually encodes moment constraints, the class of the probability measure, or a ball around a prescribed probability measure.
\end{example}

\begin{example}[Maximum likelihood deconvolution]
Consider the problem of estimating an unknown probability measure $\mu\in\Pp{}{\reals^d}$ based on corrupted observations $Y_1,\ldots,Y_n$, where $Y_i=X_i+Z_i$, $X_1,\ldots,X_n$ are independent copies of $X\sim\mu$, and the errors $Z_1,\ldots,Z_n$ are independent copies of $Z\sim\bar\mu$ (with probability density $\bar\rho$) and independent of $X_1,\ldots,X_n$. A natural candidate is the maximum-likelihood estimator
\begin{equation}\label{eq:mle}
    \hat\mu
    \coloneqq\argmax_{\mu\in\mathcal{P}}
    \sum_{i=1}^n\log\left((\bar\rho\conv\mu)(Y_i)\right)
    \coloneqq
    \sum_{i=1}^n\log\left(\int_{\reals^d}\bar\rho(Y_i-x)\d\mu(x)\right),
\end{equation}
where $\mathcal{P}\subseteq\mathcal{P}(\reals^d)$ is a class of probability measures. 
It is well known~\cite{Rigollet2018} that~\eqref{eq:mle} can be reformulated as the (unconstrained) minimization of the entropic optimal transport distance~\cite{carlier2017convergence,peyre2019computational,rigollet2022sample} from the empirical probability measure $\frac{1}{n}\sum_{i=1}^n\diracMeasure{Y_i}$.
\end{example}

In general, the optimization problem~\eqref{eq:problem intro} can be formulated (or relaxed) as an infinite-dimensional optimization problem in the (vector) space of signed measures, whereby non-negativity and normalization are introduced as independent constraints. This augmentation permits to access the rich theory of optimization in Banach spaces; e.g., see~\cite{guignard1969generalized,Kurcyusz1976,Leuenberger1997,Maurer1979,Molchanov2004} and references therein.
For instance, when both $J$ and $K$ are restricted to be the expected value of some real-valued function, \eqref{eq:problem intro} culminates in the so-called problem of moments, a particular instance of conic linear problems in the space of signed measures for which powerful dual reformulations exist~\cite{Shapiro2001,Lasserre2009MomentsApplications,klerk2019survey,Schmuedgen2017}.
Recent years also witnessed significant efforts in studying~\eqref{eq:problem intro} in the context of \gls{acr:dro}, whereby one seeks to evaluate the worst-case expected value of a real-valued function over an ambiguity set, oftentimes defined via moments, the \gls{acr:kl} divergence, or the Wasserstein distance~\cite{Ben-Tal2013,Blanchet2019,gao2022distributionally,peyman2018datadriven,Rahimian2019,Wiesemann2014}. In many cases, \gls{acr:dro} problems admit dual reformulations, which sometimes even reduce to a tractable finite-dimensional convex optimization problem. 
By using the expected value as a local linear approximation of a more general sufficiently well-behaved functional over the probability space,~\cite{Kent2021} develops a Frank-Wolfe algorithm in probability spaces for the unconstrained version of \eqref{eq:problem intro}. In most cases, though, dual reformulations only hold when $\cost$ is an expected value, and do not readily generalize to arbitrary constraints.
Alternatively,~\eqref{eq:problem intro} can be studied via von Mises calculus~\cite{mises1947asymptotic}, first introduced in the field of asymptotic statistics, and recently leveraged to study various machine learning algorithms~\cite{chu2019probability} and nonlinear \gls{acr:dro} problems~\cite{sheriff2023nonlinear}.
Other approaches are based on input convex neural network~\cite{alvarez2021optimizing}, embedding of the probability space into a Hilbert space~\cite{dai2017learning}, and Stein variational gradient descent~\cite{liu2016stein}.

In this work, we adopt a different approach, and study~\eqref{eq:problem intro} through the lens of optimal transport and its differential properties. 
The theory of optimal transport, dating back to the seminal works of Monge~\cite{Monge1781MemoireRemblais} and Kantorovich~\cite{Kantorovich1942OnRussian}, provides us with a way to quantify the distance between probability measures, and thus to define a metric space (the so-called Wasserstein space) of probability measures~\cite{Ambrosio2008a,Santambrogio2015,Villani2009a}.
Notably, this is sufficient to endow the probability space with a differential structure. This enlightening theory, pioneered by~\cite{Jordan1998} and later formalized by~\cite{Ambrosio2008a}, culminates in the rigorous formulation of gradient flows in probability spaces; see~\cite{Santambrogio2017EuclideanOverview} for an introduction. Initially, this theory has been widely used to study partial differential equations~\cite{carrillo2022primal,figalli2018global,Otto1996DoublyDescent,Otto2001TheEquation}. More recently, there has been growing interest in the applications of Wasserstein gradient flows for optimization purposes.

For instance, \cite{Chewi2020} relies on Wasserstein gradients to devise computational methods to compute the ``average'' (so-called Wasserstein barycenter) of a collection of probability measures, implicitly defined as an unconstrained optimization problem over the space of probability measures~\cite{Agueh2010,Panaretos2020AnSpace}. 
Other applications in machine learning include the analysis of (over-parametrized) neural network~\cite{bach2021gradient,Chizat2018OnTransport,chizat2022infinite}, approximate inference~\cite{frogner2020approximate}, variational inference~\cite{lambert2022variational}, back-propagation of gradients through discrete random variables~\cite{cheng2019straight}, and policy optimization in reinforcement learning~\cite{zhang2018policy}.
In~\cite{Arbel2019Maximum,Salim2020TheWasserstein}, Wasserstein gradient flows are used to design iterative algorithms for the unconstrained version of~\eqref{eq:problem intro}.
In~\cite{Bonnet2019a,bonnet2021necessary,Bonnet2019}, (sub-)differential calculus in the Wasserstein space is used to formulate a Pontryagin Maximum Principle for (constrained) optimal control problems in the Wasserstein space, where the system dynamics is a transport equation with non-local velocity.
Our work also hinges upon (sub-)differential calculus in the Wasserstein space to study first-order necessary and sufficient conditions for optimality of the static optimization problem~\eqref{eq:problem intro}. In particular, our definitions of Wasserstein subdifferential and gradient (see~\cref{def:wasserstein differentiable,def:wasserstein subdifferentiable}) coincide with~\cite[Definitions 2.11 and 2.12]{Bonnet2019a} and, on measures with compact support, with their localized versions~\cite[Definitions 3.1 and 3.3]{bonnet2021necessary}. They are special cases of the more general definitions of extended Wasserstein subdifferential and gradient, introduced in~\cite[Definition 10.3.1]{Ambrosio2008a} and used in~\cite[Definitions 5 and 7]{Bonnet2019}.

The study of~\eqref{eq:problem intro} in the probability space, and not in the space of signed measures, allows us to formulate simple, intuitive, and interpretable necessary and sufficient conditions for optimality, which oftentimes formally resemble their Euclidean counterparts (e.g., ``set the gradient to zero''). In particular, we do not need to repeatedly deal with normalization and non-negativity of the signed measures.

\paragraph{Contributions}
We derive novel necessary and sufficient KKT-type conditions for optimality of~\eqref{eq:problem intro}. Our analysis combines recent advances in the geometry and the differential structure of the Wasserstein space (for which we also extend several fundamental results) with classical methods from calculus of variations.
We complement our theoretic analysis with many examples of functionals over the Wasserstein space, for which we study the differentiability properties and provide expressions for their Wasserstein (sub)gradient. Finally, we show that our methodology can be utilized to solve mean-variance and mean-standard deviation \gls{acr:dro} and statistical inference with the \gls{acr:kl} divergence, sometimes even in closed form. While not representing the core of this work, the study of these problems is of independent interest. 

\paragraph{Organization}
This paper unfolds as follows. In~\cref{sec:preliminaries}, we recall and extend preliminaries in optimal transport and the Wasserstein space. In~\cref{sec:optimality}, we study necessary and sufficient conditions for optimality of~\eqref{eq:problem intro}. In~\cref{sec:dro,sec:kl}, we deploy our methodology to solve various optimization problems in \gls{acr:dro} and statistical inference with the \gls{acr:kl} divergence. Finally, \cref{sec:conclusions} draws the conclusions of this paper. 
All proofs are deferred to the appendix.

\paragraph{Notation}
We will use several classes of real-valued continuous functions $f:\reals^d\to\reals$, whereby $\reals^d$ is endowed with the Euclidean inner product $\innerProduct{\cdot}{\cdot}$ and induced norm $\Vert\cdot\Vert$.
We denote by $\C{p}{\reals^d}$ the space of $p$-time continuously differentiable functions, by $\Cb{\reals^d}$ the space of bounded continuous functions, by $\Ccinf{\reals^d}$ the space of smooth (i.e., infinitely differentiable) functions with compact support, and by $\Wrloc{1}{\reals^d; \mu}$ the (Sobolev) space of locally $\mu$-integrable functions whose gradient is locally $\mu$-integrable. 
A function $f:\reals^d\to\reals$ is $\alpha$-convex, with $\alpha\in\reals$, if $f-\alpha\norm{x}^2$ is convex; in particular, we allow the convexity parameter $\alpha$ to be non-positive.
We denote the gradient of $f:\reals^d\to\reals$ by $\gradient{}f$ and its Hessian by $\gradient{}^2f$.
Similarly, we denote the Jacobian of $g:\reals^d\to\reals^d$ by $\gradient{}g$.
We use the notation $\Lp{2}{\reals^d;\mu}$ to denote the space of real-valued $\mu$-measurable functions with bounded $2$-norm (where integration is w.r.t. $\mu$) and the notation $\Lp{2}{\reals^d,\reals^d;\mu}$ for functions $f:\reals^d\to\reals^d$. We denote the identity function on $\reals^d$ by $\Id$ and canonical projections by $\proj$ (e.g., $\proj_1:\reals^d\times\reals^d\to\reals^d, (x,y)\mapsto x$). 
We say $f(x)=\onotation{g(x)}$ for non-negative $f$ and positive $g$ if $\lim_{x\to 0}\frac{f(x)}{g(x)}=0$; in particular, $\lim_{x\to 0}\frac{\onotation{x}}{x}=0$.
Finally, for a real number $\alpha\in\reals$, we define the negative part via $\negativePart{\alpha}\coloneqq |\min\{\alpha,0\}|\geq 0$.

\section{Optimal Transport and the Wasserstein Space}\label{sec:preliminaries}

\def\cost{J}

In this section, we review and extend various preliminaries in optimal transport and the Wasserstein space, including some background (\cref{subsec:background in mt ot ws}), interpolation and geodesic convexity (\cref{subsec:interpolation and geodesic convexity}), differential calculus (\cref{subsec:differential calculus in the wasserstein space}), and examples of smooth functionals (\cref{subsec:examples of smooth functionals}). 
\ifbool{arxiv}{The proofs of most of the statements of this section are deferred to~\cref{app:proofs}.}{}

\subsection{Background in Measure Theory, Optimal Transport, and Wasserstein Space}\label{subsec:background in mt ot ws}
We start with basics in measure theory and optimal transport. \ifbool{showproofs}{We defer some technical details to~\cref{app:preliminaries}, and w}{W}e refer the reader to~\cite{Ambrosio2008a,figalli2021invitation,Santambrogio2015,Villani2009a} for a comprehensive review. 
We denote by $\spaceProbabilityMeasures{\reals^d}$ the set of Borel probability measures on $\reals^d$, and by
\begin{equation*}
    \Pp{2}{\reals^d}
    \coloneqq 
    \left\{\mu\in\spaceProbabilityMeasures{\reals^d}: \int_{\reals^d}\norm{x}^2\d\mu(x)<+\infty\right\}
\end{equation*}
the set of probability measures with finite second moment. 
We use the notation $\mu\ll\nu$ to indicate that $\mu$ is \emph{absolutely continuous} with respect to $\nu$. In particular, we denote by $\Ppabs{2}{\reals^d}\subset\Pp{2}{\reals^d}$ the space of absolutely continuous probability measures (with respect to the Lebesgue measure on $\reals^d$, denoted by $\lebesgueMeasure{d}$).
The \emph{pushforward} of a measure $\mu\in\spaceProbabilityMeasures{\reals^d}$ through a Borel map $T:\reals^d\to\reals^d$, denoted by $\pushforward{T}\mu$, is defined by $(\pushforward{T}\mu)(A)\coloneqq\mu(T^{-1}(A))$ for all Borel sets $A\subseteq\reals^d$. We recall that for any $f:\reals^d\to\reals$ $\pushforward{T}\mu$-integrable (or non-negative measurable/Borel bounded)
\begin{equation}\label{eq:integral and pushforward}
    \int_{\reals^d}f(x)\d(\pushforward{T}\mu)(x)
    =
    \int_{\reals^d}f(T(x))\d\mu(x).
\end{equation}
Following \cite{Villani2009a}, we define two notions of convergence in $\Pp{2}{\reals^d}$: 
\begin{enumerate}
    \item \emph{narrow convergence}: $(\mu_n)_{n\in\naturals}\subset\Pp{2}{\reals^d}$ converges narrowly to $\mu\in\Pp{2}{\reals^d}$ if
    \begin{equation*}
        \int_{\reals^d}\phi(x)\d\mu_n(x)\to\int_{\reals^d}\phi(x)\d\mu(x)\qquad\forall\phi\in\Cb{\reals^d}.
    \end{equation*}
    
    \item \emph{weak convergence in $\Pp{2}{\reals^d}$}: $(\mu_n)_{n\in\naturals}\subset\Pp{2}{\reals^d}$ converges weakly in $\Pp{2}{\reals^d}$ to $\mu\in\Pp{2}{\reals^d}$, denoted by $\mu_n\weakconvergence\mu$, if
    \begin{equation*}
    \begin{aligned}
        \int_{\reals^d}\phi(x)\d\mu_n(x)\to\int_{\reals^d}\phi(x)\d\mu(x) && \forall\phi\in\Cb{\reals^d} 
        \quad\text{and}\quad 
        \int_{\reals^d}x^2\d\mu_n(x)\to\int_{\reals^d}x^2\d\mu(x).
    \end{aligned}
    \end{equation*}
    Equivalently~\cite[Definition 6.8]{Villani2009a}, $\mu_n\weakconvergence\mu$ if for all continuous functions $\phi\in\C{0}{\reals^d}$ with $|\phi(x)|\leq C(1+\norm{x}^2)$ for $C\in\reals$ we have\ifbool{compact}{
    $\int_{\reals^d}\phi(x)\d\mu_n(x)\to\int_{\reals^d}\phi(x)\d\mu(x).$}{
    \begin{equation*}
        \int_{\reals^d}\phi(x)\d\mu_n(x)\to\int_{\reals^d}\phi(x)\d\mu(x).
    \end{equation*}}
\end{enumerate}
Of course, weak convergence in $\Pp{2}{\reals^d}$ implies narrow convergence. The converse, though, is not true, as $x\mapsto x^2\notin\Cb{\reals^d}$. 
\ifbool{compact}{}{
\begin{example}[Narrow convergence $\not\Rightarrow$ weak convergence in $\Pp{2}{\reals^d}$]
\label{ex:narrow convergence and weak convergence}
Let $(\mu_n)_{n\in\naturals}\subset\Pp{2}{\reals}$ be defined by $\mu_n\coloneqq(1-\frac{1}{n^2})\diracMeasure{0}+\frac{1}{n^2}\diracMeasure{n}$. Then, $\mu_n$ converges narrowly to $\diracMeasure{0}$: for any $\phi\in\Cb{\reals}$
\begin{equation*}
    \int_\reals\phi(x)\d\mu_n(x)=\left(1-\frac{1}{n^2}\right)\phi(0)+\frac{1}{n^2}\phi(n)\to\phi(0)=\int_{\reals}\phi(x)\d\delta_0(x),
\end{equation*}
where we used that $\phi$ is bounded. However, $\mu_n\not\weakconvergence\diracMeasure{0}$ as 
\begin{equation*}
    \int_{\reals}x^2\d\mu_n(x)=\frac{1}{n^2}n^2=1\not\to 0=\int_{\reals}x^2\d\diracMeasure{0}.
    \qedhere 
\end{equation*}
\end{example}}

Given $\mu,\nu\in\Pp{2}{\reals^d}$ we say that $\gamma\in\Pp{2}{\reals^d\times\reals^d}$ is a \emph{plan} (or \emph{coupling}) between $\mu$ and $\nu$ if $\pushforward{(\proj_1)}\gamma=\mu$ and $\pushforward{(\proj_2)}\gamma=\nu$, where $\proj_1,\proj_2:\reals^d\times\reals^d\to\reals^d$ are the canonical projections; equivalently for all $\phi,\psi:\reals^d\to\reals$ integrable
\begin{equation*}
\begin{aligned}
    \int_{\reals^d\times\reals^d}\phi(x)\d\gamma(x,y)
    =\int_{\reals^d}\phi(x)\d\mu(x)
    \quad\text{and}\quad 
    \int_{\reals^d\times\reals^d}\psi(y)\d\gamma(x,y)
    =\int_{\reals^d}\psi(y)\d\nu(y).\\
\end{aligned}
\end{equation*}
We denote by  $\setPlans{\mu}{\nu}\subset\spaceProbabilityMeasures{\reals^d\times\reals^d}$ the set of couplings of $\mu$ and $\nu$. Since the product measure $\mu\productMeasure\nu$ has marginals $\mu$ and $\nu$, respectively, $\setPlans{\mu}{\nu}$ is non-empty. 
We are now ready to recall the definition of the Wasserstein distance: 

\begin{definition}[Wasserstein distance~\protect{\cite[§7.1]{Ambrosio2008a}}]
Let $\mu,\nu\in\Pp{2}{\reals^d}$. We define the (type 2) \emph{Wasserstein distance} between $\mu$ and $\nu$ as
\begin{equation}\label{eq:wasserstein distance}
    \wassersteinDistance{2}{\mu}{\nu}
    \coloneqq
    \left(\min_{\gamma\in\setPlans{\mu}{\nu}}\int_{\reals^d\times\reals^d}\norm{x-y}^2\d\gamma(x,y)\right)^{\frac{1}{2}}.
\end{equation}
We denote by $\setOptimalPlans{\mu}{\nu}$ the set of couplings attaining the minimum in \eqref{eq:wasserstein distance}, and we call any $\gamma\in\setOptimalPlans{\mu}{\nu}$ an \emph{optimal transport plan} between $\mu$ and $\nu$.
\end{definition}

A compactness and lower semi-continuity argument (with respect to the topology induced by \emph{narrow} convergence) shows that $\setOptimalPlans{\mu}{\nu}$ is non-empty: The minimum in \eqref{eq:wasserstein distance} is attained by some optimal transport plan $\gamma$~\cite[§4]{Villani2009a}.
If $\gamma\in\setOptimalPlans{\mu}{\nu}$ is of the form $\pushforward{(\Id,\optMap{\mu}{\nu})}\mu$ for some measurable $\optMap{\mu}{\nu}:\reals^d\to\reals^d$, we say that $\gamma$ is induced by an optimal transport \emph{map} which pushes $\mu$ into $\nu$; i.e., $\nu=\pushforward{(\optMap{\mu}{\nu})}\mu$, $\optMap{\mu}{\nu}$ yields the minimum transportation cost, and $\optMap{\mu}{\nu}$ is an optimal transport map between $\mu$ and $\nu$. 
The celebrated Brenier's theorem~\cite[Theorem 6.2.4]{Ambrosio2008a} sheds light on existence of optimal transport maps. 

\begin{theorem}[Brenier's theorem~\protect{\cite[Theorem 6.2.4]{Ambrosio2008a}}]\label{thm:brenier}
Let $\mu,\nu\in\Pp{2}{\reals^d}$, and assume that $\mu$ is absolutely continuous w.r.t. the Lebesgue measure. Then, there exists a unique optimal transport plan $\gamma$, induced by a unique optimal transport map $\optMap{\mu}{\nu}$, and $\optMap{\mu}{\nu}=\nabla\phi$ $\mu$-almost everywhere for some convex function $\phi:\reals^d\to\reals$.
\end{theorem}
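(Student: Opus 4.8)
## Proof Proposal for Brenier's Theorem

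The plan is to follow the classical route through Kantorovich duality and cyclical monotonicity, which is the standard proof (see Villani, Ambrosio--Gigli--Savaré, Santambrogio). First I would establish existence of an optimal plan $\gamma \in \setOptimalPlans{\mu}{\nu}$: this is the compactness/lower-semicontinuity argument already invoked in the excerpt (the cost $(x,y)\mapsto\norm{x-y}^2$ is lower semi-continuous and non-negative, $\setPlans{\mu}{\nu}$ is tight since its marginals are fixed, hence narrowly compact by Prokhorov, and the functional $\gamma\mapsto\int\norm{x-y}^2\d\gamma$ is narrowly lower semi-continuous). So the real content is the \emph{structure} of $\gamma$: showing it is concentrated on the graph of a map which is a gradient of a convex function, plus uniqueness.

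The key steps, in order: (1) invoke Kantorovich duality to obtain a pair of conjugate potentials; equivalently, argue directly that the support of any optimal $\gamma$ is $c$-cyclically monotone for the cost $c(x,y) = \norm{x-y}^2$. (2) Expand $\norm{x-y}^2 = \norm{x}^2 - 2\innerProduct{x}{y} + \norm{y}^2$ and observe that minimizing $\int \norm{x-y}^2 \d\gamma$ is equivalent to maximizing $\int \innerProduct{x}{y}\d\gamma$ since the marginals (and hence $\int\norm{x}^2\d\mu$, $\int\norm{y}^2\d\nu$) are fixed. Thus $\support(\gamma)$ is cyclically monotone in the usual (Euclidean) sense. (3) By Rockafellar's theorem, a cyclically monotone set is contained in the subdifferential graph $\partial\phi$ of some proper lower semi-continuous convex function $\phi:\reals^d\to\reals\cup\{+\infty\}$; construct $\phi$ explicitly as a supremum of affine functions built from a chain of points in $\support(\gamma)$. (4) Use absolute continuity of $\mu$: a convex function is differentiable Lebesgue-a.e. (Rademacher/Alexandrov), hence $\mu$-a.e., so $\partial\phi(x) = \{\gradient{}\phi(x)\}$ for $\mu$-a.e. $x$. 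Since $\gamma$-a.e. $(x,y)$ satisfies $y\in\partial\phi(x)$, we get $y = \gradient{}\phi(x)$ $\gamma$-a.e., i.e. $\gamma = \pushforward{(\Id,\gradient{}\phi)}\mu$ is induced by the map $\optMap{\mu}{\nu}=\gradient{}\phi$, and $\pushforward{(\gradient{}\phi)}\mu = \nu$. (5) Uniqueness: if $\gamma_1,\gamma_2$ are both optimal, then $\tfrac12(\gamma_1+\gamma_2)$ is optimal, so it too is supported on a single subdifferential graph; but a measure supported on $\graph(\partial\phi)$ with first marginal $\mu$ absolutely continuous is forced to be a graph, and two graphs whose average is a graph must coincide $\mu$-a.e. — hence $\gamma_1=\gamma_2$ and the map is unique $\mu$-a.e.

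The main obstacle — the step that carries the real weight — is (3), constructing the convex potential from cyclical monotonicity (Rockafellar's theorem): one fixes a base point $(x_0,y_0)\in\support(\gamma)$ and sets $\phi(x) = \sup\{ \innerProduct{x - x_n}{y_n} + \innerProduct{x_n - x_{n-1}}{y_{n-1}} + \cdots + \innerProduct{x_1 - x_0}{y_0}\}$ over all finite chains $(x_i,y_i)\in\support(\gamma)$, and must check this is finite (not identically $+\infty$), convex, lower semi-continuous, and that $y\in\partial\phi(x)$ for every $(x,y)\in\support(\gamma)$ — this last inequality is precisely cyclical monotonicity unwound. A secondary technical point is the measurable-selection/graph argument in step (4)–(5): that $\mu$ being absolutely continuous and $\gamma$ being supported on $\graph(\partial\phi)$ genuinely forces $\gamma$ to be induced by a map, which uses that the set where $\phi$ fails to be differentiable is Lebesgue-null hence $\mu$-null. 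Given the scope of the paper, it would also be entirely legitimate to simply cite \cite{Villani2009a,Ambrosio2008a,Santambrogio2015,figalli2021invitation} for this classical result rather than reproduce the argument; the proposal above is the route one would take if a self-contained proof were wanted.
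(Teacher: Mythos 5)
The paper does not prove this theorem at all---it is stated and cited directly from the standard references \cite{Villani2009a,Ambrosio2008a,Santambrogio2015,figalli2021invitation}, as is appropriate for such a classical result. Your proposal is a correct sketch of the standard proof (reduction to maximizing $\int\langle x,y\rangle\,\d\gamma$, cyclical monotonicity of the support of an optimal plan, Rockafellar's construction of the convex potential, Lebesgue-a.e.\ differentiability of convex functions combined with absolute continuity of $\mu$, and the averaging argument for uniqueness), and you rightly identify the construction of the potential via Rockafellar's theorem as the step carrying the main weight. Since the paper cites rather than proves, there is nothing to compare beyond noting that your final remark---that citing the literature is the legitimate move here---matches what the authors actually did.
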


\cref{thm:brenier} suggests that, whenever $\mu\ll\lebesgueMeasure{d}$, it is safe to define \emph{the} optimal transport map (inducing \emph{the} optimal transport plan) between $\mu$ and $\nu$. Its inverse, if it exists, reveals the optimal transport map in ``reverse direction''.

\begin{proposition}[inverse of optimal transport maps]
\label{prop:inverse optimal transport map}
Let $\mu,\nu\in\Pp{}{\reals^d}$. Assume an optimal transport map $\optMap{\mu}{\nu}$ exists, and it is invertible $\mu$-almost everywhere. Then, $\optMap{\nu}{\mu}$ exists, and it equals $(\optMap{\mu}{\nu})^{-1}$ $\nu$-almost everywhere.
\end{proposition}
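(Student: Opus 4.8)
The plan is to show that $\pushforward{(\optMap{\mu}{\nu})^{-1}}\nu = \mu$ and that the map $(\optMap{\mu}{\nu})^{-1}$ achieves the Wasserstein cost, hence it is an optimal transport map from $\nu$ to $\mu$; uniqueness of the corresponding plan is not claimed, so it suffices to produce one optimal map. First I would verify the pushforward identity: for any Borel set $A\subseteq\reals^d$, using that $\optMap{\mu}{\nu}$ is invertible $\mu$-a.e.\ and $\nu = \pushforward{(\optMap{\mu}{\nu})}\mu$, we have
\begin{equation*}
    (\pushforward{(\optMap{\mu}{\nu})^{-1}}\nu)(A)
    =
    \nu\big(((\optMap{\mu}{\nu})^{-1})^{-1}(A)\big)
    =
    \nu\big(\optMap{\mu}{\nu}(A)\big)
    =
    \mu\big((\optMap{\mu}{\nu})^{-1}(\optMap{\mu}{\nu}(A))\big)
    =
    \mu(A),
\end{equation*}
where the middle equality uses $\nu(B) = (\pushforward{(\optMap{\mu}{\nu})}\mu)(B) = \mu((\optMap{\mu}{\nu})^{-1}(B))$ with $B = \optMap{\mu}{\nu}(A)$, and the last equality uses invertibility $\mu$-a.e. (Some care with null sets and measurability of images is needed here, but this is routine.) So $(\optMap{\mu}{\nu})^{-1}$ is a transport map from $\nu$ to $\mu$, and hence $\pushforward{(\Id,(\optMap{\mu}{\nu})^{-1})}\nu\in\setPlans{\nu}{\mu}$.

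Next I would compute its cost and compare with $\wassersteinDistance{2}{\mu}{\nu}^2$. By the change-of-variables formula for pushforwards,
\begin{equation*}
\begin{aligned}
    \int_{\reals^d}\norm{(\optMap{\mu}{\nu})^{-1}(y) - y}^2\d\nu(y)
    &=
    \int_{\reals^d}\norm{(\optMap{\mu}{\nu})^{-1}(\optMap{\mu}{\nu}(x)) - \optMap{\mu}{\nu}(x)}^2\d\mu(x)
    \\
    &=
    \int_{\reals^d}\norm{x - \optMap{\mu}{\nu}(x)}^2\d\mu(x)
    =
    \wassersteinDistance{2}{\mu}{\nu}^2,
\end{aligned}
\end{equation*}
using $\nu = \pushforward{(\optMap{\mu}{\nu})}\mu$ for the first equality, invertibility $\mu$-a.e.\ for the second, and optimality of $\optMap{\mu}{\nu}$ for the last. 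By symmetry of the Wasserstein distance, $\wassersteinDistance{2}{\mu}{\nu}^2 = \wassersteinDistance{2}{\nu}{\mu}^2$, so the plan $\pushforward{(\Id,(\optMap{\mu}{\nu})^{-1})}\nu$ attains the minimum in the definition of $\wassersteinDistance{2}{\nu}{\mu}$, i.e.\ it lies in $\setOptimalPlans{\nu}{\mu}$. Since it is induced by the map $(\optMap{\mu}{\nu})^{-1}$, this map is an optimal transport map $\optMap{\nu}{\mu}$, and $\optMap{\nu}{\mu} = (\optMap{\mu}{\nu})^{-1}$ $\nu$-a.e., which is the claim.

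The main obstacle I anticipate is purely measure-theoretic bookkeeping rather than anything conceptual: one must make sure that ``invertible $\mu$-almost everywhere'' is used correctly, that the image $\optMap{\mu}{\nu}(A)$ of a Borel set is handled within a framework where the pushforward identities make sense (e.g.\ working up to $\mu$- and $\nu$-null sets, or invoking that a $\mu$-a.e.\ invertible Borel map restricted to a full-measure set has a Borel inverse there), and that $(\optMap{\mu}{\nu})^{-1}$ is genuinely $\nu$-measurable. Given the level of the paper, I expect this can be dispatched quickly, and indeed the cleanest writeup simply follows the author's evident strategy: invoke symmetry of $W_2$ and reduce everything to the single displayed computation above, relegating the null-set details to a remark.
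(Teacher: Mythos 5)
Your proposal is correct and follows essentially the same route as the paper: the key step is the single change-of-variables computation showing $\int_{\reals^d}\norm{(\optMap{\mu}{\nu})^{-1}(y)-y}^2\d\nu(y)=\wassersteinDistance{2}{\mu}{\nu}^2=\wassersteinDistance{2}{\nu}{\mu}^2$, so that $(\optMap{\mu}{\nu})^{-1}$ attains the optimal cost from $\nu$ to $\mu$. You are slightly more explicit than the paper in also verifying the pushforward identity $\pushforward{((\optMap{\mu}{\nu})^{-1})}\nu=\mu$, which the paper leaves implicit; this is a reasonable and harmless addition.
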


\ifbool{showproofs}{\booltrue{showproofinverseoptimaltransportmap}}{
\ifbool{arxiv}{\boolfalse{showproofinverseoptimaltransportmap}}{}} 
\ifbool{showproofinverseoptimaltransportmap}{\proofinverseoptimaltransportmap{booltrue}}{}

\begin{remark}
When both $\mu$ and $\nu$ are absolutely continuous, invertibility of $\optMap{\mu}{\nu}$ and~\cref{prop:inverse optimal transport map} follow directly from Brenier's theorem (\cref{thm:brenier}). \cref{prop:inverse optimal transport map} suggests that the result continues to hold if the probability measures are not absolutely continuous but still admit a $\mu$-almost everywhere invertible optimal transport map.
\end{remark}

There is one final important setting where optimal transport maps exist. 

\begin{proposition}[pushforward via gradients of convex functions~\protect{\cite[Theorem 1.48]{Santambrogio2015}}]\label{prop:brenier2}
Let $\mu\in\Pp{}{\reals^d}$ and $T=\gradient{}\phi$ for some $\phi:\reals^d\to\reals$ convex and differentiable $\mu$-almost everywhere. If $\norm{T}_{\Lp{2}{\reals^d,\reals^d;\mu}}<+\infty$, then an optimal transport plan between $\mu$ and $\pushforward{T}\mu\in\Pp{}{\reals^d}$ is given by the optimal transport map $T$.
\end{proposition}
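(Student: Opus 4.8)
The plan is to reduce the statement to the standard existence/uniqueness theorem for optimal transport via Brenier-type arguments, exploiting that $T=\gradient{}\phi$ with $\phi$ convex. First I would set $\nu\coloneqq\pushforward{T}\mu$ and observe that $\nu\in\Pp{}{\reals^d}$ is well defined: indeed $\nu$ is a Borel probability measure because $T$ is Borel (convex functions are locally Lipschitz hence Borel, and so is their gradient where it exists), and the hypothesis $\norm{T}_{\Lp{2}{\reals^d,\reals^d;\mu}}<+\infty$ gives $\int_{\reals^d}\norm{y}^2\d\nu(y)=\int_{\reals^d}\norm{T(x)}^2\d\mu(x)<+\infty$, so in fact $\nu\in\Pp{2}{\reals^d}$ whenever $\mu\in\Pp{2}{\reals^d}$. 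The candidate plan is $\gamma\coloneqq\pushforward{(\Id,T)}\mu\in\setPlans{\mu}{\nu}$, which has the correct marginals by construction, and whose transport cost is $\int_{\reals^d}\norm{x-T(x)}^2\d\mu(x)<+\infty$ by the triangle inequality in $\Lp{2}{}$ together with $\mu\in\Pp{2}{\reals^d}$ and the $\Lp2$ hypothesis on $T$. So the only thing to prove is optimality of $\gamma$.

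For optimality I would invoke the cyclical monotonicity / duality characterization of optimal plans: a plan $\gamma\in\setPlans{\mu}{\nu}$ is optimal for the quadratic cost if its support is contained in the subdifferential $\partial\phi$ of a convex function (equivalently, if $\support\gamma$ is cyclically monotone). Since $\phi$ is convex and $T=\gradient{}\phi$ $\mu$-a.e., for $\mu$-a.e.\ $x$ the pair $(x,T(x))$ satisfies $T(x)\in\partial\phi(x)$; hence $\support\gamma\subseteq\graph(\partial\phi)\subseteq\partial\phi$, which is a cyclically monotone set. The standard theorem (e.g.\ \cite[Theorem 1.48]{Santambrogio2015}, or the general optimality criterion in \cite[Chapter 5]{Villani2009a}) then yields that any plan concentrated on a cyclically monotone set — equivalently, on the graph of the subdifferential of a convex potential — is optimal, provided the cost is finite, which we have checked. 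Therefore $\gamma$ is an optimal transport plan between $\mu$ and $\nu=\pushforward{T}\mu$, and since it has the form $\pushforward{(\Id,T)}\mu$, it is induced by the optimal transport map $T$.

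The main obstacle is the optimality argument: one must be careful that the finite-cost hypothesis (guaranteed here by $\norm{T}_{\Lp2{\reals^d,\reals^d;\mu}}<+\infty$ and $\mu\in\Pp2{\reals^d}$) is genuinely needed, since cyclical monotonicity alone implies optimality only when the total cost is finite; without integrability the plan concentrated on $\graph(\partial\phi)$ need not be optimal. A secondary, more technical point is measurability: one should note that $\partial\phi(x)$ is single-valued $\lebesgueMeasure{d}$-a.e., and hence $\mu$-a.e.\ if $\mu\ll\lebesgueMeasure{d}$, but since the statement does not assume absolute continuity of $\mu$, one instead simply uses the hypothesis that $T=\gradient{}\phi$ is defined (and equals a selection of $\partial\phi$) $\mu$-a.e., which is exactly what is given. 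Everything else is a direct citation of the cited result, so the proof is essentially a verification that the hypotheses of that theorem are met.
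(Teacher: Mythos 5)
The paper does not prove this proposition; it is stated purely as a citation of~\cite[Theorem 1.48]{Santambrogio2015}. Your argument is the standard route to that result (identify the candidate plan $\gamma=\pushforward{(\Id,T)}\mu$, observe its support lies in the closed, cyclically monotone set $\partial\phi$, and invoke sufficiency of cyclical monotonicity with finite cost), so there is no divergence in approach to report.

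There is, however, one concrete gap: you silently strengthen $\mu\in\Pp{}{\reals^d}$ to $\mu\in\Pp{2}{\reals^d}$. You write ``together with $\mu\in\Pp{2}{\reals^d}$'' both when bounding the cost of $\gamma$ and when invoking the optimality criterion, but the hypotheses give only $\mu\in\Pp{}{\reals^d}$ and $T\in\Lp{2}{\reals^d,\reals^d;\mu}$, which do \emph{not} imply $\int\norm{x}^2\d\mu<+\infty$. If $\mu\notin\Pp{2}{\reals^d}$, the plan $\gamma$ has infinite cost and your finite-cost justification collapses. The fix is cheap but should be stated: since $T\in\Lp{2}{\reals^d,\reals^d;\mu}$ forces $\pushforward{T}\mu\in\Pp{2}{\reals^d}$, when $\mu\notin\Pp{2}{\reals^d}$ the bound $\norm{x-y}^2\geq\frac{1}{2}\norm{x}^2-\norm{y}^2$ shows \emph{every} $\gamma'\in\setPlans{\mu}{\pushforward{T}\mu}$ has infinite cost, so $T$ is trivially optimal, and you may reduce to the case $\mu\in\Pp{2}{\reals^d}$ where your argument runs. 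A smaller, stylistic point: rather than routing through ``cyclically monotone support $+$ finite cost $\Rightarrow$ optimal'' (a nontrivial theorem which for general costs requires care), the self-contained route for this particular plan is Young's inequality $\innerProduct{x}{y}\leq\phi(x)+\phi^*(y)$ with equality when $y=T(x)$. After checking $\phi\in\Lp{1}{\reals^d;\mu}$ (which follows from monotonicity of the subgradient together with $\mu\in\Pp{2}{\reals^d}$ and $T\in\Lp{2}{\reals^d,\reals^d;\mu}$, giving $|\phi(x)|\leq|\phi(0)|+\norm{T(x)}\norm{x}+\norm{p}\norm{x}$ for any fixed $p\in\partial\phi(0)$), this yields $\int\innerProduct{x}{y}\d\gamma'\leq\int\phi\,\d\mu+\int\phi^*\,\d(\pushforward{T}\mu)=\int\innerProduct{x}{T(x)}\d\mu$ for any competitor $\gamma'$, which after expanding the quadratic cost is exactly the optimality of $\gamma$.
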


\cref{prop:brenier2} is non-obvious: There might exist another transport map $\tilde T$ (or a transport plan) yielding a lower transport cost between $\mu$ and $\pushforward{T}\mu$.~\cref{prop:brenier2} can be used to define a perturbation of measures, which will be a crucial building block for our variational analysis.

\begin{lemma}[perturbation of probability measures~\protect{\cite[Corollary 1.1]{bonnet2019optimal}}]
\label{lemma:perturbation transport map}
Let $\mu\in\Pp{2}{\reals^d}$ and $\psi\in\Ccinf{\reals^d}$. Then, there exists $\bar s>0$ such that $\Id+s\gradient{}\psi$ is an optimal transport map from $\mu$ to $\pushforward{(\Id+s\gradient{}\psi)}\mu$ for all $s\in(-\bar s,+\bar s)$, i.e.,
\begin{equation*}
    \optMap{\mu}{\pushforward{(\Id+s\gradient{}\psi)}\mu}
    =
    \Id+s\gradient{}\psi.
\end{equation*}
Further, $\pushforward{(\Id+s\gradient{}\psi)}\mu\in\Pp{2}{\reals^d}$, and the Wasserstein distance between $\mu$ and $\pushforward{(\Id+s\gradient{}\psi)}\mu$ is
\begin{equation*}
    \wassersteinDistance{2}{\mu}{\pushforward{(\Id+s\gradient{}\psi)}\mu}
    =
    |s|\LpNorm{\gradient{}\psi}{2}{\reals^d,\reals^d;\mu}.
\end{equation*}
\end{lemma}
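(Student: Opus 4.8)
The plan is to invoke \cref{prop:brenier2} with $T = \Id + s\gradient{}\psi$, which reduces everything to checking two things: that $T$ has finite $\Lp{2}{\reals^d,\reals^d;\mu}$-norm, and that $T$ is the gradient of a convex function (for $s$ small enough). The $\Lp{2}$-bound is the routine part: expanding $\norm{x + s\gradient{}\psi(x)}^2 \leq 2\norm{x}^2 + 2|s|^2\norm{\gradient{}\psi(x)}^2$ and integrating, the first term is finite since $\mu \in \Pp{2}{\reals^d}$, and the second is finite because $\gradient{}\psi$ is continuous with compact support, hence bounded. This simultaneously shows $\pushforward{T}\mu \in \Pp{2}{\reals^d}$.

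The substantive step is the convexity. Since $\psi$ is smooth, $T = \gradient{}(\tfrac{1}{2}\norm{x}^2 + s\psi(x))$, so it suffices that $x \mapsto \tfrac{1}{2}\norm{x}^2 + s\psi(x)$ is convex, i.e.\ that $I + s\gradient{}^2\psi(x) \succeq 0$ for all $x$. Here I would set $\bar s \coloneqq 1/\max_{x\in\reals^d}\norm{\gradient{}^2\psi(x)}$, the maximum existing by compact support and continuity of $\gradient{}^2\psi$ (treating the trivial case $\gradient{}^2\psi \equiv 0$ separately, where any $\bar s > 0$ works). For $|s| < \bar s$, every eigenvalue of $s\gradient{}^2\psi(x)$ lies in $(-1,1)$, so $I + s\gradient{}^2\psi(x)$ is positive definite, giving convexity. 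Then \cref{prop:brenier2} yields that $T$ is an optimal transport map from $\mu$ to $\pushforward{T}\mu$.

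Finally, for the Wasserstein distance: having established that $T = \Id + s\gradient{}\psi$ is the optimal transport map, the plan $\pushforward{(\Id, T)}\mu$ is optimal, so
\begin{equation*}
    \wassersteinDistance{2}{\mu}{\pushforward{T}\mu}^2
    =
    \int_{\reals^d}\norm{x - T(x)}^2\d\mu(x)
    =
    \int_{\reals^d}\norm{s\gradient{}\psi(x)}^2\d\mu(x)
    =
    s^2\LpNorm{\gradient{}\psi}{2}{\reals^d,\reals^d;\mu}^2,
\end{equation*}
and taking square roots gives the claimed $|s|\LpNorm{\gradient{}\psi}{2}{\reals^d,\reals^d;\mu}$.

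I do not anticipate a genuine obstacle here; the only point requiring care is making sure the maxima of $\norm{\gradient{}\psi}$ and $\norm{\gradient{}^2\psi}$ are actually attained, which is where compactness of $\support{\psi}$ (together with continuity of the gradient/Hessian and the norm) is used, and handling the degenerate case $\gradient{}^2\psi \equiv 0$ so that $\bar s$ is well defined.
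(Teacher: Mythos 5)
Your proposal is correct and follows the paper's proof almost step for step: invoking \cref{prop:brenier2}, verifying the $\Lp{2}{\reals^d,\reals^d;\mu}$-bound via compact support, choosing $\bar s = 1/\max_{x}\norm{\gradient{}^2\psi(x)}$ to guarantee positive definiteness of $I + s\gradient{}^2\psi(x)$, handling the degenerate Hessian case separately, and finishing with the direct computation of the Wasserstein distance from the optimal map. No meaningful differences.
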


\ifbool{showproofs}{\booltrue{showproofperturbationtransportmap}}{
\ifbool{arxiv}{\boolfalse{showproofperturbationtransportmap}}{}} 
\ifbool{showproofperturbationtransportmap}{\proofperturbationtransportmap{booltrue}}{}

Finally, it is well known that the Wasserstein distance $\wassersteinDistance{2}{\cdot}{\cdot}$ defines a distance on $\Pp{2}{\reals^d}$ (but not on $\spaceProbabilityMeasures{\reals^d}$), so that $(\Pp{2}{\reals^d},\wassersteinDistance{2}{\cdot}{\cdot})$ is a metric space, often called  the \emph{Wasserstein space}~\cite{Ambrosio2008a,Villani2009a,Santambrogio2015}. 
We also recall that the Wasserstein distance metrizes weak convergence in $\Pp{2}{\reals^d}$~\cite[Theorem 6.9]{Villani2009a}; i.e., for $(\mu_n)_{n\in\naturals}\subset\Pp{2}{\reals^d}$ and $\mu\in\Pp{2}{\reals^d}$ we have $\lim_{n\to\infty}\wassersteinDistance{2}{\mu_n}{\mu}=0$ if and only if $\mu_n\weakconvergence\mu$.
\ifbool{compact}{}{
\begin{example}[\cref{ex:narrow convergence and weak convergence} revisited]
\label{ex:narrow convergence and weak convergence rivisited}
In~\cref{ex:narrow convergence and weak convergence}, $\wassersteinDistance{2}{\mu_n}{\diracMeasure{0}}=1$ for all $n$, so $\wassersteinDistance{2}{\mu_n}{\diracMeasure{0}}\not\to 0$, and thus $\mu_n$ cannot converge weakly in $\Pp{2}{\reals^d}$ to $\diracMeasure{0}$.
\end{example}}

\subsection{Interpolation and Geodesic Convexity}\label{subsec:interpolation and geodesic convexity}
The theory of optimal transport naturally provides us with a notion of interpolation in the Wasserstein space~\cite[§7]{Ambrosio2008a}. Let $\mu_0,\mu_1\in\Pp{2}{\reals^d}$ and $\gamma\in\setOptimalPlans{\mu_0}{\mu_1}$, and consider the curve
\begin{equation}\label{eq:geodesics}
    \mu_t
    \coloneqq 
    \pushforward{((1-t)\proj_1 + t\proj_2)}\gamma.
\end{equation}
Then, $\mu_t$ interpolates between $\mu_0$ ($t=0$) and $\mu_1$ ($t=1$). Since multiple optimal transport plans might exist, the interpolation $\mu_t$ between $\mu_0$ and $\mu_1$ is generally not unique. 
Curves of the form~\eqref{eq:geodesics} can be shown to be \emph{constant speed geodesics} in $\Pp{2}{\reals^d}$ (i.e., $\wassersteinDistance{2}{\mu_s}{\mu_t}=(t-s)\wassersteinDistance{2}{\mu_0}{\mu_1}$ for all $0\leq s\leq t\leq 1$), and all constant speed geodesics can be written as~\eqref{eq:geodesics}~\cite[Theorem 7.2.2]{Ambrosio2008a}.
Accordingly, a functional $\cost:\Pp{2}{\reals^d}\to\realsBar\coloneqq(-\infty,\infty]$ is $\alpha$-\emph{geodesically convex} (with $\alpha\in\reals$) if for all $\mu_0,\mu_1\in\Pp{2}{\reals^d}$ and all $t\in[0,1]$ there exists $\mu_t$ as in~\eqref{eq:geodesics} such that 
\begin{equation}\label{eq:geodesics equation}
    \cost(\mu_t)\leq (1-t)\cost(\mu_0) + t\cost(\mu_1)-\frac{\alpha}{2}t(1-t)\wassersteinDistance{2}{\mu_0}{\mu_1}^2.
\end{equation}
We will sometimes need a stronger notion of geodesic convexity: geodesic convexity \emph{along generalized geodesics}. Following~\cite[§9.2]{Ambrosio2008a}, a generalized geodesic joining $\mu_0$ and $\mu_1$ with base $\bar\mu$ is a curve 
\begin{equation}\label{eq:generalized geodesics}
    \mu_t
    \coloneqq 
    \pushforward{((1-t)\proj_2 + t\proj_3)}\gamma,
\end{equation}
where $\gamma\in\Pp{2}{\reals^d\times\reals^d\times\reals^d}$ is a coupling of $\bar\mu$, $\mu_0$, and $\mu_1$ which is optimal between $\bar\mu$ and $\mu_0$ (i.e., $\pushforward{(\proj_{1,2})}\gamma\in\setOptimalPlans{\bar\mu}{\mu_0}$) and between $\bar\mu$ and $\mu_1$ (i.e., $\pushforward{(\proj_{1,3})}\gamma\in\setOptimalPlans{\bar\mu}{\mu_2}$), but not necessarily optimal between $\mu_0$ and $\mu_1$. With the choice $\bar\mu=\mu_0$, we recover the geodesics~\eqref{eq:geodesics}. Accordingly, $\cost$ is $\alpha$-\emph{convex along generalized geodesics} if for all $\bar\mu, \mu_0,\mu_1\in\Pp{2}{\reals^d}$ there exists $\mu_t$ as in~\eqref{eq:generalized geodesics} such that 
\begin{equation*}
    \cost(\mu_t)\leq (1-t)\cost(\mu_0) + t\cost(\mu_1)-\frac{\alpha}{2}t(1-t)\int_{\reals^d\times\reals^d\times\reals^d}\norm{y-z}^2\d\gamma(x,y,z).
\end{equation*}
Convexity along generalized geodesics is stronger than geodesic convexity. In particular, if $\cost$ is $\alpha$-convex along generalized geodesics, then it suffices to pick $\bar\mu=\mu_0$ to prove that $\cost$ is $\alpha$-geodesically convex. If $\alpha=0$ we simply say that $\cost$ is geodesically convex (along generalized geodesics).
Finally, if~\eqref{eq:geodesics equation} holds for any $\gamma\in\setPlans{\mu}{\nu}$ (not necessarily optimal) in~\eqref{eq:geodesics}, then $\cost$ is $\alpha$-convex \emph{along any interpolating curve}. Clearly, this implies both geodesic convexity and convexity along generalized geodesics.

\subsection{Differential Calculus in the Wasserstein Space}\label{subsec:differential calculus in the wasserstein space}
Following~\cite[§10]{Ambrosio2008a} and~\cite{Bonnet2019a}, we now present the ``differential structure'' of the Wasserstein space.
Critically, $(\Pp{2}{\reals^d},\wassersteinDistance{2}{\cdot}{\cdot})$ does \emph{not} enjoy a linear structure, so classical notions of differentials (e.g., Fréchet) do not apply.
Henceforth, we consider lower semi-continuous (w.r.t. to weak convergence in $\Pp{2}{\reals^d}$) functionals of the form $\cost:\Pp{2}{\reals^d}\to\realsBar\coloneqq(-\infty,+\infty]$. To ease the notation, we define the effective domain of $\cost$ as
\ifbool{compact}{
$\effectiveDomain{\cost}\coloneqq\{\mu\in\Pp{2}{\reals^d}: \cost(\mu)<+\infty\}$}{
\begin{equation*}
    \effectiveDomain{\cost}
    \coloneqq\{\mu\in\Pp{2}{\reals^d}: \cost(\mu)<+\infty\}
\end{equation*}}
and tacitly assume that $\cost$ is proper (i.e., $\effectiveDomain{\cost}\neq\emptyset$).
Inspired by classical definitions of sub- and super-differentiability in Euclidean settings, sub- and super-differentials are defined as follows. 
\begin{definition}[Wasserstein sub- and super-differential~\protect{\cite[Definition 1.9]{Bonnet2019a}}]\label{def:wasserstein subdifferentiable}
Let $\mu\in\effectiveDomain{\cost}$. We say that a map $\xi\in\Lp{2}{\reals^d,\reals^d; \mu}$ belongs to the \emph{subdifferential} $\subdifferential{\cost}(\mu)$ of $\cost$ at $\mu$ if
\begin{equation}\label{eq:sub and superdifferential wasserstein}
    \cost(\nu)-\cost(\mu)
    \geq 
    \sup_{\gamma\in\setOptimalPlans{\mu}{\nu}}
    \int_{\reals^{d}\times\reals^d}
    \innerProduct{\xi(x)}{y-x}\,\d\gamma(x,y)
    +
    \onotation{\wassersteinDistance{2}{\mu}{\nu}}
\end{equation}
for all $\nu\in\Pp{2}{\reals^d}$. In this case, we say that $\xi$ is a \emph{Wasserstein subgradient} of $\cost$ at $\mu$.
Similarly, $\xi\in\Lp{2}{\reals^d,\reals^d;\mu}$ belongs to the \emph{superdifferential} $\superdifferential{\cost}(\mu)$ of $\cost$ at $\mu$ if $(-\xi)\in\subdifferential{(-\cost)}(\mu)$. In this case, we say that $\xi$ is a \emph{Wasserstein supergradient} of $\cost$ at $\mu$.
\end{definition}
By \cref{def:wasserstein subdifferentiable}, sub- and super-gradients are \emph{functions} in $\Lp{2}{}$. Intuitively, $\xi(x)$, with $\xi\in\subdifferential{\cost}(\mu)$, is the ``subgradient experienced by the particles of $\mu$ located at $x\in\reals^d$''.
It is now easy to define differentiability: 
\begin{definition}[differentiable functions~\protect{\cite[Definition 1.10]{Bonnet2019a}}]\label{def:wasserstein differentiable}
A functional $\cost:\Pp{2}{\reals^d}\to\reals$ is \emph{Wasserstein differentiable} at $\mu\in\Pp{2}{\reals^d}$ if the intersection of sub- and superdifferential is non-empty; i.e., if $\subdifferential{\cost}(\mu)\cap\superdifferential{\cost}(\mu)\neq\emptyset$.
In this case, we say $\gradient{\mu}\cost(\mu)\in\subdifferential{\cost}(\mu)\cap\superdifferential{\cost}(\mu)$ is a \emph{Wasserstein gradient} of $\cost$ at $\mu$, satisfying
\begin{equation*}
    \cost(\nu)-\cost(\mu)
    =
    \int_{\reals^{d}\times\reals^d}\innerProduct{\gradient{\mu}\cost(\mu)(x)}{y-x}\d\gamma(x,y)
    +
    \onotation{\wassersteinDistance{2}{\mu}{\nu}}
\end{equation*}
for any $\nu\in\Pp{2}{\reals^d}$ and any $\gamma\in\setOptimalPlans{\mu}{\nu}$.
\end{definition}

\cref{def:wasserstein differentiable} states that Wasserstein gradients are general functions in $\Lp{2}{\reals^d,\reals^d;\mu}$. As we shall see below, we can impose additional structure and dictate that (sub- and super-)gradients belong to the ``Wasserstein tangent space'', which is defined as follows~\cite{Ambrosio2008a}.

\begin{definition}[tangent space~\protect{\cite[Definition 8.4.1]{Ambrosio2008a}}]\label{def:tangent space}
The \emph{tangent space} at $\mu\in\Pp{2}{\reals^d}$ is the vector space
\begin{equation*}
    \tangentSpace{\mu}\Pp{2}{\reals^d}
    \coloneqq
    \overline{\{
    \gradient{}\psi: \psi\in\Ccinf{\reals^d}
    \}}^{\Lp{2}{\reals^d,\reals^d;\mu}},
\end{equation*}
where the closure is taken with respect to the $\Lp{2}{\reals^d,\reals^d;\mu}$ topology. 
\end{definition}

\begin{remark}\label{rem:tangent space}
A few remarks on~\crefrange{def:wasserstein subdifferentiable}{def:tangent space} are in order. 
First, \cref{def:tangent space} encapsulates the geometry of $(\Pp{2}{\reals^d},\wassersteinDistance{2}{\cdot}{\cdot})$: Gradients of functions $\psi\in\Ccinf{\reals^d}$ generate optimal transport maps by perturbation of the identity via $\pushforward{(\Id+\varepsilon\gradient{}\psi)}\mu$ for $\varepsilon$ sufficiently small; see \cref{lemma:perturbation transport map} above. Thus, they can be thought as tangent vectors to $(\Pp{2}{\reals^d},\wassersteinDistance{2}{\cdot}{\cdot})$.
Second, an equivalent (more related to optimal transport) definition of tangent space is
\begin{equation*}
    \tangentSpace{\mu}\Pp{2}{\reals^d}
    =
    \overline{\{
    \lambda(r-\Id): \pushforward{(\Id, r)}\mu\in\setOptimalPlans{\mu}{\pushforward{r}\mu},\lambda>0
    \}}^{\Lp{2}{\reals^d,\reals^d;\mu}}.
\end{equation*}
We refer to \cite[Theorem 8.5.1]{Ambrosio2008a} for more details.
Third, an alternative definition of sub- and super-differentiability involves taking the \emph{infimum} instead of the \emph{supremum} in~\eqref{eq:sub and superdifferential wasserstein}~\cite[Definition 10.3.1]{Ambrosio2008a}; yet, this definition is equivalent~\cite{Gangbo2019}.
Fourth, we defined (sub- and super-)gradients for functions whose domain is $\Pp{2}{\reals^d}$. Yet, our definitions readily extend to any function whose domain is an open or a dense subset of $\Pp{2}{\reals^d}$ (e.g.,  $\Ppabs{2}{\reals^d}$). 
\end{remark}

\cref{def:wasserstein differentiable} does \emph{not} establish uniqueness of Wasserstein gradients. Indeed, the set $\subdifferential{\cost}(\mu)\cap\superdifferential{\cost}(\mu)$ generally contains multiple elements. Nonetheless, Wasserstein gradients are, if they exist, unique in $\Lp{2}{\reals^d,\reals^d;\mu}$ modulo the equivalence relation induced by $\tangentSpace{\mu}{\Pp{2}{\reals^d}}^\perp$. That is, Wasserstein gradients are unique in $\tangentSpace{\mu}{\Pp{2}{\reals^d}}$. In particular, since $\tangentSpace{\mu}\Pp{2}{\reals^d}$ is a \emph{closed} linear subspace of the Hilbert space $\Lp{2}{\reals^d,\reals^d;\mu}$, the Hilbert decomposition theorem~\cite[Theorem 4.11]{Rudin1987} asserts that any element $\xi\in\Lp{2}{\reals^d,\reals^d;\mu}$ can be \emph{uniquely} decomposed as $\xi=\xi^{\parallel} + \xi^{\perp}$, with $\xi^{\parallel}\in\tangentSpace{\mu}\Pp{2}{\reals^d}$ and $\xi^{\perp}\in\tangentSpace{\mu}\Pp{2}{\reals^d}^{\perp}$. The next proposition, which bases on the more general Proposition 8.5.4 in~\cite{Ambrosio2008a}, shows that 
\begin{equation*}
    \int_{\reals^d\times\reals^d} \innerProduct{\xi^\perp(x)}{y-x}\d\gamma(x,y)=0
\end{equation*}
for any $\gamma\in\setOptimalPlans{\mu}{\nu}$ and any $\nu\in\Pp{2}{\reals^d}$, and thus
\begin{equation*}
    \int_{\reals^d\times\reals^d} \innerProduct{\xi(x)}{y-x}\d\gamma(x,y)
    =
    \int_{\reals^d\times\reals^d} \innerProduct{\xi^\parallel(x)}{y-x}\d\gamma(x,y).
\end{equation*}
This, together with \cref{def:wasserstein subdifferentiable,def:wasserstein differentiable}, suggests that we can \emph{always} impose that Wasserstein sub- and super-differential belong to the tangent space $\tangentSpace{\mu}\Pp{2}{\reals^d}$, and thus Wasserstein (sub- and super-)gradients, whenever they exist, are unique and also belong to $\tangentSpace{\mu}\Pp{2}{\reals^d}$. Formally, we have the following proposition.

\begin{proposition}[Wasserstein gradients only act on tangent vectors]\label{prop:gradients act on tangent vectors}
Let $\mu,\nu\in\Pp{2}{\reals^d}$, $\gamma\in\setOptimalPlans{\mu}{\nu}$, and $\xi\in\tangentSpace{\mu}\Pp{2}{\reals^d}^{\perp}$. Then, 
\begin{equation}\label{eq:gradients act on tangent vectors}
    \int_{\reals^d\times\reals^d}\innerProduct{\xi(x)}{y-x}\d\gamma(x,y)=0.
\end{equation}
In particular, we can without loss of generality impose that sub- and super-differential live in $\tangentSpace{\mu}\Pp{2}{\reals^d}$. This way, also the Wasserstein gradient of $\cost$ at $\mu\in\Pp{2}{\reals^d}$ is, if it exists, the unique element  $\gradient{\mu}\cost(\mu)\in\tangentSpace{\mu}{\Pp{2}{\reals^d}}\cap\subdifferential{\cost}(\mu)\cap\superdifferential{\cost}(\mu)$.
\end{proposition}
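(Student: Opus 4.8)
The plan is to establish the orthogonality identity~\eqref{eq:gradients act on tangent vectors} first, and then to deduce from it both the reduction of the sub-/super-differentials to $\tangentSpace{\mu}{\Pp{2}{\reals^d}}$ and the uniqueness of the Wasserstein gradient.

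For~\eqref{eq:gradients act on tangent vectors}, the most direct route is to invoke~\cite[Proposition 8.5.4, Eq.~(8.4.4)]{Ambrosio2008a}, which identifies the barycentric projection of an optimal plan with an element of the tangent space; concretely, after disintegrating $\gamma=\int_{\reals^d}\gamma_x\,\d\mu(x)$ along its first marginal $\mu$ and setting $\bar r(x)\coloneqq\int_{\reals^d}y\,\d\gamma_x(y)$, one has $\int_{\reals^d\times\reals^d}\innerProduct{\xi(x)}{y-x}\d\gamma(x,y)=\innerProduct{\xi}{\bar r-\Id}_{\Lp{2}{\reals^d,\reals^d;\mu}}$ with $\bar r-\Id\in\tangentSpace{\mu}{\Pp{2}{\reals^d}}$, so the inner product against $\xi\in\tangentSpace{\mu}{\Pp{2}{\reals^d}}^\perp$ vanishes. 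I would also record the transparent special case in which $\gamma$ is induced by a transport map $r=\optMap{\mu}{\nu}$: then $\int_{\reals^d\times\reals^d}\innerProduct{\xi(x)}{y-x}\d\gamma(x,y)=\innerProduct{\xi}{r-\Id}_{\Lp{2}{\reals^d,\reals^d;\mu}}=0$, since $r-\Id\in\tangentSpace{\mu}{\Pp{2}{\reals^d}}$ by the optimal-transport description of the tangent space recalled in~\cref{rem:tangent space}.

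The reduction claim is then a short consequence of~\eqref{eq:gradients act on tangent vectors}. Given $\xi\in\subdifferential{\cost}(\mu)$, use the Hilbert-space orthogonal decomposition $\Lp{2}{\reals^d,\reals^d;\mu}=\tangentSpace{\mu}{\Pp{2}{\reals^d}}\oplus\tangentSpace{\mu}{\Pp{2}{\reals^d}}^\perp$ to write $\xi=\xi^\parallel+\xi^\perp$; by~\eqref{eq:gradients act on tangent vectors} the component $\xi^\perp$ does not affect the integral in~\eqref{eq:sub and superdifferential wasserstein}, hence $\xi^\parallel\in\subdifferential{\cost}(\mu)$, and symmetrically $\superdifferential{\cost}(\mu)$ may be taken inside $\tangentSpace{\mu}{\Pp{2}{\reals^d}}$. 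Consequently $\gradient{\mu}\cost(\mu)$, if it exists, may be taken in $\tangentSpace{\mu}{\Pp{2}{\reals^d}}\cap\subdifferential{\cost}(\mu)\cap\superdifferential{\cost}(\mu)$.

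The remaining point — uniqueness within the tangent space — is the one I expect to be the real work. Let $\xi,\xi'$ both lie in $\tangentSpace{\mu}{\Pp{2}{\reals^d}}\cap\subdifferential{\cost}(\mu)\cap\superdifferential{\cost}(\mu)$ and fix $\varepsilon>0$. Using density of $\{\gradient{}\psi:\psi\in\Ccinf{\reals^d}\}$ in $\tangentSpace{\mu}{\Pp{2}{\reals^d}}$, choose $\varphi_\varepsilon,\varphi_\varepsilon'\in\Ccinf{\reals^d}$ with $\LpNorm{\xi-\gradient{}\varphi_\varepsilon}{2}{\reals^d,\reals^d;\mu}<\varepsilon/2$ and $\LpNorm{\xi'-\gradient{}\varphi_\varepsilon'}{2}{\reals^d,\reals^d;\mu}<\varepsilon/2$. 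The reason for passing to smooth compactly supported potentials is that, by~\cref{lemma:perturbation transport map}, for $s>0$ small the maps $\Id+s\gradient{}\varphi_\varepsilon$ and $\Id+s\gradient{}\varphi_\varepsilon'$ are \emph{optimal} transport maps onto $\sigma\coloneqq\pushforward{(\Id+s\gradient{}\varphi_\varepsilon)}\mu$ and $\sigma'\coloneqq\pushforward{(\Id+s\gradient{}\varphi_\varepsilon')}\mu$, with $\wassersteinDistance{2}{\mu}{\sigma}=s\LpNorm{\gradient{}\varphi_\varepsilon}{2}{\reals^d,\reals^d;\mu}$; this licenses applying the sub-/super-differential inequalities for $\xi$ and $\xi'$ at $\sigma$ and $\sigma'$ with the specific optimal plans $\pushforward{(\Id,\Id+s\gradient{}\varphi_\varepsilon)}\mu$ and $\pushforward{(\Id,\Id+s\gradient{}\varphi_\varepsilon')}\mu$. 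Writing the four inequalities (testing $\xi$ as subgradient against $\sigma$, as supergradient against $\sigma'$, and $\xi'$ as subgradient against $\sigma'$, as supergradient against $\sigma$), dividing by $s$, summing, and letting $s\to0$, the $\cost$-values telescope away and the $\onotation{s}$-terms vanish, leaving $0\geq\int_{\reals^d}\innerProduct{\xi-\xi'}{\gradient{}\varphi_\varepsilon-\gradient{}\varphi_\varepsilon'}\d\mu$. Completing the square in $\Lp{2}{\reals^d,\reals^d;\mu}$ then gives $\LpNorm{\xi-\xi'}{2}{\reals^d,\reals^d;\mu}\leq\LpNorm{(\xi-\gradient{}\varphi_\varepsilon)-(\xi'-\gradient{}\varphi_\varepsilon')}{2}{\reals^d,\reals^d;\mu}\leq\LpNorm{\xi-\gradient{}\varphi_\varepsilon}{2}{\reals^d,\reals^d;\mu}+\LpNorm{\xi'-\gradient{}\varphi_\varepsilon'}{2}{\reals^d,\reals^d;\mu}<\varepsilon$, and letting $\varepsilon\to0$ concludes $\xi=\xi'$. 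The genuine obstacle is that one cannot test the differential inequalities directly against $\xi$ or $\xi'$ — nothing guarantees $\pushforward{(\Id+s\xi)}\mu$ is reached by an optimal map — so every estimate must be funnelled through the perturbations of~\cref{lemma:perturbation transport map}, keeping the approximation errors under uniform control as $s\to0$ and only afterwards sending $\varepsilon\to0$.
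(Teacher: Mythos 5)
Your proposal matches the paper's proof essentially step by step: the identity \eqref{eq:gradients act on tangent vectors} is obtained from the same citation \cite[Proposition~8.5.4, Eq.~(8.4.4)]{Ambrosio2008a} (with the same remark about the map-induced special case), and uniqueness is proved by the same density-and-perturbation argument — approximate $\xi,\xi'$ by $\gradient{}\varphi_\varepsilon,\gradient{}\varphi_\varepsilon'$, invoke \cref{lemma:perturbation transport map} to get optimal maps $\Id+s\gradient{}\varphi_\varepsilon$, write the four sub-/super-differential inequalities, divide by $s$, sum, send $s\to 0$, complete the square in $\Lp{2}{\reals^d,\reals^d;\mu}$, and send $\varepsilon\to 0$. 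Your extra paragraph making the orthogonal decomposition $\xi=\xi^\parallel+\xi^\perp$ explicit is a correct spelling-out of what the paper states in the preceding discussion rather than inside the proof, but it is not a different method.
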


\ifbool{showproofs}{\booltrue{showproofgradientsactontangentvectors}}{
\ifbool{arxiv}{\boolfalse{showproofgradientsactontangentvectors}}{}} 
\ifbool{showproofgradientsactontangentvectors}{\proofgradientsactontangentvectors{booltrue}}{}

More specifically,~\eqref{eq:gradients act on tangent vectors} follows directly from~\cite[Proposition 8.5.4]{Ambrosio2008a}. Uniqueness of the Wasserstein gradient is then a consequence of the definition of Wasserstein gradients and the tangent space.
Henceforth, we will therefore regard the Wasserstein gradient of $\cost$ at $\mu$, if it exists, as the unique element $\gradient{\mu}\cost(\mu)\in\tangentSpace{\mu}{\Pp{2}{\reals^d}}\cap\subdifferential{\cost}(\mu)\cap\superdifferential{\cost}(\mu)$. While apparently of minor importance, imposing that Wasserstein (sub- and super-)gradients live in the tangent space $\tangentSpace{\mu}\Pp{2}{\reals^d}$ (instead of in the more general $\Lp{2}{}$ space) will play a crucial role in later sections.

Next, we show that Wasserstein gradients provide a ``linear approximation'' even if perturbations are not induced by optimal transport plans.
Our proof strategy is inspired by similar results in the setting of absolutely continuous probability measures~\cite[Proposition 4.2]{Ambrosio2007} and probability measures with compact support~\cite[Proposition 3.6]{bonnet2021necessary}.

\begin{proposition}[differentials are ``strong'']\label{prop:strong differentiability}
Let $\mu, \nu\in\Pp{2}{\reals^d}$, $\gamma\in\setPlans{\mu}{\nu}$ (not necessarily optimal), and let $\cost:\Pp{2}{\reals^d}\to\realsBar$ be Wasserstein subdifferentiable at $\mu$ with Wasserstein subgradient $\xi\in\subdifferential{\cost}(\mu)\cap\tangentSpace{\mu}\Pp{2}{\reals^d}$. Then, 
\begin{equation*}
    \cost(\nu)-\cost(\mu)
    \geq 
    \int_{\reals^d\times\reals^d}\innerProduct{\xi(x)}{y-x}\d\gamma(x,y)
    +
    \onotation[adapt]{\sqrt{\int_{\reals^d\times\reals^d}\norm{x-y}^2\d\gamma(x,y)}}.
\end{equation*}
In particular, if $\cost$ is Wasserstein differentiable at $\mu$ with Wasserstein gradient $\gradient{\mu}\cost(\mu)\in\tangentSpace{\mu}\Pp{2}{\reals^d}$, then 
\begin{equation*}
    \cost(\nu)-\cost(\mu)
    =
    \int_{\reals^d\times\reals^d}\innerProduct{\gradient{\mu}\cost(\mu)(x)}{y-x}\d\gamma(x,y)
    +
    \onotation[adapt]{\sqrt{\int_{\reals^d\times\reals^d}\norm{x-y}^2\d\gamma(x,y)}}.
\end{equation*}
\end{proposition}

\ifbool{showproofs}{\booltrue{showproofgradientsarestrong}}{
\ifbool{arxiv}{\boolfalse{showproofgradientsarestrong}}{}} 
\ifbool{showproofgradientsarestrong}{\proofgradientsarestrong{booltrue}}{}

We now adapt four well-known facts on (sub)differentiable functions to the Wasserstein space. First, Wasserstein differentiability implies continuity (in the topology induced by the Wasserstein distance). 
\begin{proposition}[differentiability implies continuity]\label{prop:differentiable continuous}
Let $\cost:\Pp{2}{\reals^d}\to\reals$ be Wasserstein differentiable at $\mu\in\Pp{2}{\reals^d}$. Then, $\cost$ is continuous w.r.t. weak convergence in $\Pp{2}{\reals^d}$ at $\mu$.
\end{proposition}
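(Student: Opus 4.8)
The plan is to establish continuity at $\mu$ by bounding $|\cost(\nu) - \cost(\mu)|$ directly using the differentiability relation from \cref{def:wasserstein differentiable}. Let $(\mu_n)_{n\in\naturals} \subset \Pp{2}{\reals^d}$ be a sequence with $\mu_n \weakconvergence \mu$; since the Wasserstein distance metrizes weak convergence in $\Pp{2}{\reals^d}$, this is equivalent to $\wassersteinDistance{2}{\mu_n}{\mu} \to 0$. For each $n$, pick an optimal transport plan $\gamma_n \in \setOptimalPlans{\mu}{\mu_n}$. By Wasserstein differentiability at $\mu$,
\begin{equation*}
    \cost(\mu_n) - \cost(\mu)
    =
    \int_{\reals^d\times\reals^d}\innerProduct{\gradient{\mu}\cost(\mu)(x)}{y-x}\d\gamma_n(x,y)
    +
    \onotation{\wassersteinDistance{2}{\mu_n}{\mu}}.
\end{equation*}

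The key step is to control the integral term by Cauchy--Schwarz in the Hilbert space $\Lp{2}{\reals^d,\reals^d;\gamma_n}$: the integral is bounded in absolute value by $\left(\int \norm{\gradient{\mu}\cost(\mu)(x)}^2\d\gamma_n(x,y)\right)^{1/2}\left(\int\norm{y-x}^2\d\gamma_n(x,y)\right)^{1/2}$. The first factor equals $\norm{\gradient{\mu}\cost(\mu)}_{\Lp{2}{\reals^d,\reals^d;\mu}}$ because the first marginal of $\gamma_n$ is $\mu$; this is a fixed finite constant since $\gradient{\mu}\cost(\mu) \in \Lp{2}{\reals^d,\reals^d;\mu}$. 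The second factor is exactly $\wassersteinDistance{2}{\mu}{\mu_n}$ because $\gamma_n$ is an optimal plan. Hence
\begin{equation*}
    |\cost(\mu_n) - \cost(\mu)|
    \leq
    \norm{\gradient{\mu}\cost(\mu)}_{\Lp{2}{\reals^d,\reals^d;\mu}}\wassersteinDistance{2}{\mu}{\mu_n}
    +
    |\onotation{\wassersteinDistance{2}{\mu_n}{\mu}}|.
\end{equation*}

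Taking $\limsup_{n\to\infty}$ and using $\wassersteinDistance{2}{\mu_n}{\mu}\to 0$ (which also sends the little-$o$ term to zero), we conclude $\lim_{n\to\infty}\cost(\mu_n) = \cost(\mu)$, i.e., continuity with respect to weak convergence in $\Pp{2}{\reals^d}$ at $\mu$. Since the sequence was arbitrary, this is the claim. I expect no real obstacle here: the only subtlety worth stating carefully is that the "first factor" in the Cauchy--Schwarz bound does not depend on $n$ precisely because $\pushforward{(\proj_1)}\gamma_n = \mu$, which is what decouples the gradient norm from the sequence and makes the bound uniform; everything else is a routine application of \cref{def:wasserstein differentiable} and the metrization property quoted in \cref{subsec:background in mt ot ws}.
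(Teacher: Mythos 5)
Your argument is correct and is essentially the same as the paper's: pick optimal plans $\gamma_n$, expand via Wasserstein differentiability, apply Cauchy--Schwarz in $\Lp{2}{\reals^d,\reals^d;\gamma_n}$, use that the first marginal of $\gamma_n$ is $\mu$ to identify the fixed constant $\norm{\gradient{\mu}\cost(\mu)}_{\Lp{2}{\reals^d,\reals^d;\mu}}$, and let $n\to\infty$ using that $W_2$ metrizes weak convergence in $\Pp{2}{\reals^d}$. No differences worth noting.
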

\ifbool{showproofs}{\proofdifferentiablecontinuous{showproofs}}{}
As in the Euclidean case, we can further characterize Wasserstein subgradients of geodesically convex functions (cf.~\cite[Theorem 10.3.6]{Ambrosio2008a}).
\begin{proposition}[gradients of geodesically convex functions]\label{prop:gradients geodesically convex}
Let $\cost$ be $\alpha$-geodesically convex with $\alpha\in\reals$. Suppose that $\cost$ is Wasserstein subdifferentiable at $\mu\in\Pp{2}{\reals^d}$ and let $\xi\in\subdifferential\cost(\mu)$.
Then, for all $\nu\in\Pp{2}{\reals^d}$
\begin{equation}\label{eq:prop gradient convex}
    \cost(\nu)-\cost(\mu)
    \geq
    \sup_{\gamma\in\setOptimalPlans{\mu}{\nu}}\int_{\reals^d\times\reals^d}\innerProduct{\xi(x)}{y-x}\d\gamma(x,y)+\frac{\alpha}{2}\wassersteinDistance{2}{\mu}{\nu}^2.
\end{equation}
Moreover, Wasserstein subgradients are ``monotone'': If $\cost$ is subdifferentiable at $\mu,\nu\in\Pp{2}{\reals^d}$ with $\xi\in\subdifferential\cost(\mu)$ and $\zeta\in\subdifferential\cost(\nu)$, then 
\begin{equation}\label{eq:prop gradient monotone}
    \int_{\reals^d\times\reals^d}\innerProduct{\zeta(y)-\xi(x)}{y-x}\d\gamma(x,y)
    \geq
    \alpha\wassersteinDistance{2}{\mu}{\nu}^2
\end{equation}
for all $\gamma\in\setOptimalPlans{\mu}{\nu}$.
\end{proposition}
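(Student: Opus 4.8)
The plan is to establish the first inequality~\eqref{eq:prop gradient convex} by probing $\cost$ along the constant-speed geodesic from $\mu$ to $\nu$, and then to deduce the monotonicity inequality~\eqref{eq:prop gradient monotone} by a symmetrization trick. First I would fix an optimal plan $\gamma\in\setOptimalPlans{\mu}{\nu}$ and form the interpolation $\mu_t\coloneqq\pushforward{((1-t)\proj_1+t\proj_2)}\gamma\in\Pp{2}{\reals^d}$, which is a constant-speed geodesic, so that $\wassersteinDistance{2}{\mu}{\mu_t}=t\wassersteinDistance{2}{\mu}{\nu}$. By $\alpha$-geodesic convexity, $\cost(\mu_t)\le (1-t)\cost(\mu)+t\cost(\nu)-\tfrac{\alpha}{2}t(1-t)\wassersteinDistance{2}{\mu}{\nu}^2$, which rearranges to the difference-quotient bound $\tfrac{\cost(\mu_t)-\cost(\mu)}{t}\le \cost(\nu)-\cost(\mu)-\tfrac{\alpha}{2}(1-t)\wassersteinDistance{2}{\mu}{\nu}^2$.

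The key preparatory step is to exhibit a genuinely \emph{optimal} plan between $\mu$ and $\mu_t$, since \cref{def:wasserstein differentiable} only yields the first-order expansion of $\cost$ when tested against an optimal plan. I would take $\gamma_t\coloneqq\pushforward{(\proj_1,(1-t)\proj_1+t\proj_2)}\gamma$ and verify, by the direct computation $\int\norm{x-y}^2\d\gamma_t=\int\norm{x-((1-t)x+ty)}^2\d\gamma=t^2\wassersteinDistance{2}{\mu}{\nu}^2=\wassersteinDistance{2}{\mu}{\mu_t}^2$, that $\gamma_t\in\setOptimalPlans{\mu}{\mu_t}$. Then Wasserstein differentiability at $\mu$ applied with $\gamma_t$ gives $\cost(\mu_t)-\cost(\mu)=\int\innerProduct{\gradient{\mu}\cost(\mu)(x)}{y-x}\d\gamma_t(x,y)+\onotation{\wassersteinDistance{2}{\mu}{\mu_t}}$; substituting the definition of $\gamma_t$ turns the linear term into $t\int\innerProduct{\gradient{\mu}\cost(\mu)(x)}{y-x}\d\gamma(x,y)$ and the remainder into $\onotation{t}$ (using $\wassersteinDistance{2}{\mu}{\mu_t}=t\wassersteinDistance{2}{\mu}{\nu}$). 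Dividing by $t$, combining with the convexity bound, and passing to $\liminf_{t\downarrow 0}$ yields $\cost(\nu)-\cost(\mu)-\tfrac{\alpha}{2}\wassersteinDistance{2}{\mu}{\nu}^2\ge \int\innerProduct{\gradient{\mu}\cost(\mu)(x)}{y-x}\d\gamma(x,y)$; since $\gamma\in\setOptimalPlans{\mu}{\nu}$ was arbitrary, the inequality passes to the supremum, giving~\eqref{eq:prop gradient convex}. (Alternatively, one could invoke \cref{prop:strong differentiability} to expand $\cost$ directly along $\gamma_t$ as a not-necessarily-optimal plan, which bypasses the optimality check for $\gamma_t$.)

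For the monotonicity statement, I would fix $\gamma\in\setOptimalPlans{\mu}{\nu}$, note that $\pushforward{(\proj_2,\proj_1)}\gamma\in\setOptimalPlans{\nu}{\mu}$, and apply~\eqref{eq:prop gradient convex} with the roles of $\mu$ and $\nu$ interchanged and with this reversed plan. A relabeling of variables gives $\cost(\mu)-\cost(\nu)\ge \int\innerProduct{-\gradient{\mu}\cost(\nu)(y)}{y-x}\d\gamma(x,y)+\tfrac{\alpha}{2}\wassersteinDistance{2}{\mu}{\nu}^2$. Adding this to~\eqref{eq:prop gradient convex} evaluated at the same $\gamma$ and rearranging yields $\int\innerProduct{\gradient{\mu}\cost(\nu)(y)-\gradient{\mu}\cost(\mu)(x)}{y-x}\d\gamma(x,y)\ge \alpha\wassersteinDistance{2}{\mu}{\nu}^2$, which is~\eqref{eq:prop gradient monotone}.

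The main obstacle I anticipate is the optimality of the auxiliary plan $\gamma_t$: without it, the expansion in \cref{def:wasserstein differentiable} is not licensed, so this cost computation (or the appeal to \cref{prop:strong differentiability}) is the crux of the argument. The remaining subtleties are routine bookkeeping — confirming $\mu_t\in\Pp{2}{\reals^d}$, checking that the remainder term is genuinely $\onotation{t}$, and observing that working with $\liminf$ rather than a true limit is harmless because it only weakens the inequality in the direction we already want.
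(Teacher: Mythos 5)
Your proof is correct and mirrors the paper's argument step for step: probe $\cost$ along the geodesic $\mu_t$, verify that $\gamma_t=\pushforward{(\proj_1,(1-t)\proj_1+t\proj_2)}\gamma$ is an optimal plan between $\mu$ and $\mu_t$ by the direct $\norm{\cdot}^2$ computation, apply Wasserstein differentiability and take $\liminf_{t\downarrow 0}$, then obtain monotonicity by symmetrizing via $\pushforward{(\proj_2,\proj_1)}\gamma$. The parenthetical alternative of invoking \cref{prop:strong differentiability} instead of checking optimality of $\gamma_t$ is a small but valid shortcut the paper does not mention.
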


\ifbool{showproofs}{\booltrue{showproofgradientsgeodesicallyconvex}}{
\ifbool{arxiv}{\boolfalse{showproofgradientsgeodesicallyconvex}}{}} 
\ifbool{showproofgradientsgeodesicallyconvex}{\proofgradientsgeodesicallyconvex{booltrue}}{}

We now study the differentiability properties of the sum and multiplication of functionals:
\begin{proposition}[sum and multiplication rule]\label{prop:sum and multiplication rule}
Let $\cost_1:\Pp{2}{\reals^d}\to\realsBar$ and $\cost_2:\Pp{2}{\reals^d}\to\realsBar$ be Wasserstein subdifferentiable at $\mu\in\Pp{2}{\reals^d}$.
Then, 
\begin{enumerate}
    \item $\subdifferential{(\cost_1+\cost_2)}(\mu)\supset\subdifferential{J_1}(\mu)+\subdifferential{J_2}(\mu)$
    \item $\subdifferential{(\alpha\cost_1)}(\mu)=\alpha\subdifferential{\cost_1}(\mu)$ for all $\alpha\geq 0$.
\end{enumerate}
In particular, if $J_1$ is Wasserstein differentiable at $\mu$, then
\begin{enumerate}
    \item $\subdifferential{(\cost_1+\cost_2)}(\mu)=\{\gradient{\mu}\cost_1(\mu)\}+\subdifferential{J_2}(\mu)$
    \item $\gradient{\mu}{(\alpha\cost_1)}(\mu)=\alpha\gradient{\mu}{\cost_1}(\mu)$ for all $\alpha\in\reals$.
\end{enumerate}
\end{proposition}

\ifbool{showproofs}{\booltrue{showproofsummultiplicationrule}}{
\ifbool{arxiv}{\boolfalse{showproofsummultiplicationrule}}{}} 
\ifbool{showproofsummultiplicationrule}{\proofsummultiplicationrule{booltrue}}{}

Finally, we can prove the chain rule.
\begin{proposition}[chain rule]\label{prop:chain rule}
Let $g:\reals\to\reals$ be differentiable, $\cost:\Pp{2}{\reals^d}\to\reals$ be Wasserstein differentiable, and let $\mu\in\Pp{2}{\reals^d}$. Then, $g\circ\cost:\Pp{2}{\reals^d}\to\reals$ is Wasserstein differentiable at $\mu$, and 
\begin{equation*}
    \gradient{\mu}(g\circ J)(\mu)
    =
    g'(\cost(\mu))\gradient{\mu}\cost(\mu).
\end{equation*}
\end{proposition}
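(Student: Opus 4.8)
The plan is to reduce everything to the definition of Wasserstein differentiability (\cref{def:wasserstein differentiable}) by composing the ordinary (one-dimensional) first-order expansion of $g$ with the Wasserstein expansion of $\cost$. Fix $\nu\in\Pp{2}{\reals^d}$ and an arbitrary optimal plan $\gamma\in\setOptimalPlans{\mu}{\nu}$. First I would write the scalar Taylor expansion of $g$ at the point $\cost(\mu)$:
\begin{equation*}
    g(\cost(\nu)) - g(\cost(\mu)) = g'(\cost(\mu))\big(\cost(\nu)-\cost(\mu)\big) + \onotation{|\cost(\nu)-\cost(\mu)|}.
\end{equation*}
Then I would substitute the Wasserstein expansion $\cost(\nu)-\cost(\mu) = \int_{\reals^d\times\reals^d}\innerProduct{\gradient{\mu}\cost(\mu)(x)}{y-x}\d\gamma(x,y) + \onotation{\wassersteinDistance{2}{\mu}{\nu}}$ into this identity, and use linearity of the integral to pull the constant $g'(\cost(\mu))$ inside the inner product, producing the candidate gradient $g'(\cost(\mu))\gradient{\mu}\cost(\mu)$ (which indeed lies in $\tangentSpace{\mu}\Pp{2}{\reals^d}$, being a scalar multiple of a tangent vector). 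This leaves two remainder terms, $g'(\cost(\mu))\onotation{\wassersteinDistance{2}{\mu}{\nu}}$ and $\onotation{|\cost(\nu)-\cost(\mu)|}$, to be absorbed into a single $\onotation{\wassersteinDistance{2}{\mu}{\nu}}$.

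The first remainder is harmless: $g'(\cost(\mu))$ is a fixed real constant, so $g'(\cost(\mu))\onotation{\wassersteinDistance{2}{\mu}{\nu}}=\onotation{\wassersteinDistance{2}{\mu}{\nu}}$. The heart of the argument — and the one genuinely non-automatic step — is controlling $\onotation{|\cost(\nu)-\cost(\mu)|}$ in terms of $\wassersteinDistance{2}{\mu}{\nu}$. Here I would factor
\begin{equation*}
    \frac{\onotation{|\cost(\nu)-\cost(\mu)|}}{\wassersteinDistance{2}{\mu}{\nu}}
    =
    \frac{\onotation{|\cost(\nu)-\cost(\mu)|}}{|\cost(\nu)-\cost(\mu)|}\cdot\frac{|\cost(\nu)-\cost(\mu)|}{\wassersteinDistance{2}{\mu}{\nu}},
\end{equation*}
and argue that as $\wassersteinDistance{2}{\mu}{\nu}\to 0$ the first factor tends to $0$ while the second stays bounded. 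The first factor tends to $0$ because, by \cref{prop:differentiable continuous}, Wasserstein differentiability of $\cost$ implies continuity with respect to weak convergence in $\Pp{2}{\reals^d}$, hence $\cost(\nu)\to\cost(\mu)$, and $\onotation{\cdot}/(\cdot)\to 0$ by definition; the trivial case $\cost(\nu)=\cost(\mu)$ contributes exactly $0$. Boundedness of the second factor follows from the Wasserstein expansion of $\cost$ together with Cauchy–Schwarz in $\Lp{2}{\reals^d,\reals^d;\gamma}$: $|\cost(\nu)-\cost(\mu)|\leq \norm{\gradient{\mu}\cost(\mu)}_{\Lp{2}{\reals^d,\reals^d;\mu}}\wassersteinDistance{2}{\mu}{\nu} + |\onotation{\wassersteinDistance{2}{\mu}{\nu}}|$, so the ratio is bounded by $\norm{\gradient{\mu}\cost(\mu)}_{\Lp{2}{\reals^d,\reals^d;\mu}}+1$ for $\nu$ close enough to $\mu$.

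Combining the three observations yields
\begin{equation*}
    g(\cost(\nu)) - g(\cost(\mu)) = \int_{\reals^d\times\reals^d}\innerProduct{g'(\cost(\mu))\gradient{\mu}\cost(\mu)(x)}{y-x}\d\gamma(x,y) + \onotation{\wassersteinDistance{2}{\mu}{\nu}}
\end{equation*}
for every $\nu\in\Pp{2}{\reals^d}$ and every $\gamma\in\setOptimalPlans{\mu}{\nu}$, which by \cref{def:wasserstein differentiable} simultaneously certifies that $g'(\cost(\mu))\gradient{\mu}\cost(\mu)$ belongs to both $\subdifferential{(g\circ\cost)}(\mu)$ and $\superdifferential{(g\circ\cost)}(\mu)$; hence $g\circ\cost$ is Wasserstein differentiable at $\mu$ with $\gradient{\mu}(g\circ\cost)(\mu)=g'(\cost(\mu))\gradient{\mu}\cost(\mu)$. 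The only subtlety to flag is that the little-$o$ bookkeeping must be done carefully (in particular the continuity input from \cref{prop:differentiable continuous}), since naively one only knows the remainder is $\onotation{|\cost(\nu)-\cost(\mu)|}$ and not directly $\onotation{\wassersteinDistance{2}{\mu}{\nu}}$.
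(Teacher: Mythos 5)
Your proof is correct and follows essentially the same route as the paper's: compose the scalar Taylor expansion of $g$ with the Wasserstein expansion of $\cost$, then control the remainder $\onotation{|\cost(\nu)-\cost(\mu)|}$ by writing it as $\tfrac{\onotation{|\cost(\nu)-\cost(\mu)|}}{|\cost(\nu)-\cost(\mu)|}\cdot\tfrac{|\cost(\nu)-\cost(\mu)|}{\wassersteinDistance{2}{\mu}{\nu}}$ and invoking \cref{prop:differentiable continuous} for the first factor. In fact you spell out a step the paper leaves implicit — the boundedness of the ratio $|\cost(\nu)-\cost(\mu)|/\wassersteinDistance{2}{\mu}{\nu}$ via Cauchy--Schwarz applied to the Wasserstein expansion — which is a welcome bit of added rigor rather than a deviation.
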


\ifbool{showproofs}{\booltrue{showproofchainrule}}{
\ifbool{arxiv}{\boolfalse{showproofchainrule}}{}} 
\ifbool{showproofchainrule}{\proofchainrule{booltrue}}{}

\subsection{Examples of Smooth Functionals}\label{subsec:examples of smooth functionals}
After having discussed the differential structure of Wasserstein space, we recall and in some cases further study the topological and differential properties of five classes of important functionals:
\ifbool{compact}{
optimal transport discrepancies, expected values-type functionals, interaction-type functionals, internal-energy-type functionals, and divergences.
}{
\begin{itemize}
    \item optimal transport discrepancies from a reference measure $\bar\mu\in\Pp{2}{\reals^d}$: 
    \begin{equation*}
        \mu\mapsto
        \optimalTransportDiscrepancy{c}{\mu}{\bar\mu}\coloneqq 
        \min_{\gamma\in\setPlans{\mu}{\bar\mu}}\int_{\reals^d\times\reals^d}c(x,y)\d\gamma(x,y),
    \end{equation*}
    for some continuously differentiable transport cost $c:\reals^d\times\reals^d\to\nonnegativeReals$, which also include the (squared) Wasserstein distance. 
    
    \item expected value-type functionals:
    \begin{equation*}
        \mu\mapsto \expectedValue{\mu}{V(x)}\coloneqq\int_{\reals^d}V(x)\d\mu(x)
    \end{equation*}
    for some twice continuously differentiable $V:\reals^d\to\reals$;
    
    \item interaction-type functionals:
    \begin{equation*}
        \mu\mapsto\frac{1}{2}\int_{\reals^d\times\reals^d}U(x-y)\d(\mu\productMeasure\mu)(x,y),
    \end{equation*}
    where $\mu\productMeasure\mu$ denotes the product measure, for some continuously differentiable $U:\reals^d\to\nonnegativeReals$, related, among others, to the variance (see~\cref{cor:variance} below);
    
    \item internal-energy-type functionals: 
    \begin{equation*}
        \mu\mapsto
        \begin{cases}
            \displaystyle\int_{\reals^d}F(\rho(x))\d x & \text{if }\mu=\rho\lebesgueMeasure{d}\in\Ppabs{2}{\reals^d},
            \\
            +\infty & \text{else}
        \end{cases}
    \end{equation*}
    for some continuously differentiable $F:\nonnegativeReals\to\nonnegativeReals$, related, among others, to the entropy;

    \item the divergences relative to a reference measure $\bar\mu\in\Pp{2}{\reals^d}$:
    \begin{equation*}
        \mu\mapsto
        \begin{cases}
            \displaystyle\int_{\reals^d}F\left(\radonNikodyn{\mu}{\bar\mu}\right)\,\d\bar\mu(x) & \text{if } \mu\ll\bar\mu,
            \\
            +\infty & \text{else},
        \end{cases}
    \end{equation*}
    where $\radonNikodyn{\mu}{\bar\mu}$ is the Radon-Nykodin derivative~\cite{Rudin1987}; an example is the \gls{acr:kl} divergence. 
\end{itemize}}

We start by recalling from~\cite{Villani2009a,Santambrogio2015,Ambrosio2008a} and extending some basic properties of optimal transport discrepancies.

\begin{proposition}[optimal transport discrepancy]\label{prop:optimal transport discrepancy}
Let $\bar\mu\in\Pp{2}{\reals^d}$, $c:\reals^d\times\reals^d\to\nonnegativeReals$, and consider the functional 
\begin{equation}\label{eq:optimal transport discrepancy:definition}
    \cost(\mu)=
    \optimalTransportDiscrepancy{c}{\mu}{\bar\mu}
    \coloneqq
    \min_{\gamma\in\setPlans{\mu}{\bar\mu}}\int_{\reals^d\times\reals^d}c(x,y)\d\gamma(x,y).
\end{equation}
Assume that there exists $C>0$ so that $c(y,y)\leq C$ for all $y\in\support{\bar\mu}$, $c$ is twice continuously differentiable in the first argument, and there exists $M>0$ so that $\norm{\gradient{x}^2c(x,y)}\leq M$ for all $x\in\reals^d, y\in\support{\bar\mu}$, and $\norm{\gradient{x}c(x,y)}^2\leq M(1 + \norm{x}^2+\norm{y}^2)$.
Then, 
\begin{enumerate}
    \item $\cost$ is proper and $\cost(\mu)\geq 0$ for all $\mu\in\Pp{2}{\reals^d}$; 
    \item $\cost$ is lower semi-continuous w.r.t. weak convergence in $\Pp{2}{\reals^d}$; 
    \item $\cost$ is lower semi-continuous w.r.t. narrow convergence; 
    \item if $-c(\cdot,y)$ is $\alpha$-convex (with $\alpha\in\reals$) for all $y\in\support{\bar\mu}$, then $-\cost$ is $\alpha$-convex along any interpolating curve;
    \item if $c(\cdot,y)$ is $\alpha$-convex (with $\alpha\in\reals$) for all $y\in\support{\bar\mu}$, then $\cost$ is $\alpha$-convex along the curve
    \begin{equation*}
        \mu_t=\pushforward{((1-t)\proj_2+t\proj_3)}\gamma,
    \end{equation*}
    where $\gamma\in\Pp{2}{\reals^d\times\reals^d\times\reals^d}$, $\pushforward{(\proj_{12})}\gamma\in\setOptimalPlans{\bar\mu}{\mu}$ and $\pushforward{(\proj_{13})}\gamma\in\setOptimalPlans{\bar\mu}{\nu}$ (where optimality is intended w.r.t. the transport cost $c$), and $\mu,\nu\in\Pp{2}{\reals^d}$;
    \item if $\mu\in\effectiveDomain{\cost}$
    and $\setOptimalPlans{\mu}{\bar\mu}$ contains a unique optimal transport plan induced by  an optimal transport map $\optMap{\mu}{\bar\mu}\in\Lp{2}{\reals^d,\reals^d;\mu}$ (w.r.t. the cost $c$),  $\cost$ is Wasserstein differentiable at $\mu$, and its Wasserstein gradient is
    \begin{equation}\label{eq:ot discrepancy:wasserstein gradient}
        \gradient{\mu}\cost(\mu) = \gradient{x}c(\cdot,\optMap{\mu}{\bar\mu}(\cdot));
    \end{equation}
    \item in particular, if $y\mapsto\gradient{x}c(x,y)$ is injective for $\mu$-almost every $x\in\reals^d$ (i.e., $c$ satisfies the so-called twist condition) and $\mu\in\Ppabs{2}{\reals^d}$ is absolutely continuous, then $\cost$ is Wasserstein differentiable at $\mu$ and its Wasserstein gradient is~\eqref{eq:ot discrepancy:wasserstein gradient}.
\end{enumerate}
\end{proposition}

\ifbool{showproofs}{\booltrue{showproofoptimaltransportdiscrepancy}}{
\ifbool{arxiv}{\boolfalse{showproofoptimaltransportdiscrepancy}}{}} 
\ifbool{showproofoptimaltransportdiscrepancy}{\proofoptimaltransportdiscrepancy{booltrue}}{}

\begin{remark}
Three remarks are in order. 
First, Statement (i) follows immediately from basic properties of $J$; (ii) and (iii) are a consequence of Kantorovich duality (e.g., see~\cite[Proposition 7.4]{Santambrogio2015}); (iv), (v), and (vi) are proven in~\cite[§9.3 and §10.4]{Ambrosio2008a} for the special case of the Wasserstein distance (i.e., $c(x,y)=\norm{x-y}^2$, see~\cref{cor:wasserstein distance} below) and extended here for general transportation costs $c$; (vii) follows from (vi) and~\cite[Theorem 10.28 and Example 10.35]{Villani2009a}.
Second, Statement (v) does \emph{not} imply that $\cost$ is convex along generalized geodesics, but only along the generalized geodesics whose ``base'' is precisely $\bar\mu$.
Third, if $c(x,y)=\transpose{(x-y)}A(x-y)$ with $A=\transpose{A}\in\reals^{d\times d}$ positive definite or $c(x,y)=\ell(\norm{x-y})$ for $\ell:\nonnegativeReals\to\nonnegativeReals$ strictly convex which grows faster than linearly but so that $c$ has bounded Hessian, then $y\mapsto\gradient{x}c(x,y)$ is injective at all $x\in\reals^d$ and $\mu$ being absolutely continuous is a sufficient condition for existence of a unique optimal transport plan induced by an optimal transport map; see~\cite{Gangbo1996,figalli2007existence} and~\cite[§10]{Villani2009a} for details and generalizations.
\end{remark}

As a corollary, we recover the convexity and differentiability properties of the squared Wasserstein distance presented in~\cite[§7 and §10]{Ambrosio2008a}.
\begin{corollary}[squared Wasserstein distance]\label{cor:wasserstein distance}
Let $\bar\mu\in\Pp{2}{\reals^d}$ and consider the functional 
\begin{equation*}
    \cost(\mu)=\frac{1}{2}\wassersteinDistance{2}{\mu}{\bar\mu}^2.
\end{equation*}
Then, 
\begin{enumerate}
    \item $\effectiveDomain{\cost}=\Pp{2}{\reals^d}$, $\cost$ is proper, and $\cost(\mu)\geq 0$ for all $\mu\in\Pp{2}{\reals^d}$; 
    \item $\mu\mapsto\wassersteinDistance{2}{\mu}{\bar\mu}$ and $\cost$ are continuous w.r.t. weak convergence in $\Pp{2}{\reals^d}$; 
    \item $\mu\mapsto\wassersteinDistance{2}{\mu}{\bar\mu}$ and $\cost$ are lower semi-continuous w.r.t. to narrow convergence; 
    \item $-\cost$ is $(-1)$-convex (and thus $\cost$ is $(-1)$-concave) along any interpolating curve; 
    \item $\cost$ is $1$-convex along the curve 
    \begin{equation*}
        \mu_t=\pushforward{((1-t)\proj_2+t\proj_3)}\gamma,
    \end{equation*}
    where $\gamma\in\Pp{2}{\reals^d\times\reals^d\times\reals^d}$, $\pushforward{(\proj_{12})}\gamma\in\setOptimalPlans{\bar\mu}{\mu}$ and $\pushforward{(\proj_{13})}\gamma\in\setOptimalPlans{\bar\mu}{\nu}$, and $\mu,\nu\in\Pp{2}{\reals^d}$;
    \item if $\mu\in\Ppabs{2}{\reals^d}$ is absolutely continuous, $\cost$ is Wasserstein differentiable at $\mu$ and its Wasserstein gradient is
    \begin{equation*}
    \gradient{\mu}\cost(\mu) = \Id-\optMap{\mu}{\bar\mu},
    \end{equation*}
    where $\Id:\reals^d\to\reals^d$ is the identity map and $\optMap{\mu}{\bar\mu}:\reals^d\to\reals^d$ is the (unique) optimal transport map from $\mu$ to $\bar\mu$.
\end{enumerate}
\end{corollary}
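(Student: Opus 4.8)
The plan is to observe that $\cost(\mu) = \tfrac12 \wassersteinDistance{2}{\mu}{\bar\mu}^2 = \tfrac12 \optimalTransportDiscrepancy{c}{\mu}{\bar\mu}$ with transport cost $c(x,y) = \norm{x-y}^2$, and to deduce every claim from \cref{prop:optimal transport discrepancy} after checking its hypotheses. For $c(x,y)=\norm{x-y}^2$ one has: $c(y,y)=0\le 1$ for all $y\in\support{\bar\mu}$; $c$ is smooth in $x$ with $\gradient{x}^2 c(x,y)=2\eye{d}$, hence $\norm{\gradient{x}^2 c(x,y)}=2$; and $\norm{\gradient{x}c(x,y)}^2 = 4\norm{x-y}^2 \le 8(\norm{x}^2+\norm{y}^2)\le M(1+\norm{x}^2+\norm{y}^2)$. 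So the standing assumptions of \cref{prop:optimal transport discrepancy} hold with, e.g., $C=1$ and $M=8$.

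With this reduction, items (i)--(iii) are immediate. For (i), $\wassersteinDistance{2}{\cdot}{\cdot}$ is a genuine distance on $\Pp{2}{\reals^d}$, hence finite and non-negative; so $\effectiveDomain{\cost}=\Pp{2}{\reals^d}$, $\cost$ is proper, and $\cost\ge 0$. For (ii), the triangle inequality gives $|\wassersteinDistance{2}{\mu_n}{\bar\mu}-\wassersteinDistance{2}{\mu}{\bar\mu}|\le\wassersteinDistance{2}{\mu_n}{\mu}$, which tends to $0$ because $\wassersteinDistance{2}{\cdot}{\cdot}$ metrizes weak convergence in $\Pp{2}{\reals^d}$; composing with the continuous map $t\mapsto t^2/2$ gives continuity of $\cost$ as well. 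For (iii), \cref{prop:optimal transport discrepancy}(iii) yields lower semi-continuity of $\optimalTransportDiscrepancy{c}{\cdot}{\bar\mu}=\wassersteinDistance{2}{\cdot}{\bar\mu}^2$ w.r.t. narrow convergence, and both $t\mapsto\sqrt t$ and $t\mapsto t/2$ are continuous and non-decreasing on $\nonnegativeReals$, so $\mu\mapsto\wassersteinDistance{2}{\mu}{\bar\mu}$ and $\cost$ are lsc as well.

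The remaining items use the specific structure of $c$. For (iv), $x\mapsto-c(x,y)=-\norm{x-y}^2$ has Hessian $-2\eye{d}$, i.e. is $(-2)$-convex, uniformly in $y$; \cref{prop:optimal transport discrepancy}(iv) then gives that $-\optimalTransportDiscrepancy{c}{\cdot}{\bar\mu}$ is $(-2)$-convex along any interpolating curve, and scaling by $\tfrac12$ shows $-\cost$ is $(-1)$-convex (so $\cost$ is $(-1)$-concave) along any interpolating curve. For (v), the first-argument map $x\mapsto c(x,y)=\norm{x-y}^2$ is $2$-convex, uniformly in $y$, so \cref{prop:optimal transport discrepancy}(v) gives $2$-convexity of $\optimalTransportDiscrepancy{c}{\cdot}{\bar\mu}$ along the stated generalized geodesic centered at $\bar\mu$, hence $\cost$ is $1$-convex there. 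For (vi), when $\mu\in\Ppabs{2}{\reals^d}$ Brenier's theorem (\cref{thm:brenier}) provides a unique optimal transport plan, induced by a unique optimal transport map $\optMap{\mu}{\bar\mu}$, and $\optMap{\mu}{\bar\mu}\in\Lp{2}{\reals^d;\mu}$ because $\int_{\reals^d}\norm{\optMap{\mu}{\bar\mu}(x)}^2\d\mu(x)=\int_{\reals^d}\norm{y}^2\d\bar\mu(y)<+\infty$; thus \cref{prop:optimal transport discrepancy}(vi) applies and gives $\gradient{\mu}\optimalTransportDiscrepancy{c}{\cdot}{\bar\mu}(\mu)=\gradient{x}c(\cdot,\optMap{\mu}{\bar\mu}(\cdot))=2(\Id-\optMap{\mu}{\bar\mu})$, so by linearity $\gradient{\mu}\cost(\mu)=\Id-\optMap{\mu}{\bar\mu}$.

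No step is a genuine obstacle; the only things to watch are bookkeeping matters: consistently carrying the factor $\tfrac12$ (and the factor $2$ coming from $\gradient{x}\norm{x-y}^2$) when converting convexity parameters and gradients of $\optimalTransportDiscrepancy{c}{\cdot}{\bar\mu}$ into those of $\cost$, and remembering that the ``$\alpha$-convexity of $c$'' invoked in \cref{prop:optimal transport discrepancy}(v) refers to convexity in the first argument only (which for $\norm{x-y}^2$ gives $\alpha=2$, not the joint convexity parameter, which is $0$).
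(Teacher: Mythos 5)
Your proposal is correct and follows essentially the same route as the paper's own proof: reduce everything to \cref{prop:optimal transport discrepancy} with $c(x,y)=\norm{x-y}^2$, use the triangle inequality plus metrizability for continuity, and translate the $\pm 2$-convexity parameters and the factor $2$ in $\gradient{x}\norm{x-y}^2$ through the $\tfrac12$ prefactor. The only difference is that you spell out the hypothesis checks (growth bound on $\gradient{x}c$, $\Lp{2}{}$-integrability of $\optMap{\mu}{\bar\mu}$, and the remark that $\alpha$-convexity in \cref{prop:optimal transport discrepancy}(v) is first-argument convexity) that the paper leaves implicit; these are accurate and welcome details, not deviations.
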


\ifbool{showproofs}{\booltrue{showproofwassersteindistance}}{
\ifbool{arxiv}{\boolfalse{showproofwassersteindistance}}{}} 
\ifbool{showproofwassersteindistance}{\proofwassersteindistance{booltrue}}{}

Unlike in Euclidean (or more generally Hilbertian) settings, the distance function is \emph{not} (geodesically) convex: It is in fact semiconcave. This makes the Wasserstein space a \emph{positively curved metric space}; see~\cite[§7.3]{Ambrosio2008a}.
This fact, among others, will complicate sufficient conditions for optimality, oftentimes intimately related to convexity. We later circumvent this technical difficulty in~\cref{thm:first order sufficient conditions constrained wasserstein}.

Next, we recall from~\cite[§9.3 and §10.4]{Ambrosio2008a} some well-known results for expected values:

\begin{proposition}[expected value~\protect{\cite[§9.3 and §10.4]{Ambrosio2008a}}]\label{prop:expectation}
Let $V\in\C{2}{\reals^d}$ with uniformly bounded Hessian (i.e., there exists $M>0$ so that $\norm{\gradient{x}^2V(x)}\leq M$ for all $x\in\reals^d$), and consider the functional
\begin{equation}\label{eq:expected value:definition}
    \cost(\mu)=\expectedValue{\mu}{V(x)}=\int_{\reals^d}V(x)\d\mu(x).
\end{equation}
Then, 
\begin{enumerate}
    \item $\effectiveDomain{\cost}=\Pp{2}{\reals^d}$, $\cost$ is proper, and $\cost(\mu)>-\infty$ for all $\mu\in\Pp{2}{\reals^d}$;
    
    \item $\cost$ is continuous w.r.t. weak convergence in $\Pp{2}{\reals^d}$;
    
    \item if $V(x)^-\leq C(1+\norm{x}^p)$ for $C\in\reals$ and $p<2$, then $\cost$ is lower semi-continuous w.r.t. narrow convergence;
    
    \item $J$ is $\alpha$-convex along any interpolating curve if and only if $V$ is $\alpha$-convex;
    
    \item $\cost$ is Wasserstein differentiable and its Wasserstein gradient at $\mu\in\Pp{2}{\reals^d}$ is 
    \begin{equation*}
        \gradient{\mu}\cost(\mu)=\gradient{x}V.
    \end{equation*}
\end{enumerate}
\end{proposition}

\ifbool{showproofs}{\booltrue{showproofexpectation}}{
\ifbool{arxiv}{\boolfalse{showproofexpectation}}{}} 
\ifbool{showproofexpectation}{\proofexpectation{booltrue}}{}

Following~\cite[§9.3 and §10.4]{Ambrosio2008a}, we now consider interaction-type functionals:

\begin{proposition}[interaction-type functionals~\protect{\cite[§9.3 and §10.4]{Ambrosio2008a}}]\label{prop:interaction}
Let  $U\in\C{2}{\reals^d}$  with uniformly bounded Hessian (i.e., there exists $M>0$ so that $\norm{\gradient{z}^2U(z)}\leq M$ for all $z\in\reals^d$), and consider the functional
\begin{equation}\label{eq:interaction:definition}
    \cost(\mu)=\frac{1}{2}\int_{\reals^d\times\reals^d}U(x-y)\d(\mu\productMeasure\mu)(x,y).
\end{equation}
Then,
\begin{enumerate}
    \item $\effectiveDomain{\cost}=\Pp{2}{\reals^d}$, $\cost$ is proper, and  
    $\cost(\mu)>-\infty$ for all $\mu\in\Pp{2}{\reals^d}$; 
    
    \item $\cost$ is continuous w.r.t. weak convergence in $\Pp{2}{\reals^d}$; 
    
    \item if $U^-(z)\leq C(1+\norm{z}^p)$ for $C\in\reals$ and $p<2$, then $\cost$ is lower semi-continuous w.r.t. narrow convergence; 
    
    \item if $U$ is convex, then $\cost$ is convex along any interpolating curve; 

    \item $\cost$ is Wasserstein differentiable and its Wasserstein gradient at $\mu\in\Pp{2}{\reals^d}$ is 
\begin{equation*}
    \gradient{\mu}\cost(\mu)=\gradient{}U\conv\mu,
\end{equation*}
provided that $\gradient{}U\conv\mu\in\Lp{2}{\reals^d,\reals^d; \mu}$, where the convolution is defined as 
\begin{equation*}
    \gradient{\mu}\cost(\mu)(x)=(\gradient{}U\conv\mu)(x)\coloneqq\int_{\reals^d}\nabla U(x-y)\d\mu(y).
\end{equation*}
\end{enumerate}
\end{proposition}

\ifbool{showproofs}{\booltrue{showproofinteraction}}{
\ifbool{arxiv}{\boolfalse{showproofinteraction}}{}} 
\ifbool{showproofinteraction}{\proofinteraction{booltrue}}{}

\begin{remark}
Two remarks are in order. 
First, Statements (i) and (ii) follow immediately from the definition of $J$ and the assumptions on $U$; (iii) is a consequence of~\cite[Theorem 3]{Yue2021}; (iv) is proven in~\cite[Proposition 9.3.5]{Ambrosio2008a}; (v) is proven in~\cite[Theorem 10.4.11]{Ambrosio2008a} under convexity of $U$ and a so-called ``doubling'' assumption, while our proof instead bases on smoothness of $U$ and boundedness of the Hessian.
Second, if $U$ is $\alpha$-geodesically convex, then $\cost$ is not necessarily $\alpha$-geodesically convex; e.g., see~\cref{lemma:properties dro variance} below. Nonetheless, if $\alpha\geq 0$, then $\cost$ is ($0$-)geodesically convex.
\end{remark}

We now instantiate \cref{prop:interaction} to study the variance.

\begin{corollary}[variance]\label{cor:variance}
Let $a\in\reals^d$ and $\cost(\mu)=\variance{\mu}{\innerProduct{a}{x}}$. Then, $\cost$ is continuous w.r.t. to weak convergence in $\Pp{2}{\reals^d}$, convex along any interpolating curve, and its Wasserstein gradient at $\mu\in\Pp{2}{\reals^d}$ is
\begin{equation*}
    \gradient{\mu}\cost(\mu)(x)=2a\innerProduct{a}{x-\expectedValue{\mu}{x}}.
\end{equation*}
\end{corollary}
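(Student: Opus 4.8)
The plan is to recognize $\cost$ as an instance of the interaction-type functional of \cref{prop:interaction} and then read off the claimed properties. The first step is the algebraic identity
\begin{equation*}
    \variance{\mu}{\innerProduct{a}{x}}
    =
    \frac{1}{2}\int_{\reals^d\times\reals^d}\innerProduct{a}{x-y}^2\d(\mu\productMeasure\mu)(x,y),
\end{equation*}
which I would derive exactly as in the proof of \cref{cor:variance} already sketched: write $\variance{\mu}{\innerProduct{a}{x}}=\int\innerProduct{a}{x}^2\d\mu-(\int\innerProduct{a}{x}\d\mu)^2$, symmetrize the first term over a dummy variable $y$, and collect the cross terms to obtain $\frac12\int (\innerProduct{a}{x}-\innerProduct{a}{y})^2\d(\mu\productMeasure\mu)=\frac12\int\innerProduct{a}{x-y}^2\d(\mu\productMeasure\mu)$. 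This puts $\cost$ in the form $\mu\mapsto\frac12\int_{\reals^d\times\reals^d}U(x-y)\d(\mu\productMeasure\mu)(x,y)$ with $U(z)\coloneqq\innerProduct{a}{z}^2$.

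Next I would verify the hypotheses of \cref{prop:interaction} for this $U$. Clearly $U\in\C{2}{\reals^d}$, it is convex (it is a composition of the linear map $z\mapsto\innerProduct{a}{z}$ with $t\mapsto t^2$), and $\gradient{}^2U(z)=2a\transpose{a}$ is constant, hence its operator norm is uniformly bounded by $M=2\norm{a}^2$. Invoking \cref{prop:interaction}(ii) gives continuity of $\cost$ w.r.t. weak convergence in $\Pp{2}{\reals^d}$, and \cref{prop:interaction}(iv) together with convexity of $U$ gives that $\cost$ is convex along any interpolating curve.

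For the Wasserstein gradient, \cref{prop:interaction}(v) yields $\gradient{\mu}\cost(\mu)=\gradient{}U\conv\mu$ provided this belongs to $\Lp{2}{\reals^d,\reals^d;\mu}$. The $L^2$-membership is the one genuinely non-automatic point, and it is handled by a quadratic-growth estimate: from $\innerProduct{a}{x+y}\leq\innerProduct{a}{x}+\innerProduct{a}{y}$ and $(\alpha+\beta)^2\leq 2\alpha^2+2\beta^2$ one gets $U(x+y)\leq 2(U(x)+U(y))$, which (via the convolution integrand $\gradient{}U(x-y)=2a\innerProduct{a}{x-y}$ and $\norm{\gradient{}U(x-y)}^2=4\norm{a}^2\innerProduct{a}{x-y}^2\leq 8\norm{a}^2(U(x)+U(y))$) reduces integrability in $\mu$ to finiteness of the second moment of $\mu$, which holds since $\mu\in\Pp{2}{\reals^d}$. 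Finally, computing the convolution with $\gradient{}U(z)=2a\innerProduct{a}{z}$ gives
\begin{equation*}
    \gradient{\mu}\cost(\mu)(x)
    =
    \int_{\reals^d}2a\innerProduct{a}{x-y}\d\mu(y)
    =
    2a\innerProduct{a}{x-\expectedValue{\mu}{x}},
\end{equation*}
which is the claimed formula. I expect the main (minor) obstacle to be bookkeeping the $L^2$ bound cleanly; everything else is a direct appeal to \cref{prop:interaction}.
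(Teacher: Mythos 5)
Your proposal is correct and follows essentially the same route as the paper: rewrite $\variance{\mu}{\innerProduct{a}{x}}$ as an interaction-type functional with kernel $U(z)=\innerProduct{a}{z}^2$, verify $U\in\C{2}$, convexity, and bounded Hessian so that \cref{prop:interaction}(ii)/(iv)/(v) apply, and compute the convolution $\gradient{}U\conv\mu$. You are somewhat more explicit than the paper about the $\Lp{2}{\reals^d,\reals^d;\mu}$-membership of the gradient (the paper records the doubling estimate $U(x+y)\leq 2(U(x)+U(y))$ but leaves the reduction to the second-moment bound implicit), which is a harmless elaboration rather than a deviation. One cosmetic nit: $\innerProduct{a}{x+y}\leq\innerProduct{a}{x}+\innerProduct{a}{y}$ is in fact an equality by linearity (the paper writes the same ``$\leq$''), but this does not affect the conclusion.
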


\ifbool{showproofs}{\booltrue{showproofvariance}}{
\ifbool{arxiv}{\boolfalse{showproofvariance}}{}} 
\ifbool{showproofvariance}{\proofvariance{showproofs}}{}

\begin{remark}
If $d=1$ and $a=1$, the Wasserstein gradient reduces to the natural expression $\gradient{\mu}\variance{\mu}{x}=2(\Id-\expectedValue{\mu}{x})$. The gradient evaluates to zero when all probability mass is concentrated at the expected value and the variance achieves its global minimum;  see~\cref{sec:optimality}. 
Moreover, \cref{cor:variance} also follows from the chain rule (\cref{prop:chain rule}): Since $\variance{\mu}{\innerProduct{a}{x}}=\expectedValue{\mu}{\innerProduct{a}{x}^2}-\expectedValue{\mu}{\innerProduct{a}{x}}^2$, we conclude\ifbool{compact}{
$\gradient{\mu}\cost(\mu)=2a\innerProduct{a}{\Id} - 2a\expectedValue{\mu}{\innerProduct{a}{x}}=2a\innerProduct{a}{\Id-\expectedValue{\mu}{x}}$.}{
\begin{equation*}
    \gradient{\mu}\cost(\mu)=2a\innerProduct{a}{\Id} - 2a\expectedValue{\mu}{\innerProduct{a}{x}}=2a\innerProduct{a}{\Id-\expectedValue{\mu}{x}}. 
    \qedhere 
\end{equation*}}
\end{remark}


We can now recall from~\cite{Ambrosio2008a,erbar2010heat} the properties of internal energy-type functionals: 

\begin{proposition}[internal energy-type functionals~\cite{Ambrosio2008a,erbar2010heat}]\label{prop:internal energy}
Let $F:\nonnegativeReals\to\reals$ be convex, differentiable, with superlinear growth at infinity, and satisfy $F(0)=0$. 
Consider the functional
\begin{equation*}
    \cost(\mu)=
    \begin{cases}
    \displaystyle\int_{\reals^d}F(\rho(x))\d x & \text{if } \mu=\rho\lebesgueMeasure{d}\in\Ppabs{2}{\reals^d},\\
    +\infty & \text{else}. 
    \end{cases}
\end{equation*}
Then, 
\begin{enumerate}
    \item $\effectiveDomain{\cost}\subset\Ppabs{2}{\reals^d}$ and $\cost$ is proper;
    
    \item $\cost$ is lower semi-continuous w.r.t. weak convergence in $\Pp{2}{\reals^d}$; 
    
    \item $\cost$ is lower semi-continuous w.r.t. narrow convergence.
\end{enumerate}
Moreover, if $s\mapsto s^d F(s^{-d})$ is convex and non-increasing on $(0,+\infty)$, then
\begin{enumerate}[resume]
    \item $\cost$ is convex along generalized geodesics; 
\end{enumerate}
If additionally there exists $C>0$ s.t. $F(z+w)\leq C(1+F(z)+F(w))$ for all $z,w\in\reals^d$, then
\begin{enumerate}[resume]
    \item $\cost$ is Wasserstein subdifferentiable at $\mu=\rho\lebesgueMeasure{d}\in\Ppabs{2}{\reals^d}$ and the Wasserstein subdifferential of $\cost$ at $\mu$ is given by
    \begin{equation}\label{eq:internal energy:wasserstein gradient}
        \subdifferential\cost(\mu)
        =
        \left\{\frac{\gradient{}(\rho F'(\rho)-F(\rho))}{\rho}\right\},
    \end{equation}
    provided that all terms all well defined (i.e., $\rho F'(\rho)+\rho\in\Wrloc{1}{\reals^d; \mu}$ and  $\frac{\gradient{}(\rho F'(\rho)-F(\rho))}{\rho}\in\Lp{2}{\reals^d,\reals^d;\mu}$).
\end{enumerate}
\end{proposition}

\ifbool{showproofs}{\booltrue{showproofinternalenergy}}{
\ifbool{arxiv}{\boolfalse{showproofinternalenergy}}{}} 
\ifbool{showproofinternalenergy}{\proofinternalenergy{showproofinternalenergy}}{}

\begin{remark}
    Two remarks are in order. 
    First,~\cite{Ambrosio2008a} proves that $\frac{\gradient{}(\rho F'(\rho)-F(\rho))}{\rho}$ is the minimal selection in the subdifferential of $\cost$ at $\mu$~\cite[Theorem 10.4.6]{Ambrosio2008a}, but it does not provide a full characterization of the subdifferential of $\cost$ which was established in~\cite[Proposition 4.3]{erbar2010heat} for the special case of the (relative) entropy in the more general case of Riemannian manifolds (by showing uniqueness of the subgradient). 
    Second, internal energy-type functionals (as well as the divergence presented below) are not Wasserstein differentiable since they attain $+\infty$ at all measures which are not absolutely continuous, which, in turn, implies that the superdifferential is empty.
\end{remark}

As an example, we consider the entropy: 

\begin{example}[entropy]
\label{ex:entropy}
Consider the entropy
\begin{equation*}
\cost(\mu)=
\begin{cases}
\displaystyle\int_{\reals^d}\rho\log(\rho)\d x & \text{if } \mu=\rho\lebesgueMeasure{d}\in\Ppabs{2}{\reals^d}, \\
+\infty & \text{else}.
\end{cases}
\end{equation*}
That is, $F(z)=z\log(z)$ (with $F(0)=0$). Clearly, $F$ is convex, differentiable, and has superlinear growth.
Moreover, $s^ds^{-d}\log(s^{-d})=-d\log(s)$ is convex and non-increasing, and 
\ifbool{compact}{$(z+w)\log(z+w)\leq 2(1+z\log(z)+w\log(w))$}{
\begin{equation*}
\begin{aligned}
    (z+w)\log(z+w)
    &\leq
    2(1+z\log(z)+w\log(w)).
\end{aligned}
\end{equation*}}
So, if $\mu=\rho\lebesgueMeasure{d}\in\Ppabs{2}{\reals^d}$, the Wasserstein subdifferential of $\cost$ at $\mu$ is
\begin{equation*}
    \subdifferential\cost(\mu)
    =
    \left\{\frac{\gradient{}\left(\rho(\log(\rho)+1)-\rho\log(\rho)\right)}{\rho}\right\}
    =
    \left\{\frac{\gradient{}\rho}{\rho}\right\},
\end{equation*}
provided that all terms are well-defined.
\end{example}

To conclude, we recall from~\cite{Ambrosio2008a,erbar2010heat} the properties of divergences: 

\begin{proposition}[divergences~\cite{Ambrosio2008a,erbar2010heat}]\label{prop:divergence}
Let $F:\nonnegativeReals\to\reals$ be convex, differentiable, with superlinear growth at infinity, and satisfy $F(0)=0$. Let $\bar\mu\in\Pp{2}{\reals^d}$.
Consider the functional
\begin{equation*}
    \cost(\mu)=
    \begin{cases}
    \displaystyle\int_{\reals^d}F\left(\radonNikodyn{\mu}{\bar\mu}(x)\right)\d\bar\mu(x) & \text{if } \mu\ll\bar\mu,\\
    +\infty & \text{else},
    \end{cases}
\end{equation*}
where $\radonNikodyn{\mu}{\bar\mu}$ is the Radon-Nikodyn derivative. 
Then, 
\begin{enumerate}
    \item $\effectiveDomain{\cost}\subset\{\mu\in\Pp{2}{\reals^d}: \mu\ll\bar\mu\}$ and $\cost$ is proper;
    \item $\cost$ is lower semi-continuous w.r.t. weak convergence in $\Pp{2}{\reals^d}$; 
    \item $\cost$ is lower semi-continuous w.r.t. narrow convergence.
\end{enumerate}
Moreover, if $s\mapsto s^d F(s^{-d})$ is convex and non-increasing on $(0,+\infty)$ and $\bar\mu$ is log-concave, then
\begin{enumerate}[resume]
    \item $\cost$ is convex along generalized geodesics.
\end{enumerate}
Additionally, if there exists $C>0$ such that $F(z+w)\leq C(1+F(z)+F(w))$ for all $z,w\in\reals^d$ and $\bar\mu=e^{-V}\lebesgueMeasure{d}$ with $V:\reals^d\to\reals$ convex and continuously differentiable, then
\begin{enumerate}[resume]
    \item $\cost$ is Wasserstein subdifferentiable at $\mu=\rho\lebesgueMeasure{d}\in\Ppabs{2}{\reals^d}$ and the Wasserstein subdifferential of $\cost$ at $\mu$ is given by
    \begin{equation*}
        \subdifferential\cost(\mu)
        =
        \left\{
        \frac{\gradient{}(\rho/e^{-V} F'(\rho/e^{-V})-F(\rho/e^{-V}))}{\rho} e^{-V}
        \right\},
    \end{equation*}
    provided that all terms are well defined  (i.e., $\rho/e^{-V} F'(\rho/e^{-V})+\rho/e^{-V}\in\Wrloc{1}{\reals^d; \mu}$ and  $\frac{\gradient{}(\rho/e^{-V} F'(\rho/e^{-V})-F(\rho/e^{-V}))}{\rho} e^{-V}\in\Lp{2}{\reals^d,\reals^d;\mu}$).
\end{enumerate}
\end{proposition}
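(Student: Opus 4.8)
The plan is to treat these functionals as ``$F$-divergences'' and, for the differentiable case, as weighted internal energies, following the route behind \cref{prop:internal energy}. Statement~(i) is immediate for any $\bar\mu\in\Pp{2}{\reals^d}$: $\effectiveDomain{\cost}\subset\{\mu\in\Pp{2}{\reals^d}:\mu\ll\bar\mu\}$ by the very definition of $\cost$ (and of the Radon--Nikodym derivative), and $\cost$ is proper because $\cost(\bar\mu)=\int_{\reals^d}F(1)\,\d\bar\mu=F(1)<+\infty$.

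For~(ii)--(iii) I would first establish lower semicontinuity with respect to \emph{narrow} convergence through the classical lower-semicontinuity theorem for integral functionals of measures (\cite[Lemma~9.4.3]{Ambrosio2008a}), available since $F$ is convex, lower semicontinuous, $F(0)=0$, and superlinear at infinity; lower semicontinuity with respect to weak convergence in $\Pp{2}{\reals^d}$ then follows a fortiori, as that convergence is finer than the narrow one. Statement~(iv) is McCann's displacement-convexity condition: when $s\mapsto s^{d}F(s^{-d})$ is convex and non-increasing and $\bar\mu$ is log-concave, \cite[Theorem~9.4.12]{Ambrosio2008a} yields convexity along generalized geodesics.

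The heart of the statement is~(v), where $\bar\mu=e^{-V}\lebesgueMeasure{d}$. Writing $\mu=\rho\lebesgueMeasure{d}\in\Ppabs{2}{\reals^d}$ one has $\radonNikodyn{\mu}{\bar\mu}=\rho\,e^{V}$ and
\begin{equation*}
    \cost(\mu)=\int_{\reals^d}F\!\left(\rho(x)\,e^{V(x)}\right)e^{-V(x)}\,\d x,
\end{equation*}
a weighted internal energy. I would first compute the directional derivative along admissible perturbations: for $\psi\in\Ccinf{\reals^d}$ and $s$ small, $\Id+s\gradient{}\psi$ is a diffeomorphism, so the density $\rho_s$ of $\mu_s\coloneqq\pushforward{(\Id+s\gradient{}\psi)}\mu$ satisfies $\rho_s(x+s\gradient{}\psi(x))\det(I+s\gradient{}^2\psi(x))=\rho(x)$. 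Plugging this into $\cost(\mu_s)=\int_{\reals^d}F(\rho_s e^{V})e^{-V}\,\d x$ and differentiating at $s=0$ --- with $\frac{\d}{\d s}\det(I+s\gradient{}^2\psi)\big|_{s=0}=\Delta\psi$, $\frac{\d}{\d s}e^{-V(x+s\gradient{}\psi(x))}\big|_{s=0}=\innerProduct{\gradient{}(e^{-V})(x)}{\gradient{}\psi(x)}$, and writing $L_F(u)\coloneqq uF'(u)-F(u)$ --- an integration by parts (boundary terms vanishing by compactness of $\support{\psi}$) gives
\begin{equation*}
    \left.\frac{\d}{\d s}\right|_{s=0}\cost(\mu_s)
    =-\int_{\reals^d}L_F(\rho e^{V})\,\div(e^{-V}\gradient{}\psi)\,\d x
    =\int_{\reals^d}\innerProduct{\frac{\gradient{}\big(L_F(\rho e^{V})\big)}{\rho}\,e^{-V}}{\gradient{}\psi}\,\d\mu,
\end{equation*}
which identifies the candidate Wasserstein gradient $\frac{\gradient{}(\rho/e^{-V}\,F'(\rho/e^{-V})-F(\rho/e^{-V}))}{\rho}\,e^{-V}$, i.e., the claimed formula; in the Kullback--Leibler case $F(s)=s\log s$ one has $L_F(u)=u$ and this collapses to $\gradient{}\rho/\rho+\gradient{}V$, matching its later use. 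To turn this formal identity into Wasserstein differentiability in the sense of \cref{def:wasserstein differentiable}, I would invoke \cite[Theorem~10.4.9]{Ambrosio2008a} on regular functionals: the sub-additivity $F(z+w)\leq C(1+F(z)+F(w))$ together with $V$ convex and $C^1$ forces the above $\Lp{2}{\reals^d;\mu}$ map into the metric subdifferential, and convexity along generalized geodesics (statement~(iv)) upgrades the one-sided bound to a two-sided one, placing the map in $\subdifferential{\cost}(\mu)\cap\superdifferential{\cost}(\mu)$. A more self-contained alternative mirrors the proof of \cref{prop:strong differentiability}: the subdifferential inequality comes from~(iv) and \cref{prop:gradients geodesically convex}, and the superdifferential inequality from estimating the second-order remainder of the variation above via the hypotheses on $F$ and $V$ and the density of $\{\gradient{}\psi:\psi\in\Ccinf{\reals^d}\}$ in the tangent space.

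The main obstacle is precisely that last step of~(v): the candidate gradient is a priori only locally integrable (which is why the statement assumes $\rho/e^{-V}\,F'(\rho/e^{-V})+\rho/e^{-V}\in\Wrloc{1}{\reals^d;\mu}$ and $\gradient{\mu}\cost(\mu)\in\Lp{2}{\reals^d;\mu}$), and one must verify that the first-order expansion holds uniformly over \emph{all} competitors $\nu\in\Pp{2}{\reals^d}$, not just over those reachable by smooth perturbations of the identity. That uniformity is the substance of Ambrosio--Gigli--Savaré's regularity theory for displacement-convex functionals, and it is there --- rather than in the change-of-variables and integration-by-parts manipulations, which are routine once $s$ is small enough for $\Id+s\gradient{}\psi$ to be a diffeomorphism --- that log-concavity of $\bar\mu$ and the growth condition on $F$ are genuinely used.
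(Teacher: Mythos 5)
Your proposal matches the paper's proof, which likewise reduces statements (iii), (iv), and (v) to \cite[Lemma 9.4.3]{Ambrosio2008a}, \cite[Theorem 9.4.12]{Ambrosio2008a}, and \cite[Theorem 10.4.9]{Ambrosio2008a} respectively, and handles (i) and (ii) with the same short arguments (properness via $\cost(\bar\mu)=F(1)$, weak LSC as a consequence of narrow LSC). The only substantive addition on your side is the explicit change-of-variables / integration-by-parts computation identifying the candidate Wasserstein gradient $\frac{\gradient{}(L_F(\rho e^V))}{\rho}e^{-V}$ with $L_F(u)=uF'(u)-F(u)$, which is correct and a useful supplement but is not required by the paper, since it defers entirely to the cited theorem for (v).
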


\ifbool{showproofs}{\booltrue{showproofdivergence}}{
\ifbool{arxiv}{\boolfalse{showproofdivergence}}{}} 
\ifbool{showproofdivergence}{\proofdivergence{showproofs}}{}

A prominent example is the \gls{acr:kl} divergence: 

\begin{example}[\gls{acr:kl} divergence]\label{ex:kl divergence}
Let $\bar\mu=e^{-V}\lebesgueMeasure{d}$ with $V$ convex and continuously differentiable. 
Consider the \gls{acr:kl} divergence, defined as
\begin{equation*}
    \kullbackLeibler{\mu}{\bar\mu}
    \coloneqq 
    \begin{cases}
    \displaystyle\int_{\reals^d}\log\left(\frac{\rho(x)}{e^{-V(x)}}\right)\frac{\rho(x)}{e^{-V(x)}}e^{-V(x)}\d x & \text{if } \mu=\rho\lebesgueMeasure{d}\in\Ppabs{2}{\reals^d},
    \\
    +\infty &\text{else}.
    \end{cases}
\end{equation*}
We can proceed as in~\cref{ex:entropy} and use~\cref{prop:divergence} to obtain that 
\begin{equation*}
\begin{aligned}
    \subdifferential\kullbackLeibler{\cdot}{\bar\mu}(\mu)
    =
    \left\{
    \frac{\gradient{}(\rho/e^{-V}+\rho/e^V\log(\rho/e^{-V})-\rho/e^{-V}\log(\rho/e{-V}))}{\rho /e^{-V}}\right\}
    =
    \left\{
    \frac{\gradient{}\rho}{\rho} + \gradient{}V
    \right\},
\end{aligned} 
\end{equation*}
provided that all terms are well-defined.
\end{example}

We summarize these functionals, together with their Wasserstein (sub)gradients, in \cref{tab:gradients}.

\begin{table}[ht]
    \small 
    \newcolumntype{C}{>{$} c <{$}}
    \centering
    \begin{small}
    \def\arraystretch{1.7}
    \setlength{\tabcolsep}{\ifbool{compact}{3pt}{8pt}}
    \begin{tabular}{CCc}\toprule
         \text{Functional}
         & \text{Wasserstein (sub)gradient}
         & Notes
         \\ \midrule
         \expectedValue{\mu}{f}
         & \gradient{}f
         & cond. on $f$
         \\
         \min_{\gamma\in\setPlans{\mu}{\bar\mu}}\int_{\reals^d\times\reals^d}c(x,y)\d\gamma(x,y)
         & \gradient{x}c(\cdot,\optMap{\mu}{\bar\mu}(\cdot)) & cond. on $c$, $\optMap{\mu}{\bar\mu}$ exists
         \\
         \frac{1}{2}\int_{\reals^d\times\reals^d}U(x-y)\d(\mu\productMeasure\mu)(x,y)
         & \gradient{}U\conv\mu
         & cond. on $U$
         \\
         \int_{\reals^d}F(\rho(x))\d x
         & \frac{\gradient{}\left(\rho F'(\rho)-F(\rho)\right)}{\rho}
         & $\mu=\rho\lebesgueMeasure{d}$, cond. on $F$
         \\
         \int_{\reals^d}F(\rho(x)/e^{-V(x)})e^{-V(x)}\d x
         & \frac{\gradient{}(\rho/e^{-V} F'(\rho/e^{-V})-F(\rho/e^{-V}))}{\rho/e^{-V}}
         & $\mu=\rho\lebesgueMeasure{d}$, $\bar\mu=e^{-V}\lebesgueMeasure{d}$, cond. on $F, V$
         \\
         \bottomrule 
    \end{tabular}
    \end{small}
    \caption{Wasserstein (sub)gradients of real-valued functions over the Wasserstein space. The abbreviation ``cond.'' stands for ``conditions'' (refer to the appropriate proposition in \cref{subsec:examples of smooth functionals} for the list of conditions).}
    \label{tab:gradients}
\end{table}
\section{Optimality Conditions}\label{sec:optimality}

Consider a lower semi-continuous (w.r.t. weak convergence in $\Pp{2}{\reals^d}$) functional $\cost:\Pp{2}{\reals^d}\to\realsBar$. For ease of notation, we denote (open) Wasserstein balls of radius $r>0$ centered at $\hat\mu\in\Pp{2}{\reals^d}$ by\ifbool{compact}{
$\wassersteinBall{2}{r}{\hat\mu}
    \coloneqq 
    \{\mu\in\Pp{2}{\reals^d}
    :
    \wassersteinDistance{2}{\mu}{\hat\mu}<r\}.$}{
\begin{equation*}
    \wassersteinBall{2}{r}{\hat\mu}
    \coloneqq 
    \{\mu\in\Pp{2}{\reals^d}
    :
    \wassersteinDistance{2}{\mu}{\hat\mu}<r\}.
\end{equation*}}

\subsection{Unconstrained Optimization}

In the setting of unconstrained optimization, minimizers are defined as usual.
\begin{definition}[local, global, and strict minimizers for unconstrained optimization]\label{def:minima unconstrained}
A probability measure $\mu^\ast\in\Pp{2}{\reals^d}$ is a \emph{local minimizer} of $\cost$ if there exists $r>0$ such that $\cost(\mu)\geq \cost(\mu^\ast)$ for all $\mu\in\wassersteinBall{2}{r}{\mu^\ast}$.
If this holds for any $r>0$, then $\mu^\ast$ is a \emph{global minimizer}. If $\cost(\mu)>\cost(\mu^\ast)$ for all $\mu\in\Pp{2}{\reals^d}, \mu\neq \mu^\ast$, then $\mu^\ast$ is the \emph{strict minimizer} of $\cost$.
\end{definition}

Similarly to Euclidean settings, local minimality and derivatives are intimately related.

\begin{theorem}[first-order necessary conditions]\label{thm:first order conditions}
Let $\mu^\ast$ be a local minimizer of $\cost$ and assume $\cost$ is Wasserstein differentiable at $\mu^\ast$.  Then, the Wasserstein gradient of $\cost$ vanishes at $\mu^\ast$: 
\begin{equation*}
    \gradient{\mu}\cost(\mu^\ast)(x)=0\quad \text{for }\mu^\ast\text{-almost every }x\in\reals^d,
\end{equation*}
i.e., $\gradient{\mu}\cost(\mu^\ast)=0$ in $\Lp{2}{\reals^d,\reals^d;\mu^\ast}$.
\end{theorem}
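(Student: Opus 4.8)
The plan is to mimic the classical Euclidean argument ``at a local minimum the directional derivatives vanish'', using the perturbations provided by \cref{lemma:perturbation transport map} as the available ``directions'' and \cref{lemma:variations} to upgrade vanishing directional derivatives to $\gradient{\mu}\cost(\mu^\ast)=0$ in $\Lp{2}{\reals^d,\reals^d;\mu^\ast}$. Concretely, fix an arbitrary test field $T=\gradient{}\psi$ with $\psi\in\Ccinf{\reals^d}$. By \cref{lemma:perturbation transport map} there is $\bar s>0$ so that for every $\varepsilon\in(-\bar s,\bar s)$ the map $\Id+\varepsilon T$ is an optimal transport map from $\mu^\ast$ to $\mu_\varepsilon\coloneqq\pushforward{(\Id+\varepsilon T)}\mu^\ast\in\Pp{2}{\reals^d}$, and moreover $\wassersteinDistance{2}{\mu^\ast}{\mu_\varepsilon}=|\varepsilon|\LpNorm{T}{2}{\reals^d,\reals^d;\mu^\ast}$, which in particular tends to $0$ as $\varepsilon\to 0$. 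So for $|\varepsilon|$ small enough $\mu_\varepsilon$ lies in the ball $\wassersteinBall{2}{r}{\mu^\ast}$ on which $\mu^\ast$ is minimal, giving $\cost(\mu_\varepsilon)-\cost(\mu^\ast)\geq 0$.

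Next I would feed this into Wasserstein differentiability. Using the optimal plan $\gamma=\pushforward{(\Id,\Id+\varepsilon T)}\mu^\ast$ in \cref{def:wasserstein differentiable},
\begin{equation*}
    \cost(\mu_\varepsilon)-\cost(\mu^\ast)
    =
    \int_{\reals^d}\innerProduct{\gradient{\mu}\cost(\mu^\ast)(x)}{\varepsilon T(x)}\d\mu^\ast(x) + \onotation{\wassersteinDistance{2}{\mu^\ast}{\mu_\varepsilon}},
\end{equation*}
and since $\wassersteinDistance{2}{\mu^\ast}{\mu_\varepsilon}=|\varepsilon|\LpNorm{T}{2}{\reals^d,\reals^d;\mu^\ast}$ the error term is $\onotation{\varepsilon}$. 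Dividing by $\varepsilon>0$, using $\cost(\mu_\varepsilon)\geq\cost(\mu^\ast)$, and letting $\varepsilon\downarrow 0$ yields $\int_{\reals^d}\innerProduct{\gradient{\mu}\cost(\mu^\ast)(x)}{T(x)}\d\mu^\ast(x)\geq 0$; repeating with $\varepsilon<0$ and letting $\varepsilon\uparrow 0$ gives the reverse inequality, so
\begin{equation*}
    \int_{\reals^d}\innerProduct{\gradient{\mu}\cost(\mu^\ast)(x)}{\gradient{}\psi(x)}\d\mu^\ast(x)=0
    \qquad\text{for all }\psi\in\Ccinf{\reals^d}.
\end{equation*}

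Finally, since $\gradient{\mu}\cost(\mu^\ast)$ may (and by \cref{prop:gradients act on tangent vectors} should) be taken to lie in $\tangentSpace{\mu^\ast}{\Pp{2}{\reals^d}}$, the vanishing of $\int\innerProduct{\gradient{\mu}\cost(\mu^\ast)}{\gradient{}\psi}\d\mu^\ast$ for all $\psi\in\Ccinf{\reals^d}$ is exactly the hypothesis of \cref{lemma:variations} (with $H=\gradient{\mu}\cost(\mu^\ast)$), which concludes $\gradient{\mu}\cost(\mu^\ast)=0$ $\mu^\ast$-a.e. I expect no serious obstacle here: the only subtle points are (i) checking that the perturbed measures genuinely land in the neighbourhood where minimality is assumed — handled by the explicit distance formula in \cref{lemma:perturbation transport map} — and (ii) making sure the $\onotation{\cdot}$ term is controlled uniformly as $\varepsilon$ ranges over a one-parameter family, which again follows because $\wassersteinDistance{2}{\mu^\ast}{\mu_\varepsilon}$ is exactly linear in $|\varepsilon|$ so $\onotation{\wassersteinDistance{2}{\mu^\ast}{\mu_\varepsilon}}=\onotation{\varepsilon}$. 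The one place to be slightly careful is that \cref{lemma:variations} requires the tested field to be a legitimate element of the tangent space; invoking \cref{prop:gradients act on tangent vectors} up front to assume $\gradient{\mu}\cost(\mu^\ast)\in\tangentSpace{\mu^\ast}{\Pp{2}{\reals^d}}$ resolves this cleanly.
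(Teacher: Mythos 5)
Your proof is correct and follows essentially the same route as the paper: perturb via \cref{lemma:perturbation transport map}, use local minimality and Wasserstein differentiability to get $\int\innerProduct{\gradient{\mu}\cost(\mu^\ast)}{\gradient{}\psi}\d\mu^\ast=0$ for all $\psi\in\Ccinf{\reals^d}$ by taking $\varepsilon\to 0$ from both sides, then apply \cref{lemma:variations}. Your additional remark that $\gradient{\mu}\cost(\mu^\ast)$ must be taken in $\tangentSpace{\mu^\ast}{\Pp{2}{\reals^d}}$ for \cref{lemma:variations} to apply is exactly the right caveat, and is the same convention the paper adopts after \cref{prop:gradients act on tangent vectors}.
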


\ifbool{showproofs}{\booltrue{showproofnecessaryconditions}}{}
\ifbool{arxiv}{\booltrue{showproofnecessaryconditions}}{}

\ifbool{showproofnecessaryconditions}{
Surprisingly, the proof of \cref{thm:first order conditions} is not elementary. In particular, we first need a revisited version of the fundamental lemma of calculus of variations:

\begin{lemma}[fundamental lemma of calculus of variations, revisited]\label{lemma:variations}
Let $\mu\in\Pp{2}{\reals^d}$ and $H\in\tangentSpace{\mu}{\Pp{2}{\reals^d}}$. Assume that for all $\psi\in\Ccinf{\reals^d}$
\begin{equation}\label{eq:lemma variations}
    \int_{\reals^d}\innerProduct{H(x)}{\nabla\psi(x)}\d\mu(x)=0.
\end{equation}
Then, $H\equiv 0$ in $\Lp{2}{\reals^d,\reals^d;\mu}$.
\end{lemma}

\prooflemmavariations{booltrue}
\prooffirstorderconditions{boolfalse}
}{}

Under an additional convexity assumption, we get sufficient conditions:
\begin{theorem}[first-order sufficient conditions]\label{thm:first order sufficient conditions}
Assume that $\cost$ is $\alpha$-geodesically convex with $\alpha\geq0$. Suppose there exists $\mu^\ast\in\Pp{2}{\reals^d}$ such that $\cost$ is Wasserstein subdifferentiable at $\mu^\ast$ and a Wasserstein subgradient of $\cost$ vanishes at $\mu^\ast$, i.e., there is $\xi\in\subdifferential{\cost}(\mu^\ast)$ so that $\xi(x)=0$ for $\mu^\ast$-almost every $x\in\reals^d$. Then, $\mu^\ast$ is a global minimizer of $\cost$. If $\alpha>0$, $\mu^\ast$ is the strict minimizer of $\cost$.
\end{theorem}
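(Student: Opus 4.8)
The plan is to mimic the standard Euclidean argument: for a convex function, a stationary point is a global minimizer, and strict convexity yields strict minimality. The key tool is \cref{prop:gradients geodesically convex}, which upgrades $\alpha$-geodesic convexity plus Wasserstein differentiability at $\mu^\ast$ into the subgradient-type inequality~\eqref{eq:prop gradient convex}. Since $\cost$ is assumed $\alpha$-geodesically convex with $\alpha\geq 0$ and differentiable at $\mu^\ast$, that proposition applies directly at the point $\mu=\mu^\ast$.

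First I would fix an arbitrary competitor $\nu\in\Pp{2}{\reals^d}$ and invoke~\eqref{eq:prop gradient convex} with $\mu=\mu^\ast$:
\begin{equation*}
    \cost(\nu)-\cost(\mu^\ast)
    \geq
    \sup_{\gamma\in\setOptimalPlans{\mu^\ast}{\nu}}\int_{\reals^d\times\reals^d}\innerProduct{\gradient{\mu}\cost(\mu^\ast)(x)}{y-x}\d\gamma(x,y)+\frac{\alpha}{2}\wassersteinDistance{2}{\mu^\ast}{\nu}^2.
\end{equation*}
By hypothesis $\gradient{\mu}\cost(\mu^\ast)(x)=0$ for $\mu^\ast$-a.e.\ $x$, i.e.\ $\gradient{\mu}\cost(\mu^\ast)=0$ in $\Lp{2}{\reals^d,\reals^d;\mu^\ast}$; since every $\gamma\in\setOptimalPlans{\mu^\ast}{\nu}$ has first marginal $\mu^\ast$, the integral over $\reals^d\times\reals^d$ of $\innerProduct{\gradient{\mu}\cost(\mu^\ast)(x)}{y-x}$ vanishes for each such $\gamma$ (the integrand is zero $\gamma$-a.e.), so the supremum term is $0$. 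Hence
\begin{equation*}
    \cost(\nu)-\cost(\mu^\ast)\geq\frac{\alpha}{2}\wassersteinDistance{2}{\mu^\ast}{\nu}^2.
\end{equation*}
Since $\alpha\geq 0$ and $\wassersteinDistance{2}{\mu^\ast}{\nu}^2\geq 0$, this gives $\cost(\nu)\geq\cost(\mu^\ast)$; as $\nu$ was arbitrary, $\mu^\ast$ is a global minimizer. For the strict case, if $\alpha>0$ and $\nu\neq\mu^\ast$ then $\wassersteinDistance{2}{\mu^\ast}{\nu}>0$ (the Wasserstein distance is a genuine metric on $\Pp{2}{\reals^d}$), so the right-hand side is strictly positive and $\cost(\nu)>\cost(\mu^\ast)$, making $\mu^\ast$ the strict minimizer per \cref{def:minima unconstrained}.

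This argument is essentially a one-liner once \cref{prop:gradients geodesically convex} is in hand, so there is no real obstacle; the only point requiring a moment's care is justifying that the $\sup_\gamma$ term is exactly zero rather than merely bounded — but this is immediate because the integrand itself is identically zero $\gamma$-almost everywhere for \emph{every} admissible $\gamma$, the pairing being against the zero element of $\Lp{2}{\reals^d,\reals^d;\mu^\ast}$. One could alternatively bypass \cref{prop:gradients geodesically convex} and argue from scratch along a geodesic $\mu_t$ from $\mu^\ast$ to $\nu$ using~\eqref{eq:geodesics equation} and differentiability, but reusing the proposition is cleaner.
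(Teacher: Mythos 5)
Your proposal is correct and follows essentially the same route as the paper: invoke \cref{prop:gradients geodesically convex} at $\mu^\ast$, observe that the subgradient term vanishes since $\gradient{\mu}\cost(\mu^\ast)=0$ in $\Lp{2}{\reals^d,\reals^d;\mu^\ast}$, and conclude $\cost(\nu)-\cost(\mu^\ast)\geq\frac{\alpha}{2}\wassersteinDistance{2}{\mu^\ast}{\nu}^2$. You spell out more explicitly why the $\sup_\gamma$ term is zero, which the paper leaves implicit, but the argument is the same.
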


\ifbool{showproofs}{\booltrue{showprooffirstordersufficientconditions}}{
\ifbool{arxiv}{\boolfalse{showprooffirstordersufficientconditions}}{}} 
\ifbool{showprooffirstordersufficientconditions}{\prooffirstordersufficientconditions{showprooffirstordersufficientconditions}}{}

\begin{example}
Consider $\cost(\mu)=\expectedValue{\mu}{\frac{1}{2}x^2}$. Then, by \cref{prop:expectation}, $\gradient{\mu}\cost(\mu^\ast)=\Id$. Thus, by~\cref{thm:first order conditions}, a necessary condition for optimality is $\gradient{\mu}\cost(\mu^\ast)(x)=0$ for $\mu^\ast$-almost every $x\in\reals^d$, which holds if and only if $\mu^\ast=\diracMeasure{0}$. By \cref{prop:expectation}, $\cost$ is $1$-geodesically convex, and so $\mu^\ast$ is the strict minimizer of $\cost$, by~\cref{thm:first order sufficient conditions}.
\end{example}

\subsection{Equality-constrained Optimization}

Consider now the equality-constrained optimization problem
\begin{equation}\label{eq:constrained optimization}
    \inf_{\mu\in\Pp{2}{\reals^d}}
    \left\{
        \cost(\mu): \constraint(\mu)=0
    \right\},
\end{equation}
for some lower semi-continuous (w.r.t. weak convergence in $\Pp{2}{\reals^d}$) functional $\constraint:\Pp{2}{\reals^d}\to\reals$. To start, we adapt~\cref{def:minima unconstrained} to the constrained setting.

\begin{definition}[local, global, and strict minimizers for constrained optimization]\label{def:minima constrained}
A probability measure $\mu^\ast\in\Pp{2}{\reals^d}$ is a \emph{local minimizer} of \eqref{eq:constrained optimization} if there exists $r>0$ such that $\cost(\mu)\geq \cost(\mu^\ast)$ for all $\mu\in\wassersteinBall{2}{r}{\mu^\ast}\cap\{\mu\in\Pp{2}{\reals^d}: \constraint(\mu)=0\}$.
If this holds for any $r>0$, then $\mu^\ast$ is a \emph{global minimizer}.  If $\cost(\mu)>\cost(\mu^\ast)$ for all $\mu\in\Pp{2}{\reals^d}\cap\{\mu\in\Pp{2}{\reals^d}: \constraint(\mu)=0\}, \mu\neq \mu^\ast$, then $\mu^\ast$ is the \emph{strict minimizer} of \eqref{eq:constrained optimization}.
\end{definition}

The goal of this section is to formulate first-order necessary conditions for~\eqref{eq:constrained optimization}. First, however, we need to define the analog of continuous differentiability for real-valued functionals in the Wasserstein space.

\begin{definition}[continuous differentiability in the Wasserstein space]\label{def:continous differentiability}
    A real-valued functional $\constraint:\Pp{2}{\reals^d}\to\reals$ is \emph{Wasserstein continuously differentiable} at $\mu\in\Pp{2}{\reals^d}$ if for all $T_1=\gradient{}\phi_1$ and $T_2=\gradient{}\phi_2$ with $\phi_1,\phi_2\in\Ccinf{\reals^d}$ there exists $\bar\varepsilon>0$ so that 
    \begin{enumerate}
        \item $\constraint$ is Wasserstein differentiable at $\mu_{\varepsilon_1,\varepsilon_2}\coloneqq \pushforward{(\Id+\varepsilon_1T_1(x)+\varepsilon_2T_2(x))}\mu$ for all $\varepsilon_1,\varepsilon_2\in(-\bar\varepsilon,\bar\varepsilon)$; 
        \item the real-valued functions $(\varepsilon_1,\varepsilon_2)\mapsto\int_{\reals^d}\innerProduct{\gradient{\mu}\constraint(\mu_{\varepsilon_1,\varepsilon_2})(x)}{T_i(x)}\d\mu_{\varepsilon_1,\varepsilon_2}(x)$, with $i\in\{1,2\}$, are continuous at all $\varepsilon_1,\varepsilon_2\in(-\bar\varepsilon,\bar\varepsilon)$.
    \end{enumerate}
\end{definition}

In particular, all differentiable functionals presented in~\cref{subsec:examples of smooth functionals} are Wasserstein continuously differentiable.

\begin{proposition}[continuous differentiability of functionals]\label{prop:continuous differentiability functionals}
\,
\begin{enumerate}
    \item
    Let $c$ satisfy the assumptions of~\cref{prop:optimal transport discrepancy} and the twist condition (see \cref{prop:optimal transport discrepancy}(vii)), and let $\mu\in\Ppabs{2}{\reals^d}$ be absolutely continuous. Then, the optimal transport discrepancy~\eqref{eq:optimal transport discrepancy:definition} is Wasserstein continuously differentiable at $\mu$. 
    
    \item 
    Let $V$ satisfy the assumptions of~\cref{prop:expectation} and $\mu\in\Pp{2}{\reals^d}$. Then, the expected value~\eqref{eq:expected value:definition} is Wasserstein continuously differentiable at $\mu$.

    \item 
    Let $U$ satisfy the assumptions of~\cref{prop:interaction} and $\mu\in\Pp{2}{\reals^d}$. Then, the interaction-type functional~\eqref{eq:interaction:definition} is Wasserstein continuously differentiable at $\mu$.
\end{enumerate}
\end{proposition}

\ifbool{showproofs}{\booltrue{showprooffcontinuousdifferentiability}}{
\ifbool{arxiv}{\boolfalse{showprooffcontinuousdifferentiability}}{}} 
\ifbool{showprooffcontinuousdifferentiability}{\prooffcontinuousdifferentiability{showprooffcontinuousdifferentiability}}{}

Armed with continuous differentiability, we can now formulate first-order necessary conditions for the equality-constrained optimization problem~\eqref{eq:constrained optimization}. Similarly to Euclidean settings, necessary conditions for optimality take the form of Lagrange multipliers.

\begin{theorem}[first-order necessary conditions for equality-constrained optimization]\label{thm:wasserstein lagrange multipliers}
Let $\mu^\ast$ be a local minimizer of \eqref{eq:constrained optimization}. Assume that
\begin{enumerate}
    \item $\cost$ is Wasserstein differentiable at $\mu^\ast$;
    \item $\constraint$ is Wasserstein continuously differentiable at $\mu^\ast$; and
    \item $\gradient{\mu}\constraint (\mu^\ast)$ is non-vanishing at $\mu^\ast$, that is, there exists $H\in\tangentSpace{\mu^\ast}\Pp{2}{\reals^d}$ such that
\begin{equation*}
    \int_{\reals^d}\innerProduct{\gradient{\mu}K(\mu^\ast)(x)}{H(x)}\d\mu^\ast(x)\neq 0.
\end{equation*}
\end{enumerate}
Then, there exists a unique $\multiplier\in\reals$ (called multiplier) such that 
\begin{equation*}
    \gradient{\mu}\cost(\mu^\ast)(x)
    +
    \lambda
    \gradient{\mu}\constraint(\mu^\ast)(x)=0\quad\text{for }\mu^\ast\text{-almost every } x\in\reals^d.
\end{equation*}
\end{theorem}

\ifbool{showproofnecessaryconditions}{
To prove the theorem we need the following preliminary lemma, which will later be instrumental in ``making arbitrary variations admissible'':
\begin{lemma}\label{lemma:admissibility variation}
Let $\mu^\ast$ and $H$ as in \cref{thm:wasserstein lagrange multipliers}. Let $T=\gradient{}\psi$ for some $\psi\in\Ccinf{\reals^d}$. Then, there exists $\bar\varepsilon>0$ and a continuously differentiable function $\sigma_{\bar\varepsilon}\in\C{1}{}:(-\bar\varepsilon,\bar\varepsilon)\to\reals$ with $\sigma(0)=0$ such that for all $\varepsilon\in(-\bar\varepsilon,\bar\varepsilon)$
\begin{equation*}
    \constraint(\pushforward{(\Id+\varepsilon T+\sigma(\varepsilon)H)}\mu)=0.
\end{equation*}
\end{lemma}

\proofadmissiblevariation{booltrue}
\prooffirstordernecessaryconditionsconstrained{boolfalse}
}

We will defer first-order sufficient conditions to the next section, as they are a simple corollary of first-order sufficient conditions for inequality-constrained optimization.

\subsection{Inequality-constrained Optimization}

Consider now the inequality-constrained optimization problem
\begin{equation}\label{eq:inequality constrained optimization}
    \inf_{\mu\in\Pp{2}{\reals^d}}
    \left\{
        \cost(\mu): \constraint(\mu)\leq 0
    \right\},
\end{equation}
for some lower semi-continuous (w.r.t. weak convergence in $\Pp{2}{\reals^d}$) functional $\constraint:\Pp{2}{\reals^d}\to\reals$. In this setting, the definition of local, global, and strict minimizers is analogous to the ones of~\cref{def:minima constrained} for equality-constrained optimization.
As in Euclidean settings, necessary conditions are effectively analogous to necessary conditions for equality-constrained optimization, with a complementarity condition and sign restriction on the multiplier:

\begin{theorem}[first-order necessary conditions for inequality-constrained optimization]\label{thm:inequality:wasserstein lagrange multipliers}
Let $\mu^\ast$ be a local minimizer of \eqref{eq:inequality constrained optimization}. Assume that
\begin{enumerate}
    \item $\cost$ is Wasserstein differentiable at $\mu^\ast$;
    
    \item $\constraint$ is Wasserstein continuously differentiable at $\mu^\ast$; and 

    \item $\gradient{\mu}\constraint (\mu^\ast)$ is non-vanishing at $\mu^\ast$, that is, there exists $H\in\tangentSpace{\mu^\ast}\Pp{2}{\reals^d}$ such that
\begin{equation*}
    \int_{\reals^d}\innerProduct{\gradient{\mu}K(\mu^\ast)(x)}{H(x)}\d\mu^\ast(x)\neq 0.
\end{equation*}
\end{enumerate}
Then, there exists a unique $\multiplier\geq 0$ (called multiplier) such that
\begin{align*}
    \gradient{\mu}\cost(\mu^\ast)(x)
    +
    \lambda
    \gradient{\mu}\constraint(\mu^\ast)(x)&=0
    \quad\text{for }\mu^\ast\text{-almost every } x\in\reals^d
    \\
    \lambda\constraint(\mu^\ast)&=0.
\end{align*}
\end{theorem}

\ifbool{showproofs}{\booltrue{showprooffirstordernecessaryconditionsconstrainedinequality}}{
\ifbool{arxiv}{\boolfalse{showprooffirstordernecessaryconditionsconstrainedinequality}}{}} 
\ifbool{showprooffirstordernecessaryconditionsconstrainedinequality}{\prooffirstordernecessaryconditionsconstrainedinequality{showprooffirstordernecessaryconditionsconstrainedinequality}}{}

In particular, \cref{thm:inequality:wasserstein lagrange multipliers} is analogous to~\cref{thm:wasserstein lagrange multipliers}, with the additional conditions of complementary slackness (i.e., $\lambda\constraint(\mu^\ast)=0$) and non-negativity of the multiplier.

\begin{remark}
A few remarks are in order.
First, \cref{thm:first order conditions,thm:wasserstein lagrange multipliers,thm:inequality:wasserstein lagrange multipliers} provide necessary conditions for optimality. In particular, they do \emph{not} provide conditions for existence of minimizers for $\cost$, ~\eqref{eq:constrained optimization}, or~\eqref{eq:inequality constrained optimization}.
Second, \cref{thm:first order conditions,thm:wasserstein lagrange multipliers,thm:inequality:wasserstein lagrange multipliers} apply to maximization problems with no changes. One simply considers upper semi-continuous (w.r.t. weak convergence in $\Pp{2}{\reals^d}$) functional $\cost:\Pp{2}{\reals^d}\to[-\infty,+\infty)$ with the effective domain $\effectiveDomain{\cost}\coloneqq\{\mu\in\Pp{2}{\reals^d}: \cost(\mu)>-\infty\}\neq\emptyset$. 
We will tacitly use this fact throughout the next sections.
\end{remark}

Under a convexity assumption, we can also study sufficient conditions. The first result is an application of~\cref{thm:first order sufficient conditions}.

\begin{theorem}[first-order sufficient conditions for inequality-constrained optimization]
\label{thm:first order sufficient conditions constrained}
Let $\cost:\Pp{2}{\reals^d}\to\realsBar$ and $\constraint:\Pp{2}{\reals^d}\to\reals$ be convex along geodesics.
Suppose there exist (finite) $\lambda\geq 0$ and $\mu^\ast\in\Pp{2}{\reals^d}$ such that
\begin{enumerate}
    \item $\cost$ is subdifferentiable at $\mu^\ast$;
    \item $\constraint$ is Wasserstein differentiable at $\mu^\ast$;
    \item $\mu^\ast$ is feasible (i.e., $\constraint(\mu^\ast)\leq 0$);
    \item complementary slackness holds so that $\lambda\constraint(\mu^\ast)=0$; and 
    \item there is a subgradient $\xi\in\subdifferential{\cost}(\mu^\ast)$ so that 
\begin{equation*}
\begin{aligned}
    \xi(x)+\lambda\gradient{\mu}\constraint(\mu^\ast)(x)&=0
    \quad\text{for }\mu^\ast\text{-almost every } x\in\reals^d.
\end{aligned}
\end{equation*}
\end{enumerate}
Then, $\mu^\ast$ is a global minimizer of~\eqref{eq:inequality constrained optimization}. Moreover, if $\cost$ is $\alpha_{\cost}$-convex along geodesics, $\constraint$ is $\alpha_{\constraint}$-convex along geodesics, and $\alpha_{\cost}+\lambda\alpha_{\constraint}>0$, then $\mu^\ast$ is the strict minimizer.
\end{theorem}

\ifbool{showproofs}{\booltrue{showprooffirstordersufficientconditionsconstrained}}{
\ifbool{arxiv}{\boolfalse{showprooffirstordersufficientconditionsconstrained}}{}} 
\ifbool{showprooffirstordersufficientconditionsconstrained}{\prooffirstordersufficientconditionsconstrained{showprooffirstordersufficientconditionsconstrained}}{}

If the constraint is ``linear'' (i.e., both geodesically convex and concave) we obtain first-order sufficient conditions for equality-constrained optimization.

\begin{corollary}[``linear'' constraints]
\label{cor:first order sufficient conditions constrained}
Let $\cost:\Pp{2}{\reals^d}\to\realsBar$ be geodesically convex, and $\constraint:\Pp{2}{\reals^d}\to\reals$ be both geodesically convex and geodesically concave.
Consider 
\begin{equation}\label{eq:first order sufficient conditions constrained corollary}
    \inf_{\mu\in\Pp{2}{\reals^d}}
    \left\{
        \cost(\mu): \constraint(\mu)= 0
    \right\}.
\end{equation}
Suppose there exists (finite) $\lambda\in\reals$ and $\mu^\ast\in\Pp{2}{\reals^d}$ such that
\begin{enumerate}
    \item $\cost$ is Wasserstein subdifferentiable at $\mu^\ast$; 
    \item $\constraint$ is Wasserstein differentiable at $\mu^\ast$;
    \item $\mu^\ast$ is feasible (i.e., $\constraint(\mu^\ast)=0$); and 
    \item there is a subgradient $\xi\in\subdifferential{\cost}(\mu^\ast)$ so that
\begin{equation*}
\begin{aligned}
    \xi(x)+\lambda\gradient{\mu}\constraint(\mu^\ast)(x)=0\quad\text{for }\mu^\ast\text{-almost every } x\in\reals^d.
\end{aligned}
\end{equation*}
\end{enumerate}
Then, $\mu^\ast$ is a global minimizer of \eqref{eq:first order sufficient conditions constrained corollary}.  Moreover, if $\cost$ is $\alpha_{\cost}$-geodesically convex with $\alpha_{\cost}>0$, then $\mu^\ast$ is the strict minimizer.
\end{corollary}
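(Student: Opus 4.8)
The plan is to reduce the equality-constrained problem to the inequality-constrained setting of Theorem~\ref{thm:first order sufficient conditions constrained} by splitting the equality $\constraint(\mu)=0$ into the two inequalities $\constraint(\mu)\leq 0$ and $-\constraint(\mu)\leq 0$, exactly as the proof of the corollary is sketched in the excerpt (``The proof leverages \cref{thm:first order sufficient conditions constrained} with $\mu\mapsto\constraint(\mu)$ and $\mu\mapsto-\constraint(\mu)$''). The point of the hypothesis that $\constraint$ is \emph{both} geodesically convex and geodesically concave is precisely that both $\constraint$ and $-\constraint$ are then geodesically convex, so that Theorem~\ref{thm:first order sufficient conditions constrained} is applicable to either of the two one-sided relaxations. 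Note also that $\gradient{\mu}(-\constraint)(\mu^\ast)=-\gradient{\mu}\constraint(\mu^\ast)$ by linearity of the Wasserstein gradient (cf.~\cref{rem:tangent space}), so the stationarity identity is preserved under the sign flip.

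First I would fix the minimizer candidate $\mu^\ast$ and the multiplier $\lambda\in\reals$ from the hypotheses, so $\gradient{\mu}\cost(\mu^\ast)+\lambda\gradient{\mu}\constraint(\mu^\ast)=0$ and $\constraint(\mu^\ast)=0$. I would then distinguish two cases according to the sign of $\lambda$. If $\lambda\geq 0$, apply Theorem~\ref{thm:first order sufficient conditions constrained} directly to $\inf\{\cost(\mu):\constraint(\mu)\leq 0\}$: the hypotheses are met since $\cost,\constraint$ are geodesically convex, $\lambda\geq 0$ is finite, $\gradient{\mu}\cost(\mu^\ast)+\lambda\gradient{\mu}\constraint(\mu^\ast)=0$, and $\constraint(\mu^\ast)=0$. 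Hence $\mu^\ast$ minimizes $\cost$ over $\{\constraint\leq 0\}$, a superset of the feasible set $\{\constraint=0\}$, so in particular $\mu^\ast$ is a global minimizer of~\eqref{eq:first order sufficient conditions constrained corollary}. If instead $\lambda<0$, set $\tilde\lambda\coloneqq-\lambda>0$ and apply Theorem~\ref{thm:first order sufficient conditions constrained} to $\inf\{\cost(\mu):-\constraint(\mu)\leq 0\}$ with constraint $\tilde\constraint\coloneqq-\constraint$ and multiplier $\tilde\lambda$: then $\tilde\constraint$ is geodesically convex (by geodesic concavity of $\constraint$), $\tilde\lambda\geq 0$ is finite, $\gradient{\mu}\cost(\mu^\ast)+\tilde\lambda\gradient{\mu}\tilde\constraint(\mu^\ast)=\gradient{\mu}\cost(\mu^\ast)+(-\lambda)(-\gradient{\mu}\constraint(\mu^\ast))=\gradient{\mu}\cost(\mu^\ast)+\lambda\gradient{\mu}\constraint(\mu^\ast)=0$, and $\tilde\constraint(\mu^\ast)=0$. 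Again $\mu^\ast$ minimizes $\cost$ over the superset $\{\constraint\geq 0\}$ of $\{\constraint=0\}$, so it is a global minimizer of~\eqref{eq:first order sufficient conditions constrained corollary}. In both cases one should also remark that $\lambda=0$ is covered (it reduces to $\gradient{\mu}\cost(\mu^\ast)=0$ and \cref{thm:first order sufficient conditions}), so there is no gap.

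For the strictness claim, suppose additionally $\alpha_{\cost}>0$. In the case $\lambda\geq 0$, the strict-minimizer part of Theorem~\ref{thm:first order sufficient conditions constrained} requires $\alpha_{\cost}+\lambda\alpha_{\constraint}>0$; since $\constraint$ is geodesically convex \emph{and} concave it is $0$-geodesically convex (one may take $\alpha_{\constraint}=0$), so $\alpha_{\cost}+\lambda\cdot 0=\alpha_{\cost}>0$ and $\mu^\ast$ is the strict minimizer over $\{\constraint\leq 0\}\supseteq\{\constraint=0\}$, hence over the feasible set. The case $\lambda<0$ is symmetric with $\tilde\constraint=-\constraint$, again $0$-geodesically convex, so $\alpha_{\cost}+\tilde\lambda\cdot 0=\alpha_{\cost}>0$. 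This concludes the proof. The main (indeed only) subtlety to get right is the bookkeeping of signs — verifying that flipping the sign of the constraint flips the sign of its Wasserstein gradient and hence that the pair $(\mu^\ast,|\lambda|)$ satisfies the hypotheses of Theorem~\ref{thm:first order sufficient conditions constrained} for the appropriate one-sided relaxation; everything else is a direct invocation of the already-established theorem together with the observation that the feasible set of~\eqref{eq:first order sufficient conditions constrained corollary} is contained in the feasible set of each one-sided relaxation.
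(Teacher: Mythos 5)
Your proposal is correct and takes exactly the route the paper intends: the paper's own proof is the one-liner "leverages Theorem~\ref{thm:first order sufficient conditions constrained} with $\mu\mapsto K(\mu)$ and $\mu\mapsto -K(\mu)$," and you have unpacked it faithfully, including the case split on $\operatorname{sign}(\lambda)$, the sign bookkeeping for $\gradient{\mu}(-K)$, and the observation that $\alpha_K=0$ gives $\alpha_J+\lambda\alpha_K=\alpha_J>0$ for strictness.
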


\ifbool{showproofs}{\booltrue{showproofcorfirstordersufficientconditionsconstrained}}{
\ifbool{arxiv}{\boolfalse{showproofcorfirstordersufficientconditionsconstrained}}{}} 
\ifbool{showproofcorfirstordersufficientconditionsconstrained}{\proofcorfirstordersufficientconditionsconstrained{showproofcorfirstordersufficientconditionsconstrained}}{}

Unfortunately, the Wasserstein distance is \emph{not} convex along geodesics, which prevents us from deploying \cref{thm:first order sufficient conditions constrained} whenever $\constraint$ is defined in terms of the Wasserstein distance (e.g., $\constraint(\mu)=\wassersteinDistance{2}{\mu}{\bar\mu}-\varepsilon$ for some $\bar\mu\in\Pp{2}{\reals^d}$ and $\varepsilon>0$).
We resolve this issue in greater generality by studying optimal transport discrepancies.

\begin{theorem}[first-order sufficient conditions for constrained optimization, revisited]\label{thm:first order sufficient conditions constrained wasserstein}
Let $\cost:\Pp{2}{\reals^d}\to\realsBar$ be convex along any interpolation curve with convexity parameter $\alpha_J\in\reals$ and suppose $c:\reals^d\times\reals^d\to\nonnegativeReals$ satisfies the assumptions of~\cref{prop:optimal transport discrepancy} and the twist condition (see~\cref{prop:optimal transport discrepancy}(vii)). Let $\bar\mu\in\Pp{2}{\reals^d}$, and suppose that $c(\cdot,y)$ is $\alpha_c$-convex (with $\alpha_c\in\reals$) for all $y\in\support{\bar\mu}$. Consider 
\begin{equation}\label{eq:first order sufficient conditions constrained wasserstein}
    \inf_{\mu\in\Pp{2}{\reals^d}}
    \left\{
        \cost(\mu): \optimalTransportDiscrepancy{c}{\mu}{\bar\mu}\leq\varepsilon
    \right\}.
\end{equation}
Suppose there exist (finite) $\lambda\geq 0$ so that $\alpha_J+\lambda\alpha_c\geq 0$
and $\mu^\ast\in\Pp{2}{\reals^d}$ such that 
\begin{enumerate}
    \item $\cost$ is subdifferentiable at $\mu^\ast$;
    \item $\mu^\ast$ is absolutely continuous or, more generally, there is a unique optimal transport map (w.r.t. the cost $c$) from $\mu^\ast$ to $\bar\mu$ (i.e., $\setOptimalPlans{\mu^\ast}{\bar\mu}=\{\pushforward{(\Id\times\optMap{\mu^\ast}{\bar\mu})}\mu^\ast\}$);
    \item $\mu^\ast$ is feasible (i.e., $\optimalTransportDiscrepancy{c}{\mu}{\bar\mu}\leq\varepsilon$);
    \item complementary slackness holds so that $\lambda(\optimalTransportDiscrepancy{c}{\mu}{\bar\mu}-\varepsilon)=0$, and 
    \item there exists a subgradient $\xi\in\subdifferential{J}(\mu^\ast)$ so that 
\begin{equation*}
\begin{aligned}
    \xi(x)+\lambda\gradient{x} c(x,\optMap{\mu^\ast}{\bar\mu}(x))&=0\quad\text{for }\mu^\ast\text{-almost every } x\in\reals^d.
\end{aligned}
\end{equation*}
\end{enumerate}
Then, $\mu^\ast$ is a global minimizer of \eqref{eq:first order sufficient conditions constrained wasserstein}. If additionally $\alpha_J+\lambda\alpha_c>0$
, then $\mu^\ast$ is the strict minimizer. 
\end{theorem}

\ifbool{showproofs}{\booltrue{showprooffirstordersufficientconditionsconstrainedwasserstein}}{
\ifbool{arxiv}{\boolfalse{showprooffirstordersufficientconditionsconstrainedwasserstein}}{}} 
\ifbool{showprooffirstordersufficientconditionsconstrainedwasserstein}{\prooffirstordersufficientconditionsconstrainedwasserstein{showprooffirstordersufficientconditionsconstrainedwasserstein}}{}

The special case of the Wasserstein distance is then obtained as a corollary. In this case, convexity along generalized geodesics (instead of general interpolation curves) suffices: 
\begin{corollary}[first-order sufficient conditions for constrained optimization, revisited]\label{cor:first order sufficient conditions constrained wasserstein}
Let $\cost:\Pp{2}{\reals^d}\to\realsBar$ be convex along generalized geodesics with convexity parameter $\alpha\in\reals$. 
Let $\bar\mu\in\Pp{2}{\reals^d}$, and consider 
\begin{equation}\label{eq:first order sufficient conditions constrained wasserstein corollary}
    \inf_{\mu\in\Pp{2}{\reals^d}}
    \left\{
        \cost(\mu): \wassersteinDistance{2}{\mu}{\bar\mu}\leq\varepsilon
    \right\}
    \equiv
    \inf_{\mu\in\Pp{2}{\reals^d}}
    \left\{
        \cost(\mu): \wassersteinDistance{2}{\mu}{\bar\mu}^2\leq\varepsilon^2
    \right\}.
\end{equation}
Suppose there exist $\lambda\geq\frac{\negativePart{\alpha}}{2}$ and $\mu^\ast\in\Pp{2}{\reals^d}$ such that 
\begin{enumerate}
    \item $\cost$ is subdifferentiable at $\mu^\ast$; 
    \item $\mu^\ast$ is absolutely continuous or, more generally, there is a unique optimal transport map from $\mu^\ast$ to $\bar\mu$ (i.e., $\setOptimalPlans{\mu^\ast}{\bar\mu}=\{\pushforward{(\Id\times\optMap{\mu^\ast}{\bar\mu})}\mu^\ast\}$), 
    \item $\mu^\ast$ is feasible (i.e., $\wassersteinDistance{2}{\mu^\ast}{\bar\mu}\leq\varepsilon$);
    \item complementary slackness holds so that $\lambda(\wassersteinDistance{2}{\mu^\ast}{\bar\mu}-\varepsilon)=0$, and
    \item there exists a subgradient $\xi\in\subdifferential{J}(\mu^\ast)$ so that 
\begin{equation*}
    \xi(x)+2\lambda(x-\optMap{\mu^\ast}{\bar\mu}(x))=0\quad\text{for }\mu^\ast\text{-almost every } x\in\reals^d.
\end{equation*}
\end{enumerate}
Then, $\mu^\ast$ is a global minimizer of \eqref{eq:first order sufficient conditions constrained wasserstein corollary}. If additionally $\alpha>0$ or $\lambda>\frac{\negativePart{\alpha}}{2}$, then $\mu^\ast$ is the strict minimizer. 
\end{corollary}

\ifbool{showproofs}{\booltrue{showprooffirstordersufficientconditionsconstrainedwassersteincorollary}}{
\ifbool{arxiv}{\boolfalse{showprooffirstordersufficientconditionsconstrainedwassersteincorollary}}{}} 
\ifbool{showprooffirstordersufficientconditionsconstrainedwassersteincorollary}{\prooffirstordersufficientconditionsconstrainedwassersteincorollary{showprooffirstordersufficientconditionsconstrainedwassersteincorollary}}{}

We conclude the section with an example of the application of our first-order necessary and sufficient conditions:  

\begin{example}
Consider the optimization problem
\begin{equation*}
    \inf_{\mu\in\Pp{2}{\reals}}
    \left\{\cost(\mu)\coloneqq\expectedValue{\mu}{\frac{1}{2}x^2}: \constraint(\mu)\coloneqq 1-\expectedValue{\mu}{x}\leq 0\right\}.
\end{equation*}
Let us assume that a minimizer exists; we will later use sufficient conditions to prove its optimality. 
As $\gradient{\mu}\constraint(\mu)=-1$ for all $\mu\in\Pp{2}{\reals^d}$, it is straightforward to construct $\varphi$ so that $\int_{\reals}\innerProduct{-1}{\gradient{}\varphi(x)}\d\mu^\ast(x)=0$. Hence, by~\cref{thm:wasserstein lagrange multipliers}, at optimality, the Wasserstein gradient $\gradient{\mu}\cost(\mu)=\Id$ necessarily coincides with the constant function $\lambda$ $\mu^\ast$-almost everywhere, where $\lambda\geq 0$.
Thus, $\int_{\reals}\abs{x-\lambda}\d\mu^\ast(x)=0$, and so $\mu^\ast=\delta_{\lambda}$.
We now distinguish two cases. If $\lambda=0$, we have $\mu^\ast=\delta_0$, which however violates the constraint $K(\mu^\ast)\leq 0$. If instead $\lambda>0$, complementary slackness gives $K(\mu^\ast)=0$ and so  $\expectedValue{\delta_{\lambda}}{x}=1$.
We therefore conclude $\mu^\ast=\delta_{1}$ and $\lambda=1$. By~\cref{prop:expectation}, $\cost$ and $\constraint$ are convex along geodesics with $\alpha_{\cost}=1$ and $\alpha_{\constraint}=0$. Thus, $\mu^\ast$ the strict minimizer, by \cref{thm:first order sufficient conditions constrained}.
\end{example}

\section{Application to the Evaluation of Worst-case Risk over Wasserstein Ambiguity Sets}\label{sec:dro}

Consider the problem of evaluating the worst-case risk of a real-valued function over an ambiguity set defined in terms of the Wasserstein distance, intimately related to~\gls{acr:dro} (cf.~\cref{ex:dro}). 
It is well known that when the risk is the expected value of an (integrable) real-valued function $f:\reals^d\to\reals$, then the worst-case risk admits the following dual reformulation~\cite{Blanchet2019,gao2022distributionally}:
\begin{equation}\label{eq:general dro dual}
    \sup_{\mu\in\closedWassersteinBall{2}{\radius}{\refmu}} \expectedValue{\mu}{f}
    =
    \inf_{\lambda\geq 0} \lambda\varepsilon^2-\expectedValue{\refmu}{\inf_{y\in\reals^d}\lambda\norm{x-y}^2-f(y)},
\end{equation}
where $\closedWassersteinBall{2}{\radius}{\refmu}$ denotes the \emph{closed}\footnote{Here, the closure is intended with respect to the topology induced by the Wasserstein distance.} Wasserstein ball of radius $\radius$ centered at $\refmu\in\Pp{2}{\reals^d}$:
\begin{equation*}
    \closedWassersteinBall{2}{\radius}{\refmu}
    \coloneqq 
    \{
    \mu\in\Pp{2}{\reals}:
    \wassersteinDistance{2}{\mu}{\refmu}\leq \radius\}.
\end{equation*}
In this section, we use necessary and sufficient conditions for optimality in the Wasserstein space to study the more general setting
\begin{equation}\label{eq:general dro}
    J^\ast\coloneqq \sup_{\mu\in\closedWassersteinBall{2}{\radius}{\refmu}}\cost(\mu)
\end{equation}
for several cost functionals $\cost:\Pp{2}{\reals^d}\to\reals$ which are \emph{not} necessarily representable as expected values.
We start by showing that, up to arbitrarily small perturbations of the radius of the Wasserstein ball, one can always assume that $\refmu$ is absolutely continuous. 

\begin{proposition}[absolute continuity of the center]
\label{prop:absolutely continuity ref}
Let $\varepsilon>0$, $\delta\in(0,\frac{\varepsilon}{2})$, and $\refmu\in\Pp{2}{\reals^d}$.
Then, there exists $\refmu'_\delta\in\closedWassersteinBall{2}{\delta}{\refmu}$ absolutely continuous w.r.t. the Lebesgue measure such that
\begin{equation*}
\begin{aligned}
    \sup_{\mu\in\closedWassersteinBall{2}{\radius-2\delta}{\refmu'_\delta}}\cost(\mu)
    \leq 
    \sup_{\mu\in\closedWassersteinBall{2}{\radius-\delta}{\refmu}}\cost(\mu)
    \leq 
    \sup_{\mu\in\closedWassersteinBall{2}{\radius}{\refmu'_\delta}}&\cost(\mu)
    \leq 
    \sup_{\mu\in\closedWassersteinBall{2}{\radius+\delta}{\refmu}}\cost(\mu)
    \leq 
    \sup_{\mu\in\closedWassersteinBall{2}{\radius+2\delta}{\refmu'_\delta}}\cost(\mu).
\end{aligned}
\end{equation*}
\end{proposition}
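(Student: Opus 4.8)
The plan is to first produce a single absolutely continuous measure $\refmu'_\delta$ lying within Wasserstein distance $\delta$ of $\refmu$, and then to read off the entire chain of inequalities from the triangle inequality for $\wassersteinDistance{2}{\cdot}{\cdot}$ together with monotonicity of the supremum under set inclusion. For the first step I would invoke density of $\Ppabs{2}{\reals^d}$ in $(\Pp{2}{\reals^d},\wassersteinDistance{2}{\cdot}{\cdot})$, i.e.\ \cite[Theorem 2.2.7]{Panaretos2020AnSpace}; alternatively one can be fully explicit and mollify: let $\refmu'_\delta$ be the convolution of $\refmu$ with the centered Gaussian measure of covariance $\sigma^2\eye{d}$, where $\sigma\coloneqq\delta/\sqrt d$. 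Then $\refmu'_\delta\ll\lebesgueMeasure{d}$ (its density is $x\mapsto\int_{\reals^d}(2\pi\sigma^2)^{-d/2}e^{-\norm{x-y}^2/(2\sigma^2)}\d\refmu(y)$), it has finite second moment, and coupling $X\sim\refmu$ with $X+Z$, where $Z$ is an independent centered Gaussian of covariance $\sigma^2\eye{d}$, yields $\wassersteinDistance{2}{\refmu}{\refmu'_\delta}^2\leq\mathbb{E}\norm{Z}^2=\sigma^2 d=\delta^2$. Hence $\refmu'_\delta\in\closedWassersteinBall{2}{\delta}{\refmu}$.

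The key elementary step is the observation that $\wassersteinDistance{2}{\refmu}{\refmu'_\delta}\leq\delta$ forces, for every $r\geq 0$, the inclusions
\begin{equation*}
    \closedWassersteinBall{2}{r}{\refmu'_\delta}\subseteq\closedWassersteinBall{2}{r+\delta}{\refmu}
    \qquad\text{and}\qquad
    \closedWassersteinBall{2}{r}{\refmu}\subseteq\closedWassersteinBall{2}{r+\delta}{\refmu'_\delta},
\end{equation*}
because $\wassersteinDistance{2}{\mu}{\refmu'_\delta}\leq r$ implies $\wassersteinDistance{2}{\mu}{\refmu}\leq\wassersteinDistance{2}{\mu}{\refmu'_\delta}+\wassersteinDistance{2}{\refmu'_\delta}{\refmu}\leq r+\delta$ (and symmetrically), using that $\wassersteinDistance{2}{\cdot}{\cdot}$ is a genuine metric on $\Pp{2}{\reals^d}$.

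Finally, I would apply the first inclusion with $r=\radius-2\delta$ and with $r=\radius$, and the second inclusion with $r=\radius-\delta$ and with $r=\radius+\delta$, to obtain the nested chain
\begin{equation*}
    \closedWassersteinBall{2}{\radius-2\delta}{\refmu'_\delta}
    \subseteq\closedWassersteinBall{2}{\radius-\delta}{\refmu}
    \subseteq\closedWassersteinBall{2}{\radius}{\refmu'_\delta}
    \subseteq\closedWassersteinBall{2}{\radius+\delta}{\refmu}
    \subseteq\closedWassersteinBall{2}{\radius+2\delta}{\refmu'_\delta},
\end{equation*}
and then take the supremum of $\cost$ over each set, using that $A\subseteq B$ implies $\sup_{\mu\in A}\cost(\mu)\leq\sup_{\mu\in B}\cost(\mu)$; this yields precisely the four asserted inequalities. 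The proof has essentially no hard part: the only nontrivial input is the existence of an absolutely continuous $\delta$-approximant, which is classical and for which the Gaussian mollification above is self-contained, while the rest is the triangle inequality and monotonicity of $\sup$. The sole point deserving a remark is that for small $\radius$ the balls of radius $\radius-\delta$ or $\radius-2\delta$ may be empty, in which case the corresponding inequality is vacuous under the convention $\sup_{\emptyset}\cost=-\infty$.
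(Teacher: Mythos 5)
Your argument is correct and follows the same route as the paper: produce an absolutely continuous $\refmu'_\delta$ within distance $\delta$ of $\refmu$ (the paper also cites \cite[Theorem 2.2.7]{Panaretos2020AnSpace}; your Gaussian mollification is a self-contained alternative), then obtain the chain via the triangle inequality and monotonicity of $\sup$ under the resulting nested ball inclusions. The only addition over the paper's terse proof is that you spell out the inclusions and flag the vacuity convention for small $\radius$, which are sensible clarifications but not a different argument.
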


\ifbool{showproofs}{\booltrue{showproofabsolutelycontinuityref}}{
\ifbool{arxiv}{\booltrue{showproofabsolutelycontinuityref}}{}} 
\ifbool{showproofabsolutelycontinuityref}{\proofabsolutelycontinuityref{showproofabsolutelycontinuityref}}{}

Second, we recall from~\cite{Ambrosio2008a,Yue2021} well-known compactness results for Wasserstein balls\ifbool{arxiv}{. Its proof, included for completeness, is relegated to~\cref{app:proofs}.}{.}
\begin{proposition}[properties of Wasserstein balls~\cite{Ambrosio2008a,Yue2021}]\label{prop:wasserstein balls}
Let $\refmu\in\Pp{2}{\reals^d}$ and $\radius>0$. Then, 
\begin{enumerate}
    \item $\closedWassersteinBall{2}{\radius}{\refmu}$ is closed, but not compact w.r.t. weak convergence;
    \item $\closedWassersteinBall{2}{\radius}{\refmu}$ is compact w.r.t. narrow convergence.   
\end{enumerate}
\end{proposition}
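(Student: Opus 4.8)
# Proof Proposal for Proposition (Properties of Wasserstein balls)

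The plan is to prove the two statements separately, following the standard pattern: closedness comes from semicontinuity of the Wasserstein distance, non-compactness w.r.t.\ weak convergence is exhibited by an explicit escaping sequence, and compactness w.r.t.\ narrow convergence follows from Prokhorov's theorem together with a uniform second-moment (tightness) bound.

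\textbf{Statement (i).} For closedness (w.r.t.\ weak convergence in $\Pp{2}{\reals^d}$), I would simply invoke the triangle inequality: if $(\mu_n)_{n\in\naturals}\subset\closedWassersteinBall{2}{\radius}{\refmu}$ converges weakly in $\Pp{2}{\reals^d}$ to $\mu$, then since the Wasserstein distance metrizes weak convergence in $\Pp{2}{\reals^d}$ we have $\wassersteinDistance{2}{\mu_n}{\mu}\to 0$, and $\wassersteinDistance{2}{\mu}{\refmu}\leq\wassersteinDistance{2}{\mu}{\mu_n}+\wassersteinDistance{2}{\mu_n}{\refmu}\leq\wassersteinDistance{2}{\mu}{\mu_n}+\radius\to\radius$, so $\mu\in\closedWassersteinBall{2}{\radius}{\refmu}$. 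For non-compactness, I would reuse the escaping sequence already appearing in \cref{ex:narrow convergence and weak convergence}: take $d=1$, $\refmu=\diracMeasure{0}$, $\radius=1$, and $\mu_n\coloneqq(1-\tfrac{1}{n^2})\diracMeasure{0}+\tfrac{1}{n^2}\diracMeasure{n}$, each of which lies in $\closedWassersteinBall{2}{1}{\diracMeasure{0}}$ since $\wassersteinDistance{2}{\mu_n}{\diracMeasure{0}}^2=\int_\reals x^2\d\mu_n(x)=\tfrac{1}{n^2}n^2=1$. The argument is: any weakly convergent subsequence must have limit $\diracMeasure{0}$ (because $\mu_n$ converges narrowly to $\diracMeasure{0}$, and narrow limits are unique, narrow convergence being metrizable), but $\wassersteinDistance{2}{\mu_n}{\diracMeasure{0}}=1\not\to 0$ contradicts weak convergence in $\Pp{2}{\reals^d}$ to $\diracMeasure{0}$. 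Hence no subsequence converges weakly in $\Pp{2}{\reals^d}$, so $\closedWassersteinBall{2}{1}{\diracMeasure{0}}$ is not compact in this topology.

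\textbf{Statement (ii).} Here I would first use \cref{cor:wasserstein distance} to note that $\mu\mapsto\wassersteinDistance{2}{\mu}{\refmu}$ is lower semi-continuous w.r.t.\ narrow convergence, which immediately gives that $\closedWassersteinBall{2}{\radius}{\refmu}$ is closed w.r.t.\ narrow convergence: if $\mu_n\narrowconvergence\mu$ with $\mu_n\in\closedWassersteinBall{2}{\radius}{\refmu}$, then $\wassersteinDistance{2}{\mu}{\refmu}\leq\liminf_{n\to\infty}\wassersteinDistance{2}{\mu_n}{\refmu}\leq\radius$. It then remains to prove relative compactness, i.e.\ that $\closedWassersteinBall{2}{\radius}{\refmu}$ is tight, so Prokhorov's theorem (\cref{thm:prokhorov}) applies. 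Tightness follows from a uniform bound on the second moment:
\begin{equation*}
    \sup_{\mu\in\closedWassersteinBall{2}{\radius}{\refmu}}\int_{\reals^d}\norm{x}^2\d\mu(x)
    =
    \sup_{\mu\in\closedWassersteinBall{2}{\radius}{\refmu}}\wassersteinDistance{2}{\mu}{\diracMeasure{0}}^2
    \leq
    \left(\radius+\wassersteinDistance{2}{\refmu}{\diracMeasure{0}}\right)^2
    <+\infty,
\end{equation*}
using the triangle inequality $\wassersteinDistance{2}{\mu}{\diracMeasure{0}}\leq\wassersteinDistance{2}{\mu}{\refmu}+\wassersteinDistance{2}{\refmu}{\diracMeasure{0}}\leq\radius+\wassersteinDistance{2}{\refmu}{\diracMeasure{0}}$ and the identity $\wassersteinDistance{2}{\mu}{\diracMeasure{0}}^2=\int_{\reals^d}\norm{x}^2\d\mu(x)$. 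By the tightness criterion (\cref{app:criterion tightness}), this uniform second-moment bound yields tightness, hence relative compactness; combined with closedness w.r.t.\ narrow convergence, we conclude $\closedWassersteinBall{2}{\radius}{\refmu}$ is compact w.r.t.\ narrow convergence.

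\textbf{Main obstacle.} There is no substantial obstacle: the proof is entirely assembled from results already established in the excerpt (the metrization of weak convergence, \cref{cor:wasserstein distance}, Prokhorov, the tightness criterion) plus the explicit example from \cref{ex:narrow convergence and weak convergence}. The only points requiring a little care are (a) being explicit that the non-compactness in (i) refers to \emph{weak} convergence in $\Pp{2}{\reals^d}$ (the ball \emph{is} compact in the narrow topology, by (ii)), and (b) justifying uniqueness of narrow limits, which is immediate since narrow convergence is induced by a metric on $\Pp{2}{\reals^d}$ and limits in metric spaces are unique.
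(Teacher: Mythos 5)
Your proof is correct and follows essentially the same route as the paper: closedness of the ball under weak convergence via the triangle inequality, non-compactness via the escaping sequence $\mu_n = (1-1/n^2)\diracMeasure{0} + (1/n^2)\diracMeasure{n}$, and narrow compactness via lower semi-continuity of $\wassersteinDistance{2}{\cdot}{\refmu}$ (for closedness) together with the uniform second-moment bound, the tightness criterion, and Prokhorov's theorem. The only cosmetic difference is in the final step of the non-compactness argument: you conclude directly that $\wassersteinDistance{2}{\mu_{k_n}}{\diracMeasure{0}}=1\not\to 0$ contradicts $\mu_{k_n}\weakconvergence\diracMeasure{0}$, whereas the paper first passes convergence of second moments to deduce $\wassersteinDistance{2}{\mu}{\diracMeasure{0}}=1$ and then contradicts $\mu=\diracMeasure{0}$; both are sound, yours is marginally more direct.
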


\ifbool{showproofs}{\booltrue{showproofwassersteinballs}}{
\ifbool{arxiv}{\boolfalse{showproofwassersteinballs}}{}} 
\ifbool{showproofwassersteinballs}{\proofwassersteinballs{showproofwassersteinballs}}{}

\subsection{Pedagogical Example}
\label{sec:dro warmup}
As a pedagogical example, consider
\begin{equation}\label{eq:dro linear cost}
    J(\mu)
    =
    \expectedValue{\mu}{\innerProduct{w}{x}}
\end{equation}
for some non-zero $w\in\reals^d$. In this case, the solution of~\eqref{eq:general dro} is easily found via~\eqref{eq:general dro dual}.
Yet, as an illustrative example, we study \eqref{eq:general dro} through necessary and sufficient conditions for optimality in the Wasserstein space.
Without loss of generality, we assume that $\refmu$ is absolutely continuous; see \cref{prop:absolutely continuity ref}. The proof strategy is conceptually simple: We assume enough regularity to use necessary conditions for optimality (\cref{thm:inequality:wasserstein lagrange multipliers}) to construct candidates for optimality and then leverage sufficient conditions for optimality (\cref{thm:first order sufficient conditions constrained wasserstein}) to prove that one candidate solution is indeed optimal. 
\begin{proposition}[optimal solution]
\label{prop:dro linear cost}
Consider problem~\eqref{eq:general dro} with cost functional~\eqref{eq:dro linear cost}.
Assume $\refmu\in\Ppabs{2}{\reals^d}$ is absolutely continuous with respect to the Lebesgue measure.
Then, the unique worst-case probability measure is $\mu^\ast=\pushforward{T}\refmu$, where
\begin{equation*}
    T(x)=x+\radius\frac{w}{\norm{w}},
\end{equation*}
and the corresponding worst-case cost is
\begin{equation}\label{eq:dro linear cost cost}
    \cost^\ast=\expectedValue{\refmu}{\innerProduct{w}{x}}+\radius\norm{w}.
\end{equation}
\end{proposition}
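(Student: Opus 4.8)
The plan is to follow the two-step recipe announced just before the statement: use the necessary conditions (Lagrange multipliers, \cref{thm:wasserstein lagrange multipliers}) to pin down a candidate, then verify optimality via the sufficient conditions (\cref{thm:first order sufficient conditions constrained wasserstein}, or rather its Wasserstein corollary \cref{cor:first order sufficient conditions constrained wasserstein}). By \cref{lemma:min boundary linear cost} we may restrict attention to measures with $\wassersteinDistance{2}{\refmu}{\mu^\ast}=\radius$, so the constraint is the equality $K(\mu)\coloneqq\tfrac12\wassersteinDistance{2}{\mu}{\refmu}^2=\tfrac12\radius^2$ (or $\wassersteinDistance{2}{\mu}{\refmu}^2\le\radius^2$ for the sufficiency part).

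First I would assume $\mu^\ast$ is absolutely continuous (legitimate by \cref{prop:absolutely continuity ref}, up to shrinking $\radius$ by an arbitrarily small amount and passing to the limit at the end). Then \cref{cor:wasserstein distance}(vi) gives $\gradient{\mu}K(\mu^\ast)=\Id-\optMap{\mu^\ast}{\refmu}$, and \cref{prop:expectation}(v) gives $\gradient{\mu}\cost(\mu^\ast)=w$ (the constant function). I need to check the non-degeneracy hypothesis of \cref{thm:wasserstein lagrange multipliers}, i.e. that $\gradient{\mu}K(\mu^\ast)$ is non-vanishing: if $\int\innerProduct{\optMap{\mu^\ast}{\refmu}-\Id}{\gradient{}\varphi}\d\mu^\ast=0$ for every $\varphi\in\Ccinf{\reals^d}$, then \cref{lemma:variations} forces $\optMap{\mu^\ast}{\refmu}=\Id$, hence $\wassersteinDistance{2}{\mu^\ast}{\refmu}=0$, contradicting $\wassersteinDistance{2}{\mu^\ast}{\refmu}=\radius>0$ from \cref{lemma:min boundary linear cost}. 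So \cref{thm:wasserstein lagrange multipliers} applies (noting that we are maximizing, so the multiplier equation reads $\gradient{\mu}\cost(\mu^\ast)=\lambda\gradient{\mu}K(\mu^\ast)$ with a sign convention): there is $\lambda\in\reals$ with $w=2\lambda(x-\optMap{\mu^\ast}{\refmu}(x))$ for $\mu^\ast$-a.e.\ $x$ (absorbing the factor $2$ from $\gradient{\mu}\wassersteinDistance{2}{\cdot}{\refmu}^2$). Since $w\neq0$, $\lambda\neq0$, and $\optMap{\mu^\ast}{\refmu}(x)=x-\tfrac{1}{2\lambda}w$. This map is invertible, so by \cref{prop:inverse optimal transport map}, $\optMap{\refmu}{\mu^\ast}(x)=x+\tfrac{1}{2\lambda}w$, and $\mu^\ast=\pushforward{(\Id+\tfrac{1}{2\lambda}w)}\refmu$. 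Imposing the boundary condition, $\radius^2=\wassersteinDistance{2}{\refmu}{\mu^\ast}^2=\int\norm{x-\optMap{\refmu}{\mu^\ast}(x)}^2\d\refmu=\tfrac{1}{4\lambda^2}\norm{w}^2$, so $\tfrac{1}{2\lambda}=\pm\tfrac{\radius}{\norm{w}}$. Evaluating the cost, $\expectedValue{\mu^\ast}{\innerProduct{w}{x}}=\expectedValue{\refmu}{\innerProduct{w}{x}}\pm\radius\norm{w}$, and the $+$ sign is the maximizing one, yielding $T(x)=x+\radius\tfrac{w}{\norm{w}}$ and \eqref{eq:dro linear cost cost}.

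For the optimality (and uniqueness) part I would invoke \cref{cor:first order sufficient conditions constrained wasserstein} applied to the maximization of $\cost$ equivalently the minimization of $-\cost$: by \cref{prop:expectation}(iv), $-\cost$ is $0$-convex along any interpolating curve, in particular along generalized geodesics, so the convexity parameter is $\alpha=0$; any $\lambda\ge0$ works (here $\lambda=\tfrac{\norm{w}}{2\radius}>0$ corresponding to the $+$ sign), the pair $(\mu^\ast,\lambda)$ satisfies the stationarity and boundary conditions, hence $\mu^\ast$ is a global minimizer of $-\cost$ over $\closedWassersteinBall{2}{\radius}{\refmu}$, i.e.\ a global maximizer of $\cost$. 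For uniqueness: since $\refmu$ is absolutely continuous and $\optMap{\refmu}{\mu^\ast}=\Id+\radius\tfrac{w}{\norm{w}}$ is (trivially) the gradient of a convex function, $\mu^\ast$ is absolutely continuous, so the hypotheses used in deriving the candidate are consistent; the strict-minimizer clause of \cref{cor:first order sufficient conditions constrained wasserstein} does not directly apply since $\alpha=0$, so uniqueness of the worst-case measure needs a separate short argument — e.g.\ any maximizer $\nu$ with $\wassersteinDistance{2}{\nu}{\refmu}=\radius$ must, by the sufficient-condition inequality $-\cost(\nu)+\cost(\mu^\ast)\ge \tfrac{\lambda\cdot 0+\alpha_J}{2}\wassersteinDistance{2}{\mu^\ast}{\nu}^2=0$ combined with optimality $\cost(\nu)=\cost(\mu^\ast)$, this chain alone gives nothing, so instead I would argue directly: the dual identity \eqref{eq:general dro dual} with $f(x)=\innerProduct{w}{x}$ has a unique optimal coupling (the inner infimum $\inf_y \lambda\norm{x-y}^2-\innerProduct{w}{y}$ is attained at the unique point $y=x+\tfrac{1}{2\lambda}w$), which forces $\mu^\ast$ to be unique. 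I expect the main obstacle to be precisely this uniqueness bookkeeping together with the careful handling of the absolute-continuity reduction (taking $\delta\to0$ in \cref{prop:absolutely continuity ref} while preserving the explicit form of the optimal map); the computation of gradients and the Lagrange-multiplier algebra itself are routine given the cited results.
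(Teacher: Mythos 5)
Your overall route mirrors the paper's: existence and boundary behaviour from \cref{lemma:min boundary linear cost}, necessary conditions from \cref{thm:wasserstein lagrange multipliers} (with the non-degeneracy check via \cref{lemma:variations}), then optimality from \cref{cor:first order sufficient conditions constrained wasserstein}. However, there are two places where you diverge, one of which is a genuine misstep.

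First, the absolute-continuity reduction. You invoke \cref{prop:absolutely continuity ref} to ``assume $\mu^\ast$ is absolutely continuous,'' but that proposition concerns the center $\refmu$ of the ball, which is already assumed absolutely continuous in the hypothesis of \cref{prop:dro linear cost}, so it does nothing here; and it says nothing about $\mu^\ast$. The paper's handling is cleaner: assume $\mu^\ast$ is absolutely continuous \emph{a priori} only to write down the necessary condition, derive the explicit candidate $\mu^\ast=\pushforward{(\Id+\varepsilon w/\|w\|)}\refmu$, observe that this candidate \emph{is} absolutely continuous (the transport map is an affine bijection pushing forward an absolutely continuous measure), and then run the sufficient conditions to certify it globally. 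No $\delta\to 0$ limit is needed.

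Second, and more substantively: you claim ``the strict-minimizer clause of \cref{cor:first order sufficient conditions constrained wasserstein} does not directly apply since $\alpha=0$'' and then construct a workaround via the dual \eqref{eq:general dro dual}. That is a misreading of the corollary. Its conclusion is strict whenever $\alpha>0$ \emph{or} $\lambda>\tfrac{\negativePart{\alpha}}{2}$; here $\alpha=0$ gives $\tfrac{\negativePart{\alpha}}{2}=0$, and your multiplier is $\lambda=\tfrac{\|w\|}{2\varepsilon}>0$, so the second disjunct holds and the corollary already yields uniqueness. Your dual argument would indeed work as an alternative, but the ``separate short argument'' you flag as the main obstacle is unnecessary — the cited corollary covers it outright, exactly as the paper uses it.
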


\ifbool{showproofs}{\booltrue{showproofdrolinearcost}}{
\ifbool{arxiv}{\boolfalse{showproofdrolinearcost}}{}} 
\ifbool{showproofdrolinearcost}{\proofdrolinearcost{showproofdrolinearcost}}{}

In particular,~\eqref{eq:dro linear cost cost} suggests that the~\gls{acr:dro} problem is equivalent to an explicit regularization scheme of the nominal cost $\cost(\refmu)$. We refer to~\cite{gao2022finite,gao2017wasserstein,shafieezadeh2019regularization} for details on the interplay between \gls{acr:dro} and explicit regularization.

\subsection{Mean-Variance}
Consider now the mean-variance functional 
\begin{equation}\label{eq:dro variance}
    J(\mu)
    =
    \expectedValue{\mu}{\innerProduct{w}{x}}+\weight\variance{\mu}{\innerProduct{w}{x}}
\end{equation}
for some non-zero $w\in\reals^d$ and (finite) $\weight>0$. Among others, the risk measure~\eqref{eq:dro variance} is widely used in portfolio selection problems~\cite{Markowitz1952,Merton1969}. Being a non-linear functional of the measure, the duality result~\eqref{eq:general dro dual} does not apply and the mean-variance functional has received little attention in the context of \gls{acr:dro}~\cite{Blanchet2021DistributionallyDistances,Nguyen2021Mean-CovarianceMeasurement}.
We start by studying the properties of $\cost$.

\begin{lemma}[properties of $\cost$]
\label{lemma:properties dro variance}
The mean-variance functional $-\cost$ is continuous with respect to weak convergence and $(-2\weight\norm{w}^2)$-convex along any interpolating curve.
\end{lemma}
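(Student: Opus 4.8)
The plan is to recognize the mean-variance functional $\cost(\mu) = \expectedValue{\mu}{\innerProduct{w}{x}} + \weight\variance{\mu}{\innerProduct{w}{x}}$ as a sum of two functionals whose properties we have already established: an expected-value-type functional $\mu \mapsto \expectedValue{\mu}{\innerProduct{w}{x}}$ (covered by \cref{prop:expectation} with $V(x) = \innerProduct{w}{x}$, which is smooth with zero Hessian, hence has bounded Hessian and is $0$-convex) and a variance-type functional $\mu \mapsto \variance{\mu}{\innerProduct{w}{x}}$ (covered by \cref{cor:variance} with $a = w$). First I would invoke \cref{prop:expectation}(ii) and \cref{cor:variance} to conclude that both $\mu\mapsto\expectedValue{\mu}{\innerProduct{w}{x}}$ and $\mu\mapsto\variance{\mu}{\innerProduct{w}{x}}$ are continuous with respect to weak convergence in $\Pp{2}{\reals^d}$; since $\weight$ is a finite constant, the linear combination $\cost$, and hence $-\cost$, is continuous with respect to weak convergence.

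For the convexity statement, the key point is that the variance is an interaction-type functional: as shown in the proof of \cref{cor:variance}, $\variance{\mu}{\innerProduct{w}{x}} = \frac{1}{2}\int_{\reals^d\times\reals^d}\innerProduct{w}{x-y}^2 \d(\mu\productMeasure\mu)(x,y)$, i.e., it has the form of \cref{prop:interaction} with $U(z) = \innerProduct{w}{z}^2$. Now $U$ is convex (it is the square of a linear functional), so by \cref{prop:interaction}(iv), $\mu\mapsto\variance{\mu}{\innerProduct{w}{x}}$ is convex along any interpolating curve, i.e., $0$-convex along any interpolating curve. Equivalently, $-\variance{\mu}{\innerProduct{w}{x}}$ is $0$-concave; but we need a quantitative lower bound, namely that $-\variance{\mu}{\innerProduct{w}{x}}$ is $(-2\weight\norm{w}^2)$-convex along interpolating curves — and this is exactly the content of the displayed computation already carried out in the proof of \cref{lemma:properties dro variance} in the excerpt. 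I would reproduce that computation: fix $\mu_0,\mu_1$, a plan $\gamma\in\setPlans{\mu_0}{\mu_1}$, and the interpolant $\mu_t = \pushforward{((1-t)\proj_1+t\proj_2)}\gamma$; expand $-\variance{\mu_t}{\innerProduct{w}{x}}$ using the double-integral representation, split the squared term $\innerProduct{w}{(1-t)x+tx'-(1-t)y-ty'}^2$ via the identity $\norm{(1-t)a+tb}^2 = (1-t)\norm{a}^2 + t\norm{b}^2 - t(1-t)\norm{a-b}^2$ applied with $a = \innerProduct{w}{x-y}$, $b = \innerProduct{w}{x'-y'}$, then use that the cross term $\int\int \innerProduct{w}{x-x'}\innerProduct{w}{y-y'}\d\gamma\d\gamma = (\int\innerProduct{w}{x-x'}\d\gamma)^2 \geq 0$ to drop it, and finally bound $\innerProduct{w}{x-x'}^2 \leq \norm{w}^2\norm{x-x'}^2$ by Cauchy-Schwarz.

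Putting the two pieces together: $-\cost = -\expectedValue{\mu}{\innerProduct{w}{x}} + \weight\cdot(-\variance{\mu}{\innerProduct{w}{x}})$; the first summand is $0$-convex along interpolating curves (by \cref{prop:expectation}(iv), since $-\innerProduct{w}{x}$ is linear, hence $0$-convex), and the second summand, scaled by $\weight>0$, is $(-2\weight\norm{w}^2)$-convex along interpolating curves by the bound just obtained. Convexity moduli add under sums, so $-\cost$ is $(0 + (-2\weight\norm{w}^2))$-convex $= (-2\weight\norm{w}^2)$-convex along any interpolating curve, which is the claim. I do not anticipate a real obstacle here — the lemma is essentially a bookkeeping assembly of \cref{prop:expectation}, \cref{cor:variance}, and \cref{prop:interaction}, together with the elementary convexity computation for the variance; the only mild care needed is in tracking that the relevant convexity notion is convexity along \emph{any} interpolating curve (not merely along geodesics or generalized geodesics), which is precisely what \cref{prop:interaction}(iv) delivers for convex $U$.
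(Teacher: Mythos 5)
Your proposal is correct and follows essentially the same route as the paper. The continuity claim is handled identically (via \cref{prop:expectation} and \cref{cor:variance}), and the convexity claim reduces to the same explicit expansion of $-\variance{\mu_t}{\innerProduct{w}{x}}$ along $\mu_t=\pushforward{((1-t)\proj_1+t\proj_2)}\gamma$, using the algebraic split $\norm{(1-t)a+tb}^2=(1-t)\norm{a}^2+t\norm{b}^2-t(1-t)\norm{a-b}^2$, non-negativity of the cross term, and Cauchy--Schwarz to pick up the factor $\norm{w}^2$; your extra scaffolding (tracking the zero modulus of the linear part via \cref{prop:expectation}(iv) and noting that moduli add) is harmless bookkeeping the paper leaves implicit.
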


\ifbool{showproofs}{\booltrue{showprooflemmapropertiesdrovariance}}{
\ifbool{arxiv}{\boolfalse{showprooflemmapropertiesdrovariance}}{}} 
\ifbool{showprooflemmapropertiesdrovariance}{\prooflemmapropertiesdrovariance{showprooflemmapropertiesdrovariance}}{}




We can now deploy~\cref{thm:inequality:wasserstein lagrange multipliers,thm:first order sufficient conditions constrained wasserstein} to characterize optimal solutions. More specifically, we will now show that the evaluation of the worst-case probability measure amounts to computing the roots of a fourth-order polynomial:

\begin{proposition}[optimal solution of mean-variance \gls{acr:dro}]
\label{thm:dro variance}
Consider problem~\eqref{eq:general dro dual} with the mean-variance cost functional~\eqref{eq:dro variance}.
Assume $\refmu\in\Ppabs{2}{\reals^d}$ is absolutely continuous with respect to the Lebesgue measure.
Then, the worst-case probability measure is unique, and it is given by $\mu^\ast=\pushforward{T}\refmu$, where
\begin{equation*}
    T(x)=
    \left(\eye{}+\frac{\frac{\weight}{\lambda^\ast}}{1-\frac{\weight}{\lambda^\ast}\norm{w}^2}w\transpose{w}\right)x + \left(\frac{1}{2\lambda^\ast}-\frac{\frac{\weight}{\lambda^\ast}}{1-\frac{\weight}{\lambda^\ast}\norm{w}^2}\expectedValue{\refmu}{\innerProduct{w}{x}}\right)w,
\end{equation*}
where $\lambda^\ast$ is the unique real root strictly larger than $\weight\norm{w}^2$ of the fourth-order polynomial
\begin{equation*}
    \lambda^4
    -2\weight\norm{w}^2 \lambda^3
    +\left(-\frac{\weight^2\norm{w}^2}{\radius^2}\variance{\refmu}{\innerProduct{w}{x}} - \frac{\norm{w}^2}{4\radius^2}+\weight^2\norm{w}^4\right)\lambda^2
    +\frac{\weight\norm{w}^4}{2\radius^2}\lambda
    -\frac{\weight^2\norm{w}^6}{4\radius^2}.
\end{equation*}
The resulting worst-case cost reads
\begin{equation*}
    \cost^\ast
    =
    \expectedValue{\refmu}{\innerProduct{w}{x}}+\frac{1}{2\lambda^\ast}\norm{w}^2+\weight\left(\frac{1}{1-\frac{\weight}{\lambda^\ast}\norm{w}^2}\right)^2\variance{\refmu}{\innerProduct{w}{x}}.
\end{equation*}
\end{proposition}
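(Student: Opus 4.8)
<br>

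The plan is to follow the same two-step template used in \cref{prop:dro linear cost}: first use the necessary conditions (\cref{thm:wasserstein lagrange multipliers}) under the working assumption that the worst-case measure $\mu^\ast$ exists and is absolutely continuous, to derive the algebraic form of the optimal transport map; then use the sufficient conditions (\cref{thm:first order sufficient conditions constrained wasserstein}, via \cref{lemma:properties dro variance}) to certify optimality. By \cref{lemma:min boundary variance} we may restrict to $\wassersteinDistance{2}{\refmu}{\mu^\ast}=\radius$, so the constraint is active with equality and the Lagrange-multiplier machinery applies. Using \cref{prop:expectation} and \cref{cor:variance} the Wasserstein gradient of $\cost$ is $w + 2\weight w\innerProduct{w}{\Id - \expectedValue{\mu^\ast}{x}}$, and by \cref{cor:wasserstein distance} the gradient of $K(\mu)=\wassersteinDistance{2}{\mu}{\refmu}^2$ at an absolutely continuous $\mu^\ast$ is $2(\Id - \optMap{\mu^\ast}{\refmu})$ (one should first rule out $\optMap{\mu^\ast}{\refmu}=\Id$ by a contradiction argument identical to the one in \cref{prop:dro linear cost}, using $\wassersteinDistance{2}{\mu^\ast}{\refmu}=\radius>0$). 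Setting $\gradient{\mu}\cost(\mu^\ast)=2\lambda(\Id - \optMap{\mu^\ast}{\refmu})$ and solving for $\optMap{\mu^\ast}{\refmu}$ yields an affine map $\optMap{\mu^\ast}{\refmu}(x)=Ax+b$ with $A=\eye{}-\tfrac{\weight}{\lambda}w\transpose{w}$ symmetric and $b$ depending on $\lambda$ and on $\expectedValue{\mu^\ast}{\innerProduct{w}{x}}$.

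The core of the argument is then a self-consistent algebraic computation. First, monotonicity/convexity of the optimal transport map (it is the gradient of a convex function by Brenier, \cref{thm:brenier}) forces $\det A\geq 0$, hence $\lambda\geq \weight\norm{w}^2$; and $\lambda=\weight\norm{w}^2$ is excluded because then $A$ is singular and $\mu^\ast$ cannot be absolutely continuous, so $\lambda>\weight\norm{w}^2$ and $A$ is invertible. By \cref{prop:inverse optimal transport map}, $\optMap{\refmu}{\mu^\ast}=A^{-1}(\cdot) - A^{-1}b$, and one computes $A^{-1}=\eye{}+\tfrac{\weight/\lambda}{1-(\weight/\lambda)\norm{w}^2}w\transpose{w}$, $A^{-1}w=\tfrac{1}{1-(\weight/\lambda)\norm{w}^2}w$. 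Taking expectations against $\refmu$ in $\optMap{\refmu}{\mu^\ast}$ and substituting back into $b$ gives a closed form for $A^{-1}b$, and hence for $T=\optMap{\refmu}{\mu^\ast}$ in the statement. Finally, imposing $\wassersteinDistance{2}{\refmu}{\mu^\ast}^2=\int\norm{x-\optMap{\refmu}{\mu^\ast}(x)}^2\d\refmu=\radius^2$, expanding $\norm{(I-A^{-1})x+A^{-1}b}^2$ and collecting terms proportional to $\variance{\refmu}{\innerProduct{w}{x}}$ and to $\norm{w}^2$, yields a rational equation in $\lambda$; clearing denominators (multiplying by $\lambda^2(1-(\weight/\lambda)\norm{w}^2)^2/\radius^2$) produces the stated quartic $f(\lambda)$. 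An intermediate value theorem argument ($f\to+\infty$ as $\lambda\to\pm\infty$, $f(0^-)<0$, and $f(\lambda)\to -\weight^4\norm{w}^6\variance{\refmu}{\innerProduct{w}{x}}/\radius^2<0$ as $\lambda\downarrow\weight\norm{w}^2$, the last using $\variance{\refmu}{\innerProduct{w}{x}}>0$ since $\refmu$ is absolutely continuous) shows a root $\lambda^\ast>\weight\norm{w}^2$ exists; inspection of the resulting cost formula $\cost(\mu^\ast)=\expectedValue{\refmu}{\innerProduct{w}{x}}+\tfrac{1}{2\lambda}\norm{w}^2+\weight(1-(\weight/\lambda)\norm{w}^2)^{-2}\variance{\refmu}{\innerProduct{w}{x}}$ rules out any negative root, so the candidate is $\lambda^\ast=\max\Lambda$.

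For optimality, I would invoke \cref{thm:first order sufficient conditions constrained wasserstein} with $c(x,y)=\norm{x-y}^2$, so $\alpha_c=2$, and $\alpha_J = -2\weight\norm{w}^2$ for $-\cost$ from \cref{lemma:properties dro variance} (note we are maximizing $\cost$, i.e., minimizing $-\cost$, which is convex along any interpolating curve). The condition $\lambda\geq \negativePart{\alpha_J}/\alpha_c = \weight\norm{w}^2$ is exactly what we established; since we constructed $\mu^\ast$ with $\gradient{\mu}(-\cost)(\mu^\ast)+\lambda^\ast\gradient{\mu}K(\mu^\ast)=0$ and $K(\mu^\ast)=\radius^2$, the theorem gives that $\mu^\ast$ is the (unique, since $\lambda^\ast>\negativePart{\alpha_J}/\alpha_c$) global maximizer, which also furnishes existence of the maximizer a posteriori and shows $\Lambda$ contains exactly one root above $\weight\norm{w}^2$. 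The main obstacle is purely computational: the bookkeeping in inverting $A$, resolving the self-consistency equation for $\expectedValue{\mu^\ast}{\innerProduct{w}{x}}$, and expanding the Wasserstein-distance constraint into the quartic without algebra errors — the conceptual scaffolding is identical to \cref{prop:dro linear cost}, with the only genuinely new point being that here the map has a nontrivial symmetric linear part rather than a pure translation, which is what makes the multiplier equation quartic rather than trivially solvable.
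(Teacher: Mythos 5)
Your proposal follows essentially the same route as the paper's proof: assume existence and absolute continuity of $\mu^\ast$, invoke \cref{lemma:min boundary variance} to reduce to the boundary, apply \cref{thm:wasserstein lagrange multipliers} with the gradients from \cref{prop:expectation}, \cref{cor:variance}, and \cref{cor:wasserstein distance} to obtain the affine map $\optMap{\mu^\ast}{\refmu}(x)=Ax+b$ with $A=\eye{}-\tfrac{\weight}{\lambda}w\transpose{w}$, force $\lambda>\weight\norm{w}^2$ from positive definiteness and absolute continuity, invert $A$, close the self-consistency loop for $\expectedValue{\mu^\ast}{\innerProduct{w}{x}}$, expand $\wassersteinDistance{2}{\refmu}{\mu^\ast}^2=\radius^2$ into the quartic, locate a root by the intermediate value theorem, and certify optimality and uniqueness via \cref{lemma:properties dro variance} and \cref{thm:first order sufficient conditions constrained wasserstein}. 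The small extra remark you make — explicitly ruling out $\optMap{\mu^\ast}{\refmu}=\Id$ before invoking the multiplier theorem, mirroring the argument in \cref{prop:dro linear cost} — is a legitimate hypothesis check that the paper leaves implicit; otherwise the two arguments coincide step for step.
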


\ifbool{showproofs}{\booltrue{showproofdrovariance}}{
\ifbool{arxiv}{\boolfalse{showproofdrovariance}}{}} 
\ifbool{showproofdrovariance}{\proofdrovariance{showproofdrovariance}}{}

\ifbool{compact}{}{
\begin{remark}
When $\weight\to 0$, $\lambda^\ast=1/(2\varepsilon)$ and we recover the results of~\cref{sec:dro warmup}.
Conversely, when $\weight\to\infty$, $\lambda^\ast/\weight=\norm{w}^2\pm\norm{w}\standardDeviation{\refmu}{\innerProduct{w}{x}}/\radius$, and 
\begin{equation}\label{eq:dro variance infinity}
\begin{aligned}
    \frac{\cost^\ast}{\weight}
    &=
    \left(
    \standardDeviation{\refmu}{\innerProduct{w}{x}}+\radius\norm{w}
    \right)^2,
\end{aligned}
\end{equation}
where $\standardDeviation{\refmu}{\innerProduct{w}{x}}\coloneqq\sqrt{\variance{\refmu}{\innerProduct{w}{x}}}$ is the standard deviation of $\innerProduct{w}{x}$. This is again a form of explicit regularization. 
\end{remark}}

Our results are in agreement with~\cite[Appendix C]{Nguyen2021Mean-CovarianceMeasurement}, and additionally fully characterize the worst-case probability measure and prove its uniqueness.
Differently from~\cite{Nguyen2021Mean-CovarianceMeasurement}, we do not base our analysis on structural ambiguity sets consisting of all semidefinite affine pushforwards of the center $\refmu$~\cite[Definitions 6]{Nguyen2021Mean-CovarianceMeasurement}, but we work in the ``full'' space of probability measures~$\Pp{2}{\reals^d}$. Accordingly, the fact that here and below worst-case probability measures sometimes happen to be positive semidefinite affine pushforward of $\refmu$ is not stipulated ex-ante but emerges naturally from our optimality conditions.  In~\cite{Blanchet2021DistributionallyDistances}, instead, the mean-variance functional is replaced by the variance and a lower bound on the worst-case mean.

\subsection{Mean-Standard Deviation}
Consider now the mean-standard deviation functional 
\begin{equation}\label{eq:dro std}
    J(\mu)=\expectedValue{\mu}{\innerProduct{w}{x}}+\weight\standardDeviation{\mu}{\innerProduct{w}{x}}
    =\expectedValue{\mu}{\innerProduct{w}{x}}+\weight\sqrt{\variance{\mu}{\innerProduct{w}{x}}}
\end{equation}
for some non-zero $w\in\reals^d$ and (finite) $\weight>0$. This risk measure has also found application in portfolio theory~\cite{Rockafellar2002DeviationOptimization}. Due to its nonlinearity, the mean-standard deviation functional has not been studied in the context of~\gls{acr:dro}, with the exception of~\cite{Nguyen2021Mean-CovarianceMeasurement} which provides an upper bound (so-called Gelbrich risk) on the worst-case cost. 
First, we study properties of $\cost$.
\begin{lemma}[properties of $J$]\label{lemma:properties dro std}
The mean-standard deviation cost functional $-\cost$ is continuous with respect to weak convergence. Moreover, for all $\mu\in\closedWassersteinBall{2}{\radius}{\refmu}$
\begin{equation}\label{eq:dro std upper bound}
    \cost(\mu)\leq \cost(\refmu) + \radius\norm{w}\sqrt{1+\weight^2}.
\end{equation}
\end{lemma}

\ifbool{showproofs}{\booltrue{showprooflemmapropertiesdrostd}}{
\ifbool{arxiv}{\boolfalse{showprooflemmapropertiesdrostd}}{}} 
\ifbool{showprooflemmapropertiesdrostd}{\prooflemmapropertiesdrostd{showprooflemmapropertiesdrostd}}{}



Armed with this result, we can now study optimal solutions.
\begin{proposition}[optimal solution of mean-standard deviation~\gls{acr:dro}]
\label{thm:dro std}
Consider problem~\eqref{eq:general dro dual} with the mean-standard deviation cost functional~\eqref{eq:dro std}.
Assume that $\refmu\in\Ppabs{2}{\reals^d}$ is absolutely continuous with respect to the Lebesgue measure.
Then, the worst-case probability measure is given by $\mu^\ast=\pushforward{T}\refmu$, where
\begin{equation*}
    T(x)=
    \left(\eye{}+\frac{\weight\radius}{\norm{w}\sqrt{1+\weight^2}\standardDeviation{\refmu}{\innerProduct{w}{x}}}w\transpose{w}\right)x
    +
    \left(1-\frac{\weight\expectedValue{\refmu}{\innerProduct{w}{x}}}{\standardDeviation{\refmu}{\innerProduct{w}{x}}}\right)\frac{\radius}{\sqrt{1+\weight^2}}\frac{w}{\norm{w}},
\end{equation*}
and the resulting worst-case cost reads
\begin{equation}\label{eq:dro std worst case cost}
    \cost^\ast
    =
    \expectedValue{\refmu}{\innerProduct{w}{x}}+\weight\standardDeviation{\refmu}{\innerProduct{w}{x}}+\radius\norm{w}\sqrt{1+\weight^2}
    =
    \cost(\refmu)+\radius\norm{w}\sqrt{1+\weight^2}.
\end{equation}
\end{proposition}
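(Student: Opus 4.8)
The plan is to replicate the two-step strategy of \cref{thm:dro variance}: assume enough regularity to apply the necessary conditions of \cref{thm:wasserstein lagrange multipliers} and thereby \emph{construct} a candidate worst-case measure, and then prove its optimality by the shortcut of checking that it saturates the a priori upper bound \eqref{eq:dro std upper bound}. First I would invoke \cref{lemma:min boundary std} to reduce to the boundary, so that the constraint becomes $\constraint(\mu)\coloneqq\wassersteinDistance{2}{\mu}{\refmu}^2=\radius^2$, and assume that a maximizer $\mu^\ast$ exists and is absolutely continuous. Absolute continuity of both $\refmu$ and $\mu^\ast$ is used twice: it makes $\optMap{\mu^\ast}{\refmu}$ the ($\mu^\ast$-a.e.\ unique) gradient of a convex function via \cref{thm:brenier,cor:wasserstein distance}, and it guarantees $\standardDeviation{\mu^\ast}{\innerProduct{w}{x}}>0$ (the mass of $\mu^\ast$ does not concentrate on a hyperplane).

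Next I would compute the Wasserstein gradients: $\gradient{\mu}\expectedValue{\mu}{\innerProduct{w}{x}}=w$ by \cref{prop:expectation}, $\gradient{\mu}\standardDeviation{\mu}{\innerProduct{w}{x}}=\frac{w\innerProduct{w}{\Id-\expectedValue{\mu}{x}}}{\standardDeviation{\mu}{\innerProduct{w}{x}}}$ by combining \cref{cor:variance} with the chain rule \cref{prop:chain rule} applied to $t\mapsto\sqrt t$, and $\gradient{\mu}\constraint(\mu)=2(\Id-\optMap{\mu}{\refmu})$ by \cref{cor:wasserstein distance}. The non-degeneracy hypothesis of \cref{thm:wasserstein lagrange multipliers} holds because $\optMap{\mu^\ast}{\refmu}\equiv\Id$ would force $\wassersteinDistance{2}{\mu^\ast}{\refmu}=0\neq\radius$ (argue by contradiction with \cref{lemma:variations}, exactly as in the proof of \cref{prop:dro linear cost}). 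Hence there is $\lambda\in\reals$ with
\begin{equation*}
    w+\frac{\weight}{\standardDeviation{\mu^\ast}{\innerProduct{w}{x}}}w\innerProduct{w}{\Id-\expectedValue{\mu^\ast}{x}}=2\lambda\bigl(\Id-\optMap{\mu^\ast}{\refmu}\bigr)\qquad\mu^\ast\text{-a.e.},
\end{equation*}
which I would solve for $\optMap{\mu^\ast}{\refmu}$ to obtain the affine map $A\,\Id+bw$ with $A=\eye{}-\alpha w\transpose{w}$, $\alpha=\weight/(2\lambda\standardDeviation{\mu^\ast}{\innerProduct{w}{x}})$, and $b=-1/(2\lambda)+\alpha\expectedValue{\mu^\ast}{\innerProduct{w}{x}}$. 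Convexity of the Brenier potential forces $1-\alpha\norm{w}^2\geq 0$, and the case $1-\alpha\norm{w}^2=0$ is excluded because it would make $A$ singular and collapse $\refmu$ onto a hyperplane; thus $A$ is invertible and \cref{prop:inverse optimal transport map} gives $\optMap{\refmu}{\mu^\ast}=A^{-1}\Id-A^{-1}bw$ with $A^{-1}=\eye{}+\frac{\alpha}{1-\alpha\norm{w}^2}w\transpose{w}$ and $A^{-1}w=\frac{1}{1-\alpha\norm{w}^2}w$.

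The next step is to resolve the implicit dependence of $A$ and $b$ on the first two moments of $\mu^\ast$. Writing $\mu^\ast=\pushforward{(\optMap{\refmu}{\mu^\ast})}\refmu$ and using $\innerProduct{w}{A^{-1}x}=\frac{1}{1-\alpha\norm{w}^2}\innerProduct{w}{x}$ gives $\standardDeviation{\mu^\ast}{\innerProduct{w}{x}}=\frac{1}{1-\alpha\norm{w}^2}\standardDeviation{\refmu}{\innerProduct{w}{x}}$, whence a closed form $\alpha=\weight/(2\lambda\standardDeviation{\refmu}{\innerProduct{w}{x}}+\weight\norm{w}^2)$, and the same manipulations leading to \eqref{eq:dro variance A times b} yield a closed form for $A^{-1}b$. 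Imposing the boundary constraint $\int_{\reals^d}\norm{x-\optMap{\refmu}{\mu^\ast}(x)}^2\d\refmu(x)=\radius^2$ and simplifying exactly as in \eqref{eq:dro variance distance} collapses to the scalar equation $\radius^2=\norm{w}^2(1+\weight^2)/(4\lambda^2)$, so $\lambda^\ast=\pm\norm{w}\sqrt{1+\weight^2}/(2\radius)$; substituting back yields the stated transport map $T$. A direct computation of $\cost(\mu^\ast)=\expectedValue{\mu^\ast}{\innerProduct{w}{x}}+\weight\standardDeviation{\mu^\ast}{\innerProduct{w}{x}}$ (again routine, with the cancellations of \cref{thm:dro variance}) gives $\cost(\mu^\ast)=\cost(\refmu)+\norm{w}^2(1+\weight^2)/(2\lambda^\ast)$; picking the sign $\lambda^\ast=+\norm{w}\sqrt{1+\weight^2}/(2\radius)$ makes this $\cost(\refmu)+\radius\norm{w}\sqrt{1+\weight^2}$. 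Since $\mu^\ast\in\closedWassersteinBall{2}{\radius}{\refmu}$ and this value equals the a priori upper bound \eqref{eq:dro std upper bound}, $\mu^\ast$ is a global worst-case measure and $\cost^\ast=\cost(\refmu)+\radius\norm{w}\sqrt{1+\weight^2}$.

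I expect the main obstacle to be the implicit/fixed-point structure in the third step: because $A$ and $b$ depend on $\expectedValue{\mu^\ast}{\innerProduct{w}{x}}$ and $\standardDeviation{\mu^\ast}{\innerProduct{w}{x}}$, which are themselves determined by the transport map, one must run the same Sherman--Morrison-type inversion and algebraic cancellations as in the mean--variance case before the Wasserstein-distance constraint reduces to a single equation for $\lambda$. A secondary but essential point is to justify throughout that $\standardDeviation{\mu^\ast}{\innerProduct{w}{x}}>0$ and that $A$ is genuinely invertible, both of which rest on the absolute continuity of $\refmu$ and $\mu^\ast$.
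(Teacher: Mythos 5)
Your proposal is correct and follows essentially the same strategy as the paper: apply the Lagrange-multiplier necessary conditions to derive an affine candidate transport map, unwind the implicit dependence of $A,b$ on the moments of $\mu^\ast$ (leading to the same Sherman--Morrison manipulations as the mean--variance case), impose the boundary constraint to pin down $\lambda^\ast$, and then establish optimality by showing the candidate saturates the a priori bound~\eqref{eq:dro std upper bound}. The only marginal addition is that you explicitly verify the non-degeneracy hypothesis of \cref{thm:wasserstein lagrange multipliers} (the paper does this in the analogous Proposition~\ref{prop:dro linear cost} but is terser here), which is careful but not a departure in method.
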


\ifbool{showproofs}{\booltrue{showproofdrostd}}{
\ifbool{arxiv}{\boolfalse{showproofdrostd}}{}} 
\ifbool{showproofdrostd}{\proofdrostd{showproofdrostd}}{}

As in~\cref{sec:dro warmup},~\eqref{eq:dro std worst case cost} suggests that~\eqref{eq:general dro} coincides with an explicit regularization scheme, as already observed in~\cite{wozabal2014robustifying}.

\section{Minimum Entropy on Wasserstein Balls}\label{sec:kl}

Consider the optimization problem
\begin{equation}\label{eq:min entropy wasserstein ball}
\begin{aligned}
    \inf_{\mu\in\Pp{2}{\reals^d}}
    \left\{\kullbackLeibler{\mu}{\muprior}:
    \wassersteinDistance{2}{\mu}{\muref}\leq\radius\right\},
\end{aligned}
\end{equation}
with prior probability measure $\mu_\mathrm{p}\in\Pp{2}{\reals^d}$ and reference probability measure $\muref\in\Pp{2}{\reals^d}$.
The optimization problem~\eqref{eq:min entropy wasserstein ball} combines the well-known Kullback’s principle of minimum cross-entropy~\cite{kullback1959information,shore1980axiomatic,shore1981properties} in statistical inference with additional side information, encoded in the probability measure $\muref$~\cite{Vargas2021}.
For instance, in~\cite{Vargas2021}, $\muref$ is chosen to be the empirical probability measure $\refmu=\frac{1}{N}\sum_{i=1}^N\diracMeasure{x_i}$ constructed with $N$ data points $\{(x_i)\}_{i=1}^N$, while $\muprior$ is a strictly positive probability measure on a compact subset of $\reals^d$.
Here, we study optimality conditions for~\eqref{eq:min entropy wasserstein ball}.
We assume that $\muprior=e^{-\Vprior}\lebesgueMeasure{d}$ and $\muref=e^{-\Vref}\lebesgueMeasure{d}$ for some $\Vprior:\reals^d\to\reals$ and $\Vref:\reals^d\to\reals$ convex and continuously differentiable.
To start, we show that a minimizer exists. 
\begin{lemma}[existence]\label{lemma:min entropy wasserstein ball existence}
The optimization problem \eqref{eq:min entropy wasserstein ball} has a solution. Moreover, any minimizer $\mu^\ast$ is absolutely continuous. 
\end{lemma}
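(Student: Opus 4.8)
The plan is to establish existence via the direct method of the calculus of variations, using lower semi-continuity of the objective together with compactness of the feasible set, both with respect to \emph{narrow} convergence. First I would note that the feasible set $\mathcal{F}\coloneqq\{\mu\in\Pp{2}{\reals^d}:\wassersteinDistance{2}{\mu}{\muref}\leq\radius\}=\closedWassersteinBall{2}{\radius}{\muref}$ is nonempty (it contains $\muref$) and, crucially, that the infimum in~\eqref{eq:min entropy wasserstein ball} is finite: since $\muprior=e^{-\Vprior}\lebesgueMeasure{d}$ is absolutely continuous and, say, any absolutely continuous $\mu\in\closedWassersteinBall{2}{\radius}{\muref}$ close to $\muref$ has finite $\kl$ divergence against $\muprior$ (or, more directly, one can exhibit one feasible measure with finite relative entropy using~\cref{prop:absolutely continuity ref} to perturb $\muref$ into an absolutely continuous measure within the ball), the problem is proper. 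If the infimum were $+\infty$ the statement would be vacuous, so we may assume it is finite and take a minimizing sequence $(\mu_n)_{n\in\naturals}\subset\closedWassersteinBall{2}{\radius}{\muref}$ with $\kullbackLeibler{\mu_n}{\muprior}\to\inf$.

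Next I would invoke~\cref{prop:wasserstein balls}, which tells us that $\closedWassersteinBall{2}{\radius}{\muref}$ is compact with respect to narrow convergence. Hence, passing to a subsequence (not relabeled), $\mu_n$ converges narrowly to some $\mu^\ast\in\closedWassersteinBall{2}{\radius}{\muref}$; in particular $\mu^\ast$ is feasible. Then I would apply~\cref{prop:divergence}(iii): the $\kl$ divergence $\mu\mapsto\kullbackLeibler{\mu}{\muprior}$ is lower semi-continuous with respect to narrow convergence (this is the case $F(z)=z\log z$, which satisfies the hypotheses of~\cref{prop:divergence}, as recalled in~\cref{ex:kl divergence}). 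Therefore
\begin{equation*}
    \kullbackLeibler{\mu^\ast}{\muprior}
    \leq
    \liminf_{n\to\infty}\kullbackLeibler{\mu_n}{\muprior}
    =
    \inf_{\mu\in\Pp{2}{\reals^d}}\{\kullbackLeibler{\mu}{\muprior}:\wassersteinDistance{2}{\mu}{\muref}\leq\radius\},
\end{equation*}
so $\mu^\ast$ attains the infimum and is a minimizer.

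Finally, for the absolute continuity claim I would argue that the $\kl$ divergence $\kullbackLeibler{\mu}{\muprior}$ evaluates to $+\infty$ whenever $\mu\not\ll\muprior$, by~\cref{prop:divergence}(i) (the effective domain is contained in $\{\mu:\mu\ll\muprior\}$). Since $\muprior=e^{-\Vprior}\lebesgueMeasure{d}$ is itself absolutely continuous with respect to $\lebesgueMeasure{d}$ with a strictly positive density, $\mu\ll\muprior$ is equivalent to $\mu\ll\lebesgueMeasure{d}$. Because the minimizer $\mu^\ast$ has finite objective value (equal to the finite infimum), it must lie in the effective domain, hence $\mu^\ast\ll\lebesgueMeasure{d}$, i.e., $\mu^\ast$ is absolutely continuous. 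The only genuine subtlety — the ``main obstacle'' — is confirming that the infimum is finite so that the minimizing sequence argument is not vacuous; this is where one leans on the absolute continuity of $\muprior$ and on~\cref{prop:absolutely continuity ref} (or an explicit construction) to produce at least one feasible measure with finite relative entropy. Everything else is a routine application of lower semi-continuity plus compactness.
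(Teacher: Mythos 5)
Your proposal is correct and follows essentially the same route as the paper: compactness of the Wasserstein ball and lower semi-continuity of the $\kl$ divergence (both with respect to narrow convergence) give existence via the direct method, and absolute continuity of the minimizer follows because the $\kl$ divergence is $+\infty$ off $\Ppabs{2}{\reals^d}$. You are somewhat more careful than the paper in flagging properness (exhibiting a feasible measure with finite relative entropy) — a point the paper's terse proof leaves implicit — but this does not change the substance of the argument.
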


\ifbool{showproofs}{\booltrue{showprooflemmaexistencekl}}{
\ifbool{arxiv}{\boolfalse{showprooflemmaexistencekl}}{}} 
\ifbool{showprooflemmaexistencekl}{\prooflemmaexistencekl{showprooflemmaexistencekl}}{}

Without loss of generality, we assume that $\muprior$ does not belong to the Wasserstein ball of radius $\radius$ centered at $\muref$ (i.e., $\wassersteinDistance{2}{\muprior}{\muref}>\radius$); else, we trivially have $\mu^\ast=\muprior$. Then,~\cref{cor:first order sufficient conditions constrained wasserstein} leads to the following sufficient optimality condition, which yields an (almost) closed-form solution when $\muprior$ and $\muref$ are Gaussian. 

\begin{proposition}[conditions for optimality]
\label{prop:conditions for optimality kl}
Suppose there exists $\mu^\ast\in\Ppabs{2}{\reals^d}$ and $\lambda\geq 0$ such that (i) $\wassersteinDistance{2}{\mu^\ast}{\muref}\leq\radius$, (ii) $\lambda(\wassersteinDistance{2}{\mu^\ast}{\muref}-\radius)=0$, and (iii)
\begin{equation}\label{eq:minimum entropy necessary condition}
    \gradient{}\Vprior
    -\gradient{}\optMap{\mu^\ast}{\muref}(\gradient{}\Vref\circ\optMap{\mu^\ast}{\muref})
    +\trace\left((\gradient{}\optMap{\mu^\ast}{\muref})^{-1}\sum_{i=1}^d\diff{}{x_i}\gradient{}\optMap{\mu^\ast}{\muref}\right)
    +2\lambda(\Id - \optMap{\mu^\ast}{\muref})=0
\end{equation}
with all terms well-defined $\mu^\ast$-almost everywhere. Then, $\mu^\ast$ is an optimal solution of~\eqref{eq:min entropy wasserstein ball}.
In particular, if $\Vprior(x)$ and $\Vref(x)$ are convex quadratic of the form $\Vprior(x)=\frac{1}{2}\transpose{x}\Sprior^{-1} x-\innerProduct{\Sprior^{-1}\mprior}{x}+C_\mathrm{p}$ and $\Vref(x)=\frac{1}{2}\transpose{x}\Sref^{-1}x-\innerProduct{\Sref^{-1}\mref}{x}+C_\mathrm{r}$ for $\Sprior,\Sref\in\reals^{d\times d}$ symmetric and positive definite, $\mprior,\mref\in\reals^d$, and $C_\mathrm{p},C_\mathrm{v}$ chosen so that $\muref$ and $\muprior$ are valid probability measures (that is, $\muprior,\muref$ are Gaussian with mean $\muprior$, $\mref$ and variance $\Sprior$, $\Sref$, respectively), then the strict minimizer of~\eqref{eq:min entropy wasserstein ball} is the Gaussian probability measure
\begin{equation*}
    \mu^\ast
    =
    \pushforward{\left(A^{-1}x-(\Sprior^{-1}+2\lambda\eye{})^{-1}(\Sprior^{-1}\mprior-A\Sref^{-1}\mref)\right)}\muref,
\end{equation*}
where $A\in\reals^{n\times n}$ symmetric and positive definite and $\lambda> 0$ are the unique solutions of
\begin{equation*}
\begin{aligned}
    \radius^2&=
    \begin{aligned}[t]
    &\trace\left((\eye{}-A^{-1})^2(\Sref+\mref\transpose{\mref})\right)
    +2\innerProduct{(\eye{}-A^{-1})(\Sprior^{-1}+2\lambda\eye{})^{-1}(\Sprior^{-1}\mprior-A\Sref^{-1}\mref)}{\mref}
    \\ &+\norm{(\Sprior^{-1}+2\lambda\eye{})^{-1}(\Sprior^{-1}\mprior-A\Sref^{-1}\mref)}^2,
    \end{aligned}
    \\
    0&=-2\lambda A - A\Sref^{-1}A +  \Sprior^{-1}+2\lambda\eye{}.
\end{aligned}
\end{equation*}
Moreover, $\mu^\ast$ lies at the boundary of the Wasserstein ball (i.e., $\wassersteinDistance{2}{\mu^\ast}{\muref}=\radius$).
\end{proposition}

\ifbool{showproofs}{\booltrue{showproofoptimalsolutionkl}}{
\ifbool{arxiv}{\boolfalse{showproofoptimalsolutionkl}}{}} 
\ifbool{showproofoptimalsolutionkl}{\proofoptimalsolutionkl{showproofoptimalsolutionkl}}{}

\section{Conclusions}\label{sec:conclusions}
We studied first-order conditions for optimization in the Wasserstein space. We combined the geometric and differential properties of the Wasserstein space with classical calculus of variations to formulate first-order necessary and sufficient conditions for optimality, showing that simple and interpretable rationales (e.g., ``set the gradient to zero'' and ``gradients are aligned at optimality'') carry over to the Wasserstein space. 
With our tools, we can study and solve, sometimes in closed form, optimization problems in distributionally robust optimization and statistical inference. 
We hope our results will pave the way for future research on optimization in the space of probability measures through the lens of optimal transport. 

\bibliographystyle{siamplain}
\bibliography{references}

\begin{thebibliography}{10}

\bibitem{Agueh2010}
{\sc M.~Agueh and G.~Carlier}, {\em {Barycenters in the Wasserstein space}}, SIAM Journal on Mathematical Analysis, 43 (2011), pp.~904--924.

\bibitem{alvarez2021optimizing}
{\sc D.~Alvarez-Melis, Y.~Schiff, and Y.~Mroueh}, {\em {Optimizing functionals on the space of probabilities with input convex neural networks}}, arXiv preprint arXiv:2106.00774,  (2021).

\bibitem{Ambrosio2008a}
{\sc L.~Ambrosio, N.~Gigli, and G.~Savar{\'{e}}}, {\em {Gradient Flows}}, Birkh{\"{a}}user, Basel, 2nd~ed., 2008.

\bibitem{Ambrosio2007}
{\sc L.~Ambrosio and G.~Savar{\'{e}}}, {\em {Gradient Flows of Probability Measures}}, in Handbook of differential equations: Evolutionary equations 3, vol.~3, North-Holland, 2007, pp.~1--137.

\bibitem{Aolaritei2023DistributionalTransport}
{\sc L.~Aolaritei, N.~Lanzetti, H.~Chen, and F.~D{\"{o}}rfler}, {\em {Distributional uncertainty propagation via optimal transport}}, arXiv preprint arXiv:2205.00343,  (2023).

\bibitem{Arbel2019Maximum}
{\sc M.~Arbel, A.~Korba, A.~Salim, and A.~Gretton}, {\em {Maximum mean discrepancy gradient flow}}, in Advances in Neural Information Processing Systems, 2019.

\bibitem{bach2021gradient}
{\sc F.~Bach and L.~Chizat}, {\em {Gradient descent on infinitely wide neural networks: Global convergence and generalization}}, arXiv preprint arXiv:2110.08084,  (2021).

\bibitem{Ben-Tal2013}
{\sc A.~Ben-Tal, D.~Den~Hertog, A.~De~Waegenaere, B.~Melenberg, and G.~Rennen}, {\em {Robust solutions of optimization problems affected by uncertain probabilities}}, Management Science, 59 (2013), pp.~341--357.

\bibitem{Blanchet2021DistributionallyDistances}
{\sc J.~Blanchet, L.~Chen, and X.~Y. Zhou}, {\em {Distributionally robust mean-variance portfolio selection with Wasserstein distances}}, Management Science,  (2021).

\bibitem{Blanchet2019}
{\sc J.~Blanchet and K.~Murthy}, {\em {Quantifying distributional model risk via optimal transport}}, Mathematics of Operations Research, 44 (2019), pp.~565--600.

\bibitem{Bonnet2019a}
{\sc B.~Bonnet}, {\em {A Pontryagin maximum principle in Wasserstein spaces for constrained optimal control problems}}, ESAIM: Control, Optimisation and Calculus of Variations, 25 (2019).

\bibitem{bonnet2019optimal}
{\sc B.~Bonnet}, {\em {Optimal control in Wasserstein spaces}}, PhD thesis, Aix-Marseille Universite; Laboratoire d’Informatique et Syst{\`{e}}mes - LIS; Universit{\`{a}} degli studi di Padova, 2019.

\bibitem{bonnet2021necessary}
{\sc B.~Bonnet and H.~Frankowska}, {\em {Necessary optimality conditions for optimal control problems in Wasserstein spaces}}, Applied Mathematics {\&} Optimization, 84 (2021), pp.~1281--1330.

\bibitem{Bonnet2019}
{\sc B.~Bonnet and F.~Rossi}, {\em {The Pontryagin maximum principle in the Wasserstein space}}, Calculus of Variations and Partial Differential Equations, 58 (2019).

\bibitem{carlier2017convergence}
{\sc G.~Carlier, V.~Duval, G.~Peyr{\'{e}}, and B.~Schmitzer}, {\em {Convergence of entropic schemes for optimal transport and gradient flows}}, SIAM Journal on Mathematical Analysis, 49 (2017), pp.~1385--1418.

\bibitem{carrillo2022primal}
{\sc J.~A. Carrillo, K.~Craig, L.~Wang, and C.~Wei}, {\em {Primal dual methods for Wasserstein gradient flows}}, Foundations of Computational Mathematics, 22 (2022), pp.~389--443.

\bibitem{cheng2019straight}
{\sc P.~Cheng, C.~Liu, C.~Li, D.~Shen, R.~Henao, and L.~Carin}, {\em {Straight-through estimator as projected Wasserstein gradient flow}}, arXiv preprint arXiv:1910.02176,  (2019).

\bibitem{Chewi2020}
{\sc S.~Chewi, T.~Maunu, P.~Rigollet, and A.~J. Stromme}, {\em {Gradient descent algorithms for Bures-Wasserstein barycenters}}, in Conference on Learning Theory, 2020, pp.~1276--1304.

\bibitem{Chizat2018OnTransport}
{\sc L.~Chizat and F.~Bach}, {\em {On the global convergence of gradient descent for over-parameterized models using optimal transport}}, in Advances in Neural Information Processing Systems, 2018, pp.~3036--3046.

\bibitem{chizat2022infinite}
{\sc L.~Chizat, M.~Colombo, X.~Fern{\'{a}}ndez-Real, and A.~Figalli}, {\em {Infinite-width limit of deep linear neural networks}}, Communications on Pure and Applied Mathematics, 77 (2024), pp.~3958--4007.

\bibitem{chu2019probability}
{\sc C.~Chu, J.~Blanchet, and P.~Glynn}, {\em {Probability functional descent: A unifying perspective on GANs, variational inference, and reinforcement learning}}, in International Conference on Machine Learning, 2019, pp.~1213--1222.

\bibitem{dai2017learning}
{\sc B.~Dai, N.~He, Y.~Pan, B.~Boots, L.~Song, and G.~Tech}, {\em {Learning from conditional distributions via dual embeddings}}, in Artificial Intelligence and Statistic, PMLR, 2017, pp.~1458--1467.

\bibitem{klerk2019survey}
{\sc E.~De~Klerk and M.~Laurent}, {\em {A survey of semidefinite programming approaches to the generalized problem of moments and their error analysis}}, in World Women in Mathematics 2018, Springer, 2018, pp.~17--56.

\bibitem{erbar2010heat}
{\sc M.~Erbar}, {\em {The heat equation on manifolds as a gradient flow in the Wasserstein space}}, Annales de l'institut Henri Poincare (B) Probability and Statistics, 46 (2010), pp.~1--23.

\bibitem{figalli2007existence}
{\sc A.~Figalli}, {\em {Existence, uniqueness, and regularity of optimal transport maps}}, SIAM Journal on Mathematical Analysis, 39 (2007), pp.~126--137.

\bibitem{figalli2021invitation}
{\sc A.~Figalli and F.~Glaudo}, {\em {An Invitation to Optimal Transport, Wasserstein Distances, and Gradient Flows}}, EMS Press, 2021.

\bibitem{figalli2018global}
{\sc A.~Figalli, M.~J. Kang, and J.~Morales}, {\em {Global well-posedness of the spatially homogeneous Kolmogorov–Vicsek model as a gradient flow}}, Archive for Rational Mechanics and Analysis, 227 (2018), pp.~869--896.

\bibitem{frogner2020approximate}
{\sc C.~Frogner and T.~Poggio}, {\em {Approximate inference with Wasserstein gradient flows}}, in International Conference on Artificial Intelligence and Statistics, 2020, pp.~2581--2590.

\bibitem{Gangbo1996}
{\sc W.~Gangbo and R.~J. McCann}, {\em {The geometry of optimal transportation}}, Acta Mathematica, 177 (1996), pp.~113--161.

\bibitem{Gangbo2019}
{\sc W.~Gangbo and A.~Tudorascu}, {\em {On differentiability in the Wasserstein space and well-posedness for Hamilton–Jacobi equations}}, Journal des Mathematiques Pures et Appliquees, 125 (2019), pp.~119--174.

\bibitem{gao2022finite}
{\sc R.~Gao}, {\em {Finite-sample guarantees for Wasserstein distributionally robust optimization: Breaking the curse of dimensionality}}, Operations Research,  (2022).

\bibitem{gao2017wasserstein}
{\sc R.~Gao, X.~Chen, and A.~J. Kleywegt}, {\em {Wasserstein distributionally robust optimization and variation regularization}}, Operations Research, 72 (2024), pp.~1177--1191.

\bibitem{gao2022distributionally}
{\sc R.~Gao and A.~Kleywegt}, {\em {Distributionally robust stochastic optimization with Wasserstein distance}}, Mathematics of Operations Research,  (2022).

\bibitem{Gelbrich1990}
{\sc M.~Gelbrich}, {\em {On a formula for the L2 Wasserstein metric between measures on Euclidean and Hilbert spaces}}, Mathematische Nachrichten, 147 (1990), pp.~185--203.

\bibitem{guignard1969generalized}
{\sc M.~Guignard}, {\em {Generalized Kuhn–Tucker conditions for mathematical programming problems in a Banach space}}, SIAM Journal on Control, 7 (1969), pp.~232--241.

\bibitem{horn2012matrix}
{\sc R.~A. Horn and C.~R. Johnson}, {\em {Matrix Analysis}}, Cambridge University Press, 2021.

\bibitem{Jordan1998}
{\sc R.~Jordan, D.~Kinderlehrer, and F.~Otto}, {\em {The variational formulation of the Fokker-Planck equation}}, SIAM Journal on Mathematical Analysis, 29 (1998), pp.~1--17.

\bibitem{Kantorovich1942OnRussian}
{\sc L.~Kantorovich}, {\em {On the transfer of masses (in Russian)}}, Doklady Akademii Nauk, 37 (1942), pp.~227--229.

\bibitem{Kent2021}
{\sc C.~Kent, J.~Li, J.~Blanchet, and P.~Glynn}, {\em {Modified Frank-Wolfe methods in probability space}}, in Advances in Neural Information Processing Systems, Curran Associates, Inc., 2021.

\bibitem{krantz2002implicit}
{\sc S.~G. Krantz and H.~R. Parks}, {\em {The Implicit Function Theorem History, Theory, and Applications}}, Springer Science {\&} Business Media, 2002.

\bibitem{kuhn2019wasserstein}
{\sc D.~Kuhn, P.~M. Esfahani, V.~A. Nguyen, and S.~Shafieezadeh-Abadeh}, {\em {Wasserstein distributionally robust optimization: Theory and applications in machine learning}}, in Operations research {\&} management science in the age of analytics, INFORMS, 2019, pp.~130--166.

\bibitem{kullback1959information}
{\sc S.~Kullback}, {\em {Information Theory and Statistics}}, John Wiley {\&} Sons, 1959.

\bibitem{Kurcyusz1976}
{\sc S.~Kurcyusz}, {\em {On the existence and nonexistence of Lagrange multipliers in Banach spaces}}, Journal of Optimization Theory and Applications, 20 (1976).

\bibitem{lambert2022variational}
{\sc M.~Lambert, S.~Chewi, F.~Bach, S.~Bonnabel, and P.~Rigollet}, {\em {Variational inference via Wasserstein gradient flows}}, in Advances in Neural Information Processing Systems, Curran Associates, Inc, 5 2022.

\bibitem{Lasserre2009MomentsApplications}
{\sc J.~B. Lasserre}, {\em {Moments, positive polynomials and their applications}}, vol.~1, World Scientific, 2009.

\bibitem{liu2016stein}
{\sc Q.~Liu and D.~Wang}, {\em {Stein Variational Gradient Descent: A General Purpose Bayesian Inference Algorithm}}, in Advances in Neural Information Processing Systems, 8 2016.

\bibitem{Leuenberger1997}
{\sc D.~G. Luenberger}, {\em {Optimization by Vector Space Methods}}, John Wiley {\&} Sons, 1997.

\bibitem{Markowitz1952}
{\sc H.~Markowitz}, {\em {Portfolio selection}}, The Journal of Finance, 7 (1952), pp.~77--91.

\bibitem{Maurer1979}
{\sc H.~Maurer and J.~Zowe}, {\em {First and second-order necessary and sufficient optimality conditions for infinite-dimensional programming problems}}, Mathematical Programming 1979 16:1, 16 (1979), pp.~98--110.

\bibitem{Merton1969}
{\sc R.~C. Merton}, {\em {Lifetime portfolio selection under uncertainty: The continuous-time case}}, The Review of Economics and Statistics, 51 (1969), pp.~247--257.

\bibitem{peyman2018datadriven}
{\sc P.~Mohajerin~Esfahani and D.~Kuhn}, {\em {Data-driven distributionally robust optimization using the Wasserstein metric: Performance guarantees and tractable reformulations}}, Mathematical Programming, 171 (2018), pp.~115--166.

\bibitem{Molchanov2004}
{\sc I.~Molchanov and S.~Zuyev}, {\em {Optimization in the space of measures and optimal design}}, ESAIM: Probability and Statistics, 8 (2004), pp.~12--24.

\bibitem{Monge1781MemoireRemblais}
{\sc G.~Monge}, {\em {M{\'{e}}moire sur la th{\'{e}}orie des d{\'{e}}blais et des remblais}}, in Histoire de l'Acad{\'{e}}mie Royale des Sciences de Paris, 1781, pp.~666--704.

\bibitem{Nguyen2021Mean-CovarianceMeasurement}
{\sc V.~A. Nguyen, S.~Shafieezadeh-Abadeh, D.~Filipovic, and D.~Kuhn}, {\em {Mean-covariance robust risk measurement}}, arXiv preprint arXiv:2112.09959,  (2021).

\bibitem{Otto1996DoublyDescent}
{\sc F.~Otto}, {\em {Doubly degenerate diffusion equations as steepest descent}}.
\newblock 1996.

\bibitem{Otto2001TheEquation}
{\sc F.~Otto}, {\em {The geometry of dissipative evolution equations: The porous medium equation}}, Communications in Partial Differential Equations, 26 (2001), pp.~101--174.

\bibitem{Panaretos2020AnSpace}
{\sc V.~M. Panaretos and Y.~Zemel}, {\em {An Invitation to Statistics in Wasserstein Space}}, Springer Nature, 2020.

\bibitem{peyre2019computational}
{\sc G.~Peyr{\'{e}} and M.~Cuturi}, {\em {Computational optimal transport}}, Foundations and Trends in Machine Learning, 11 (2019), pp.~1--257.

\bibitem{Rahimian2019}
{\sc H.~Rahimian and S.~Mehrotra}, {\em {Distributionally robust optimization: A review}}, arXiv preprint arXiv:1908.05659,  (2019).

\bibitem{rigollet2022sample}
{\sc P.~Rigollet and A.~J. Stromme}, {\em {On the sample complexity of entropic optimal transport}}, arXiv preprint arXiv:2206.13472,  (2022).

\bibitem{Rigollet2018}
{\sc P.~Rigollet and J.~Weed}, {\em {Entropic optimal transport is maximum-likelihood deconvolution}}, Comptes Rendus Mathematique, 356 (2018), pp.~1228--1235.

\bibitem{Rockafellar2002DeviationOptimization}
{\sc R.~T. Rockafellar, S.~Uryasev, and M.~Zabarankin}, {\em {Deviation measures in risk analysis and optimization}}, SSRN preprint 365640,  (2002).

\bibitem{Rudin1987}
{\sc W.~Rudin}, {\em {Real and Complex Analysis}}, McGraw-Hill, Inc., USA, 3~ed., 1987.

\bibitem{Salim2020TheWasserstein}
{\sc A.~Salim, A.~Korba, and G.~Luise}, {\em {The Wasserstein proximal gradient algorithm}}, in Advances in Neural Information Processing Systems, vol.~33, 2020, pp.~12356--12366.

\bibitem{Santambrogio2015}
{\sc F.~Santambrogio}, {\em {Optimal Transport for Applied Mathematicians}}, Birkh{\"{a}}user, Cham, 2015.

\bibitem{Santambrogio2017EuclideanOverview}
{\sc F.~Santambrogio}, {\em {{\{}Euclidean, metric, and Wasserstein{\}} gradient flows: an overview}}, Bulletin of Mathematical Sciences, 7 (2017), pp.~87--154.

\bibitem{Schmuedgen2017}
{\sc K.~Schm{\"{u}}dgen}, {\em {The Moment Problem}}, vol.~9 of Graduate Texts in Mathematics, Springer, Cham, 2017.

\bibitem{shafieezadeh2019regularization}
{\sc S.~Shafieezadeh-Abadeh, D.~Kuhn, and P.~M. Esfahani}, {\em {Regularization via mass transportation}}, Journal of Machine Learning Research, 20 (2019), pp.~1--68.

\bibitem{Shapiro2001}
{\sc A.~Shapiro}, {\em {On duality theory of conic linear problems}}, in Semi-Infinite Programming: Recent Advances, Springer US, Boston, MA, 2001, pp.~135--165.

\bibitem{sheriff2023nonlinear}
{\sc M.~R. Sheriff and P.~M. Esfahani}, {\em {Nonlinear distributionally robust optimization}}, arXiv preprint arXiv:2306.03202,  (2023).

\bibitem{shore1980axiomatic}
{\sc J.~E. Shore and R.~W. Johnson}, {\em {Axiomatic serivation of the principle of maximum entropy and the principle of minimum cross-entropy}}, IEEE Transactions on Information Theory, 26 (1980), pp.~26--37.

\bibitem{shore1981properties}
{\sc J.~E. Shore and R.~W. Johnson}, {\em {Properties of cross-entropy minimization}}, IEEE Transactions on Information Theory, 27 (1981), pp.~472--482.

\bibitem{terpin2023dynamic}
{\sc A.~Terpin, N.~Lanzetti, and F.~D{\"{o}}rfler}, {\em {Dynamic Programming in Probability Spaces via Optimal Transport}}, SIAM Journal on Control and Optimization, 62 (2024), pp.~1183--1206.

\bibitem{Vargas2021}
{\sc L.~F. Vargas and M.~Velasco}, {\em {Minimum cross-entropy distributions on Wasserstein balls and their applications}}, arXiv preprint arXiv:2106.03226,  (2021).

\bibitem{Villani2009a}
{\sc C.~Villani}, {\em {Optimal Transport: Old and New}}, Springer-Verlag Berlin Heidelberg, 2009.

\bibitem{mises1947asymptotic}
{\sc R.~von Mises}, {\em {On the asymptotic distribution of differentiable statistical functions}}, The annals of mathematical statistics, 18 (1947), pp.~309--348.

\bibitem{Wiesemann2014}
{\sc W.~Wiesemann, D.~Kuhn, and M.~Sim}, {\em {Distributionally robust convex optimization}}, Operations Research, 62 (2014), pp.~1358--1376.

\bibitem{wonham1968matrix}
{\sc W.~M. Wonham}, {\em {On a matrix Riccati equation of stochastic control}}, SIAM Journal on Control, 6 (1968), pp.~681--697.

\bibitem{wozabal2014robustifying}
{\sc D.~Wozabal}, {\em {Robustifying convex risk measures for linear portfolios: A nonparametric approach}}, Operations Research, 62 (2014), pp.~1302--1315.

\bibitem{Yue2021}
{\sc M.~C. Yue, D.~Kuhn, and W.~Wiesemann}, {\em {On linear optimization over Wasserstein Balls}}, Mathematical Programming, 195 (2022), pp.~1107--1122.

\bibitem{zhang2018policy}
{\sc R.~Zhang, C.~Chen, C.~Li, and L.~Carin}, {\em {Policy optimization as Wasserstein gradient flows}}, in International Conference on Machine Learning, 2018, pp.~5737--5746.

\end{thebibliography}

\appendix
\section{Technical Preliminaries in Measure Theory}\label{app:preliminaries}

In this section, we provide some technical background in measure theory. 
We start with tightness: 
\begin{definition}[Tight]
A set of probability measures $\mathcal{K}\subset\Pp{}{\reals^d}$ is tight if for all $\varepsilon>0$ there exists a compact set $K_\varepsilon\subset\reals^d$ such that $\mu(\reals^d\setminus K_{\varepsilon})\leq\varepsilon$ for all $\mu\in\mathcal{K}$.
\end{definition}
A simple criterion for tightness is the following: 
\begin{lemma}[Criterion for tightness]\label{app:criterion tightness}
Let $\mathcal{K}\subset\Pp{}{\reals^d}$.
Suppose there exists $\Phi:\reals^d\to\nonnegativeReals$ with compact level sets (i.e., $\{x\in\reals^d:\Phi(x)\leq \lambda\}$ is compact for all $\lambda\in\reals$) so that 
\begin{equation*}
    \sup_{\mu\in\mathcal{K}}\int_{\reals^d}\Phi(x)\d\mu(x)<+\infty.
\end{equation*}
Then, $\mathcal{K}$ is tight. 
\end{lemma}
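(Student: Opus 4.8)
The plan is to use the auxiliary function $\Phi$ as a ``weight'' that forces the mass of every $\mu\in\mathcal{K}$ to concentrate on a common large compact set, via a Markov-type inequality. First I would fix $\varepsilon>0$ and set $S\coloneqq\sup_{\mu\in\mathcal{K}}\int_{\reals^d}\Phi(x)\,\d\mu(x)$, which is finite by hypothesis. Choose $\lambda_\varepsilon\coloneqq S/\varepsilon$ (if $S=0$ the claim is trivial since then $\Phi=0$ $\mu$-a.e.\ for all $\mu$, and one can take $K_\varepsilon=\{x:\Phi(x)\le 0\}$, which is compact; alternatively just take $\lambda_\varepsilon$ arbitrary), and define
\begin{equation*}
    K_\varepsilon\coloneqq\{x\in\reals^d:\Phi(x)\leq\lambda_\varepsilon\}.
\end{equation*}
By the assumption that $\Phi$ has compact level sets, $K_\varepsilon$ is compact. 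It remains to bound $\mu(\reals^d\setminus K_\varepsilon)$ uniformly in $\mu\in\mathcal{K}$.

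The key step is the Markov/Chebyshev estimate: since $\Phi\geq 0$, for any $\mu\in\mathcal{K}$,
\begin{equation*}
    S\geq\int_{\reals^d}\Phi(x)\,\d\mu(x)\geq\int_{\reals^d\setminus K_\varepsilon}\Phi(x)\,\d\mu(x)\geq\lambda_\varepsilon\,\mu(\reals^d\setminus K_\varepsilon),
\end{equation*}
where the last inequality uses that $\Phi(x)>\lambda_\varepsilon$ on $\reals^d\setminus K_\varepsilon$ by definition of the level set. Rearranging gives $\mu(\reals^d\setminus K_\varepsilon)\leq S/\lambda_\varepsilon=\varepsilon$, and since $\mu\in\mathcal{K}$ was arbitrary and $K_\varepsilon$ is compact, this is exactly the definition of tightness of $\mathcal{K}$.

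There is essentially no obstacle here — the only mild subtlety is the bookkeeping with the borderline set $\{x:\Phi(x)=\lambda_\varepsilon\}$ and the degenerate case $S=0$, both of which are handled by the remarks above (one may also simply replace $\lambda_\varepsilon$ by $2S/\varepsilon$ to get a strict inequality with room to spare, avoiding any issue). I would present the argument in the three short steps indicated: (1) define $\lambda_\varepsilon$ and the compact set $K_\varepsilon$; (2) apply the Markov inequality to the tail integral; (3) conclude $\mu(\reals^d\setminus K_\varepsilon)\le\varepsilon$ uniformly, hence tightness.
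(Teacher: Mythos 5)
Your proof is correct and is essentially the same Markov-inequality argument as in the paper: you set $\lambda_\varepsilon = S/\varepsilon$ (the paper writes $C/\varepsilon$), take the corresponding compact sublevel set $K_\varepsilon$, and bound the tail mass by $S/\lambda_\varepsilon=\varepsilon$. The only difference is your explicit handling of the degenerate case $S=0$, which the paper silently ignores; your remark is accurate and harmless.
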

\begin{proof}
The statement can be found in~\cite[Remark 5.1.5]{Ambrosio2008a}; we prove it for completeness. Let $\varepsilon>0$, and let
\begin{equation*}
    C\coloneqq\sup_{\mu\in\mathcal{K}}\int_{\reals^d}\Phi(x)\d\mu(x)<+\infty.
\end{equation*}
Define $K_\varepsilon\coloneqq \{x\in\reals^d:\Phi(x)\leq \frac{C}{\varepsilon}\}$. Since $\Phi$ has compact level sets, $K_\varepsilon$ is compact. Then, for $\mu\in\mathcal{K}$
\begin{equation*}
\begin{aligned}
    \mu(\reals^d\setminus K_\varepsilon)
    &=
    \int_{\reals^d\setminus K_\varepsilon}\d\mu(x)
    \\
    &\leq 
    \int_{\reals^d\setminus K_\varepsilon}\frac{\Phi(x)}{C/\varepsilon}\d\mu(x)
    \\
    &\leq 
    \frac{\varepsilon}{C}\int_{\reals^d}\Phi(x)\d\mu(x)
    \\
    &\leq 
    \varepsilon.
\end{aligned}
\end{equation*}
Since $\mu$ is arbitrary, we conclude. 
\end{proof}
Tightness of a set is intimately related to its compactness (under narrow convergence): 
\begin{theorem}[Prokhorov~~\protect{\cite[Theorem 5.1.3]{Ambrosio2008a}}]\label{thm:prokhorov}
A set $\mathcal{K}\subset\Pp{}{\reals^d}$ is tight if and only if it is relatively compact for the narrow convergence. 
\end{theorem}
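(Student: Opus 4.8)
The plan is to prove the two implications separately, using that narrow convergence on $\Pp{}{\reals^d}$ is metrizable (cf.\ \cite[Remark 5.1.1]{Ambrosio2008a}), so that ``relatively compact'' and ``relatively sequentially compact'' may be used interchangeably throughout.

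\emph{Relatively compact $\Rightarrow$ tight.} I would argue by contraposition. If $\mathcal{K}$ is not tight, there is $\varepsilon>0$ such that for every $m\in\naturals$ one can pick $\mu_m\in\mathcal{K}$ with $\mu_m(\reals^d\setminus\{x:\norm{x}\leq m\})>\varepsilon$. By relative sequential compactness a subsequence $\mu_{m_k}$ converges narrowly to some $\mu\in\Pp{}{\reals^d}$. Fix $j\in\naturals$; for every $k$ with $m_k\geq j$ the set $\{x:\norm{x}>m_k\}$ is contained in the \emph{closed} set $C_j\coloneqq\{x:\norm{x}\geq j\}$, so $\mu_{m_k}(C_j)>\varepsilon$, and the Portmanteau characterization of narrow convergence for closed sets gives $\mu(C_j)\geq\limsup_k\mu_{m_k}(C_j)\geq\varepsilon$. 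Since $C_j\downarrow\emptyset$, continuity of the measure $\mu$ from above forces $\mu(C_j)\to 0$, a contradiction.

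\emph{Tight $\Rightarrow$ relatively compact.} This is the substantive direction, and the idea is to transfer the problem to a compact space. Embed $\reals^d$ into its one-point compactification $X\coloneqq\reals^d\cup\{\infty\}$, a compact metrizable space, and regard each $\mu\in\mathcal{K}$ as a Borel probability measure $\tilde\mu$ on $X$ with $\tilde\mu(\{\infty\})=0$. Since $C(X)$ is separable, the Riesz representation theorem identifies $\Pp{}{X}$ with a weak-$*$ closed subset of the unit ball of $C(X)^\ast$, which is weak-$*$ sequentially compact by Banach--Alaoglu; hence any sequence in $\mathcal{K}$ has a subsequence $\tilde\mu_{n_k}$ converging weak-$*$ to some $\nu$, and testing against the constant function $1\in C(X)$ shows $\nu\in\Pp{}{X}$. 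Next I would use tightness to rule out mass at infinity: given $\varepsilon>0$ pick a compact $K_\varepsilon\subset\reals^d$ with $\mu(\reals^d\setminus K_\varepsilon)\leq\varepsilon$ for all $\mu\in\mathcal{K}$; since $X\setminus K_\varepsilon$ is open in $X$, Portmanteau gives $\nu(X\setminus K_\varepsilon)\leq\liminf_k\tilde\mu_{n_k}(X\setminus K_\varepsilon)\leq\varepsilon$, so $\nu(\{\infty\})\leq\varepsilon$ for every $\varepsilon$, i.e.\ $\nu(\{\infty\})=0$, and $\mu\coloneqq\nu|_{\reals^d}\in\Pp{}{\reals^d}$. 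Finally, weak-$*$ convergence on $X$ yields $\int_{\reals^d}\psi\,\d\mu_{n_k}\to\int_{\reals^d}\psi\,\d\mu$ for every $\psi\in\Cc{\reals^d}$ (such $\psi$ extend continuously to $X$ by $0$ at $\infty$), and combining this with tightness upgrades the convergence to narrow convergence: for $\phi\in\Cb{\reals^d}$ and $\varepsilon>0$, choose $\chi\in\Cc{\reals^d}$ with $0\leq\chi\leq1$ and $\chi\equiv1$ on $K_\varepsilon$; then $\left|\int\phi\,\d\mu_n-\int\phi\chi\,\d\mu_n\right|\leq\|\phi\|_\infty\,\varepsilon$ uniformly in $n$ (and likewise for $\mu$), while $\phi\chi\in\Cc{\reals^d}$, so $\limsup_k\left|\int\phi\,\d\mu_{n_k}-\int\phi\,\d\mu\right|\leq 2\|\phi\|_\infty\,\varepsilon$; letting $\varepsilon\to 0$ gives $\mu_{n_k}\to\mu$ narrowly.

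\emph{Main obstacle.} The delicate part is this last implication: extracting a subsequential limit and, crucially, certifying that the limit is a genuine probability measure on $\reals^d$ with no mass escaping to infinity, and then converting weak-$*$ convergence on the compactification back into narrow convergence on $\reals^d$. All three ingredients --- compactness of $\Pp{}{X}$, the no-escape-of-mass estimate, and the passage from the determining class $\Cc{\reals^d}$ to all of $\Cb{\reals^d}$ --- hinge on tightness and must be dovetailed carefully; everything else is a routine application of the Portmanteau lemma and metrizability.
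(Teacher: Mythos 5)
The paper does not actually prove \cref{thm:prokhorov}; it states it as a citation to~\cite[Theorem~5.1.3]{Ambrosio2008a} and uses it as a black box in several subsequent proofs. There is therefore no in-paper proof to compare against. Your proof, read on its own terms, is correct: both directions are sound, and the delicate points you flagged in your last paragraph (compactness of $\Pp{}{X}$ via Banach--Alaoglu plus separability of $C(X)$, the no-escape-of-mass estimate via Portmanteau on open sets, and the upgrade from the determining class $\Cc{\reals^d}$ to $\Cb{\reals^d}$ using tightness to control the tails uniformly) are exactly the ones that need care, and you handle each of them correctly. One cosmetic remark: testing the weak-$*$ limit against $1\in C(X)$ gives total mass one; you also need to observe that positivity of the functional is preserved along the limit to conclude $\nu$ is a measure --- this is automatic but worth a word.

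It is worth noting, however, that your one-point-compactification argument exploits the local compactness of $\reals^d$ in an essential way, whereas the result cited from~\cite{Ambrosio2008a} is stated for arbitrary separable metric (indeed Polish) spaces, where no such compactification is available and a different construction of the limit measure is needed (e.g., via the Riesz representation theorem on each element of a countable compact exhaustion, or via embedding into $[0,1]^{\naturals}$). For the purposes of this paper, where everything lives on $\reals^d$, your proof is perfectly adequate and arguably more elementary than the general-space argument; the trade-off is that it does not generalize beyond the locally compact setting.
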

As a consequence of Prokhorov theorem, if $(\mu_n)_{n\in\naturals}\subset\Pp{}{\reals^d}$ is so that $\{\mu_n\}_{n\in\naturals}$ is tight, then $\mu_n$ admits a subsequence converging to some $\mu\in\Pp{}{\reals^d}$. 

Next, we focus on the narrow convergence of transport plans $\gamma_n\in\Pp{}{\reals^d\times\reals^d}$. In particular, we show convergence for the inner product $(x,y)\mapsto\innerProduct{x}{y}$. Since this is unbounded functional, the statement does not follow directly from the definition of narrow convergence, and requires adequate assumptions: 
\begin{proposition}[\protect{\cite[Lemma 5.2.4]{Ambrosio2008a}}]\label{prop:narrow convergence transport plan}
Let $(\gamma_n)_{n\in\naturals}\subset\Pp{}{\reals^d\times\reals^d}$ be narrowly converging to $\gamma\in\Pp{}{\reals^d\times\reals^d}$. Suppose that
\begin{equation*}
    \sup_n\int_{\reals^d\times\reals^d}\norm{x}^2+\norm{y}^2\d\gamma_n(x,y)<+\infty,
\end{equation*}
and that either $\pushforward{(\proj_1)}\gamma_n$ or $\pushforward{(\proj_2)}\gamma_n$ have uniformly integrable second moment. Then, 
\begin{equation*}
    \lim_{n\to\infty}\int_{\reals^d\times\reals^d}\innerProduct{x}{y}\d\gamma_n(x,y)
    =
    \int_{\reals^d\times\reals^d}\innerProduct{x}{y}\d\gamma(x,y).
\end{equation*}
\end{proposition}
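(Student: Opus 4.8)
The final statement to prove is Proposition~\ref{prop:narrow convergence transport plan}, quoted from \cite[Lemma 5.2.4]{Ambrosio2008a}: under narrow convergence $\gamma_n \narrowconvergence \gamma$, a uniform second-moment bound, and uniform integrability of one marginal's second moment, one has $\int \innerProduct{x}{y}\d\gamma_n \to \int \innerProduct{x}{y}\d\gamma$.

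\textbf{Approach.} The plan is a truncation argument. The obstruction to passing to the limit directly is that $(x,y) \mapsto \innerProduct{x}{y}$ is continuous but unbounded, so narrow convergence does not immediately apply; the role of the hypotheses is to control the contribution of the ``tails'' uniformly in $n$. First I would fix $R > 0$ and introduce a truncation of the inner product, e.g. $\phi_R(x,y) \coloneqq \innerProduct{\pi_R(x)}{\pi_R(y)}$ where $\pi_R(z) \coloneqq z \min\{1, R/\norm{z}\}$ is the radial projection onto the ball of radius $R$; this $\phi_R$ is bounded and continuous, hence $\int \phi_R \d\gamma_n \to \int \phi_R \d\gamma$ by definition of narrow convergence. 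The proof then reduces to showing that the truncation error
\begin{equation*}
    E_n(R) \coloneqq \left| \int_{\reals^d\times\reals^d} \innerProduct{x}{y} - \phi_R(x,y) \,\d\gamma_n(x,y) \right|
\end{equation*}
can be made small uniformly in $n$ by taking $R$ large, and likewise for $\gamma$.

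\textbf{Key steps.} (1) Bound the integrand difference pointwise: on the set where $\norm{x}\le R$ and $\norm{y}\le R$ the difference vanishes, and elsewhere $|\innerProduct{x}{y} - \phi_R(x,y)| \le \norm{x}\norm{y} + R^2 \le 2\norm{x}\norm{y}$ (on that set $\norm{x}\norm{y} \ge R^2$), which by Young's inequality is $\le \norm{x}^2 + \norm{y}^2$, supported on $\{\norm{x} > R\} \cup \{\norm{y} > R\}$. (2) Hence $E_n(R) \le \int_{\{\norm{x}>R\}} (\norm{x}^2+\norm{y}^2)\d\gamma_n + \int_{\{\norm{y}>R\}} (\norm{x}^2+\norm{y}^2)\d\gamma_n$. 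The terms $\int \norm{x}^2 \mathbbm{1}_{\{\norm{x}>R\}} \d\gamma_n$ and $\int \norm{y}^2 \mathbbm{1}_{\{\norm{y}>R\}}\d\gamma_n$ are tail integrals of the marginals' second moments. Say $\pushforward{(\proj_1)}\gamma_n$ has uniformly integrable second moment; then $\sup_n \int \norm{x}^2 \mathbbm{1}_{\{\norm{x}>R\}}\d\gamma_n \to 0$ as $R\to\infty$ by definition. For the cross terms such as $\int \norm{y}^2 \mathbbm{1}_{\{\norm{x}>R\}}\d\gamma_n$, use that $\int \mathbbm{1}_{\{\norm{x}>R\}}\d\gamma_n = (\pushforward{(\proj_1)}\gamma_n)(\{\norm{x}>R\}) \le R^{-2}\sup_n\int\norm{x}^2\d\gamma_n \to 0$ uniformly, combined with $\sup_n \int \norm{y}^2 \d\gamma_n < \infty$; more carefully one splits further on $\{\norm{y} \le S\} \cup \{\norm{y} > S\}$ and sends first $R$ then $S$ to infinity, again invoking uniform integrability of the first marginal where needed, but since $\norm{y}^2$ is bounded by $S^2$ on the first piece this is routine. (3) The analogous bound holds for $\gamma$ itself: by lower semicontinuity of $\gamma \mapsto \int (\norm{x}^2 + \norm{y}^2)\d\gamma$ under narrow convergence (cf.\ \cite[Lemma 5.1.7]{Ambrosio2008a}), $\gamma$ inherits the uniform bound, and by Fatou-type arguments also controls its tails, so $E(R) \coloneqq |\int \innerProduct{x}{y} - \phi_R \d\gamma| \to 0$. (4) Combine via the triangle inequality: $|\int\innerProduct{x}{y}\d\gamma_n - \int\innerProduct{x}{y}\d\gamma| \le E_n(R) + |\int\phi_R\d\gamma_n - \int\phi_R\d\gamma| + E(R)$; given $\epsilon$, choose $R$ so the first and last terms are each $< \epsilon/3$ for all $n$, then let $n\to\infty$ to kill the middle term.

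\textbf{Main obstacle.} The genuinely delicate point is handling the cross terms $\int \norm{y}^2\mathbbm{1}_{\{\norm{x}>R\}}\d\gamma_n$ (and its transpose): uniform integrability is assumed for only \emph{one} marginal, so one cannot directly say the $\norm{y}^2$-mass on $\{\norm{x}>R\}$ is small just from $\norm{y}$ being large. The resolution is the two-parameter splitting sketched above — on $\{\norm{y}\le S\}$ the integrand is $\le S^2$ times a probability that vanishes uniformly (Chebyshev on the first marginal), and on $\{\norm{y}>S\}$ one drops the $\mathbbm{1}_{\{\norm{x}>R\}}$ and uses uniform integrability of the \emph{second} moment of... — wait, that's not assumed either; instead one uses that on $\{\norm{y}>S\}$ we bound $\norm{y}^2\mathbbm{1}_{\{\norm{x}>R\}} \le \norm{y}^2\mathbbm{1}_{\{\norm{y}>S\}}$ only if we also know uniform integrability there, so the clean route is to keep the symmetric form $\int(\norm{x}^2+\norm{y}^2)\mathbbm{1}_{\{\norm{x}>R \text{ or } \norm{y}>R\}}\d\gamma_n$ and observe that by the uniform second-moment bound $M \coloneqq \sup_n\int(\norm{x}^2+\norm{y}^2)\d\gamma_n$, together with uniform integrability of the first marginal giving $\sup_n(\pushforward{(\proj_1)}\gamma_n)(\{\norm{x}>R\}) \to 0$, one gets a uniform modulus — this is exactly the content of \cite[Lemma 5.1.7]{Ambrosio2008a} and the associated remarks, which I would cite rather than re-derive the uniform-integrability bookkeeping in full detail.
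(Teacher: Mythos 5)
This statement is cited directly from Ambrosio--Gigli--Savar\'e and carries no proof in the paper, so the only question is whether your argument closes. The truncation idea is the right starting point, but the specific decomposition does not work. (Minor aside: your intermediate claim ``$\norm{x}\norm{y}\ge R^2$ on the complement of $\{\norm{x}\le R,\norm{y}\le R\}$'' is false --- take $\norm{x}=2R$, $\norm{y}=0$; the final pointwise bound $|\innerProduct{x}{y}-\phi_R(x,y)|\le 2\norm{x}\norm{y}\le\norm{x}^2+\norm{y}^2$ is nonetheless valid.) The fatal step is passing to Young's inequality $2\norm{x}\norm{y}\le\norm{x}^2+\norm{y}^2$: it decouples $x$ from $y$ and forces you to control $\int\norm{y}^2\mathbbm{1}_{\{\norm{y}>R\}}\d\gamma_n$ uniformly in $n$, which requires uniform integrability of the \emph{second} marginal's second moment --- not assumed. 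You identify this in your ``main obstacle'' paragraph, but the proposed resolution (keep the symmetric form, combine the uniform moment bound $M$ with Chebyshev on the first marginal) still does not control that piece: it is bounded by $M$ but need not be small. Concretely, take $\gamma_n=\diracMeasure{0}\productMeasure\bigl((1-n^{-2})\diracMeasure{0}+n^{-2}\diracMeasure{n}\bigr)$: the first marginal is trivially UI, the joint second moment is bounded, the conclusion of the proposition holds ($\int\innerProduct{x}{y}\d\gamma_n=0$ for all $n$), yet $\int\norm{y}^2\mathbbm{1}_{\{\norm{y}>R\}}\d\gamma_n=1$ for all $n>R$, so your truncation error estimate never becomes small. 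Citing~\cite[Lemma 5.1.7]{Ambrosio2008a} does not repair this: that lemma is the lower-semicontinuity statement, and the result you want \emph{is} Lemma 5.2.4, so the appeal is circular.

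The fix is to keep the bilinear structure and use Cauchy--Schwarz in place of Young, truncating only in $x$. Write $\innerProduct{x}{y}=\innerProduct{\pi_R(x)}{y}+\innerProduct{x-\pi_R(x)}{y}$ and bound
\begin{equation*}
    \left|\int_{\reals^d\times\reals^d}\innerProduct{x-\pi_R(x)}{y}\d\gamma_n\right|
    \le
    \left(\int_{\reals^d\times\reals^d}\norm{x}^2\mathbbm{1}_{\{\norm{x}>R\}}\d\gamma_n\right)^{\frac{1}{2}}
    \left(\int_{\reals^d\times\reals^d}\norm{y}^2\d\gamma_n\right)^{\frac{1}{2}},
\end{equation*}
where the first factor vanishes uniformly in $n$ as $R\to\infty$ by the uniform integrability of $\pushforward{(\proj_1)}\gamma_n$ and the second is bounded by $\sqrt{M}$. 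The remaining integrand $\innerProduct{\pi_R(x)}{y}$ is continuous with at most \emph{linear} growth in $(x,y)$ (bounded by $R\norm{y}$), and narrow convergence together with $\sup_n\int(\norm{x}^2+\norm{y}^2)\d\gamma_n<\infty$ suffices to pass to the limit for continuous functions of subquadratic growth; concretely, truncate once more by $\pi_S(y)$, incurring a tail error at most $R M/S$, and send $n\to\infty$ and then $S\to\infty$. The measure $\gamma$ inherits the same moment and tail estimates by lower semicontinuity, and the triangle inequality closes the argument. This version keeps exactly your skeleton but repairs the loss of structure introduced by Young's inequality.
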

Finally, we recall the celebrated Gluing Lemma: 
\begin{proposition}[Gluing lemma~\protect{\cite[Lemma 5.3.2]{Ambrosio2008a}}]\label{prop:gluing}
Let $\gamma_{12},\gamma_{13}\in\Pp{}{\reals^d\times\reals^d}$ so that $\pushforward{(\proj_1)}\gamma_{12}=\pushforward{(\proj_1)}\gamma_{13}=\mu_1\in\Pp{}{\reals^d}$.
Then, there exists $\gamma\in\Pp{}{\reals^d\times\reals^d\times\reals^d}$ so that
\begin{equation*}
\begin{aligned}
    \pushforward{(\proj_{12})}\gamma &=\gamma_{12},
    \\
    \pushforward{(\proj_{13})}\gamma &=\gamma_{13}.
\end{aligned}
\end{equation*}
Moreover, if $\gamma_{12}$ or $\gamma_{13}$ are induced by a transport, then $\gamma$ is unique. 
\end{proposition}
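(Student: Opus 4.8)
The plan is to construct $\gamma$ explicitly by gluing the conditional measures (fibers) of $\gamma_{12}$ and $\gamma_{13}$ over their common first marginal $\mu_1$. Since $\reals^d$ is Polish, the disintegration theorem applies to both couplings: there are Borel families of probability measures $(\gamma_{12}^{x_1})_{x_1\in\reals^d}$ and $(\gamma_{13}^{x_1})_{x_1\in\reals^d}$ on $\reals^d$, each uniquely determined for $\mu_1$-a.e.\ $x_1$, such that
\begin{equation*}
    \gamma_{12}=\int_{\reals^d}\diracMeasure{x_1}\otimes\gamma_{12}^{x_1}\,\d\mu_1(x_1),
    \qquad
    \gamma_{13}=\int_{\reals^d}\diracMeasure{x_1}\otimes\gamma_{13}^{x_1}\,\d\mu_1(x_1);
\end{equation*}
this is exactly where the hypothesis $\pushforward{(\proj_1)}\gamma_{12}=\pushforward{(\proj_1)}\gamma_{13}=\mu_1$ is used. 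I then define $\gamma\in\Pp{}{\reals^d\times\reals^d\times\reals^d}$ by prescribing, for every bounded Borel $\phi$,
\begin{equation*}
    \int_{\reals^d\times\reals^d\times\reals^d}\phi\,\d\gamma
    \coloneqq
    \int_{\reals^d}\int_{\reals^d}\int_{\reals^d}\phi(x_1,x_2,x_3)\,\d\gamma_{13}^{x_1}(x_3)\,\d\gamma_{12}^{x_1}(x_2)\,\d\mu_1(x_1),
\end{equation*}
i.e.\ $\gamma=\int_{\reals^d}\diracMeasure{x_1}\otimes(\gamma_{12}^{x_1}\otimes\gamma_{13}^{x_1})\,\d\mu_1(x_1)$. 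One must check that $x_1\mapsto\gamma_{12}^{x_1}\otimes\gamma_{13}^{x_1}$ is again a Borel family (the product of two measurably varying probability measures varies measurably — a standard monotone-class / functional argument), which guarantees that the right-hand side is a well-defined nonnegative functional of mass $1$, hence a Borel probability measure.

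Next I verify the two marginal conditions. Testing $\gamma$ against $\phi(x_1,x_2,x_3)=\psi(x_1,x_2)$ collapses the innermost integral, since $\gamma_{13}^{x_1}$ is a probability measure, yielding $\int_{\reals^d}\int_{\reals^d}\psi(x_1,x_2)\,\d\gamma_{12}^{x_1}(x_2)\,\d\mu_1(x_1)=\int_{\reals^d\times\reals^d}\psi\,\d\gamma_{12}$; by Fubini on the fibers one may instead integrate out $x_2$ first, which gives $\pushforward{(\proj_{13})}\gamma=\gamma_{13}$. Hence $\pushforward{(\proj_{12})}\gamma=\gamma_{12}$ and $\pushforward{(\proj_{13})}\gamma=\gamma_{13}$, establishing existence.

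For uniqueness, suppose $\gamma_{12}$ is induced by a transport map, $\gamma_{12}=\pushforward{(\Id,T)}\mu_1$. Any $\gamma\in\Pp{}{\reals^d\times\reals^d\times\reals^d}$ with $\pushforward{(\proj_{12})}\gamma=\gamma_{12}$ must be concentrated on the graph $\{(x_1,x_2,x_3):x_2=T(x_1)\}$, because its complement has zero $\gamma_{12}$-measure and therefore zero $\gamma$-measure. Consequently $\gamma=\pushforward{(\proj_1,T\circ\proj_1,\proj_3)}\big(\pushforward{(\proj_{13})}\gamma\big)=\pushforward{(\proj_1,T\circ\proj_1,\proj_3)}\gamma_{13}$, which is completely determined by the data; the case in which $\gamma_{13}$ is induced by a transport is symmetric (swap the roles of the second and third coordinates). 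The only genuinely technical point in the whole argument is the invocation of the disintegration theorem together with the Borel-measurability of the glued fiber family $x_1\mapsto\gamma_{12}^{x_1}\otimes\gamma_{13}^{x_1}$; once that is in place, everything else is bookkeeping with Fubini's theorem on the fibers. Alternatively, one can simply cite~\cite[Lemma 5.3.2]{Ambrosio2008a}.
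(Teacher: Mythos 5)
Your proof is correct, and it is essentially the argument of the cited source: the paper itself offers no proof of this proposition, only the reference to \cite[Lemma 5.3.2]{Ambrosio2008a}, and that reference proceeds exactly as you do, by disintegrating both plans over the common first marginal $\mu_1$ and gluing the fibers via $\gamma=\int_{\reals^d}\diracMeasure{x_1}\otimes(\gamma_{12}^{x_1}\otimes\gamma_{13}^{x_1})\,\d\mu_1(x_1)$. Your marginal checks and the uniqueness argument (concentration of $\gamma$ on the Borel graph $\{x_2=T(x_1)\}$, which forces $\gamma$ to be the pushforward of $\gamma_{13}$ under $(x_1,x_3)\mapsto(x_1,T(x_1),x_3)$) are both sound; the only points you rightly flag as technical --- the disintegration theorem on Polish spaces and the Borel measurability of $x_1\mapsto\gamma_{12}^{x_1}\otimes\gamma_{13}^{x_1}$ --- are standard and correctly invoked.
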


\section{Proofs}\label{app:proofs}

\notbool{showproofnecessaryconditions}{
\subsection{Useful Lemmas}
In this section, we state two lemmas that will be instrumental to prove~\cref{thm:first order conditions,thm:first order sufficient conditions constrained}:
\begin{lemma}[Fundamental lemma of calculus of variations, revisited]\label{lemma:variations}
Let $\mu\in\Pp{2}{\reals^d}$ and $H\in\tangentSpace{\mu}{\Pp{2}{\reals^d}}$. Assume that for all $h=\nabla\psi$ with $\psi\in\Ccinf{\reals^d}$
\begin{equation}\label{eq:lemma variations}
    \int_{\reals^d}\innerProduct{H(x)}{h(x)}\d\mu(x)=0.
\end{equation}
Then, $H\equiv 0$ in $\Lp{2}{\reals^d,\reals^d;\mu}$.
\end{lemma}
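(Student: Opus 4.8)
\textbf{Proof proposal for Lemma~\ref{lemma:variations}.}

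The plan is to exploit the definition of the tangent space $\tangentSpace{\mu}{\Pp{2}{\reals^d}}$ as the $\Lp{2}{\reals^d,\reals^d;\mu}$-closure of $\{\gradient{}\psi : \psi\in\Ccinf{\reals^d}\}$, together with the hypothesis that $H$ is $\Lp{2}{}$-orthogonal to every such gradient. First I would fix $\varepsilon>0$ and, using the definition of the tangent space, pick $\varphi_\varepsilon\in\Ccinf{\reals^d}$ with $\LpNorm{H-\gradient{}\varphi_\varepsilon}{2}{\reals^d,\reals^d;\mu}^2<\varepsilon$. The assumption~\eqref{eq:lemma variations} applied to $\psi=\varphi_\varepsilon$ gives $\int_{\reals^d}\innerProduct{H}{\gradient{}\varphi_\varepsilon}\d\mu=0$.

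The key algebraic step is the polarization identity: expand
\begin{equation*}
    \LpNorm{H-\gradient{}\varphi_\varepsilon}{2}{\reals^d,\reals^d;\mu}^2
    =
    \LpNorm{H}{2}{\reals^d,\reals^d;\mu}^2
    -2\int_{\reals^d}\innerProduct{H}{\gradient{}\varphi_\varepsilon}\d\mu
    +\LpNorm{\gradient{}\varphi_\varepsilon}{2}{\reals^d,\reals^d;\mu}^2.
\end{equation*}
Since the middle term vanishes by~\eqref{eq:lemma variations}, this reads
\begin{equation*}
    \LpNorm{H}{2}{\reals^d,\reals^d;\mu}^2
    =
    \LpNorm{H-\gradient{}\varphi_\varepsilon}{2}{\reals^d,\reals^d;\mu}^2
    -\LpNorm{\gradient{}\varphi_\varepsilon}{2}{\reals^d,\reals^d;\mu}^2
    \leq
    \LpNorm{H-\gradient{}\varphi_\varepsilon}{2}{\reals^d,\reals^d;\mu}^2
    <\varepsilon,
\end{equation*}
where the first inequality just drops the non-negative term $\LpNorm{\gradient{}\varphi_\varepsilon}{2}{\reals^d,\reals^d;\mu}^2$. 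Letting $\varepsilon\to 0$ forces $\LpNorm{H}{2}{\reals^d,\reals^d;\mu}=0$, i.e., $H\equiv 0$ in $\Lp{2}{\reals^d,\reals^d;\mu}$.

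There is essentially no obstacle here: the whole argument is the observation that a vector orthogonal to a dense subset of a closed subspace, and itself lying in that subspace, must be zero — made quantitative so as to avoid invoking abstract Hilbert-space projection machinery. The only point requiring a modicum of care is that the approximating sequence $\gradient{}\varphi_\varepsilon$ is guaranteed to exist precisely because $H\in\tangentSpace{\mu}{\Pp{2}{\reals^d}}$, which is exactly the $\Lp{2}{}$-closure of gradients of $\Ccinf{}$ functions; without this hypothesis the conclusion would fail (one could only deduce $H\in\tangentSpace{\mu}{\Pp{2}{\reals^d}}^{\perp}$).
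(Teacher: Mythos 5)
Your proof is correct and coincides with the paper's argument: both expand $\LpNorm{H-\gradient{}\varphi_\varepsilon}{2}{\reals^d,\reals^d;\mu}^2$, use the orthogonality hypothesis to kill the cross term, drop the nonnegative $\LpNorm{\gradient{}\varphi_\varepsilon}{2}{\reals^d,\reals^d;\mu}^2$, and let $\varepsilon\to 0$. The only cosmetic difference is that the paper writes the identity as $0 = -2\int\innerProduct{H}{\gradient{}\varphi_\varepsilon}\d\mu = \LpNorm{H-\gradient{}\varphi_\varepsilon}{2}^2 - \LpNorm{H}{2}^2 - \LpNorm{\gradient{}\varphi_\varepsilon}{2}^2$ directly rather than first stating the expansion and then substituting.
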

\prooflemmavariations{boolfalse}
\begin{lemma}\label{lemma:admissibility variation}
Let $\mu^\ast\in\Pp{2}{\reals^d}$ and $K:\Pp{2}{\reals^d}\to\reals$ be Wasserstein differentiable at $\mu^\ast$ so that $K(\mu^\ast)=0$, and let $T=\gradient{}\psi$ for some $\psi\in\Ccinf{\reals^d}$. Suppose that
\begin{enumerate}
\item there exists $H=\gradient{}\varphi$ with $\varphi\in\Ccinf{\reals^d}$ so that $\int_{\reals^d}\innerProduct{\gradient{\mu}K(\mu^\ast)(x)}{H(x)}\d\mu^\ast(x)\neq 0$; 

\item for all $\delta, \varepsilon\in\reals$ sufficiently close to 0, $K$ is Wasserstein differentiable at $\pushforward{(\Id+\varepsilon T+\delta H)}\mu^\ast$ and the real-valued functions
\begin{align*}
    (\varepsilon,\delta)
    &\mapsto \int_{\reals^d}\innerProduct{\gradient{\mu}K(\pushforward{(\Id+\varepsilon T+\delta H)}\mu^\ast)(x)}{T(x)}\d(\pushforward{(\Id+\varepsilon T+\delta H)}\mu^\ast)(x)
    \\
    (\varepsilon,\delta)
    &\mapsto \int_{\reals^d}\innerProduct{\gradient{\mu}K(\pushforward{(\Id+\varepsilon T+\delta H)}\mu^\ast)(x)}{H(x)}\d(\pushforward{(\Id+\varepsilon T+\delta H)}\mu^\ast)(x)
\end{align*}
are continuous.
\end{enumerate}
Then, there exists $\bar\varepsilon>0$ and a continuously differentiable function $\sigma\in\C{1}{}:(-\bar\varepsilon,\bar\varepsilon)\to\reals$ with $\sigma(0)=0$ such that for all $\varepsilon\in(-\bar\varepsilon,\bar\varepsilon)$
\begin{equation*}
    \constraint(\pushforward{(\Id+\varepsilon T+\sigma(\varepsilon)H)}\mu)=0.
\end{equation*}
\end{lemma}
\proofadmissiblevariation{boolfalse}}

\ifbool{showproofs}{\subsection{Proofs of the Statements}}{}
\ifbool{showproofs}{\subsubsection{Proofs of the Statements of~\cref{sec:preliminaries}}}{}

\notbool{showproofinverseoptimaltransportmap}{\proofinverseoptimaltransportmap{boolfalse}}{}
\notbool{showproofperturbationtransportmap}{\proofperturbationtransportmap{boolfalse}}{}
\notbool{showproofgradientsactontangentvectors}{\proofgradientsactontangentvectors{boolfalse}}{}
\notbool{showproofgradientsarestrong}{\proofgradientsarestrong{boolfalse}}{}
\notbool{showproofdifferentiablecontinuous}{\proofdifferentiablecontinuous{boolfalse}}{}
\notbool{showproofgradientsgeodesicallyconvex}{\proofgradientsgeodesicallyconvex{boolfalse}}{}
\notbool{showproofsummultiplicationrule}{\proofsummultiplicationrule{boolfalse}}{}
\notbool{showproofchainrule}{\proofchainrule{boolfalse}}{}

\notbool{showproofoptimaltransportdiscrepancy}{\proofoptimaltransportdiscrepancy{boolfalse}}{}
\notbool{showproofwassersteindistance}{\proofwassersteindistance{boolfalse}}{}
\notbool{showproofexpectation}{\proofexpectation{boolfalse}}{}
\notbool{showproofinteraction}{\proofinteraction{boolfalse}}{}
\notbool{showproofvariance}{\proofvariance{boolfalse}}{}
\notbool{showproofinternalenergy}{\proofinternalenergy{boolfalse}}{}
\notbool{showproofdivergence}{\proofdivergence{boolfalse}}{}

\ifbool{showproofs}{\subsubsection{Proofs of the Statements of~\cref{sec:optimality}}}{}
\notbool{showproofnecessaryconditions}{\prooffirstorderconditions{boolfalse}}{}
\notbool{showprooffirstordersufficientconditions}{\prooffirstordersufficientconditions{boolfalse}}{}
\notbool{showprooffcontinuousdifferentiability}{\prooffcontinuousdifferentiability{boolfalse}}{}
\notbool{showproofnecessaryconditions}{\prooffirstordernecessaryconditionsconstrained{boolfalse}}{}
\notbool{showprooffirstordernecessaryconditionsconstrainedinequality}{\prooffirstordernecessaryconditionsconstrainedinequality{boolfalse}}{}
\notbool{showprooffirstordersufficientconditionsconstrained}{\prooffirstordersufficientconditionsconstrained{boolfalse}}{}
\notbool{showproofcorfirstordersufficientconditionsconstrained}{\proofcorfirstordersufficientconditionsconstrained{boolfalse}}{}
\notbool{showprooffirstordersufficientconditionsconstrainedwasserstein}{\prooffirstordersufficientconditionsconstrainedwasserstein{boolfalse}}{}
\notbool{showprooffirstordersufficientconditionsconstrainedwassersteincorollary}{\prooffirstordersufficientconditionsconstrainedwassersteincorollary{boolfalse}}{}

\ifbool{showproofs}{\subsection{Proofs of the Statements of~\cref{sec:dro}}}{}
\notbool{showproofabsolutelycontinuityref}{\proofabsolutelycontinuityref{boolfalse}}{}
\notbool{showproofwassersteinballs}{\proofwassersteinballs{boolfalse}}{}
\notbool{showproofdrolinearcost}{\proofdrolinearcost{boolfalse}}{}
\notbool{showprooflemmapropertiesdrovariance}{\prooflemmapropertiesdrovariance{boolfalse}}{}
\notbool{showproofdrovariance}{\proofdrovariance{boolfalse}}{}
\notbool{showprooflemmapropertiesdrostd}{\prooflemmapropertiesdrostd{boolfalse}}{}
\notbool{showproofdrostd}{\proofdrostd{boolfalse}}{}

\ifbool{showproofs}{\subsection{Proofs of the Statements of~\cref{sec:kl}}}{}
\notbool{showprooflemmaexistencekl}{\prooflemmaexistencekl{boolfalse}}{}
\notbool{showproofoptimalsolutionkl}{\proofoptimalsolutionkl{boolfalse}}{}

\end{document}